\documentclass[graybox,envcountsect,envcountsame]{svmult}
%%%%%%%%%%%%%%%%%%%%%%%%%%%%%%%%%%%%%%%%%%%%%%%%%%%%%%%%%%%%%% 
\usepackage{adjustbox}
\usepackage{multirow}
\usepackage{type1cm}         
\usepackage{makeidx}         
\usepackage{graphicx}        
\usepackage{multicol}        
\usepackage[bottom]{footmisc}
\usepackage{newtxtext}       
\usepackage{newtxmath}       
\usepackage{url}
\usepackage[all]{xy}
\usepackage{graphicx}
\usepackage[pagebackref,bookmarksopen]{hyperref}
\hypersetup{
	colorlinks=true,
	linkcolor=blue,
	citecolor=magenta,
	urlcolor=cyan,
}
\usepackage{bookmark} % package to set bookmarks to the pdf file

%%%%  Macros for the package bookmark to set bookmark level to title and author
\makeatletter
\providecommand*{\toclevel@titlech}{0} 
\edef\toclevel@authorch{\the\numexpr\toclevel@titlech+1} 
\makeatother

%%%%%%%%%%%%%%%%%%%%%%%%%%%%%%%%%%%%%%%%%%%%%%%%%%%%%%%%%%%%%%% 
%\makeindex%[program=makeindex,options= -s svind]

%%%%%%%%%%%%%%%%%%%%%%%%%%%%%%%%%%%%%%%%%%%%%%%%%%%%%%%%%%%%%%%%%% 
 \DeclareMathOperator*{\cod}{cod}

\makeatletter                                       
\newenvironment{chapquote}[3][35pt]
  {\setlength{\@tempdima}{#1}
    \ifx\relax#2\relax\setlength{\@tempdimb}{0pt}\else\setlength{\@tempdimb}{#2}\fi
    \def\chapquote@author{#3}
   \parshape 1 \@tempdima \dimexpr\textwidth-\@tempdima-\@tempdimb\relax
   \itshape}
  {\newline\par\normalfont\hfill--\ \chapquote@author\hspace*{\@tempdimb}\par\bigskip}
\makeatother

 \newcommand{\widesim}[2][1.5]{
   \mathrel{\underset{#2}{\scalebox{#1}[1]{$\sim$}}}}
 
%%%%%%%%%%%%%%%%%%%%%%%%%%%%%%%%%%%%%%%%%%%%%%%%%%%%%%%%%%%%% 
\begin{document}

\title*{Old and new results on density of stable mappings}
\titlerunning{Results on density of stable mappings}
\author{Maria Aparecida Soares Ruas}
\authorrunning{M. A. S. Ruas}
\institute{Maria Aparecida Soares Ruas \at Instituto de Ci\^encias
Matem\'aticas e de Computa\c c\~ao - USP, Av. Trabalhador s\~ao-carlense,
400, 13566-590 - S\~ao Carlos - SP, Brazil, \email{maasruas@icmc.usp.br}}
\maketitle

\begin{chapquote}{}{C.T.C.Wall\cite{Wal81}}
  The analysis of the conditions for a map-germ to be finitely
  determined and of the degree of determinacy involves the most important
  of the local aspects of singularity theory. 
\end{chapquote}

\abstract{Density of stable maps is the common thread of this
  paper. We review Whitney's contribution to singularities of
  differentiable mappings and Thom-Mather theories on $C^{\infty}$
  and $C^{0}$-stability. Infinitesimal and algebraic methods are
  presented in order to prove Theorem A and Theorem B on density of
  proper stable and topologically stable mappings $f:N^{n}\to P^{p}.$
  Theorem A states that the set of proper stable maps is dense in the
  set of all proper maps from $N$ to $P$, if and only if the pair
  $(n,p)$ is in \emph{nice dimensions,} while Theorem B shows that
  density of topologically stable maps holds for any pair $(n,p).$  A
  short review of results by du Plessis and Wall on the range in
  which proper smooth mappings are $C^{1}$- stable is given.
  A Thom-Mather map is a topologically stable map $f:N \to P$ whose \index{Map!Thom-Mather}
  associated $k$-jet map $j^{k}f:N \to P$ is transverse to
  the Thom-Mather stratification in $J^{k}(N,P).$ We give a detailed
  description of Thom-Mather maps for pairs $(n,p)$ in the boundary
  of the nice dimensions.
  The main open question on density of stable mappings is to determine
  the pairs $(n,p)$ for which Lipschitz stable mappings are
  dense.  We discuss recent results by Nguyen, Ruas and Trivedi  on
  this subject, formulating conjectures for the density
  of Lipschitz stable mappings in the boundary of the nice
  dimensions. At the final section, Damon's results relating
  $\mathcal{A}$-classification  of map-germs and $\mathcal{K}_{V}$
  classification of sections of the discriminant $V=\Delta(F)$ of a
  stable unfolding of $f$ are reviewed and open problems are discussed.
}

\section*{Contents}
\setcounter{minitocdepth}{1}
\dominitoc

\section{Introduction}
\label{sec:introduction}

Although Riemann, Klein, Poincar\'e and other great mathematicians of
the nineteenth century  already used deep topological concepts in their work,
the birth of algebraic and differential topology as formal sub-areas
of Mathematics occurred in the first half  of the twentieth century. 

After previous works of Whitehead, Veblen and others, the American
mathematician Hassler Whitney introduced fundamental concepts and
proved strong results in differential topology such as the well known
\emph{strong Whitney
  embedding theorem} and \emph{weak Whitney embedding
  theorem}. The first one states that any smooth real $m$-dimensional
manifold can be smoothly embedded in $\mathbb R^{2m}$, while the
latter says that any continuous mapping of an $n$-dimensional
manifold to an $m$-dimensional manifold may be approximated by a
smooth embedding provided that $m > 2n.$ Furthermore, replacing
embedding by immersion in this last  statement the result holds for
all $m\geq 2n.$  His survey paper \emph{Topological
  properties of differentiable manifolds} published in 1937
\cite{Whi37} contains many contributions he made in those early years
of differential topology. 

In 1944, Whitney \cite{Whi44} studied the first pair of dimensions not
covered by his immersion theorem. For mappings $f$ from $\mathbb
R^{n}$ to $\mathbb R^{2n-1}$ Whitney proved that singularities cannot
be avoided in general. He introduced the \emph{semi regular mappings}
as proper mappings $f:\mathbb R^{n}\to \mathbb R^{2n-1}$ whose only
singularities are the generalized cross-caps (Whitney umbrellas)
points.  Away from singular
points, $f$ is an immersion with transverse
double points, and when $n=2,$ a finite number of triple points may also
appear in the image of  $f.$ These are the only stable singularities in these
dimensions. However, only later, Whitney introduced the notion of
stable mappings.

Abstract spaces and their topological properties were known by then,
so that the notion of \emph{stability} of systems and mappings appeared
naturally. It appeared first in dynamical systems, introduced by
A. Andronov and L. Pontryagin \cite{AndPon} for a class of autonomous
differential systems on the plane, under the name of ``syst\`emes
grossiers''. {The term ``structural stability'' appears in the english
language edition of the book by Andronov and Chaikin, edited under the
direction of Solomon Lefschetz in 1949 \cite{AndCha} (see also
\cite{Sot}). It also appears in other pioneering papers on the
subject, among them the paper \emph{On structural stability} by
Mauricio Peixoto \cite{Pei}, published in 1959.}

The notion of stable mappings was formulated by Whitney in
\cite{Whi58} around the middle of last century. He characterized stable
mappings  from $\mathbb R^{n}$ to $\mathbb R^{p}$  with $p \geq
2n-1$ in \cite{Whi43} and stable mappings from the plane into the
plane in \cite{Whi55}, showing in these cases  that stable mappings
form  a dense set in the space of smooth proper mappings.

The article Whitney \cite{Whi55} published in 1955 is a landmark, considered
by many to be the cornerstone of the theory of singularities. The
stable singularities of mappings of the plane into the plane are folds
and cusps and any proper smooth mapping $f:\mathbb R^{2}\to \mathbb
R^{2}$ can be approximated by a stable mapping. Whitney
conjectured that density of stable mappings would hold for any
pair $(n,p).$  However 
Ren\'e Thom showed, in his 1959 lecture at Bonn, that this is not the
case by given an example of a map $f:\mathbb R^{9}\to \mathbb R^{9}$
that appears generically in a $1$-parameter family of maps.

Thom conjectured that the topologically stable maps are always dense
and gave an outline of the proof. The complete proof was given  by John
Mather,  who from 1965 to 1975, solved almost completely the program drawn by Ren\'e Thom for
the problem of stability.

Mather found several characterizations of stability and proved that
the set $\mathcal S^{\infty}(N,P)$ of stable mappings is dense in the
set $C_{pr}^{\infty}(N,P)$ of smooth proper mappings, from the $n$-dimensional manifold
$N$ to the $p$-dimensional manifold $P,$ if and only if $(n,p)$ is in the \emph{nice
  dimensions,} which he completely characterized in
\cite{Mat71}. Based on Thom's ideas, he  also proved in \cite{Mat71-3,
  Mat76} that the set of topologically  stable 
 mappings $\mathcal S^{0}(N,P)$ in $C_{pr}^{\infty}(N,P)$ is residual for all pairs $(n,p).$

 The 70's was blooming period for singularity theory. Along with
 Mather's work, Ren\'e Thom's book on catastrophe theory \cite{Tho72}
 and Arnold's  seminal classification of simple singularities of
 functions \cite{Arn} also had a great impact.
 These works paved the intense development of the theory
 of the following decades.  
The deep understanding of stable
mappings, versal unfoldings and finite determinacy transformed
singularity theory into an organizing center for several areas of
mathematics and sciences.

The common thread of these notes is the question of density of stable
mappings in  $C^{\infty}_{pr}(N,P).$  We outline the solutions of the various formulations of this
problem: $C^{\infty}, C^{0}$ and $C^{l}, 1\leq l < \infty$
stability. The remaining  open problem in this setting is density
of Lipschitz stable mappings. Recent
progress in the solution of this problem  appear in \cite{RuaTri, NguRuaTri}.

We give an account of tools for the proofs of the main theorems
including  the notion of infinitesimal stability, the generalized
Malgrange's theorem, Thom's transversality theorem, mappings of finite
singularity type and finite determinacy of Mather's groups. Whitney
and Thom's results on stratified sets and maps are fundamental
pieces of the theory.
For an account of these topics we refer to David Trotman's article in
Volume 1 of this Handbook .

In these notes we concentrate on the discussion of real
singularities. The infinitesimal methods discussed here also hold true
for holomorphic mappings. For an account on Mather's theory of
${\mathcal A}$-equivalence  and the description of the topology of
stable perturbations of ${\cal A}$-finitely determined holomorphic
germs the reader may consult the notes by David Mond and Juan Jos\'e
Nu\~no-Ballesteros in this Handbook \cite{Han}.

Related topics to those discussed in these notes, as well as new
developments of the theory, are given in the subsections \emph{Notes} at the end of
each section. The final section includes a discussion of open problems
in the theory of singularities of smooth mappings.

\section{Setting the problem}

\label{sec:dens-stable-mapp}

Let $C^{\infty}(N,P)=\{f:N\to P, f\in C^{\infty}\}$ be the set of
  smooth mappings from $N$ to $P,$ where $N$ and $P$ are smooth
  manifolds of dimension $n$ and $p$ respectively. The topology on
  $C^{ \infty}(N,P)$  is the $C^{\infty}$-Whitney topology.

 We  review here the contributions of
  singularity theory to solve the following problem.

  \begin{problem}\label{pro}
      Find an open and dense set $\mathcal S$ in $C^{\infty}(N,P)$ and
  describe all singularities of mappings $f\ \in \mathcal S.$
\end{problem}

The relevant equivalence is $\mathcal{A}$-equivalence.

\begin{definition}\label{aequi} \index{$\mathcal A$-equivalence}
    Two smooth maps $f,g:N \to P$ are $\mathcal
    A$-\emph{equivalent} if there
  exist $C^{\infty}$ diffeomorphisms $h:N \to N$ and $k:P \to P$ such
  that the following diagram commutes
$$
  \xymatrix{\ar @{} [dr] |{\circlearrowleft}
N \ar[d]_h \ar[r]^f &P  \ar[d]^k \\
N \ar[r]_g   &      P }
$$
\end{definition}
\begin{definition}\label{asta}\index{Map!stable}
  The map $f:N \to P$ is \emph{stable} ($\mathcal A$-\emph{stable}) if there exists a neighborhood $W$ of $f$ in
  $C^{\infty}(N,P),$ such that  $g \widesim{\mathcal{A}}
  f$ for every $g\in W.$
\end{definition}

  Replacing $C^{\infty}$-diffeomorphisms by homeomorphisms,  $C^{l}$-diffeomorphisms, $l>0$ or bi-Lipschitz
  homeomorphisms in definitions \ref{aequi} and  \ref{asta} we get respectively the definitions of $C^{0}$-$\mathcal
 A$, $C^{l}$-$\mathcal A$ ($l> 0$), \emph{bi-Lipschitz}-$\mathcal A$ \emph{equivalences}
 and of \emph{topological
 stability}, $C^{l}$-\emph{stability, or Lipschitz stability of maps} in
 $C^{\infty}(N,P).$
\index{$C^{0}$-$\mathcal A$-equivalence}
\index{$C^{l}$-$\mathcal A$-equivalence}
\index{Bi-Lipschitz! $\mathcal A$-equivalence}
\index{Map!topologically stable}
\index{Map!Lipschitz stable}
\index{Map!$C^{l}$- stable}

Before starting the discussion of Problem \ref{pro}, we review some notation
and definitions.

The Whitney $C^{\infty}-$ topology in $C^{\infty}(N,P)$ was defined by
John Mather in \cite{Mat68-1}. We review it here (more details can be
found in the book of Golubitsky and Guillemin \cite{GolGui}).

For $x\in N,$ $y\in P$ and for a non-negative integer $k,$ we denote by
$J^{k}(N,P)_{x,y}$ the set of $k$-jets of map-germs $(N,x)\to
(P,y).$ When $N=\mathbb R^{n},$ $P=\mathbb R^p,$ we denote
$J^{k}(n,p)$ the set of polynomial mappings $f:\mathbb R^n \to
\mathbb R^p$ of degree $\leq k,$ such that $f(0)=0.$

The set $J^{k}(N,P)=\bigcup_{x\in N,y\in P} J^{k}(N,P)_{x,y}$ is the $k$-jet
space of mappings from $N$ to $P.$ \index{Jet space of mappings}
The set
$J^{k}(N,P)$ is a smooth manifold (theorem 2.7 in
\cite{GolGui}). Moreover, it has the structure of a fibre bundle
with  basis  $N\times P.$

% The \emph{Whitney $C^{k}$-topology} on $C^{\infty}(N,P)$ is defined as
% follows.

Let $U$ be an open set in $J^{k}(N,P)$ and 
$$
M(U)=\{ f\in C^{\infty}(N,P) |\; j^{k}f(N)\subset U \}.
$$
The family of sets $\{M(U)\}$ where $U$ is an open set of $J^{k}(N,P)$
is a basis for a topology in $C^{\infty}(N,P)$ (note that $M(U)\cap
M(V)=M(U\cap V)$).
This topology is called the Whitney $C^{k}$-topology.\index{Whitney! $C^{k}$-topology}

Denote by $W_{k}$ the set of open subsets of $C^{\infty}(N,P)$ in the
Whitney $C^{k}$-topology. The Whitney $C^{\infty}$-topology is the
topology whose basis is $W=\cup_{k=0}^{\infty} W_{k}.$
\index{Whitney! $C^{\infty}$-topology}

Given a metric $d$ on $J^{k}(N,P),$ compatible with
its topology and a nonnegative continuous function $\delta: N\to
\mathbb R$ we can define a basic neighborhood of $f\in C^{\infty}(N,P)$ as
follows 
$$
B_{\delta}(f)=\{g \in C^{\infty}(N,P) |\;
d(j^{k}f(x),j^{k}g(x))<\delta(x), \forall x \in N  \}.
$$ 

% The
% collection $\{B_{\delta}(f) \}$ forms a neighborhood basis of $f$ in
% the Whitney $C^{k}$-topology on $C^{\infty}(N,P).$

When $N$ is compact, $f_{n}$ converges to $f$ in the Whitney
$C^{k}$-topology if and only if $j^{k}f_{n}$ converges uniformly to
$j^{k}f.$  On noncompact manifolds  $f_{n}$ converges to $f$ in the Whitney
$C^{k}$-topology if and only there exists a compact $K\subset N,$ such
 $j^{k}f_{n}$ converges  to $j^{k}f$ uniformly in $K,$ and there
 exists $n_{0}$ such that $f_{n}\equiv f$ in $N\setminus K$ for any $n\geq
 n_{0}$ (for details see the book by Golubitsky and Guillemin \cite{GolGui}). 

Thus we can see that there is a great difference in the Whitney
topology depending on whether or not the domain $N$ is a compact
manifold.

When $N$ is not compact, the Whitney $C^{k}$-topology is a very fine
topology, with many open  sets. As a consequence, dense sets in
$C^{\infty}(N,P)$ are very large sets, and theorems characterizing
these sets in $C^{\infty}(N,P)$ are strong results.

\subsection[The work of Hassler Whitney]{The work of Hassler Whitney: from 1944 to 1958}
\label{sec:from-1944-1958:the}

The foundations of the theory were Whitney's work, in which he formulated
the problem of classifying singularities that can not be eliminated by
small perturbations, and completely succeeded  in solving it for maps
from $\mathbb R^{n}$ to $\mathbb R^{p}$ with $p\geq 2n-1$ in Whitney \cite
{Whi43} and  from $\mathbb R^{2}$ to $\mathbb R^{2}$  in Whitney \cite{Whi55}. 

The article \cite{Whi55} published in 1955 is a magnificent work
dedicated to maps from the plane into the plane. In the introduction to the article, Whitney presents a
complete review of the existing results and future perspectives of the
theory. We reproduce it here:
\itshape
``Let $f_{0}$ be a
mapping of an open set $R$ in $n$-space $E^{n}$ into $m$-space
$E^{m}.$ Let us consider, along with $f_{0},$ all the mappings $f$
which are sufficiently good approximations to $f_{0}.$ By the
Weierstrass Approximation Theorem, there are such mappings $f$ which
are analytic; in fact, (see \cite[Lemma1]{Whi34}) we may make $f$
approximate to $f_{s}$ throughout $R$ arbitrarily well, and if
$f_{0}$ is $r$-smooth, (i.e., has continuous partial  derivatives of
order $\leq r$),  we may make corresponding derivatives of $f$
approximate those of $f_{0}.$

Supposing $f$ is smooth, (i.e., $1$-smooth), the Jacobian matrix $J(f)$
of $f$ is  defined (using fixed coordinate systems); we say the point
$p\in R$ is a \emph{regular point} or \emph{singular point} of $f,$
according as $J(f)$ is of maximal rank (i.e., of rank $\min(n,m)$) or
lesser rank. In general we cannot expect $f$ to be free of singular
points. A fundamental problem is to determine what sort of
singularities any good approximations $f$ to $f_{0}$ must have; what
sort of sets they occupy, what $f$ is like near such points, what
topological properties hold with references to them, etc.

Some special cases of this problem have been studied as follows:

\noindent a) For $m=1,$ we have a real valued function in $R.$ It was
shown by M. Morse in Theorem 1.6 of \cite{Mor31}, that $f$ may be chosen
so that the singular points (called critical points here) are
isolated, the ``Hessian'' being non-zero at each.'' ``Moreover, each
critical point may be assigned a ``type number''; topological relations
among these were given by Morse \cite{Mor25}.

\noindent b) If $m\geq 2n,$ we may find an $f$ with no singular
points; see (a) and (b) of Theorem 2 in \cite{Whi34}.

\noindent c) If $m=2n-1,$ we may obtain an $f$ with singular
points: see \cite{Whi43}. For each such point $p\in R,$ coordinate
systems $(x_{1}, x_{2}, \dots, x_{n})$ in $E^{n}$ and $(u_{1}, u_{2},
\dots, u_{m})$ in $E^{m}$ may be chosen, in which $f,$ near $p,$ has
the form

{$$
\ u_{1}= x_{1}^{2},\ \ \ \, u_{i}= x_{i},\,\,\,\,\,
u_{n+i-1}=x_{1}x_{i},\ \ \  (i=2,
\dots, n).
$$}

The singularities are studied from a topological point of view in
\cite{Whi44}.

\noindent d) Some beginnings have been made for the other pairs of
values $(n.m)$ by N. Wolfsohn, \cite{Wolf}, but no complete
classification of the singularities exist in these cases. Thus the
smallest pair of values for which the problem is open is the pair
$(2,2),$ i.e for mappings of the plane into the plane; it is this case
that we treat here. In this case, there can be ``folds'' lying along
curves and isolated ``cusps'' on the folds ''\upshape(Figure \ref{fig:foldsandcusps}).

We review Whitney's results in this section.
%First we recall the notion of excellent maps. 

Let $f:U\to \mathbb R^{2}$ be a smooth mapping defined on the open set
$U\subset\mathbb R^{2}.$ With coordinates systems $(x,y)$ in $U$ and
$(u,v)$ in the target, the Jacobian of $f$ is given by
$$
J(f)=u_{x}v_{y}-u_{y}v_{x}.
$$
\index{Point!regular}\index{Point!singular}
{A point $p\in U$ is \emph{regular} or \emph{singular} according as
$J(f)(p)\neq 0$ or $J(f)(p)= 0.$ A singular point $(x_{0},y_{0})$ is
\emph{good} if the derivatives $\frac{\partial J(f)}{\partial
  x}(x_{0},y_{0})$ and $\frac{\partial J(f)}{\partial
  y}(x_{0},y_{0})$ do not vanish simultaneously. We say that $f$ is
\emph{good} if every singular point of $f$ is good. This condition
implies that the set $S(f)$ of singular points of $f$ is a regular
curve.}
\index{Map!good}
If $f$ is good and $p$ is a singular point, let $\phi:(-\epsilon,
\epsilon) \to \mathbb R^{2}$ be a parametrization of the singular set
$\mathcal{S}(f)$ in a neighborhood of $p\in \mathcal{S}(f)$ such that
$\phi(0)=p.$ Then, we define
\index{Point!fold}
\noindent (i) If $(f\circ\phi)'(0) \neq 0,$ we say  $p$ is
\emph{fold point } of $f.$

\noindent (ii) $(f\circ\phi)'(0)= 0$ and $(f\circ\phi)''(0) \neq 0,$
we say  $p$ is a \emph{cusp point} of $f.$
\index{Point!cusp}
These definitions are independent of the parametrization chosen for
$S(f)$ in a neighborhood of $p.$ 

One can easily see that at a fold point, the restriction of $f$ to its
singular set is non singular, while a cusp point is a singular point  of this restriction.

It follows from the definition that cusp points are isolated.

\begin{definition}[Whitney \cite{Whi55}, p. 379]
Let $f$ be a good map.\index{Map!excellent}
  We say that $p$ is an \emph{excellent point} of $f$ if it is a
  regular, fold or cusp point of $f.$ If  each point $p\in U$ is
  excellent we say $f$ is \emph{excellent.}
\end{definition}
% A map $f:\mathbb R^{2}\to \mathbb R^{2}$ is \emph{excellent} if it
  % satisfies the following conditions
  % \begin{itemize}
  % \item [(a)]At each isolated singular point $(x_0
  %   ,y_0)$ of $f,$ the derivatives $\frac{\partial J(f)}{\partial
  %     x}(x_{0},y_{0})$ and  $\frac{\partial J(f)}{\partial
  %     y}(x_{0},y_{0})$ do not vanish simultaneously. This condition
  %   implies that the set $\mathcal S(f)$ of singular points of $f$ is
  %   a regular curve. When $f$ satisfies this condition (a), we call
  %   $f$ \emph{good.}
  % \item [(b)] $f$ is good and the only singularities of the
  %   restriction of $f$ to $\mathcal S(f),$ if they exist, are nondegenerate singularities.

%  When $f$ is an excellent map, one of the following two
% conditions hold:

Any smooth map can be approximated in
the $C^{r}$-Whitney topology, $r\geq 3,$ by an excellent map.

\begin{theorem}[Whitney \cite{Whi55}, Theorem 13A ]
  Let $f_{0}$ be a mapping from $U\subset \mathbb R^{2}$ to $\mathbb
  R^{2},$ where $U$ is an open set in $\mathbb R^{2}.$ Then
  arbitrarily near $f_{0}$ there is an excellent mapping $f.$ If
  $f_{0}$ is $r$-smooth and $\epsilon$ is a positive continuous
  function in $U,$ we make $f$ an $(r,\epsilon)$-approximation of $f.$ 
\end{theorem}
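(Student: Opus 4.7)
The plan is to express ``excellent'' as a jet-transversality condition and then invoke Thom's transversality theorem in the Whitney $C^{r}$-topology. The target stratification of the jet space $J^{3}(U,\mathbb{R}^{2})$ will be built from the Thom--Boardman strata: the corank-one locus $\Sigma^{1}\subset J^{1}(U,\mathbb{R}^{2})$ of codimension $1$, the corank-two locus $\Sigma^{2}$ of codimension $4$, the second Boardman stratum $\Sigma^{1,1}\subset J^{2}(U,\mathbb{R}^{2})$ of codimension $2$, and the third Boardman stratum $\Sigma^{1,1,1}\subset J^{3}(U,\mathbb{R}^{2})$ of codimension $3$. Since $\dim U=2$ is strictly smaller than both $\mathrm{codim}\,\Sigma^{2}=4$ and $\mathrm{codim}\,\Sigma^{1,1,1}=3$, transversality of $j^{3}f$ to these two submanifolds will force $j^{3}f$ to miss them entirely.

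The next step is the dictionary between jet transversality and the pointwise conditions defining an excellent map. Transversality of $j^{1}f$ to $\Sigma^{1}$ is equivalent to $0$ being a regular value of $J(f)=u_{x}v_{y}-u_{y}v_{x}$, i.e.\ to the pair $(J(f)_{x},J(f)_{y})$ not vanishing on $S(f)=J(f)^{-1}(0)$; this is precisely the definition of \emph{good}, and it makes $S(f)$ a smooth curve. Once this holds, the intrinsic derivative of $f$ along $S(f)$ separates folds from cusp candidates: transversality of $j^{2}f$ to $\Sigma^{1,1}$ isolates the cusp candidates on $S(f)$, and avoidance of $\Sigma^{1,1,1}$ in $J^{3}$ promotes each candidate to a genuine cusp by guaranteeing $(f\circ\phi)''(0)\neq 0$. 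A map $f$ whose $3$-jet extension is transverse to all four of the strata above is therefore excellent at every point of $U$.

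Finally, I would invoke Thom's transversality theorem, which for any $r\geq 3$ provides a residual (and Whitney-dense) set of $f\in C^{\infty}(U,\mathbb{R}^{2})$ whose $3$-jet extension is simultaneously transverse to the four locally closed submanifolds above. Because the Whitney $C^{r}$-topology admits arbitrary positive continuous gauges $\epsilon\colon U\to\mathbb{R}_{>0}$ as moduli of approximation, the resulting excellent $f$ can be arranged to be $(r,\epsilon)$-close to $f_{0}$ as claimed. The main technical obstacle is the last step of the dictionary: one must identify the Boardman defining equations for $\Sigma^{1,1,1}$, expressed in terms of successive intrinsic derivatives, with the classical second-derivative condition $(f\circ\phi)''(0)=0$ used in the statement to distinguish cusps from degenerate $\Sigma^{1,1}$ points. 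In Whitney's original 1955 argument this identification is bypassed by a hands-on normal-form computation on $S(f)$; modernly it is a consequence of the Boardman symbol formalism, after which density of excellent maps is a clean corollary of the transversality theorem.
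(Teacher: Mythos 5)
Your outline is correct, but it is not the proof the paper is referring to: the text does not actually reprove Theorem 13A, and the argument it attributes to Whitney is the original 1955 one, which \emph{predates} Thom's transversality theorem. As the paper explains, Whitney characterizes the ``bad set'' of degenerate jets directly and then constructs explicit $C^{r}$-small perturbations of $f_{0}$ avoiding it --- doing by hand, for this one pair of dimensions, what the transversality theorem later automated. Your route --- encode \emph{good} as $j^{1}f\pitchfork\Sigma^{1}$ together with avoidance of $\Sigma^{2}$ (the latter is in fact forced by goodness, since all first derivatives vanish at a corank-$2$ point and hence so does $dJ(f)$ there), isolate cusp candidates by $j^{2}f\pitchfork\Sigma^{1,1}$ of codimension $2$, and upgrade them to genuine cusps by avoiding $\Sigma^{1,1,1}$ of codimension $3>2$ --- is the standard modern argument, essentially Chapter VI of Golubitsky--Guillemin, which the paper cites as the ``simpler proof based on current tools'' for the companion normal-form theorems 15A/15B. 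What the modern route buys is brevity and a template valid for all pairs $(n,p)$; what Whitney's route buys is explicit control of the approximating map (necessary in 1955, before residual-implies-dense arguments in the Baire space $C^{\infty}(U,\mathbb R^{2})$ with the Whitney topology were available) and the fact that it operates directly on an $r$-smooth $f_{0}$, whereas your argument needs a preliminary smoothing of $f_{0}$ (e.g.\ by the Weierstrass approximation Whitney himself quotes) before jet transversality can be applied. The step you flag as the technical obstacle --- identifying membership in $\Sigma^{1,1}\setminus\Sigma^{1,1,1}$ with the classical conditions $(f\circ\phi)'(0)=0$ and $(f\circ\phi)''(0)\neq 0$ --- is indeed where the remaining content lies; in the equidimensional corank-one case it is the Morin normal-form computation, and once it is granted your proof closes, with the $(r,\epsilon)$-approximation supplied by taking the basic Whitney $C^{r}$-neighborhood $B_{\epsilon}(f_{0})$ as the set in which density is invoked.
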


Prior to Thom's transversality theorem (\cite{Tho56}), Whitney
introduced the method of characterizing in the jet space the set of
jets with degenerate singularities, the so called ``bad set''.

In addition,  methods of producing generic $C^{r}$-perturbations of any
given map were also introduced by him. The goal was to find
sufficiently close perturbations that would avoid the bad set.

For polynomial maps from the plane into plane, the bad set are the
polynomial maps admitting singularities more degenerate than folds and
cusps.

Folds and cusps have simple normal forms.

\begin{figure}
  \centering
  \scalebox{0.60}{\includegraphics {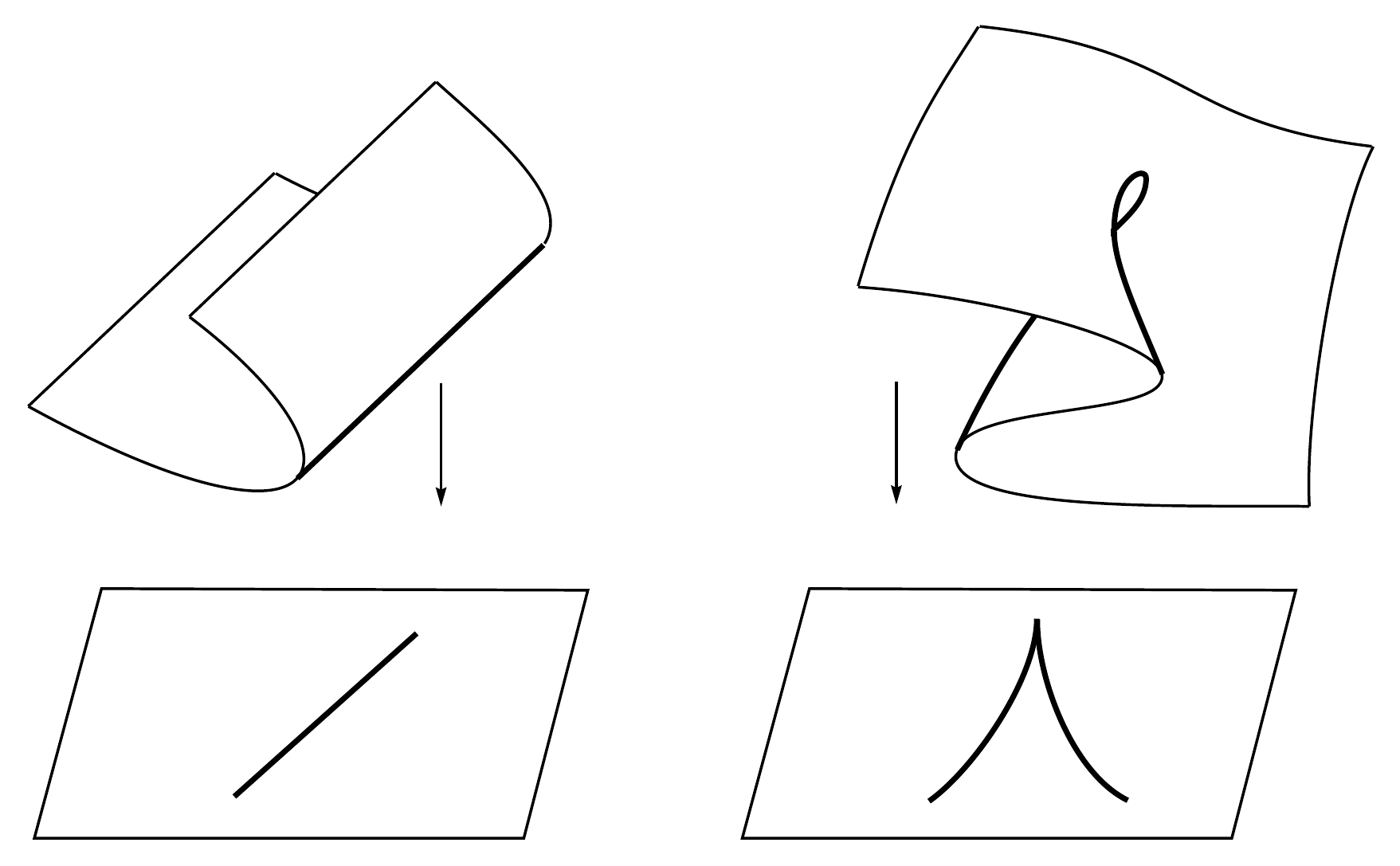}}  
  \caption{Folds and Cusps}\label{fig:foldsandcusps}
\end{figure}

\begin{theorem}[Whitney \cite{Whi55}, Theorems 15A and 15B ]
  \begin{enumerate}
  \item Let $p$ be a fold point of the $r$-smooth mapping $f$
  of  $\mathbb R^{2}$ into $\mathbb R^{2},$ with $r\geq 3.$ Then
  $(r-3)$-smooth coordinate systems $(x,y),\, (u,v)$ may be introduced
  about $p$ and $f(p)$ respectively, in terms of which $f$ takes the
  form
  \begin{equation}
    \label{eq:1.1}
      u=x^2\, ,v=y
    \end{equation}
    
  \item Let $p$ be a cusp point of the $r$-smooth mapping $f$
  of  $\mathbb R^{2}$ into $\mathbb R^{2},$ with $r\geq 12.$ Then
  $(\frac r2-5)$-smooth coordinate systems $(x,y),\, (u,v)$ may be introduced
  about $p$ and $f(p)$ respectively, in terms of which $f$ takes the
  form
  \begin{equation}
    \label{eq:1.2}
      u=xy -x^3\, ,v=y
    \end{equation}
  \end{enumerate}
\end{theorem}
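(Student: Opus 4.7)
My plan is to handle both parts by a common initial reduction, followed by case-specific normalizations. First, after translations I assume $p = 0$ and $f(0) = 0$. Since $p$ is a singular point with $dJ(f)_p \neq 0$ (the ``good'' condition, automatic for folds and cusps), $df_0$ has rank exactly $1$: if $df_0 = 0$, then every partial derivative entering $J(f) = u_x v_y - u_y v_x$ vanishes at $p$, forcing $dJ_p = 0$. I choose target coordinates $(u,v)$ with $\mathrm{Im}(df_0) = \mathrm{span}(\partial_v)$, so that $d(v\circ f)_0 \neq 0$; after a linear rotation of the source the implicit function theorem lets me take $y' := v\circ f(x,y)$ as a new $y$-coordinate. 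Relabelling gives the preliminary form
\[
f(x,y) = (q(x,y),\,y), \qquad q(0,0) = q_x(0,0) = q_y(0,0) = 0,
\]
in which $J(f) = q_x$ and $S(f) = \{q_x = 0\}$. All further coordinate changes will have the shape $x \mapsto X(x,y)$, $y \mapsto \mu(y)$ in the source and $u \mapsto U(u,v)$, $v \mapsto \mu(v)$ in the target, which preserve this form.

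For part (1), a short computation along a local parametrization of $S(f)$ shows that the fold condition is equivalent to $q_{xx}(0,0) \neq 0$. The implicit function theorem produces the critical curve $x = \gamma(y)$ with $\gamma(0) = 0$, and the source shift $x \mapsto x - \gamma(y)$ moves it onto the $y$-axis, so that $q_x(0,y) \equiv 0$. I then apply the Morse lemma with parameters (in $x$, with $y$ as parameter) to get a source change $\tilde{x} = x\, h(x,y)$ with $h(0,0) \neq 0$ such that $q(x,y) = q_0(y) + \varepsilon\, \tilde{x}^{2}$, where $q_0(y) := q(0,y)$ and $\varepsilon = \mathrm{sgn}\, q_{xx}(0,0)$. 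The target change $u \mapsto \varepsilon(u - q_0(v))$ then yields the fold normal form $(\tilde{x}^{2},y)$, with the loss $r \mapsto r-3$ coming entirely from the Morse lemma with parameters.

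For part (2), the cusp condition translates to $q_{xx}(0,0) = 0$, $q_{xy}(0,0) \neq 0$, $q_{xxx}(0,0) \neq 0$, and Hadamard's lemma gives
\[
q(x,y) = q_0(y) + x\, q_1(y) + x^{2}\, q_2(y) + x^{3}\, r(x,y),
\]
with $q_0(0) = q_1(0) = q_2(0) = 0$, $q_1'(0) \neq 0$, and $r(0,0) \neq 0$. I kill $q_0$ by $u \mapsto u - q_0(v)$; the implicit function theorem (using $r(0,0) \neq 0$) then gives a smooth $\alpha$ with $\alpha(0) = 0$ solving $q_2(y) + 3\alpha(y)\, r(\alpha(y),y) = 0$, and the source shift $x \mapsto x - \alpha(y)$ removes the $x^{2}$-term (a residual function of $y$ alone is absorbed by another target shift). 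A simultaneous reparametrization $y \mapsto \mu(y)$ in source and target, together with linear scalings of $x$ and $u$, normalizes the $xy$- and $x^{3}$-coefficients to leave
\[
q(x,y) = xy - x^{3} + \rho(x,y),
\]
where $\rho$ sits in a sufficiently high power of the maximal ideal.

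The main obstacle is the elimination of the remainder $\rho$ by a final pair of diffeomorphisms. Conceptually this rests on the $3$-$\mathcal{A}$-determinacy of the cusp: the cleanest modern argument integrates the vector field produced by Mather's homotopy method applied to the family $xy - x^{3} + t\rho$, $t \in [0,1]$, whereas Whitney's original 1955 proof predates that machinery and proceeds by explicit term-by-term normalization of the Taylor expansion. Either route yields the desired diffeomorphisms, but with the substantial loss of differentiability $r \mapsto r/2 - 5$ recorded in the statement, reflecting the repeated derivative losses incurred in solving the relevant homological equation at each step.
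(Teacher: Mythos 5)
The paper does not actually prove this theorem: it only records that part (1) ``is not hard,'' that Whitney's original proof of part (2) is ``an ingenious sequence of changes of coordinates in the source and target'' whose tool ``is essentially the implicit function theorem,'' and then points to the modern treatments in Golubitsky--Guillemin and Mond--Nu\~no-Ballesteros. Your sketch follows exactly that modern route, so there is nothing in the paper to contradict it. Your reduction to the preliminary form $f(x,y)=(q(x,y),y)$, the identification $J(f)=q_x$, the characterizations ``fold $\iff q_{xx}(0,0)\neq 0$'' and ``cusp $\iff q_{xx}(0,0)=0,\ q_{xy}(0,0)\neq 0,\ q_{xxx}(0,0)\neq 0$,'' and the treatment of part (1) via the Morse lemma with parameters are all correct and essentially complete. (Two small imprecisions in part (2): the equation determining $\alpha(y)$ should be the full coefficient of $x^{2}$ after the shift, namely $q_2(y)+3\alpha r(\alpha,y)+3\alpha^{2}r_x(\alpha,y)+\tfrac{1}{2}\alpha^{3}r_{xx}(\alpha,y)=0$, not just $q_2+3\alpha r=0$ --- though the implicit function theorem still applies since the $\alpha$-derivative at the origin is $3r(0,0)\neq 0$; and one should note that the sign of the cubic term can be normalized to $-x^{3}$ by $x\mapsto -x$, $y\mapsto -y$, $v\mapsto -v$.)

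The one genuine gap is in your final step for part (2). The statement is about $r$-smooth maps with $r\geq 12$ and asserts the existence of $(\tfrac r2-5)$-smooth coordinate systems. The elimination of the remainder $\rho$ by $3$-$\mathcal{A}$-determinacy, or by integrating the vector field from Mather's homotopy method, is machinery that lives in the $C^{\infty}$ (or analytic) category: the infinitesimal criterion and the Malgrange preparation theorem are formulated for germs in $\mathcal{E}_n^p$, and applying them to a merely $C^{12}$ map, let alone extracting the precise loss $r\mapsto \tfrac r2-5$, is not something you can wave at. As stated, your argument proves the $C^{\infty}$ version of the cusp normal form but not the theorem as formulated. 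To get the finite-differentiability conclusion you must either carry out Whitney's explicit term-by-term normalization (which is precisely where the quantitative derivative counts come from --- you mention this route but do not execute it), or prove a finitely differentiable version of the triviality/integration argument with explicit bookkeeping of the derivatives lost at each division and each application of the implicit function theorem. The same remark applies, more mildly, to part (1): there you do correctly attribute the loss of three derivatives to the Morse lemma with parameters, which is a defensible count.
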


  While the proof of (1) is not hard, Whitney's proof of the normal
  form in a neighborhood of a cusp point $p$ follows by an ingenious
  sequence of changes of coordinates in the source and
  target. The tool is essentially the implicit function theorem.

Today, there are simpler proofs of this result, based on current tools
of singularity theory: see for instance, Theorem 2.4, Chapter VI in 
Golubitsky and Guillemin's book \cite{GolGui} or Example 3.6 in Mond
and Ballesteros \cite{MonNun}.

  The notion of stable mappings is due to Whitney. In order to
  characterize them, in addition to the local behavior of stable
  singularities, it is necessary to explain the behavior of multiple
  points. For maps from the plane into the plane the following holds.

  \begin{theorem}
    Let $f:N^{2}\to P^{2}$ be a smooth map, $N$ and $P$
    $2$-dimensional manifolds, $N$ compact. Then $f$ is $C^{\infty}$-
    stable if and only if the following conditions hold.
    \begin{enumerate}
    \item $f$ is excellent and hence $\mathcal{S}(f)$ is a
      regular curve, with at most a finite number of cusp points.
    \item  If $p_{1}$ and $p_{2}$ are singular points of $f,$
      $f(p_{1})=f(p_{2}),$ then $p_{1}$ and $p_{2}$ are not cusp
      points. Moreover the fold lines intersect transversaly at
      $f(p_{1})=f(p_{2}).$ 
    \item  The restriction of $f$ to $\mathcal{S}(f)$ has no
      triple points.
    \end{enumerate}
    \end{theorem}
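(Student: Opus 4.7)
The plan is to prove the two implications of the characterization separately, using Whitney's local normal forms for folds and cusps (the theorem stated just above) together with a perturbation/transversality argument, and then either a partition-of-unity gluing or (preferably) an appeal to Mather's infinitesimal stability criterion to globalize.

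For the ``only if'' direction I would assume $f$ is $C^{\infty}$-stable and show each of (1), (2), (3) is forced. If $f$ fails to be excellent, then at some $p\in \mathcal{S}(f)$ the germ of $f$ is more degenerate than a fold or a cusp; using the ``bad set'' method recalled just before the theorem (or Thom transversality applied to $j^{k}f$), an arbitrarily small perturbation $g$ can be built whose $k$-jet at the corresponding point lies off the bad set, so the local $\mathcal{A}$-model of $g$ near that point differs from that of $f$, contradicting stability. For condition (2), if a cusp shares its image with another singular point, or if two fold lines through a common image point meet non-transversely, an arbitrarily small translation in the target applied to one local sheet changes the isomorphism type of the resulting multigerm (a transverse crossing appears, disappears, or a cusp is split off), again producing a non-$\mathcal{A}$-equivalent nearby map. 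Condition (3) is handled in the same way: a tiny target translation resolves any triple point of $f|_{\mathcal{S}(f)}$ into three distinct double points, which changes the combinatorial type of the multigerm structure and so is incompatible with stability.

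For the ``if'' direction I would argue local-to-global. First note that conditions (1)--(3) are open: good/excellent maps form an open set, cusps are isolated and hence finite by compactness of $N$, transverse fold crossings are preserved under small perturbation, and the no-triple-point condition is generic. Hence any $g$ sufficiently close to $f$ in the $C^{\infty}$-Whitney topology is still excellent with the same discrete combinatorial structure: the same finite list of cusps, and the same pattern of double points of $f|_{\mathcal{S}(f)}$. Whitney's normal-form theorem then supplies, at each cusp and along each fold of both $f$ and $g$, local coordinates in which the map equals $(x^{2},y)$ or $(xy-x^{3},y)$; after first moving the cusps and the relevant double points of $g$ to those of $f$ by a source diffeomorphism close to the identity, the local source-target models agree, and conditions (2)--(3) ensure that the matching at common image points is consistent. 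The final step is to glue these local equivalences into global diffeomorphisms $h:N\to N$ and $k:P\to P$ with $k\circ f = g\circ h$.

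The main obstacle is precisely this global patching: the local normal forms trivialize $f$ and $g$ on small neighborhoods, but stitching the source and target changes of coordinates on overlaps so that the commutation relation holds everywhere at once requires careful isotopy-extension bookkeeping around the fold curves and double points. The cleaner route, which I would actually prefer to follow, is to bypass the direct gluing by checking that conditions (1)--(3) force $f$ to be \emph{infinitesimally} stable (a purely algebraic condition on the module of vector fields along $f$) and then invoking Mather's theorem, to be presented in the subsequent sections of the paper, that infinitesimal stability plus properness is equivalent to $\mathcal{A}$-stability. This reduces the theorem to a finite local computation at each $\mathcal{A}$-orbit appearing in conditions (1)--(3), which is straightforward for folds, cusps, transverse fold crossings, and the empty triple-point condition.
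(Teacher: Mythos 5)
The paper itself gives no proof of this theorem: it is stated in the historical review of Whitney's work, with the reader implicitly referred to Whitney \cite{Whi55} and to the later Mather machinery. So there is no ``paper proof'' to match; what I can say is that your preferred route --- verifying that conditions (1)--(3) are equivalent to \emph{infinitesimal} stability and then invoking Mather's theorem that, for proper maps (here $N$ compact, so $f$ is automatically proper), infinitesimal stability is equivalent to $\mathcal{A}$-stability --- is exactly the standard modern proof (it is the one carried out in Chapter VI of Golubitsky--Guillemin, cited in the paper), and it is sound. You are right to distrust the direct gluing of local normal forms; the isotopy bookkeeping there is genuinely painful, and the infinitesimal criterion is designed to avoid it. Two places deserve more care than your sketch gives them. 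First, in the ``only if'' direction, after perturbing you must exhibit a diffeomorphism-invariant that distinguishes $f$ from $g$ (e.g.\ the local algebra $Q$ at the degenerate point, or the presence/absence of a cusp over a multiple singular value, or the count of double points of $f|_{\mathcal{S}(f)}$ over a given target point); ``the local model differs'' is not by itself a contradiction with global $\mathcal{A}$-equivalence unless the differing feature is invariant and located by an invariant condition. Second, in the ``if'' direction, the reduction to a \emph{finite} list of local checks rests on Theorem \ref{delta} of the paper (infinitesimal stability is detected on fibers $\mathcal{S}\subset f^{-1}(y)$ with at most $p+1=3$ points) together with Mather's general-position criterion for multigerms (Remark \ref{rem:4.21}): each branch must be infinitesimally stable (hence a fold or cusp, which is condition (1), and cusps cannot occur in multigerms since their image curves cannot be in general position with anything --- condition (2)), and the images of the fold curves must be in general position in $T_{y}P\cong\mathbb{R}^{2}$, which forces pairwise transversality and excludes triple points since three lines through a point in a plane cannot be in general position --- conditions (2) and (3). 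With those two points made explicit, your outline is a complete and correct plan.
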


    Whitney  formulated in \cite{Whi58} a general approach to defining a
stratification in jet space and to define locally generic mappings as
those  whose $r$-jets were transversal to the strata of the
stratification, for every $r \in \mathbb N^{*}.$ The article contains
an explicit description of generic singularities for pairs $(n,p)$
such that $n,p\leq 5.$

% The central problem of singularity theory was formulated in
% \cite{Whi58}: Are there locally generic maps topologically
% stable?

He asked the question whether for any pair of dimensions
$(n,p),$ the stable maps could be characterized by transversality to a
finite collection of submanifolds in jet space, so that one could
apply Thom's transversality theorem to prove that a smooth map could
be always approximated by stable maps.

However, in a course taught at the University of Bonn in 1959, Ren\'e
Thom showed with  an example that
it is not always possible to approximate a given map by $C^{\infty}$
stable mappings (See section \ref{sec:thoms-example}, on Thom's
example). In fact, in the notes \emph{ Singularity of differentiable  
mappings I,} written by Harold Levine \cite{Tho68}, Thom sketched
the proof that  $C^{2}$-stable mappings do not form an open set in
$C^{\infty}(N,P),$ when $n=p=9$ and  he formulated conjectures that
promoted a great development in the
theory in the following decades. In particular, Thom conjectured the density of topologically
stable mappings, proved by John Mather in 1971. We discuss Ren\'e Thom and John
Mather's contributions in the next section.

\subsection[From 1958 to 1970]{Ren\'e Thom and John Mather: from 1958 to 1970}
\label{sec:from-1958-1970}

We start by reviewing the subjects covered by R. Thom in his course at
the University of Bonn. H. Levine's notes are divided into three
chapters.

Chapter I, named ``Jets'' introduces the notion of jet spaces, the
action of the group ${\mathcal A}$ in jet space and ${\mathcal
  A}^{r}$-invariant manifolds, denominated, in the notes,
\emph{critical varieties} in $J^{r}(n,p).$ The set ${\cal S}_{k}$
of $1-$jets of corank $k$ and its topological closure $\overline {\cal S}_{k}$
in $J^{1}(n,p)$ were defined.

In Chapter II, entitled ``Singularities of mappings'', Thom's
transversality theorem was stated and proved. We remark however that
the topology in the space of mappings in Thom's proof was the weakest topology
making the mapping
\begin{align*}
  j^{r}:C^{\infty}(N,P) &\to C^{\infty}(N,J^{r}(N,P))\\
 f &\to j^{r}f
\end{align*}
continuous. The topology in the second space was the compact open
  topology. The transversality theorem in
\cite{Tho68} was stated as follows:
For $s>r\geq 0,$ let $W$ be a codimension $q,$ $C^{s-r}$ submanifold
of $J^{r}(N,P),$ $s-r> \dim N -q.$ Then the set of mappings $f\in
C^{\infty}(N,P),$ such that $j^{r}f \pitchfork W$ is dense in
$C^{\infty}(N,P).$
\index{Thom!transversality theorem}
 The notion of second order
 singularities  $S_{h,k}$ in $J^{2}(n,p)$ was introduced. These sets are connected to
the singular points $S_{h}\subset J^{1}(n,p)$ by the relation: if
$j^{1}f \pitchfork S_{k},$ then
$(j^{2}f)^{-1}(S_{k,h})=S_{h}(S_{k}(f)).$ The general definition of
the singular varieties $S_{k_{1},\dots,k_{r}}\subset J^{r}(N,P),$
introduced in \cite{Tho68} was
better formulated by J.M. Boardman, in 1967, in \cite{Boa}. Mather's
account in \cite{Mat71-2} is the clearest.

\begin{remark}
  In the following sections the sets $S_{k}$ and $S_{k,h}$ will be
  denoted by $\Sigma^{k}$ and $\Sigma^{k,h},$ respectively.
\end{remark}

In Chapter III, ``Equivalence and stability'', Thom formulated the
problem of characterizing singularities determined by their
jet of some order.
The name \emph{finitely determined germs,} was later given by John
Mather \cite{Mat68-2}, who also gave 
necessary and sufficient conditions for finite determinacy. The notion
of $C^{s}$-stable mappings and the example illustrating that $C^{2}$
stable mappings are not dense when $n=p=9$ were discussed in that
chapter.

The notion of homotopic stability was also introduced. A mapping $f:N\to P$
is \emph{homotopically stable} if for every homotopy $F:N\times I \to
P$ of $f,$ there exist $t_{0}$ and homotopies of diffeomorphisms
$\phi_{t}:N\to N,$ $0\leq t \leq t_{0},$  $\psi_{t}:P \to P,$ of
$1_{N}$ and $1_{P}$ such that $F_{t}=\psi_{t}\circ f \circ \phi_{t},\
t<t_{0}.$

The program for the theory of stable mappings originated from the
contributions of Whitney and Thom consisted of finding pairs of
dimensions $(n,p),$ for which there exists a set of mappings
$\mathcal{S}\subset C^{\infty}(N^{n},P^{p}),$ with the following
properties:
\begin{enumerate}
\item $\mathcal{S}$ is a residual set in $C^{\infty}(N^{n},P^{p}),$
\item The maps $f\in \mathcal{S}$ are $C^{\infty}$-stable,
\item There exists a finite number of polynomial normal forms such that
  every singular point of $f\in\mathcal{S}$ is equivalent to a
  normal form in this list.
\end{enumerate}

In a memorable series of six articles from 1968 to 1971,
John Mather found several characterizations of stability and provided
theorems answering almost completely the question of density of stable
maps.

The main results on density of stable mappings are stated below. The
proofs are based on ideas of Ren\'e Thom  developed by Mather in the
sequence of papers, on Stability of $C^{\infty}$-mappings, I to VI,
\cite{Mat68-1,Mat69-1, Mat68-2, Mat69-2, Mat70, Mat71} and
\cite{Mat12, Mat71-3, Mat76}. In 
these notes we review the main steps leading to the proofs of Theorems
A and B.

Let $C^{\infty}_{pr}(N,P)$ be the set of proper smooth mappings
$f:N\to P.$

\vspace{.8em}
\noindent\textbf{Theorem A}\ \textbf{(Density of stable mappings in the
  nice dimensions, Mather \cite{Mat71,Mat69-2})}\label{th:4}
\emph{  The set $\mathcal{S}^{\infty}(N,P)$ of proper stable mappings $f:N\to
  P$ is dense in $C_{pr}^{\infty}(N,P)$ if and only if $(n,p)$ is in the nice dimensions.}
\vspace{.8em}

See section \ref{sec:nice-dimensions} for the definition of the nice dimensions.

\vspace{.8em}
\noindent\textbf{Theorem B}\ \textbf{(Density of topologically stable
  mappings, Mather \cite {Mat71-3, Mat76})}\label{th:4.2}
\emph{The set $\mathcal{S}^{0}(N,P)$  of proper topologically stable mappings is
  dense in $C_{pr}^{\infty}(N,P).$}
\vspace{.8em}

The main tools in the proofs of theorems \textbf{A} and \textbf{B} are the notion of infinitesimal
stability, Thom's transversality theorem, the generalized
Malgrange theorem, 
the notions of mappings of finite singularity type and contact
equivalence, finite determinacy and unfoldings of Mather's groups, properties of Whitney stratified sets and Thom's 
isotopy theorems. Such notions and results form the framework of the
theory of singularities of differentiable mappings.

We organize the contents of the next sections as follows.

In section \ref{sec:equiv-noti-stab} we introduce infinitesimally
stable and transverse stable mappings. The main goal of the section is
to discuss theorem \ref{th:3.10}  which establishes the equivalence
between these notions and stable mappings.

Section \ref{sec:finite-determ-math}  gives a short presentation of
the infinitesimal machinery of singularity theory. We introduce the
contact group $\mathcal{K}$ defined by Mather as a tool to classify
stable singularities. For Mather's groups $\mathcal{G}=\mathcal{R},
\mathcal{L}, \mathcal{A}, \mathcal{C}$ and $\mathcal{K}$ we define 
$\mathcal{G}$-finitely determined germs and prove the Infinitesimal
Criterion for $\mathcal{G}$-determinacy. We finish the section with a
discussion of maps of finite singularity type (FST), a global
version of $\mathcal{K}$-finitely determined germs, which plays a
central role in the proof of theorem B.

In section \ref{sec:nice-dimensions} we define the nice dimensions
and give an outline of the proof of theorem A.

Section \ref{sec:thoms-example} gives a detailed presentation of
Thom's example, illustrating  that the set of stable maps in
$C^{\infty}_{pr}(\mathbb R^{9},\mathbb R^{9})$ is not dense.

Section \ref{sec:dens-topol-stable-1} is dedicated to the proof of
density of topologically stable mappings $f:N \to P,$ when $N$ is
compact manifold. The general lines of the proof are discussed, although
the details are omitted. 

Section \ref{sec:bound-nice-dimens} gives a systematic
presentation of the topologically stable singularities in the boundary
of the nice dimensions. Much of the section is well known to experts,
however the organized presentation of the  Thom-Mather stratification in
jet space and the discussion of properties of topologically stable
mappings in these dimensions do not appear in the literature.

The question of the density of Lipschitz stable mappings is still
open.
We report on section \ref{sec:non-trivial-example} some recent results
of Ruas and Trivedi \cite{RuaTri} and Nguyen, Ruas and Trivedi \cite{NguRuaTri} on this subject. 

In section \ref{sec:open-problems}, Damon's results relating
  $\mathcal{A}$-classification  of map-germs and $\mathcal{K}_{V}$
  classification of sections of the discriminant $V=\Delta(F)$ of a
  stable unfolding of $f$ are reviewed and open problems are
  discussed.

\section{Equivalent notions of stability}
\label{sec:equiv-noti-stab}

Mather defined infinitesimally stable mappings
in \cite{Mat68-1}, in order to introduce infinitesimal deformations
of a map as a tool to study stability. The main goal in this
section is to review Mather's result that, for proper mappings, stability and infinitesimal
stability are equivalent notions.

First, we introduce some notation.
Let $C^{\infty}(N)=\{\lambda:N\to \mathbb R\}$ be the ring of smooth
functions defined on the smooth manifold $N.$

We denote by $\Theta_{f}$ the $C^{\infty}(N)$-module of vector
fields along $f,$ defined as follows

$$
\Theta_{f}=\{\sigma:N\to TP|\;\, \pi_{2}\circ \sigma=f\}
$$
where $\pi_{2}:TP\to P$ is the projection of the tangent bundle $TP$
into $P.$

Let  $f^{*}(TP)$ denote the pull-back bundle over $N $ via $f.$ Then
the module $\Theta_{f}$ is the set of sections of this bundle.

Similarly, 
$$
\Theta_{N}=\{\xi:N\to TN|\;\, \pi_{1}\circ \xi=I_{N}\}
$$
is the set of sections of the
tangent bundle of $N,$ and
$$
\Theta_{P}=\{\eta:P\to TP|\;\, \pi_{2}\circ \eta=I_{P}\},
$$
the set of sections of the tangent bundle of $P,$ where $I_{N}$ and
$I_{P}$ are the identities.,

The set $\Theta_{N}$ is a $C^{\infty}(N)$-module, while $\Theta_{P}$ is a
module over the ring $C^{\infty}(P).$

We have the following diagram and homomorphisms

$$
  \xymatrix{TN \ar[d]_{\pi_{1}} \ar[r]^{df} &TP  \ar[d]^{\pi_{2}} \\
N \ar[r]_f\ar[ru]^{\sigma}   &      P }.
$$

\begin{align*}
  tf:\Theta_{N}&\to \Theta_{f}\\
      \xi &\mapsto tf(\xi)
\end{align*}
where $tf(\xi)(x)= df_{x}(\xi(x)),$ 
\begin{align*}
  {\omega}f:\Theta_{P}&\to \Theta_{f}\\
      \eta &\mapsto {\omega}f(\eta)=\eta\circ f
\end{align*}

The map $tf$ is a homomorphism of $C^{\infty}(N)$-modules. The map
$f:N\to P$ induces a ring homomorphism
\begin{align*}
  f^{*}:C^{\infty}(P) &\to C^{\infty}(N) \\
  \phi &\mapsto f^{*}(\phi) = \phi\circ f.
\end{align*}         

We say that the map ${\omega}f$ is a homomorphism over
$f^{*}(C^{\infty}(P))$ (or alternatively a $C^{\infty}(P)$-module
homomorphism via $f$).

Notice that  ${\omega}f(\eta_{1}+ \eta_{2}) = (\eta_{1}+\eta_{2})\circ f={\omega}f(\eta_{1}) + {\omega}f(\eta_{2})$
and  $ {\omega}f(\alpha \eta) = (\alpha\circ f)(\eta\circ f)=(\alpha\circ f) {\omega}f(\eta),$
for any  $\alpha \in C^{\infty}(P)$ and any $\eta_{1}, \eta_{2} \in \Theta_{p}.$

\begin{definition}\index{Map!infinitesimally stable}
The map $f:N \to P$ is \emph{infinitesimally stable} if for any $\sigma \in
\Theta_{f},$ there are sections $\xi \in \Theta_{N}$ and $\eta \in
\Theta_{P}$ such that $\sigma=tf(\xi)+\eta\circ f.$
%In other words $\Theta_{f}=tf(\Theta_{N})+ {\omega}f(\Theta_{P}).$ 
 Equivalently, we can say that $\Theta_{f}=tf(\Theta_{N})+ {\omega}f(\Theta_{P}).$
\end{definition}

\begin{example}
  If $N$ is compact, $1-1$ immersions and submersions $f:N\to P$ are
  infinitesimally stable.
\end{example}

Infinitesimal stability has a local counterpart that we define now.
 Recall that two maps $f,g:N^{n}\to P^{p}$ define the same \emph{germ} at $x=a$ if they agree in some
neighborhood of $a.$ The point $x=a$ is the \emph{source} of the germ
and $b=f(a)$ is its \emph{target}. The analogues of the above notations
for a germ $f:(N,a) \to (P,b)$ can be obtained replacing  $N$ by
$(N,a)$ and $P$ by $(P,b)$ in the previous notation.  However to
simplify notation, we take local coordinates such that $a=0\in
 \mathbb R^{n}$ and $f(a)=0\in \mathbb R^{p},$ denoting the
 germ  $f:(\mathbb R^{n},0) \to (\mathbb R^{p},0).$ In this case, we
 use the usual notation:

$\mathcal{E}_n=\{\lambda:(\mathbb R^{n},0)\to \mathbb R\}$ is the
local ring of $C^{\infty}$ function germs at the origin. Its unique
maximal ideal is $\mathcal{M}_{n}=\{\lambda \in
\mathcal{E}_{n}|\;\,\lambda(0)=0\}.$ 

$\mathcal{E}_{n}^{p}=\{f:(\mathbb R^{n},0) \to \mathbb R^{p}\}$ is a
free $\mathcal{E}_{n}$-module of rank $p,$ also denoted by $\mathcal{E}_{n,p}.$

The local version of  the previous diagram is

$$
  \xymatrix{{(T\mathbb R^{n},0)} \ar[d]_{\pi_{1}} \ar[r]^{df}
    &{(T\mathbb R^{p},0)}  \ar[d]^{\pi_{2}} \\
{(\mathbb R^{n},0)} \ar[r]_f\ar[ru]^{\sigma}   &      {(\mathbb R^{p},0)} }.
$$

The set

$$
\Theta_{f}=\{\sigma:(\mathbb R^n,0)\to (T\mathbb R^{p},0)|\;\, \pi_{2}\circ \sigma=f\}
$$
is the  $\mathcal{E}_{n}$-module of rank $p$ consisting of germs of vector
fields along $f.$ When $f$ is the identity in $\mathbb R^{n},$
respectively in $\mathbb R^{p},$ we obtain

$$
\Theta_{n}=\{\xi:(\mathbb R^n,0)\to (T\mathbb R^n,0)|\;\, \pi_{1}\circ \xi=\textrm{id}_{\mathbb R^n}\}
$$
and
$$
\Theta_{p}=\{\eta:(\mathbb R^p,0)\to (T\mathbb R^p,0)|\;\, \pi_{2}\circ \eta=\mathrm{id}_{\mathbb R^p}\}
$$

We now define the groups acting on $\mathcal{E}_{n}^P.$

\begin{definition} \index{Mather's groups}
  Let
  \begin{align*}
    &\mathcal{R}=\{h:(\mathbb R^n,0)\to (\mathbb
  R^n,0),\,\mathrm{ germs\;  of\; } C^\infty-\mathrm{diffeomorphisms\
    in}\, (\mathbb R^n,0)\},\\
   &\mathcal{L}=\{k:(\mathbb R^p,0)\to (\mathbb
  R^p,0),\,\mathrm{ germs\;  of\; } C^\infty-\mathrm{diffeomorphisms\
     in}\, (\mathbb R^p,0)\},
  \end{align*}
  and $\mathcal{A}=\mathcal{R}\times \mathcal{L}.$
\end{definition}

The actions of the groups $\mathcal{R}, \mathcal{L}$ and $\mathcal{A}$
are as follows

\begin{align*}
  \mathcal{R}\times\mathcal{E}_{n}^{p}&\to \mathcal{E}_{n}^{p}
  &\mathcal{L}\times\mathcal{E}_{n}^{p}&\to \mathcal{E}_{n}^{p}   &\mathcal{A}\times\mathcal{E}_{n}^{p}&\to \mathcal{E}_{n}^{p}\\
  (h,f) &\mapsto f\circ h^{-1},  &(k,f) &\mapsto k\circ f,  &((h,k),f) &\mapsto k\circ f\circ h^{-1}.
\end{align*}
% and
%   \begin{align*}
%   \mathcal{A}\times\mathcal{E}_{n}^{p}&\to \mathcal{E}_{n}^{p}\\
%   ((h,k),f) &\mapsto k\circ f\circ h^{-1}.
% \end{align*}

These notions extend to multigerms. Let $ S=\{x_{1},x_{2},\dots
x_{s}\}$ be a finite subset of $\mathbb R^{n}.$

\begin{definition} \index{Multigerm}
  A \emph{multigerm} at $\mathcal{S}=\{x_{1}, \dots,x_{s}\}$ is the germ of a smooth map
  $$f=\{f_{1},f_{2},\dots f_{s}\}:(\mathbb R^{n},S)\to(\mathbb
  R^{p},y),\; f_{i}(x_{i})=y,\ i=1,\dots, s.$$
By a local change of coordinates at each $x_{i}\in \mathcal{S},$ we
can take $f_{i}:(\mathbb R^{n},0)\to(\mathbb R^{p},0)$ and we let
 $\mathcal{M}_{S}\mathcal{E}^{p}_{n,S}$ be the vector space of these map-germs, and call $f_{i}, i=1,\dots, s$ a \emph{branch} of $f.$
\end{definition}

The previous notations for monogerms extend naturally to multigerms. As before
$\Theta_{f}$ and $\Theta_{n,S}$ are $\mathcal{E}_{n,S}$-modules. The
map $ tf:\Theta_{n,S}\to \Theta_{f}$ is an $\mathcal{E}_{n,S}$-module
homomorphism defined by $tf(\xi)(x)=df_{x}(\xi(x)).$

The map-germ $f:(\mathbb R^{n},S)\to (\mathbb
R^{p},0)$ induces the ring homomorphism

\begin{align*}
  f^{*}:\mathcal{E}_{p}&\to \mathcal{E}_{n,S} \\
  \gamma &\mapsto f^{*}(\gamma)= \gamma\circ f,
\end{align*}           
and  we say that the map
\begin{align*}
  {\omega}f:\Theta_{p} &\to \Theta_{f}.\\
  \eta &\mapsto {\omega}f(\eta)= \eta \circ f
\end{align*}         
is a \emph{homomorphism over $f^{*}(\mathcal{E}_{p})$} (or
alternatively, an $\mathcal{E}_{p}$-module homomorphism via $f$). 

\begin{definition}\index{Germ!$\mathcal{A}$-equivalence}
  Two germs $f,g:(\mathbb{R}^{n},S)\to (\mathbb{R}^{p},0)$ are
  $\mathcal{A}$-equivalent ($f\widesim{\mathcal{A}}g$) if there exist
  $h:(\mathbb{R}^{n},S)\to (\mathbb{R}^{n},S)$ and $k:(\mathbb{R}^{p},0)\to (\mathbb{R}^{p},0)$
  such that $g=k \circ f\circ h^{-1}.$
\end{definition}

\begin{definition} \index{Germ!infinitesimally stable}
The germ   $f:(\mathbb{R}^{n},S)\to (\mathbb{R}^{p},0)$ is \emph{infinitesimally stable} if
$$
  tf(\Theta_{n,S})+{\omega}f(\Theta_{p})=\Theta_{f}
$$
\end{definition}

\begin{remark}
  When we refer to an infinitesimally stable multigerm $f:(N,S)
  \to (P,y),$ we use the notation
$$
  tf(\Theta_{(N,S)})+{\omega}f(\Theta_{(P,y)})=\Theta_{f}.
$$
\end{remark}

\begin{definition} \index{Tangent space!$\mathcal A$-equivalence}\index{Extended tangent space!$\mathcal A$-equivalence}
  For the groups $\mathcal{G}=\mathcal{R},\mathcal{L},\mathcal{A},$
  and any multigerm $f:(\mathbb{R}^{n},S)\to (\mathbb{R}^{p},0),$ we define the
  \emph{tangent space} $T\mathcal{G}_{f}$ and the \emph{extended
    tangent space} $T\mathcal G_{e}f$ as  follows:
\begin{align*}
  T\mathcal{R}f&= tf(\mathcal{M}_{n}\Theta_{n,\mathcal{S}}) 
  &T\mathcal{R}_{e}f&= tf(\Theta_{n,\mathcal{S}})\\
  T\mathcal{L}f&= {\omega}f(\mathcal{M}_{p}\Theta_{p}) &T\mathcal{L}_{e}f&=
  {\omega}f(\Theta_{p}) \\
  T\mathcal{A}f&= tf(\mathcal{M}_{S}\Theta_{n,\mathcal{S}})+ {\omega}f(\mathcal{M}_{p}\Theta_{p}) &T\mathcal{A}_{e}f&= tf(\Theta_{n,\mathcal{S}})+{\omega}f(\Theta_{p})
\end{align*}
\end{definition}

{One can give a heuristic justification for the definition of the
tangent space for the groups $\mathcal G$ in the above
definition. They  can be seen as the set of
``tangent vectors'' at the origin,
to ``paths'' $f_{t}$, such  that $f_{0}=f,$ and
$f_{t}$ is contained in the $\mathcal{G}-$orbit of $f.$ A careful
calculation in the case $\mathcal{G}=\mathcal{A},$ beginning with
$f_{t}=\psi_{t}\circ f_{t}\circ \phi_{t}$ and differentiating with
respect to $t,$ is done on pages
60-61 of the book of Mond and Nu\~no-Ballesteros \cite{MonNun}.}

For any group $\mathcal{G}$ acting on $\mathcal{E}_{n, S}$ the
$\mathcal{G}$-codimension and  the $\mathcal{G}_{e}$-codimension  to
the $\mathcal{G}$-orbit of $f,$ are given by
$$
\mathcal{G}\textrm{-}{\cod f}=\dim_{\mathbb R}\frac{\mathcal{M}_{S}\Theta_{f}}{T\mathcal{G}f}\
\textrm{\ and\ }\ 
 \mathcal{G}_e-{\cod f}=\dim_{\mathbb R}\frac{\Theta_{f}}{T{\cal G}_{e}f}.
$$

Note that a map-germ $f\in \mathcal{E}_{n, S}$ is infinitesimally
stable if and only if $\mathcal{A}_e$-${\cod f}=0.$ 
\begin{definition}\index{Map!locally infinitesimally stable}
A mapping $f:N\to P$ is \emph{locally infinitesimally stable at} $\mathcal{S}=\{x_{1},
\dots, x_{s}\}\subset N$ if the germ of $f$ at $\mathcal{S}$ is
infinitesimally stable.
 \end{definition}

 The next theorem shows that for proper mappings infinitesimal stability is locally a
 condition of finite order. That is, if the equations can be solved
 locally to order $p=\dim P,$ then they can be solved globally. 

 \begin{theorem}[Theorem 1.6, Chapter 5, \cite{GolGui}]\label{delta}
   Let $f:N\to P$ be a smooth and  proper $C^{\infty}$ mapping. Then
   $f$ is infinitesimally stable if and only if for every $y\in P$ and every
   finite set $\mathcal{S} \subset f^{-1}(y),$  with no
   more than $(p+1)$ points, we have 
$$
\Theta_{f}= tf(\Theta_{(N,\mathcal S)})+{\omega}f(\Theta_{(P,y)})+\mathcal 
M^{p+1}_{\mathcal S}\Theta_{f}.
$$
\end{theorem}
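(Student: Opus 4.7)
The plan is to prove the two directions separately, with the forward implication being essentially formal and the reverse requiring the Malgrange preparation theorem.

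For the forward direction, assume $f$ is infinitesimally stable, so $\Theta_f = tf(\Theta_N)+\omega f(\Theta_P)$ as $C^\infty(N)$-modules. Given any $y\in P$ and any finite $\mathcal{S}\subset f^{-1}(y)$, passing to germs at $\mathcal{S}$ (resp.\ at $y$) gives the identity $\Theta_f = tf(\Theta_{(N,\mathcal{S})})+\omega f(\Theta_{(P,y)})$ at the multigerm, and the $\mathcal{M}_{\mathcal{S}}^{p+1}\Theta_f$ term is absorbed for free. This costs nothing and needs no hypothesis on $|\mathcal{S}|$.

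For the reverse direction, the strategy has two stages. \textbf{Stage 1 (local upgrade via Malgrange).} At each $y\in P$ and each multigerm $\mathcal{S}\subset f^{-1}(y)$ with $|\mathcal{S}|\le p+1$, I would apply Mather's form of the Malgrange preparation theorem to the $\mathcal{E}_{p,y}$-module $\Theta_f/tf(\Theta_{(N,\mathcal{S})})$, regarded as an $\mathcal{E}_p$-module via $f^*$. The hypothesis says precisely that this quotient equals the submodule generated by $\omega f(\Theta_{(P,y)})$ modulo $\mathcal{M}_y^{p+1}$; since $\Theta_f$ is finitely generated as an $\mathcal{E}_{n,\mathcal{S}}$-module and the bound $p+1$ (coming from $\dim P=p$) controls the preparation degree, the preparation theorem upgrades this to exact equality
\[
\Theta_f = tf(\Theta_{(N,\mathcal{S})})+\omega f(\Theta_{(P,y)}),
\]
i.e.\ infinitesimal stability at the multigerm $(\mathcal{S},y)$. \textbf{Stage 2 (local-to-global patching).} Given $\sigma\in\Theta_f$, for each $y\in P$ properness makes $f^{-1}(y)$ compact, so by Stage 1 applied to all sufficiently small finite $\mathcal{S}\subset f^{-1}(y)$ one obtains a neighborhood $U_y\ni y$ and vector fields $\xi_y\in\Theta_{f^{-1}(U_y)}$, $\eta_y\in\Theta_{U_y}$ with $\sigma=tf(\xi_y)+\eta_y\circ f$ on $f^{-1}(U_y)$. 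Choose a locally finite cover $\{U_\alpha\}$ of $P$ with associated partition of unity $\{\rho_\alpha\}$ and set $\xi=\sum_\alpha (\rho_\alpha\circ f)\xi_\alpha$, $\eta=\sum_\alpha \rho_\alpha\eta_\alpha$; the $\mathcal{E}_p$-linearity of $\omega f$ on the target factor and the $C^\infty(N)$-linearity of $tf$ immediately yield $\sigma=tf(\xi)+\eta\circ f$ on all of $N$.

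The main obstacle is the Malgrange step in Stage 1: one must verify that Malgrange's preparation theorem applies to the multigerm module $\Theta_f/tf(\Theta_{(N,\mathcal{S})})$ with the sharp bound $p+1$, and in particular justify why the bound $|\mathcal{S}|\le p+1$ in the hypothesis is sufficient to capture infinitesimal stability at \emph{every} multigerm, including those with more points. Roughly, one argues that for $|\mathcal{S}|>p+1$ the multigerm condition follows from decomposing $\Theta_f$ over the branches and applying Stage 1 to the sub-multigerms of size $\le p+1$, but assembling this carefully and tracking the interaction between $\mathcal{M}_{\mathcal{S}}^{p+1}$ on the source side and $\mathcal{M}_y$ on the target side (related by $f^*$) is the delicate technical heart of the proof.
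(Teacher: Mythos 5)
Your two-stage skeleton is exactly the one the paper adopts (and delegates to Golubitsky--Guillemin): necessity is formal, and sufficiency is a local upgrade via Nakayama's lemma plus the generalized Malgrange preparation theorem --- this is the content of Proposition \ref{prop:69-2} and Corollary \ref{cor:inf-sta} --- followed by a partition-of-unity globalization over $P$ using properness. Both stages are sound in outline. In Stage 1 you should make explicit that before the preparation theorem can be invoked one first applies Nakayama over $\mathcal{E}_{n,\mathcal{S}}$ to the estimate $\dim_{\mathbb R}\Theta_{f}/\bigl(tf(\Theta_{(N,\mathcal S)})+f^{*}(\mathcal{M}_{y})\Theta_{f}+\mathcal{M}^{p+1}_{\mathcal S}\Theta_{f}\bigr)\leq p$ to obtain $\mathcal{M}^{p}_{\mathcal S}\Theta_{f}\subset tf(\Theta_{(N,\mathcal S)})+f^{*}(\mathcal{M}_{y})\Theta_{f}$, which is what supplies the finite-dimensionality hypothesis of the preparation theorem for $E=\Theta_{f}/tf(\Theta_{(N,\mathcal S)})$; this is precisely where the exponent $p+1$ is used.

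The genuine gap is the point you flag at the end, and the fix you sketch there is not the right one. For a fiber $f^{-1}(y)$ meeting $\Sigma(f)$ in more than $p+1$ points you cannot ``decompose $\Theta_{f}$ over the branches and apply Stage 1 to sub-multigerms,'' because the target field $\eta$ in $\sigma=tf(\xi)+\eta\circ f$ must be one and the same for all branches over $y$; solving branch-by-branch produces incompatible $\eta$'s. The actual argument is that such configurations cannot occur: by Mather's geometric characterization of infinitesimally stable multigerms (Remark \ref{rem:4.21}), stability of the multigerm at $r$ singular points of a common fiber forces the images $f_{i}(X_{i})$, each of codimension at least one, to be in general position at $y$, which is impossible for $r\geq p+1$ submanifolds all passing through $y$ in a $p$-dimensional target. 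Hence the $(p+1)$-point hypothesis forces $\#\bigl(f^{-1}(y)\cap\Sigma(f)\bigr)\leq p$ for every $y$; the multigerm at the full singular fiber is then itself covered by Stage 1, and at the remaining points of $f^{-1}(y)$ the map is a submersion, so $tf$ alone is surjective and no $\eta$ is needed there. Only with this counting argument in place does your Stage 2 patching go through.
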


The proof of the necessity in theorem \ref{delta} is obvious. To prove
the sufficiency, the main tool is the generalized Malgrange
Preparation Theorem proved by Mather in \cite{Mat68-1}. See
Proposition \ref{prop:69-2} and Corollary \ref{cor:inf-sta}.
A complete proof of this theorem is given in Chapter 5, section 1 of \cite{GolGui}.

Our main goal in this section is to discuss the following theorem.

 \begin{theorem}[Mather \cite{Mat70}, Theorem 4.1]\label{th:3.10}
   The following conditions are equivalent in $C^{\infty}_{pr}(N,P)$
   for a proper mapping $f:N\to P.$
   \begin{enumerate}
   \item $f$ is stable,
   \item $f$ is infinitesimally stable.
   \item $f$ is transverse  stable.
   \end{enumerate}
 \end{theorem}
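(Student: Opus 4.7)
The plan is to close the cycle $(1)\Rightarrow(2)\Rightarrow(3)\Rightarrow(1)$; in practice I would prove $(1)\Rightarrow(2)$ and $(2)\Leftrightarrow(3)$ more or less directly, and treat $(2)\Rightarrow(1)$ as the deep step.

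For $(1)\Rightarrow(2)$, given any $\sigma\in\Theta_{f}$ I would use a Riemannian exponential on $P$ to produce a smooth $1$-parameter perturbation $f_{t}(x)=\exp_{f(x)}(t\sigma(x))$ with $f_{0}=f$ and $(\partial f_{t}/\partial t)|_{t=0}=\sigma$. Since $f_{t}\to f$ in the Whitney $C^{\infty}$ topology, stability of $f$ yields, for small $t$, diffeomorphisms $h_{t}$ of $N$ and $k_{t}$ of $P$ with $f_{t}\circ h_{t}=k_{t}\circ f$; after upgrading stability to \emph{homotopic stability} (so that $h_{0}=\mathrm{id}_{N}$, $k_{0}=\mathrm{id}_{P}$ and $t\mapsto(h_{t},k_{t})$ is smooth), one may differentiate this identity at $t=0$. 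The resulting relation $\sigma=tf(\xi)+\omega f(\eta)$, with $\xi=\dot h_{0}$ and $\eta=\dot k_{0}$, is exactly the infinitesimal stability equation.

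For $(2)\Leftrightarrow(3)$, the idea is that transverse stability asks that, for every finite $S\subset f^{-1}(y)$, the multi-jet extension $j^{k}f$ be transverse to the multi-jet $\mathcal{A}^{k}$-orbit of $j^{k}f|_{S}$. The tangent space to that orbit, when pulled back to $\Theta_{f}/\mathcal{M}_{S}^{k+1}\Theta_{f}$, is exactly $tf(\Theta_{N,S})+\omega f(\Theta_{P,y})$ modulo $\mathcal{M}_{S}^{k+1}\Theta_{f}$, so transversality rephrases as
\[
\Theta_{f}=tf(\Theta_{N,S})+\omega f(\Theta_{P,y})+\mathcal{M}_{S}^{k+1}\Theta_{f}.
\]
For $k\geq p$ and $|S|\leq p+1$ this is precisely the hypothesis of Theorem~\ref{delta}, which converts the finite-order statement to the global one $\Theta_{f}=tf(\Theta_{N,S})+\omega f(\Theta_{P,y})$. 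Hence transverse stability and infinitesimal stability are two formulations of the same condition.

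The main obstacle is $(2)\Rightarrow(1)$. For $g$ sufficiently close to $f$, I would join $f$ to $g$ by a smooth path $F(x,t)=F_{t}(x)$ with $F_{0}=f$, $F_{1}=g$, and try to lift $F$ to an $\mathcal{A}$-isotopy by the Thom--Levine vector-field method: find time-dependent $\xi_{t}\in\Theta_{N}$ and $\eta_{t}\in\Theta_{P}$ with
\[
\frac{\partial F}{\partial t}=tF_{t}(\xi_{t})+\omega F_{t}(\eta_{t}),
\]
and then integrate them into diffeomorphism isotopies $h_{t},k_{t}$ producing $g=k_{1}\circ f\circ h_{1}^{-1}$. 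The crux is solvability of this infinitesimal equation for every $F_{t}$ close to $f$. One would first establish it locally at finite sets $S\subset F_{t}^{-1}(y)$ of size at most $p+1$, using infinitesimal stability of $f$ together with an openness argument that transports the condition from $f$ to nearby $F_{t}$; the finite-order truncation is then promoted to the genuine module equation via the Generalized Malgrange Preparation Theorem, exactly as in Theorem~\ref{delta}. Finally, one glues the local solutions into a global pair $(\xi_{t},\eta_{t})$ by a partition of unity on $N\times P$ subordinate to a covering adapted to $F$. Properness of $f$ is essential throughout: it is what makes Malgrange applicable and what allows the flows of $\xi_{t},\eta_{t}$ to exist for all $t\in[0,1]$ on the non-compact manifolds $N$ and $P$. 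This global gluing and integration step is where the real work lies.
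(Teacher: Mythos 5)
Your treatment of $(2)\Leftrightarrow(3)$ coincides with the paper's: both rest on Proposition \ref{seila} (transversality of the multi-jet extension to an $\mathcal{A}^{k}$-orbit is equivalent to the finite-order equation $\Theta_{f}=tf(\Theta_{(N,S)})+\omega f(\Theta_{(P,y)})+\mathcal{M}_{S}^{k+1}\Theta_{f}$) together with Theorem \ref{delta}, which uses the generalized Malgrange preparation theorem to discard the remainder term. Your outline of $(2)\Rightarrow(1)$ is also the right idea in spirit: it is essentially how Mather proves Theorem \ref{star}, which the paper simply quotes (and which in fact yields the stronger conclusion that the trivializing diffeomorphisms depend continuously on the perturbation, i.e., $W$-strong stability).

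The genuine gap is in your $(1)\Rightarrow(2)$. Stability only provides, for each small $t$, \emph{some} pair $(h_{t},k_{t})\in\mathcal{A}$ with $f_{t}=k_{t}\circ f\circ h_{t}^{-1}$; nothing forces these to depend smoothly (or even continuously) on $t$, nor to satisfy $h_{0}=\mathrm{id}_{N}$, $k_{0}=\mathrm{id}_{P}$, so the identity cannot be differentiated at $t=0$. You acknowledge this by invoking an ``upgrade to homotopic stability,'' but that upgrade is precisely the hard point: the only standard route to it is Theorem \ref{star}, whose hypothesis is infinitesimal stability, so as written the argument is circular. The paper sidesteps the difficulty entirely by proving $1\Rightarrow 3$ instead: Thom's multi-jet transversality theorem lets one approximate $f$ arbitrarily well by a transverse stable map $g$; stability of $f$ forces $g\widesim{\mathcal{A}}f$; and transversality to the $\mathcal{A}^{k}$-invariant orbits is preserved under $\mathcal{A}$-equivalence, so $f$ itself is transverse stable. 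Combined with $3\Leftrightarrow 2$ this gives $1\Rightarrow 2$ without ever differentiating a family of conjugating diffeomorphisms. You should replace the differentiation argument by this genericity argument, or else supply an independent proof that stability implies homotopic stability, which is not known to be any easier.
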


 We present the main steps of the proof of Theorem \ref{th:3.10}. Initially we 
discuss the notion of transverse stability.

\subsection{Transverse stability and the proof of $2. \Leftrightarrow 3.$}

{The idea of transverse stability consists in defining a stratification in jet space, such 
that the strata of this stratification are invariant by the action of the group $\mathcal{A}$ in jet 
space.} A map is \emph{transverse stable} if its $k$-jet is transversal to this stratification. To make 
this notion more precise, we introduce the $r$-fold $k$-jet bundle, following Mather \cite{Mat70}.

Let $N$ and $P$ be manifolds. Let $N^{(r)}=\{(x_1,x_2,\dots, x_r)\in
N^{r} |\; x_i\neq x_j\ 
\textrm{if}\  i\neq j\}$. Let $\pi_{N}:J^{k}(N,P) \to N$ denote the
projection where $J^{k}(N,P)$ is the bundle of $k$-jets. We define
$_{r}J^{k}(N,P)=(\pi^{r}_{N})^{-1}(N^{(r)})$ where
$\pi^{r}_{N}:J^{k}(N,P)^{r}\to N^{r}$ is the projection.

It follows that
$$_{r}J^{k}(N,P)=\{(z_{1}, \dots, z_r)\in
J^{k}(N,P)^{r}, \mathrm{such\  that}\ \pi_{N}(z_{i})\neq \pi_{N}(z_{j}),
\ \mathrm{if}\  i\neq j\}.$$

The set $_{r}J^{k}(N,P)$ is a fibre bundle over $N^{(r)}\times P^{r},$
and we call it the \emph{r fold k-jet bundle} of mappings of $N$ into
$P.$

If $f:N \to P$ is a $C^{\infty}$ mapping, we define

\begin{align*}
_{r}j^{k}f:N^{(r)} &\to\  _{r}J^{k}(N,P)
\intertext{by}
_{r}j^{k}f(x_{1}, \dots, x_{r}) &=(j^{k}f(x_{1}), \dots, j^{k}f(x_{r}))
\end{align*}

The action of the group $\mathcal{A}$ in $_{r}J^{k}(N,P)$ is defined
 as follows. If $(h,h') \in \mathcal{A},$
$z=(z_{1},\dots, z_{r}) \in\
_{r}J^{k}(N,P).$ $x_{i}=\pi_{N}z_{i},$ and
$j^{k}f_{i}(x_{i})=z_{i},$ then $(h,h')z=(z_{1}', \dots, z_{r}')$
where $z_{i}'=j^{k}(h'\circ f_{i}\circ h^{-1})h(x_{i}).$ We denote by
$\mathcal{A}^{k}$ the group of $k$-jets of elements in $\mathcal{A}.$

\begin{proposition}[Mather \cite{Mat70}, Proposition 1.4]
  An $\mathcal{A}^{k}$ orbit $W$ in $_{r}J^{k}(N,P)$ is a submanifold.
\end{proposition}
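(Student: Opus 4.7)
The plan is to exploit the compatibility of the $\mathcal{A}^{k}$-action with the natural projection $\pi\colon{}_{r}J^{k}(N,P)\to N^{(r)}\times P^{r}$ and to reduce, on each fiber of $\pi$, to an action of a finite-dimensional Lie group, after which the classical orbit theorem applies.

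\textbf{Step 1 (Bundle compatibility).} For $(h,h')\in\mathcal{A}$ and $z=(z_{1},\dots,z_{r})$ with source $x=(x_{1},\dots,x_{r})$ and target $y=(y_{1},\dots,y_{r})$, the definition of the action gives $(h,h')\cdot z\in \pi^{-1}(h(x),h'(y))$. Since $\mathrm{Diff}(N)$ acts transitively on $N^{(r)}$ and $\mathrm{Diff}(P)$ on $P^{r}$ (restricting to connected components if necessary, via a standard isotopy-extension argument), the image of the orbit $W$ under $\pi$ is open in $N^{(r)}\times P^{r}$.

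\textbf{Step 2 (Fiber orbit as a Lie group orbit).} Fix $z_{0}\in W$ with source $x_{0}$ and target $y_{0}$, and let $F_{0}=\pi^{-1}(x_{0},y_{0})$. The subgroup $G\subset\mathcal{A}$ of pairs fixing each $x_{0,i}$ and each $y_{0,i}$ acts on $F_{0}$, and since this action depends only on the $k$-jets at those marked points, it factors through the finite-dimensional Lie group
\[
G^{k}\;\cong\;L^{k}(n)^{r}\times L^{k}(p)^{r},
\]
where $L^{k}(m)$ denotes the Lie group of $k$-jets at $0$ of diffeomorphism germs of $(\mathbb{R}^{m},0)$. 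By the classical orbit theorem for smooth Lie group actions, the orbit map $G^{k}\to F_{0}$, $g\mapsto g\cdot z_{0}$, has constant rank, so its image $G^{k}\cdot z_{0}$ is an immersed submanifold of $F_{0}$.

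\textbf{Step 3 (Local trivialization and gluing).} In a neighborhood $U$ of $(x_{0},y_{0})$, choose smoothly varying diffeomorphisms $h_{x,y}\in\mathrm{Diff}(N)$ and $h'_{x,y}\in\mathrm{Diff}(P)$ with $h_{x,y}(x_{0,i})=x_{i}$ and $h'_{x,y}(y_{0,i})=y_{i}$. Acting by $(h_{x,y},h'_{x,y})$ gives a local trivialization of $\pi$ over $U$ under which $W\cap\pi^{-1}(U)$ corresponds to $U\times(G^{k}\cdot z_{0})$. Being locally a product of a manifold and an immersed submanifold, $W$ is an immersed submanifold of ${}_{r}J^{k}(N,P)$.

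\textbf{Main obstacle.} The crux is verifying in Step 2 that $G^{k}\cdot z_{0}$ is a submanifold of $F_{0}$, i.e.\ that the orbit map has constant rank. This is classical for smooth actions of Lie groups and reduces to showing that the stabilizer of $z_{0}$ in $G^{k}$ is a closed subgroup, hence a Lie subgroup by Cartan's theorem. The rest of the argument is routine bookkeeping compatible with the fiber-bundle structure of $\pi$.
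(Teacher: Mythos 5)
The paper itself gives no proof of this proposition: it is quoted directly from Mather \cite{Mat70}, so there is no in-text argument to compare yours with. Your overall strategy --- fibre ${}_{r}J^{k}(N,P)$ over the configuration of sources and targets, reduce to a finite-dimensional jet-group acting on a fibre, then trivialize --- is the standard and correct one, but two of your steps fail precisely in the case the $r$-fold jet bundle is designed for, namely multigerms with coincident targets, and a third point leaves you with a strictly weaker conclusion than the one claimed. In Step 1, $\pi(W)$ is \emph{not} open in $N^{(r)}\times P^{r}$ unless the targets $y_{0,1},\dots,y_{0,r}$ are pairwise distinct: the action of $\mathrm{Diff}(P)$ on $P^{r}$ is diagonal, hence preserves the coincidence pattern of the $y_{0,i}$, and in the case emphasized in the paper (all targets equal) one has $\pi(W)\subset N^{(r)}\times\Delta_{r}$, of codimension $(r-1)p$. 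The correct statement is that $\pi(W)$ is the $\mathrm{Diff}(N)\times\mathrm{Diff}(P)$-orbit of $(x_{0},y_{0})$, a submanifold determined by that coincidence pattern, and the trivialization of Step 3 must be carried out over this submanifold rather than over an open set. Correspondingly, in Step 2 the group is not $L^{k}(n)^{r}\times L^{k}(p)^{r}$: a single diffeomorphism $h'$ of $P$ has only one $k$-jet at each point, so at coincident targets the target-jets are forced to agree, and the effective group is $L^{k}(n)^{r}\times L^{k}(p)^{s}$ with $s$ the number of distinct targets. Both errors are repairable (one still has a finite-dimensional Lie group acting on the fibre and a manifold as base), but as written your argument only covers the generic and least interesting configuration.

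The more substantive gap is in the ``main obstacle'' you identify. Constant rank of the orbit map plus Cartan's closed-subgroup theorem only yields that $G^{k}\cdot z_{0}$ is an injectively \emph{immersed} submanifold of the fibre, and indeed your Step 3 concludes only that $W$ is an immersed submanifold. That is weaker than the proposition and weaker than what is needed to make sense of transversality of ${}_{r}j^{k}f$ to $W$ in the sequel (Proposition 2.6 of Mather and Theorem 3.6 here). The missing ingredient is algebraicity: $L^{k}(n)^{r}\times L^{k}(p)^{s}$ is a linear algebraic group acting polynomially on the fibre, which is an affine space of polynomial jets, and orbits of algebraic group actions are smooth and locally closed (Zariski open in their closure), hence embedded submanifolds. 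This is how Mather, and Golubitsky--Guillemin in the case $r=1$, actually obtain the result; without it, closedness of the stabilizer does not rule out the orbit failing to be locally closed in $F_{0}$.
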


\begin{definition}
   $f:N\to P$ is \emph{transverse} stable if $_{r}j^{k}f:N^{(r)} \to   {_{r}J}^{k}(N,P)$ is transverse to every $\mathcal{A}^{k}$ orbit $W$
   in $_{r}J^{k}(N,P).$
 \end{definition}

An important remark is that in order to understand the local
structure of the orbits in $_{r}J^{k}(N,P)$ it is sufficient to
understand the structure of the orbits in $\pi^{r}_{P}(\Delta_{r}),$
where $\Delta_{r} \subset P^{r}$ is the diagonal (see Mather
\cite{Mat70} for details). In other words, it suffices to take jets
with sources $\mathcal S =\{x_{1}, \dots, x_{r}\}$ for which
$f(x_{1})= \dots =f(x_{r}).$

 The next proposition gives a characterization of transversality of
 $_{r}j^{k}f$ to $W;$ it is an important step in the proof of theorem \ref{th:3.10}.

 \begin{proposition}[Mather \cite{Mat70}, Proposition 2.6]\label{seila}
   $_{r}j^{k}f$ is transverse to $W$ at $x$ if and only if,
 $$
 tf(\Theta_{(N,\mathcal S)})+{\omega}f(\Theta_{(P,y)})+\mathcal M^{k+1}_{S}\Theta_{f}=\Theta_{f},
$$
where $y=f(x),$ $\mathcal{S}= f^{-1}(y)=\{x_{1}, \dots, x_{r}\}.$
 \end{proposition}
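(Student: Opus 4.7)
The plan is to translate both sides of the transversality condition into purely infinitesimal (algebraic) statements about the modules $\Theta_f$, $\Theta_{N,\mathcal{S}}$, $\Theta_{P,y}$ and then compare. Throughout, set $z={_{r}j^{k}f}(x)$ and $y=f(x_1)=\cdots=f(x_r)$.

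\emph{Step 1: Identify the tangent space $T_{z}({_{r}J^{k}(N,P)})$.} Using a local trivialization of the fibre bundle $_{r}J^{k}(N,P)\to N^{(r)}$ near $x$, I would establish the natural identification
$$
T_{z}\bigl({_{r}J^{k}(N,P)}\bigr)\;\cong\;T_{x}N^{(r)}\;\oplus\;\Theta_{f}/\mathcal{M}_{S}^{k+1}\Theta_{f},
$$
where the second summand is the vertical tangent space of the fibre bundle, because a $k$-jet at a fixed source $x_i\in\mathcal{S}$ is nothing but a polynomial Taylor datum, i.e.\ an element of $\Theta_{f,x_i}/\mathcal{M}_{x_i}^{k+1}\Theta_{f,x_i}$. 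This identification and the short exact sequence it fits into are what really make the module language work; I expect this to be the most delicate technical point.

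\emph{Step 2: Describe the image of $d({_{r}j^{k}f})_x$.} The map $_{r}j^{k}f$ is a section of the projection $_{r}J^{k}(N,P)\to N^{(r)}$, so $d({_{r}j^{k}f})_x$ is the graph of a linear map $\lambda:T_{x}N^{(r)}\to\Theta_{f}/\mathcal{M}_{S}^{k+1}\Theta_{f}$. Concretely, for $v=(v_1,\dots,v_r)\in T_xN^{(r)}$ one has $\lambda(v)=[tf(\tilde{v})]_{k}\bmod\mathcal{M}_{S}^{k+1}\Theta_f$ for any extension $\tilde{v}\in\Theta_{N,\mathcal{S}}$ of $v$ (the ambiguity in the extension lies in $\mathcal{M}_{S}\Theta_{N,\mathcal{S}}$, and $\lambda$ is well-defined on cosets). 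Crucially, $d({_{r}j^{k}f})_x$ projects isomorphically onto $T_xN^{(r)}$.

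\emph{Step 3: Describe $T_{z}W$.} Differentiating the action $(h_t,h'_t)\cdot z$ at $t=0$ with $\xi=\dot{h}_0$, $\eta=\dot{h}'_0$, and using that the source of $z_i$ is translated to $h_t(x_i)$, a direct computation yields
$$
T_{z}W\;=\;\Bigl\{\bigl(\xi|_{\mathcal{S}},\;[\omega f(\eta)-tf(\xi)]_{k}+\lambda(\xi|_{\mathcal{S}})\bigr)\;:\;\xi\in\Theta_{N,\mathcal{S}},\;\eta\in\Theta_{P,y}\Bigr\}.
$$
Here the extra $\lambda(\xi|_{\mathcal{S}})$ term records the motion along the jet section induced by moving the source, so it is absorbed by the image of $d({_{r}j^{k}f})_{x}$ once we add the two subspaces.

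\emph{Step 4: Compare.} Adding the subspaces from Steps 2 and 3, the horizontal projection is automatically surjective (via $\lambda$'s graph), so transversality is equivalent to the vertical statement: every class in $\Theta_{f}/\mathcal{M}_{S}^{k+1}\Theta_{f}$ has the form $[tf(\xi)+\omega f(\eta)]_{k}$ for some $\xi\in\Theta_{N,\mathcal{S}}$, $\eta\in\Theta_{P,y}$ (freely choosing $\xi|_\mathcal{S}$ via the horizontal component, then adjusting the higher-order part of $\xi$ and $\eta$ to hit the desired vertical class). Unwinding, this is exactly
$$
\Theta_{f}\;=\;tf(\Theta_{N,\mathcal{S}})+\omega f(\Theta_{P,y})+\mathcal{M}_{S}^{k+1}\Theta_{f},
$$
proving the proposition. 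The main obstacle, as noted, is Step 1: making the identification of $T_{z}(_{r}J^{k}(N,P))$ canonical enough to carry out the subsequent bookkeeping of horizontal and vertical contributions without ambiguity.
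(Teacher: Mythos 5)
The paper itself does not prove this proposition: it is quoted directly from Mather \cite{Mat70}, Proposition 2.6, and used as a black box in the proof of the equivalence $2.\Leftrightarrow 3.$ of Theorem \ref{th:3.10}. So there is no in-text argument to compare yours against; what you have written is essentially the standard (Mather's) proof, and its outline is correct: identify the vertical tangent space of $_{r}J^{k}(N,P)\to N^{(r)}$ at $z$ with $\Theta_{f}/\mathcal{M}_{S}^{k+1}\Theta_{f}$, compute the image of $d(_{r}j^{k}f)_{x}$ as a graph over $T_{x}N^{(r)}$, compute $T_{z}W$ from the infinitesimal $\mathcal{A}^{k}$-action, and observe that after adding the two subspaces the horizontal directions are free and the vertical condition is exactly $tf(\Theta_{(N,\mathcal{S})})+\omega f(\Theta_{(P,y)})+\mathcal{M}_{S}^{k+1}\Theta_{f}=\Theta_{f}$.

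Two points deserve sharper wording. First, your $\lambda$ is not well defined modulo $\mathcal{M}_{S}^{k+1}\Theta_{f}$ alone: changing the extension $\tilde v$ of $v$ changes $tf(\tilde v)$ by an element of $tf(\mathcal{M}_{S}\Theta_{(N,\mathcal{S})})$, which need not lie in $\mathcal{M}_{S}^{k+1}\Theta_{f}$. The honest statement is that the chosen chart determines $\lambda$ (via the constant extensions), and that the ambiguity lies in $[tf(\Theta_{(N,\mathcal{S})})]$, so it is harmlessly absorbed in Step 4; as written, ``well-defined on cosets'' papers over this. Second, equating $T_{z}W$ with the image of the infinitesimal action requires knowing that the $\mathcal{A}^{k}$-orbit is a submanifold whose tangent space is that image; this is exactly what the paper's preceding Proposition (Mather's Proposition 1.4) supplies, and you should invoke it rather than just ``differentiating the action.'' You should also note that, as the paper remarks just before the statement, one may restrict to multijets with a common target, so that $\eta$ is a single germ in $\Theta_{(P,y)}$ acting on all branches. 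With these adjustments the argument is complete.
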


 From proposition \ref{seila} and theorem \ref{delta} we obtain the
 proof of $2.\Longleftrightarrow 3.$ in theorem \ref{th:3.10}. 

That $1.$ implies $3.$ in Theorem \ref{th:3.10} follows from a
general fact, and it is not hard to show.

In fact, let $f:N\to P$ be a stable mapping. It follows from the
transversality theorem that $f$ can be
well approximated by  a mapping $g:N\to P,$ such that $g$ is transverse
stable and $g\widesim{\mathcal A} f.$ That is, there is $(h,k)\in \mathcal{A}$
such that $g=k\circ f \circ h^{-1}.$ Now, transversality is preserved
by $\mathcal{A}$-equivalence,  hence $f$ is
transverse stable as well, as we wanted to show.

We have proved $1. \Rightarrow 2.\Leftrightarrow 3..$ 

Mather proved in \cite{Mat69-1}, Theorem 1 that if $f$ is proper and
infinitesimally stable then it is stable, that is
$2.\Rightarrow 1..$

%Mather's proof that $2. \Rightarrow 1.$ (Theorem 1, \cite{Mat69-1})

His proof follows from the following result. 

\begin{theorem}[Mather \cite{Mat69-1}, Theorem 2]\label{star}
  If $f$ is proper and infinitesimally stable, then there exists a
  neighborhood $U$ of $f$ in $C^{\infty}(N,P)$ and continuous mappings
  $H_{1}:U\to \textrm{Diff}^{\infty}(N)$ and $H_{2}:U\to
  \textrm{Diff}^{\infty}(P)$ such that $H_{1}(f)=1_{N},$
  $H_{2}(f)=1_{P}$ and $g = H_{2}(g)\circ f \circ H_{1}(g),$ for $g\in U.$
\end{theorem}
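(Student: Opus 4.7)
The plan is to prove this by the \emph{homotopy method} (Mather trick). Given $g$ in a small neighborhood $U$ of $f$, I would construct a smooth homotopy $F\colon N\times[0,1]\to P$ with $F_0=f$ and $F_1=g$ — for instance by interpolating in a fixed tubular neighborhood of the graph of $f$ — and then produce time-dependent isotopies $\phi_t\in\textrm{Diff}^{\infty}(N)$ and $\psi_t\in\textrm{Diff}^{\infty}(P)$ with $\phi_0=1_N$, $\psi_0=1_P$, satisfying $F_t\circ\phi_t=\psi_t\circ f$ for all $t\in[0,1]$. Setting $H_1(g)=\phi_1^{-1}$ and $H_2(g)=\psi_1$ will then give $g=H_2(g)\circ f\circ H_1(g)$. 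The continuity of $H_1,H_2$ at $f$ will follow from continuous dependence of $\phi_t,\psi_t$ on $g$, together with the fact that the construction returns the identities when $g=f$.

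Differentiating the relation $F_t\circ\phi_t=\psi_t\circ f$ with respect to $t$ and letting $\xi_t,\eta_t$ be the generating vector fields of $\phi_t,\psi_t$, one obtains the infinitesimal lifting equation
$$\frac{\partial F}{\partial t}=tF_t(\xi_t)+{\omega}F_t(\eta_t)\quad\text{in }\Theta_{F_t}.$$
Because infinitesimal stability is an open condition on proper mappings (by the finite-order characterization of Theorem \ref{delta} and a Malgrange-type semicontinuity argument), shrinking $U$ ensures that $F_t$ is infinitesimally stable for every $(g,t)\in U\times[0,1]$; hence the equation above is solvable at each $t$. What is needed, however, is not just solvability but a \emph{continuous right inverse} to the $\mathbb{R}$-linear map $(\xi,\eta)\mapsto tF_t(\xi)+\omega F_t(\eta)$, with parameter $(g,t)$.

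The main obstacle, and the technical heart of the proof, will be to produce this right inverse with continuous dependence on $g\in U$. I would invoke a parametrized version of the generalized Malgrange preparation theorem applied at each point $y\in P$ to the multigerm of $F_t$ at the finite set $\mathcal{S}=F_t^{-1}(y)$; properness of $f$ (and hence of nearby $F_t$) bounds $\#\mathcal{S}\leq p+1$ via Theorem \ref{delta}, so the local solvability reduces to a finite-order algebraic problem depending continuously on the $(p+1)$-jet of $F_t$ along $\mathcal{S}$. A partition of unity on $P$ subordinate to a locally finite cover by such neighborhoods then assembles the local liftings into global vector fields $\xi_t\in\Theta_N$, $\eta_t\in\Theta_P$ that solve the equation and depend continuously on $g$ in the Whitney topology.

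Finally, I would integrate the time-dependent vector fields to obtain $\phi_t,\psi_t$. Properness is used a second time here: by multiplying $\xi_t,\eta_t$ by suitable cut-off functions adapted to the compact sets $f^{-1}(K)$, the flows are complete on $[0,1]$, and classical continuous-dependence-on-parameters results for ODEs transfer the continuity in $g$ of $(\xi_t,\eta_t)$ to continuity of $\phi_1,\psi_1$ as maps $U\to\textrm{Diff}^{\infty}(N)$ and $U\to\textrm{Diff}^{\infty}(P)$. When $g=f$ one may take $F_t\equiv f$, giving $\partial F/\partial t=0$ and hence $\xi_t=0$, $\eta_t=0$, so $H_1(f)=1_N$ and $H_2(f)=1_P$, completing the argument.
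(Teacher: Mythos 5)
The paper does not actually prove Theorem \ref{star}: it is quoted from Mather \cite{Mat69-1} and used as a black box to obtain $2.\Rightarrow 1.$ in Theorem \ref{th:3.10}, so the comparison has to be with Mather's original argument. Your outline does follow its architecture --- homotopy from $f$ to $g$, the infinitesimal lifting equation, local solvability at multigerms of at most $p+1$ points via the preparation theorem, globalization by a partition of unity on $P$, and integration of time-dependent vector fields --- and you correctly locate the technical heart in the continuity of the solution operator. But at exactly that point the proposal has a hole: there is no off-the-shelf ``parametrized version of the generalized Malgrange preparation theorem'' that hands you a right inverse of $(\xi,\eta)\mapsto tF_t(\xi)+\omega F_t(\eta)$ depending continuously on $g$ in the Whitney topology. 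The preparation theorem is an existence statement with no canonical choice of the data it produces; constructing explicit local solutions whose coefficients depend continuously on the finite jet of $F_t$ along $\mathcal{S}$, and checking that the patched global solution is Whitney-continuous in $g$, is precisely the content of Mather's paper, and your proposal defers it rather than carrying it out.

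There is also a concrete error in the final step. If you multiply $\xi_t$ and $\eta_t$ by cut-off functions supported in $f^{-1}(K)$ and near $K$, then $\phi_1=1_N$ off $f^{-1}(K)$ and $\psi_1=1_P$ off a neighbourhood of $K$, so the identity $g=\psi_1\circ f\circ\phi_1^{-1}$ would force $g=f$ outside a compact set; this is false for a general $g$ in a basic Whitney neighbourhood $B_{\delta}(f)$, whose elements may differ from $f$ on all of $N$ (only the size of the difference is controlled by $\delta$). Completeness of the flows and continuity of $g\mapsto(\phi_1,\psi_1)$ must instead come from the fact that the lifted vector fields are themselves controlled by $\delta$ (since $\partial F/\partial t$ is) together with properness of $f$, which localizes all data over each compact subset of $P$ and is also what makes composition continuous in the Whitney topology --- the very point the paper flags immediately after the statement of Theorem \ref{star}.
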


Du Plessis and Wall \cite{PleWal} introduced the notion of 
\emph{W-strongly} stable mappings as stable mappings $F:N\to P$
admitting a neighborhood $U$ in $C^{\infty}(N,P)$ satisfying the
conditions stated in theorem \ref{star}. 

The main difficult  to prove that stable mappings are $W$-strongly stable is that in the Whitney
$C^{\infty}$ topology, the composition of mappings is not
continuous. However continuity holds when one restricts to proper
mappings. The strong stability of non proper functions was recently
discussed by Kenta Hayano in \cite{Hay}.

It follows that the result $2. \Rightarrow 1.$ is an easy consequence of theorem \ref{star}.

The hypothesis that $f$ is proper cannot be omitted, as we see in the
following example.

\begin{example}[\emph{\cite{Mat69-1}, pp.\, 267}]
  Let $N=(-1,1) \cup (1,2),$ $P=(-1,1),$ and
  \begin{align*}
    f|_{(-1,1)}:(-1,1) &\to (-1,1) &f|_{(1,2)}:(1,2) &\to (-1,1) \\ 
            x &\mapsto x^{2}  &x\mapsto 2-x
  \end{align*}

{ We can verify that $f$ is infinitesimally stable, as
 the restrictions to $(-1,1)$ and $(1,2)$ are.}

 However, $f$ is not stable since it has the following non-stable
 property: for any $a \in P,$ $f^{-1}(a)$ contains either $0,$ $1$ or
 $3$ points.
\end{example}

The reader can find in \cite{Mat70} the discussion of which
implications in theorem \ref{th:3.10} depend on the hypothesis that
$f$ is proper.

In the next example we illustrate the role of the Whitney
$C^{\infty}$-topology in the characterization of stable mappings.

\begin{example}
  The cusp map
  \begin{align*}
    F:\mathbb R^{2} &\to \mathbb R^{2}\\
    (x,y) &\mapsto F(x,y)=(x, y^{3}+xy)
  \end{align*}
is a stable mapping when the topology  in $C^{\infty}_{pr}(\mathbb
R^{2},\mathbb R^{2})$ is the Whitney topology.
This follows from Whitney's theorem as we discussed in section
\ref{sec:from-1944-1958:the}. 
We can also apply Mather's result: the map $F$ is proper and
infinitesimally stable , hence it is stable
% (see ???? for the proof
% that $F$ is infinitesimally stable).

Let $F_{n}(x,y)=(x, y^{3}+xy+\frac{x^{2}}{n}y)$. The singular set of
$F_{n}$ is the set $\Sigma_{n}$ defined by $3y^{2}+x+\frac{x^{2}}{n}=0.$ For each $n$,
$F_{n}$ has two cusp points: $(0,0)$ and $(-n,0).$

We can easily see that $F_{n}\to F$ in $C^{\infty}_{pr}(\mathbb
R^{2},\mathbb R^{2})$ with the topology of uniform convergence on
compact sets. Hence $F$ is not stable when one considers this topology in
$C^{\infty}_{pr}(\mathbb R^{2},\mathbb R^{2}).$ 
\end{example}

\subsection{Notes}
\label{sec:notes}

The definitions and properties of infinitesimally stable mappings also
hold for real and complex analytic germs. However, care is necessary to
characterize stable maps $f:N \to P,$ when $f$ is a holomorphic map
between complex manifolds $N$ and $P.$ In fact,Thom's transversality
theorem does not hold in general in this case. See discussion by
F. Forstneri\v{c}, \cite{Fos} and examples given by S. Kaliman and
M. Za\u{\i}denberg in \cite{KalZai}. In a recent paper, S. Trivedi \cite{Tri13}
proves that the set of maps between Stein manifolds and Oka manifolds,
transverse to a countable collection of submanifolds in the target is
dense in the space of holomorphic maps with the weak topology. The results
hold, in particular, for holomorphic maps $f:\mathbb C^{n} \to
\mathbb C^{p},$ as the complex spaces satisfy the hypothesis of the
theorem.

A related problem is the characterization of topologically stable
polynomial mappings $f:\mathbb C^{n} \to \mathbb C^{p}.$ M. Farnick,
Z. Jeloneck and M.A. S. Ruas \cite{FarJelRua}, characterize topologically stable
polynomial mappings $F:\mathbb C^{2}\to\mathbb C^{2}$ in the space
$\Omega_{\mathbb C^{2}}(d_{1}, d_{2})$ of polynomial mappings of degree
bounded by $(d_{1},d_{2}).$
Locally stable singularities are folds and cusps, but the behavior of
generic polynomial mappings at infinity imposes new restrictions. The
number of cusps of a topological stable $F\in \Omega_{\mathbb
  C^{2}}(d_{1}, d_{2})$ is given by $c(F)= d_{1}^{2}+d_{2}^{2}+3
d_{1}d_{2}-6d_{1}-6d_{2}+7.$ In particular, when $d_{1}=1$ and
$d_{2}=3,$  $c(F)=2.$
\section{Finite determinacy of Mather's groups}
\label{sec:finite-determ-math}

Mather's groups are the groups $\mathcal{G}=\mathcal{R},\mathcal{L},\mathcal{A},
\mathcal{K}$ and $\mathcal{C}.$

The contact group $\mathcal{K},$ defined by Mather in
\cite{Mat68-2} plays a fundamental role in the classification of
stable singularities. In subsections \ref{subsec:contact-group} and \ref{sec:class-stable-sing}
we define the group $\mathcal{K},$ discuss properties of $\mathcal{K}$-equivalence
and their role in the study of stable mappings.

The problem of classification of stable singularities motivated the
introduction of the notion of $\mathcal{G}$-finitely determined
germs \cite{Mat68-2}.
For the groups $\mathcal{G}=\mathcal{R}$ or $\mathcal{K},$ finite
determinacy was studied by J. Tougeron in \cite{Tou1} and chapter II
of \cite{Tou2}. When $\mathcal{G}=\mathcal{A}$ or $\mathcal{L},$ the
first results are due to Mather's in \cite{Mat68-2}.
Infinitesimal criteria of finite determinacy for
$\mathcal{G}=\mathcal{A}$ and $\mathcal{L}$ depend on the
Preparation Theorem. We discuss the infinitesimal criterion for
Mather's group in section \ref{subsec:fi-nite-determined}. In section
\ref{sec:maps-finite-sing} we introduce the basic properties of maps
of finite singularity type.

\subsection{The contact group}
\label{subsec:contact-group}

\begin{definition}\label{def:contact} \index{Contact!group}
  The \emph{contact group} $\mathcal{K}$ is the set of pairs of germs of
  diffeomorphisms $(h,H),$ where $h:(\mathbb R^{n},0)\to (\mathbb
  R^{n},0),$ $H:(\mathbb R^{n}\times \mathbb R^{p},0)\to (\mathbb
  R^{n}\times\mathbb R^{p},0)$ such that $\pi_{1}\circ H=h,$
  $(\pi_{2}\circ H)(x,0)=0$ where $\pi_{1}$ and $\pi_{2}$ are the
  projections into $\mathbb R^{n}$ and $\mathbb R^{p},$ respectively. 
\end{definition}

Notice that
$H(x,y)=(h(x),H_{2}(x,y)),\, H_{2}(x,0)=0.$

The set of pairs $(h,H)\in \mathcal{K},$ such that $h$ is the
identity $I_{\mathbb{R}^{n}}$ form a subgroup of $\mathcal{K},$ usually
denoted by $\mathcal{C}.$

\begin{definition}\index{Contact!equivalence}
  Let $f,g\in \mathcal{E}_{n}^{p}.$ We say
  that $f$ and $g$ are contact equivalent, $f\widesim{\mathcal{K}}g, $ if there is a pair $(h,H) \in
  \mathcal{K}$ such that $H(x,f(x))=(h(x),g(h(x)).$
\end{definition}

\begin{remark}

  Notice that if $f \widesim{\mathcal{K}} g, $ then the diffeomorphism
$H:(\mathbb R^{n}\times \mathbb R^{p},0) \to (\mathbb
R^{n}\times\mathbb R^{p},0)$ sends $\mathrm{graph}(f)$ into
$\mathrm{graph}(g),$ leaving $\mathbb R^{n}\times\{0\}$ invariant (see
Figure \ref{fig:contact}). This geometric viewpoint of contact equivalence was
extended by Montaldi \cite{Mon} as follows: two pairs of germs of submanifolds
of $\mathbb R^{m}$ have the same contact type if there is a
germ of diffeomorphism of $\mathbb R^{m}$ taking one  pair to the
other. Moreover,  he proved in \cite{Mon}, that the
contact type of a pair of germs of manifolds is completely characterized by
the $\mathcal{K}$-equivalence class of a convenient map. This result
is one the fundamental pieces of the applications of singularity theory
to differential geometry (see Bruce and Giblin \cite{BruGib} and Izumiya,
Romero-Fuster, Ruas and Tari,  \cite{IzRoRuTa}).

\end{remark}\index{$\mathcal{K}$-equivalence}
\index{Tangent  space!$\mathcal{K}$-equivalence}\index{Extended tangent space!$\mathcal{K}$-equivalence}
The \emph{tangent space} and the \emph{extended tangent space} of
$\mathcal{K}$-equivalence are, respectively

\begin{align*}
  T\mathcal{K}f &=tf(\mathcal{M}_{n}\Theta_{n})
+f^{*}(\mathcal{M}_{p})\Theta_{f}\\
T\mathcal{K}_{e}f &=tf(\Theta_{n})
+f^{*}(\mathcal{M}_{p})\Theta_{f}
\end{align*}

We also define $\mathcal{K}$-${\cod f}=\dim_{\mathbb
  R}\frac{\mathcal{M}_{n}\Theta_{f}}{T\mathcal{K}f}$ and $\mathcal{K}_e$-${\cod
  f}=\dim_{\mathbb R}\frac{\Theta_{f}}{T\mathcal{K}_{e}f}.$

\begin{figure}[h]
  \centering
 \scalebox{0.60}{\includegraphics{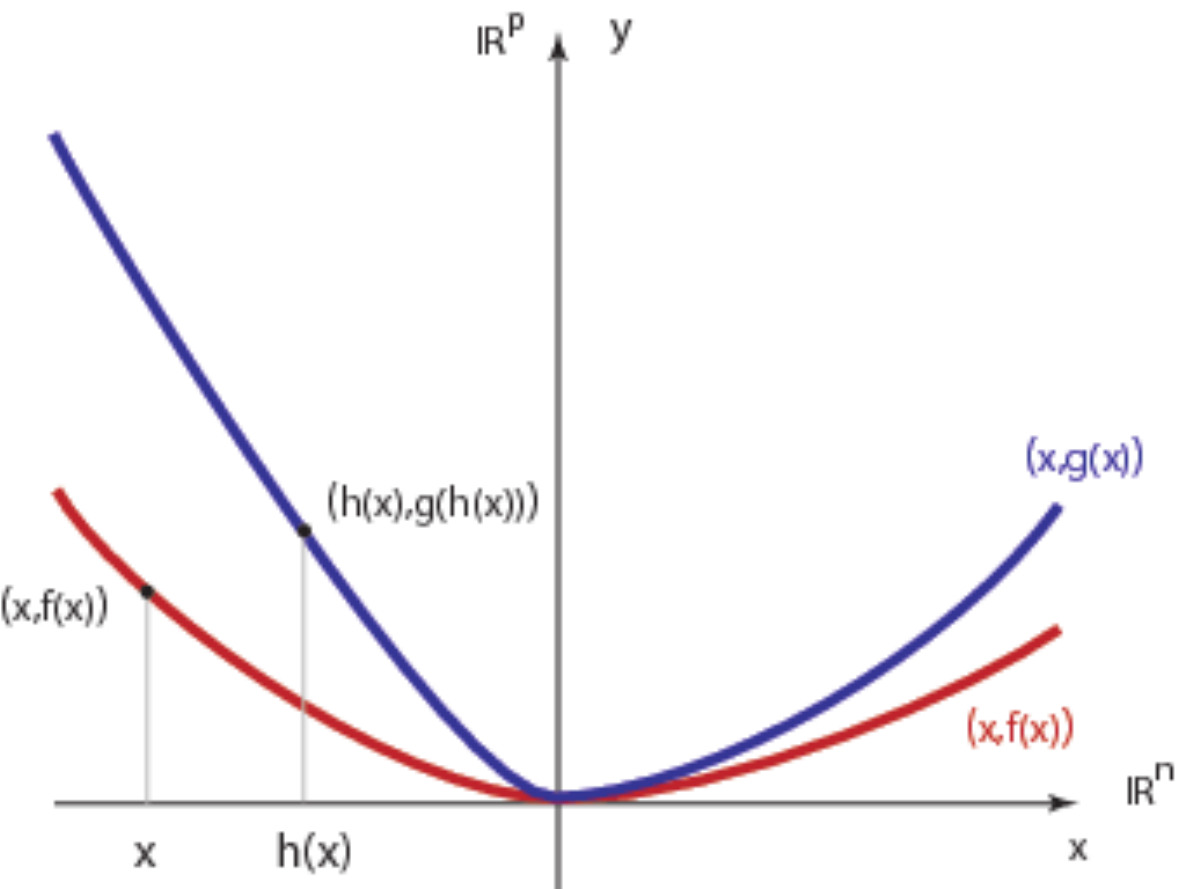}}
  \caption{Contact equivalence}
  \label{fig:contact}
\end{figure}

The following result was first proved by Mather in \cite{Mat69-2}.

\begin{proposition}[Gibson \cite{Gib}, Proposition 2.2, Mond and
  Nu\~no-Ballesteros \cite{MonNun}, Section 4.4]

The following statements are equivalent.
\begin{itemize}
\item [(1)]\  Two map-germs $f,g \in \mathcal{E}_{n}^{p}$ are $\mathcal{K}$-equivalent.
\item [(2)]\  There exists a germ of diffeomorphism $h:(\mathbb R^{n},0) \to (\mathbb R^{n},0)$ such that
$$h^{*}f(\mathcal{M}_{p})\mathcal{E}_{n}=g^{*}(\mathcal{M}_{p})\mathcal{E}_{n}.$$
\end{itemize}
\end{proposition}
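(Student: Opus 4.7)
The plan is to establish the two directions separately, extracting the equality of ideals directly from the geometry of the contact group for $(1)\Rightarrow(2)$, and constructing an element of the subgroup $\mathcal{C}\subset\mathcal{K}$ via an invertible matrix of germs for $(2)\Rightarrow(1)$.

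For $(1)\Rightarrow(2)$, I would start by unpacking the definition: if $f\widesim{\mathcal{K}}g$ via $(h,H)\in\mathcal{K}$, then $H(x,f(x))=(h(x),g(h(x)))$, so the second component gives $H_{2}(x,f(x))=g(h(x))$. Since $H_{2}(x,0)=0$, a Hadamard-type expansion in the $y$-variable yields $H_{2,i}(x,y)=\sum_{j}a_{ij}(x,y)\,y_{j}$ for smooth germs $a_{ij}$. Substituting $y=f(x)$ shows each $g_{i}\circ h$ lies in the ideal generated by $f_{1},\dots,f_{p}$, i.e.\ $(g\circ h)^{*}(\mathcal{M}_{p})\mathcal{E}_{n}\subseteq f^{*}(\mathcal{M}_{p})\mathcal{E}_{n}$. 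Changing variable via $u=h(x)$ rewrites this as $g^{*}(\mathcal{M}_{p})\mathcal{E}_{n}\subseteq (f\circ h^{-1})^{*}(\mathcal{M}_{p})\mathcal{E}_{n}$. Applying the identical argument to the inverse $(h^{-1},H^{-1})\in\mathcal{K}$, which relates $g$ back to $f$, gives the reverse containment. Taking $h^{-1}$ as the required diffeomorphism in the statement yields (2).

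For $(2)\Rightarrow(1)$, I would first reduce to the case $h=\mathrm{id}$. Indeed, composing $f$ with the diffeomorphism $h$ in the source is a $\mathcal{R}$-equivalence (hence $\mathcal{K}$-equivalence), so after replacing $f$ by $f\circ h$ we may assume $f^{*}(\mathcal{M}_{p})\mathcal{E}_{n}=g^{*}(\mathcal{M}_{p})\mathcal{E}_{n}$. The heart of the argument is then the following algebraic claim: there exists an invertible matrix $M(x)\in GL_{p}(\mathcal{E}_{n})$ such that $g(x)=M(x)f(x)$. Granted the claim, define
\[
H:(\mathbb{R}^{n}\times\mathbb{R}^{p},0)\to(\mathbb{R}^{n}\times\mathbb{R}^{p},0),\qquad H(x,y)=(x,M(x)y).
\]
Since $M(0)$ is invertible, $H$ is a local diffeomorphism; clearly $\pi_{1}\circ H=\mathrm{id}$ and $H(x,0)=(x,0)$, so $(\mathrm{id},H)\in\mathcal{C}\subset\mathcal{K}$. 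Finally $H(x,f(x))=(x,M(x)f(x))=(x,g(x))$, which witnesses $f\widesim{\mathcal{K}}g$.

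The main obstacle is the existence of the invertible matrix $M$. The ideal equality gives a priori only $p\times p$ matrices $A,B$ of germs in $\mathcal{E}_{n}$ with $g=Af$ and $f=Bg$, whence $(AB-I)g=0$ and $(BA-I)f=0$; neither $A$ nor $B$ need be invertible at the origin. The step I would need to carry out carefully is to exploit the freedom of modifying $A$ by a matrix whose rows annihilate $f$ (i.e.\ by syzygies of $f$), together with a Nakayama-type argument in the local ring $\mathcal{E}_{n}$ applied to the ideal $f^{*}(\mathcal{M}_{p})\mathcal{E}_{n}$, to produce some representative for which $A(0)\in GL_{p}(\mathbb{R})$; invertibility then propagates to a neighbourhood by continuity of $\det$. (An equivalent route would go through the Malgrange preparation theorem, in the spirit of the tools announced in Section~\ref{sec:finite-determ-math}.) This lemma on changing generators of an ideal is precisely where the local-ring structure of $\mathcal{E}_{n}$ is indispensable and constitutes the one genuinely nontrivial ingredient of the proof.
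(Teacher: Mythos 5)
The paper does not actually prove this proposition --- it is stated without proof and attributed to Gibson and to Mond--Nu\~no-Ballesteros --- so there is no in-text argument to compare against; judged on its own, your proof is correct and follows the standard route taken in those references. The direction $(1)\Rightarrow(2)$ via Hadamard's lemma applied to $H_{2}(x,y)$ (using $H_{2}(x,0)=0$), followed by the same argument for the inverse pair $(h^{-1},H^{-1})$, is exactly right, as is the reduction to $h=\mathrm{id}$ and the construction $H(x,y)=(x,M(x)y)\in\mathcal{C}$ in the converse. The only place where you stop short is the lemma that two $p$-tuples generating the same ideal of the local ring $\mathcal{E}_{n}$ differ by a matrix in $GL_{p}(\mathcal{E}_{n})$; you correctly identify this as the genuinely nontrivial ingredient, and your proposed strategy (syzygy modification plus Nakayama) is the right one. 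For completeness: pass to $V=I/\mathcal{M}_{n}I$ with $I=f^{*}(\mathcal{M}_{p})\mathcal{E}_{n}$; after a permutation, $\bar f_{1},\dots,\bar f_{d}$ form a basis of $V$, so by Nakayama $f_{1},\dots,f_{d}$ generate $I$ and elementary (hence invertible) row operations reduce $f$ to $(f_{1},\dots,f_{d},0,\dots,0)$, and likewise for $g$; writing $g_{i}=\sum_{j\le d}A_{ij}f_{j}$, the matrix $\bar A$ carries one basis of $V$ to another, so $\det A\notin\mathcal{M}_{n}$ and $A\in GL_{d}(\mathcal{E}_{n})$, and composing the three invertible matrices gives $M$. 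No appeal to the Malgrange preparation theorem is needed. With that lemma filled in, your argument is complete.
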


The local algebra we introduce now is an useful invariant of $\mathcal{K}$-equivalence.
For a given map-germ $f:(\mathbb R^{n},0) \to (\mathbb R^{p},0)$ we
define the \emph{local algebra} of $f$ as \index{Local algebra}
$$
Q(f)=
\frac{\mathcal{E}_{n}}{f^{*}(\mathcal{M}_{p})\mathcal{E}_{n}}.
$$

It follows from the previous proposition that the isomorphism class of
$Q(f)$ is a $\mathcal{K}$-
invariant.
Furthermore, it is a complete invariant of
$\mathcal{K}$-equivalence for germs $f$ with finite
$\mathcal{K}$-codimension. More precisely, we have

\begin{theorem}\label{pr:9}
If $f$ and $g$ are map-germs with finite $\mathcal{K}$-codimension it
follows that
$$  f\widesim{\mathcal{K}}g \mathrm{\ if\ and\ only\ if\ the\ local\
  algebras\ } Q(f)\ \mathrm{\ and\ } Q(g) \mathrm{\ are\ isomorphic.\ } $$
\end{theorem}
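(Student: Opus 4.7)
The ``only if'' direction is immediate from the preceding proposition: a $\mathcal K$-equivalence between $f$ and $g$ produces a diffeomorphism $h$ of $(\mathbb R^{n},0)$ with $h^{*}(f^{*}(\mathcal M_{p})\mathcal E_{n})=g^{*}(\mathcal M_{p})\mathcal E_{n}$, and the induced ring automorphism $h^{*}\colon\mathcal E_{n}\to\mathcal E_{n}$ descends to an $\mathbb R$-algebra isomorphism $Q(f)\to Q(g)$.

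For the substantive ``if'' direction, write $I=f^{*}(\mathcal M_{p})\mathcal E_{n}$ and $J=g^{*}(\mathcal M_{p})\mathcal E_{n}$, and let $\phi\colon Q(f)\to Q(g)$ be an $\mathbb R$-algebra isomorphism. The plan is to realize $\phi$ via a diffeomorphism of the source. Since both quotients are local, $\phi$ sends the maximal ideal to the maximal ideal and induces an isomorphism of cotangent spaces $\mathcal M_{n}/(\mathcal M_{n}^{2}+I)\cong\mathcal M_{n}/(\mathcal M_{n}^{2}+J)$; in particular $\operatorname{rank}df_{0}=\operatorname{rank}dg_{0}$. I would then choose lifts $h_{1},\dots,h_{n}\in\mathcal M_{n}$ of the classes $\phi(\bar x_{i})\in Q(g)$, set $h=(h_{1},\dots,h_{n})\colon(\mathbb R^{n},0)\to(\mathbb R^{n},0)$, and try to verify that $h$ is a diffeomorphism with $h^{*}(I)=J$, so that the preceding proposition applies.

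The main technical obstacle is arranging that $h$ be a local diffeomorphism, i.e.\ that the linear parts of the $h_{i}$ form a basis of $\mathcal M_{n}/\mathcal M_{n}^{2}$. Each $h_{i}$ is determined only modulo $J$, so its linear part is prescribed only modulo the subspace $W\subset\mathcal M_{n}/\mathcal M_{n}^{2}$ of linear parts of elements of $J$; on the other hand, the prescribed images in the quotient $(\mathcal M_{n}/\mathcal M_{n}^{2})/W\cong\mathcal M_{n}/(\mathcal M_{n}^{2}+J)$ span this quotient because $\bar x_{1},\dots,\bar x_{n}$ span the cotangent space of $Q(f)$ and $\phi$ is an isomorphism. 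An extension-of-basis argument then produces admissible lifts with independent linear parts: pick $s=\dim\mathcal M_{n}/(\mathcal M_{n}^{2}+J)$ indices indexing a basis of the quotient, take any lifts for these (automatically independent), and for each remaining index use the $W$-freedom to extend to a basis of $\mathcal M_{n}/\mathcal M_{n}^{2}$. The matching of dimensions — $\dim W=n-s$ and the complement of the prescribed subspace has dimension $n-s$ — is what makes this possible.

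Finally, with $h$ a diffeomorphism, the two ring homomorphisms $\mathcal E_{n}\to Q(g)$ given by $\lambda\mapsto\phi(\bar\lambda)$ and $\lambda\mapsto\overline{\lambda\circ h}$ agree on the generators $x_{i}$, hence on every polynomial. The finite $\mathcal K$-codimension hypothesis, after a standard reduction of $f$ and $g$ to split normal forms $(x_{1},\dots,x_{r},F)$ and $(x_{1},\dots,x_{r},G)$ that isolates the part of the map with zero $1$-jet, guarantees that the associated local algebras are finite-dimensional, so some $\mathcal M_{n}^{k}\subset I\cap J$; a Taylor expansion argument then forces the two ring maps to coincide on all of $\mathcal E_{n}$. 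This gives $h^{*}(I)\subset J$, and comparing $\mathbb R$-dimensions (both quotients have the same finite dimension and $h^{*}$ is an automorphism) upgrades this to $h^{*}(I)=J$, so $f\widesim{\mathcal K}g$ by the preceding proposition.
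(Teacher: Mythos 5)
The paper itself does not prove Theorem \ref{pr:9} (it is quoted from Mather), so your proposal must stand on its own. Its architecture --- realize an algebra isomorphism $\phi:Q(f)\to Q(g)$ by lifting the classes $\phi(\bar x_{i})$ to a source diffeomorphism $h$, show $h^{*}(I)=J$, and invoke the preceding proposition --- is exactly the classical one, and the ``only if'' direction, the equality of ranks, and the linear-algebra step producing a diffeomorphic lift are all sound.

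The genuine gap is the claim that finite $\mathcal K$-codimension forces the local algebras to be finite dimensional, ``so some $\mathcal M_{n}^{k}\subset I\cap J$.'' This is false whenever $n>p$ and $f$ is not a submersion: after your rank reduction the core $f_{0}:(\mathbb R^{n-r},0)\to(\mathbb R^{p-r},0)$ is a rank-zero germ defined by strictly fewer components than variables, so its zero set has positive dimension and no power of $\mathcal M_{n-r}$ lies in $f_{0}^{*}(\mathcal M_{p-r})\mathcal E_{n-r}$. Concretely, $f=x^{2}+y^{2}+z^{2}:(\mathbb R^{3},0)\to(\mathbb R,0)$ is $2$-$\mathcal K$-determined, yet $\mathcal M_{3}^{k}\not\subset\langle x^{2}+y^{2}+z^{2}\rangle$ for every $k$, so $Q(f)$ is infinite dimensional. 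You use finite dimensionality twice in an essential way --- to force the two ring homomorphisms $\mathcal E_{n}\to Q(g)$ to agree beyond polynomials, and in the final count upgrading $h^{*}(I)\subset J$ to equality --- and both steps break down for infinite-dimensional $Q$. The standard repair is to run your construction on the truncations $Q_{k}(f)=\mathcal E_{n}/(I+\mathcal M_{n}^{k+1})$, which are always finite dimensional, are the quotients of $Q(f)$ by the powers of its maximal ideal, and hence are carried into each other by $\phi$; your argument then yields $j^{k}f\widesim{\mathcal K^{k}}j^{k}g$ for every $k$, and finite $\mathcal K$-determinacy (Theorem \ref{gaf-wal}) --- which is where the finite-codimension hypothesis really enters --- promotes the jet-level equivalence to $f\widesim{\mathcal K}g$. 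For $n\le p$ your argument is essentially complete as written, since there a rank-zero $\mathcal K$-finite germ does have finite-dimensional local algebra.
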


\begin{remark}
  For complex analytic germs the hypothesis of $\mathcal{K}-$determinacy
in Theorem \ref{pr:9} is not needed.
\end{remark}

\begin{example}\label{ex:6.5}
  Let $F:(\mathbb R^{n},0) \to (\mathbb R^{p},0)$ be a germ of rank
  $r.$ Then, up to $\mathcal{A}$-equivalence, we can take $F$ in the
  normal form $F(x,y)=(x,f(x,y)), x \in \mathbb R^{r}, y \in \mathbb
  R^{n-r},$ with $f:(\mathbb R^{n},0) \to (\mathbb R^{p-r},0)$ and
  $j^{1}f(0,0)\equiv 0.$ Let $f_{0}:(\mathbb R^{n-r},0) \to (\mathbb
  R^{p-r},0)$ be the rank zero germ $f_{0}(y)=f(0,y).$ Then
  $Q(F)=Q(f_{0}).$

  If $\mathcal{K}$-$\cod f_{0} <\infty$ and $Q(F) \cong Q(f_{0})$ it follows
  that $F$ is $\mathcal{K}$-equivalent to the suspension $F_{0}(x,y)=
  (x,f_{0}(y))$ of $f_{0}.$
\end{example}

As we shall see in the next section, germs $f\in \mathcal{E}_{n}^{p}$ of
finite $\mathcal{K}$-codimension are finitely
$\mathcal{K}$-determined, and in this case
$\mathcal{K}(f)=\mathcal{K}(z),$ where $z=j^{k}f(0)$ for some $k.$

Now, for each positive integer $k,$ we set
$$Q_{k}(f)=\frac{\mathcal{E}_{n}}{f^{*}(\mathcal{M}_{p})\mathcal{E}_{n}+\mathcal{M}_{n}^{k+1}}.$$

$Q_{k}(f)$ is the local algebra of $z=j^{k}f(0).$ We can also write $Q_{k}(f)=Q(z).$ 

It is not hard to show that $z\widesim{\mathcal {K}^{k}}z'$  if and
only if $Q_{k}(z)$ and $Q_{k}(z')$ are isomorphic.

This definition can be extended to  $k$-jets of a multigerm $f:(\mathbb
R^{n},S)\to (\mathbb R^{p},0)$ $S=\{x_{1}, x_{2}, \dots, x_{s}\}.$
\index{Contact!class}
By a \emph{contact class} in $J^{k}(N,P)$ we mean an equivalence class of
$ _{s}J^{k}(N,P)$ under the relation of $\mathcal{K}^{k}$-equivalence.

\subsection{Finitely determined germs}
\label{subsec:fi-nite-determined}

Let $\mathcal{G}$ be a group acting in the space of germs $f:(\mathbb
R^{n},0) \to (\mathbb R^{p},0).$  We say that $f$ is
finitely $\mathcal{G}$-\emph{determined}\index{$\mathcal{G}$-determined} if there exists a positive integer
$k$ such that for all $g:(\mathbb R^{n},0) \to (\mathbb R^{p},0)$ with
$j^{k}g(0)=j^{k}f(0),$  it follows that $f\widesim{\mathcal{G}} g.$ We
  say that $f$ is $\mathcal{G}$-\emph{finitely determined} if $f$ is
  $k$-determined for some $k.$ The denomination ${\cal
    G}$-\emph{finite} germs is also widely used.

  Finite determinacy has been an important subject in singularity theory
  for many decades and the bibliography in this topic is extensive.

  With regard to results on necessary and sufficient conditions of
  finite determinacy and estimates of the order of determinacy we
  refer  to Mather \cite{Mat68-2}, Gaffney \cite{Gaf76, Gaf79}, du
  Plessis \cite{Ple80}, Damon \cite{Dam84} and Du Plessis, Bruce and
  Wall \cite{BruPleWal}. The survey article by Terry Wall
  \cite{Wal81} is a complete account of the theory of finite
  determinacy for Mather's groups $\mathcal{G}=\mathcal{A},
  \mathcal{R}, \mathcal{L}, \mathcal{K}$ and $\mathcal{C}$ until
  1981. See also the clear presentation (with examples) in Chapter 6
  of the book of Mond and Nu\~no-Ballesteros \cite{MonNun}.

An important advance appeared in \cite{Dam84} in which J. Damon  defined
the \emph{geometric subgroups of }$\mathcal{K},$  a large class of
  subgroups for which the theory of finite determinacy can be
  formulated as for Mather's group.

The following theorem, known as \emph{infinitesimal
  criterion}\index{Finite determinacy!infinitesimal criterion of} gives
necessary and sufficient conditions for finite determinacy.
The original result is due to Mather \cite{Mat68-2}. We give here an
improved version due to Gaffney \cite{Gaf79} and du Plessis
\cite{Ple80}. The statement and proof of Theorem \ref{gaf-wal} are 
slight modifications  of T. Wall \cite[Theorem
1.2]{Wal81}. The reader can also compare the statement for the group
$\mathcal{A}$ in section 1.2.3 (Theorem 1.2.12) of the article of Mond
and Nu\~no-Ballesteros in this Handbook \cite{Han}.

%following the presentation of T. Wall \cite{Wal81}, Theorem 1.2.

{\begin{theorem}%[Wall \cite{Wal81}, Theorem 1.2]
  \label{gaf-wal}
  For each $f\in \mathcal{E}^{p}_{n}$, $\mathcal{G}=\mathcal{R},
  \mathcal{L}, \mathcal{A}, \mathcal{C}, \mathcal{K}$ the following conditions are
  equivalent
  \begin{enumerate}
  \item [(1)] $f$ is finitely $\mathcal{G}$-determined, 
  \item [(2)] for some $r,$ $T\mathcal{G}f \supset \mathcal{M}^r_n
    \Theta_{f},$
  \item [(3)] $\mathcal{G}$-$\cod f <\infty,$
  \item [(4)] $\mathcal{G}_{e}$-$\cod f <\infty.$
  \end{enumerate}
   More precisely, if we set $\epsilon =1$ for $\mathcal{G}=\mathcal{R},
  \mathcal{C}$ or $ \mathcal{K}$ and $\epsilon=2$ for $\mathcal{G}= \mathcal{L}, \mathcal{A},$
  \begin{enumerate}
  \item [(i)] If $f$ is $k$-$\mathcal{G}$-determined then $T\mathcal{G}f \supset \mathcal{M}^{k+1}_n
    \Theta_{f},$
  \item [(ii)] If $T\mathcal{G}f \supset \mathcal{M}^{k+1}_n\Theta_{f},$ then
    $f$ is  $(\epsilon k+1)$-$\mathcal{G}$-determined.
	 \item [(iii)]  If $T\mathcal{G}f + \mathcal{M}^{\epsilon
             k+2}_n\Theta_{f} \supset \mathcal{M}^{k+1}_n\Theta_{f},$
           then $T\mathcal{G}f \supset \mathcal{M}_n^{k+1}\Theta_{f}.$ 
    % \item [(iii)] If $\mathcal{G}$-$\cod f=d <\infty,$ then $T\mathcal{G}f \supset \mathcal{M}^{(d+1)^{\epsilon}}_n\Theta_{f},$
  \end{enumerate}
\end{theorem}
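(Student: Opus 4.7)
The plan is to establish the cycle (1) $\Rightarrow$ (i) $\Rightarrow$ (2) $\Rightarrow$ (ii) $\Rightarrow$ (1), together with the equivalences (2) $\Leftrightarrow$ (3) $\Leftrightarrow$ (4); statement (iii) is a Nakayama-plus-preparation refinement handled at the end.

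For the easy direction (1) $\Rightarrow$ (i): assume $f$ is $k$-$\mathcal{G}$-determined and pick $\phi \in \mathcal{M}_n^{k+1}\Theta_f$. The affine family $f_t = f + t\phi$ has $j^k f_t(0) = j^k f(0)$ for every $t$, hence each $f_t$ is $\mathcal{G}$-equivalent to $f$. The Thom--Levine trivialisation lemma upgrades this pointwise equivalence to a smooth path $\gamma_t$ in $\mathcal{G}$ with $\gamma_0 = \mathrm{id}$ and $\gamma_t \cdot f = f_t$; differentiating at $t=0$ places $\phi$ inside $T\mathcal{G}f$, giving (i) and hence (2).

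The core of the theorem is (ii), proved by Mather's \emph{homotopy method}. Given $g$ with $j^{\epsilon k + 1}g(0) = j^{\epsilon k + 1}f(0)$, set $F_t = (1-t)f + tg$ so that $\dot F_t = g - f$ lies in $\mathcal{M}_n^{\epsilon k + 2}\Theta_f$. I would seek a smooth path $\gamma_t \in \mathcal{G}$, $\gamma_0 = \mathrm{id}$, with $\gamma_t \cdot f = F_t$; infinitesimally this amounts to finding $\dot F_t \in T\mathcal{G}F_t$ at every $t$ and integrating the resulting time-dependent vector field (in source, and for $\mathcal{L}, \mathcal{A}$ also in target). Hypothesis (2) for $f$ gives $\mathcal{M}_n^{k+1}\Theta_f \subset T\mathcal{G}f$, and a perturbation argument (Nakayama's lemma combined with $F_t - f \in \mathcal{M}_n^{\epsilon k + 2}\Theta_f$) spreads this to the uniform containment $\mathcal{M}_n^{k+1}\Theta_{F_t} \subset T\mathcal{G}F_t$. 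For $\mathcal{R}, \mathcal{C}, \mathcal{K}$ the modules $T\mathcal{G}F_t$ are genuine $\mathcal{E}_n$-modules and the exponent $\epsilon = 1$ is enough; for $\mathcal{L}, \mathcal{A}$ the summand $\omega F_t(\Theta_p)$ is only $\mathcal{E}_p$-linear via $F_t^*$, so transporting the inclusion along the homotopy costs a factor of $\mathcal{M}_n$, forcing $\epsilon = 2$.

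For the codimension equivalences, (2) $\Rightarrow$ (3) is immediate from $\dim_{\mathbb{R}}(\Theta_f / \mathcal{M}_n^r \Theta_f) < \infty$, and (3) $\Leftrightarrow$ (4) follows because the two quotients differ only by the finite-dimensional pieces $T\mathcal{G}_e f / T\mathcal{G}f$ and $\Theta_f / \mathcal{M}_n \Theta_f \cong \mathbb{R}^p$. The decisive direction is (4) $\Rightarrow$ (2): for $\mathcal{R}, \mathcal{C}, \mathcal{K}$, $T\mathcal{G}_e f$ is already a finitely generated $\mathcal{E}_n$-submodule of $\Theta_f$, so finite extended codimension plus Nakayama gives $\mathcal{M}_n^r \Theta_f \subset T\mathcal{G}_e f$, and one further multiplication by $\mathcal{M}_n$ produces (2); for $\mathcal{A}, \mathcal{L}$, $\omega f(\Theta_p)$ is only an $\mathcal{E}_p$-module via $f^*$, and Malgrange's Preparation Theorem is needed to convert finite-dimensionality of the quotient into finite generation over $\mathcal{E}_p$, after which the same Nakayama step applies. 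Statement (iii) is then obtained by the identical Nakayama-plus-preparation machinery applied to the inclusion $\mathcal{M}_n^{k+1}\Theta_f \subset T\mathcal{G}f + \mathcal{M}_n^{\epsilon k + 2}\Theta_f$. The main obstacle is step (ii): producing the sharp exponent $\epsilon k + 1$ requires tracking carefully how the order of a correction term evolves when pushed through $tf$ versus $\omega f$, and the differing module structures on these two summands is precisely what forces $\epsilon = 2$ for $\mathcal{A}$ and $\mathcal{L}$; the Preparation Theorem is the one non-trivial analytic ingredient that makes the whole argument go through for those groups.
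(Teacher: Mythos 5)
Your overall architecture (homotopy method plus Thom--Levine for the sufficiency (ii), Nakayama for $\mathcal{R},\mathcal{C},\mathcal{K}$ and the Malgrange Preparation Theorem for $\mathcal{L},\mathcal{A}$ in the algebraic steps, and the finite-dimensional comparison for the codimension statements) is the same route the paper takes, and your discussion of (ii), (iii) and (2)$\Leftrightarrow$(3)$\Leftrightarrow$(4) is a faithful outline of it. However, your proof of the necessity step (1)~$\Rightarrow$~(i) has a genuine gap. You argue: each $f_t=f+t\phi$ has the same $k$-jet as $f$, hence lies in the orbit $\mathcal{G}f$, and ``the Thom--Levine trivialisation lemma upgrades this pointwise equivalence to a smooth path $\gamma_t$ in $\mathcal{G}$ with $\gamma_t\cdot f=f_t$,'' which you then differentiate. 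Thom--Levine does no such thing: it converts a solution of the \emph{infinitesimal} equation (the existence of suitable vector fields) into a trivialisation by integration, and conversely a given smooth trivialisation into vector fields. It does not convert the statement ``$f_t\in\mathcal{G}f$ for every $t$'' into a smooth one-parameter family in $\mathcal{G}$ starting at the identity. Since $\mathcal{G}$ is not a Lie group and $\mathcal{E}_n^p$ is not a Banach manifold, the orbit $\mathcal{G}f$ is not known to be a submanifold, so you cannot differentiate a curve lying in it and land in the algebraically defined $T\mathcal{G}f$. This is exactly the difficulty the paper flags before stating Mather's lemma.

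The correct argument (the one the paper gives) reduces to finite dimensions: from $\mathcal{G}f\supset\mathtt{W}:=\{g:j^kg(0)=j^kf(0)\}$ one projects by $\pi^l$ to $J^l(n,p)$, where the jet-group orbits \emph{are} submanifolds; comparing tangent spaces of $\pi^l(\mathcal{G}f)$ and of the affine subspace $\pi^l(\mathtt{W})$ yields only
$$T\mathcal{G}f+\mathcal{M}_n^{l+1}\Theta_f\supset\mathcal{M}_n^{k+1}\Theta_f,\qquad l>k,$$
i.e.\ the inclusion \emph{with an error term}, and one then needs (iii) (Nakayama, resp.\ the Bruce--du Plessis--Wall lemma via the Preparation Theorem) to remove that error term. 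The fact that your argument would deliver $\phi\in T\mathcal{G}f$ outright, with no error term and no appeal to (iii), is a sign that a real step has been skipped: in the paper, (i) genuinely depends on (iii), which is why (iii) is proved first. The remainder of your proposal is sound at the level of detail given.
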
 }

  This section is mainly devoted to describe this result. Although the
  theory applies to multigerms,  for   simplicity we restrict our
  discussion to monogerms $f:(\mathbb R^{n},0) \to (\mathbb R^{p},0).$

The successful approach to finite determinacy was inspired by the
action of a Lie group on finite dimensional manifolds.
The following lemma is due to Mather.

\begin{lemma}[Mather \cite{Mat69-2}, Lemma 3.1]
  Let $G$ be a Lie group, $M$ a $C^{\infty}$ manifold and
  $\alpha:G\times M \to M$ a $C^{\infty}$ action. Let $V$ be a
  connected $C^{\infty}$-submanifold of $M.$ Then $V$ is contained in
  an orbit of $\alpha$ if and only if
  \begin{enumerate}
  \item [(a)] For all $v\in V,$ $T_{v} G\cdot v \supseteq T_{v}V,$ and
  \item [(b)] $\dim T_{v}(G\cdot v)$ is the same for all $v\in V.$
  \end{enumerate}
\end{lemma}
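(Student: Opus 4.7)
The statement is a biconditional; the forward direction is routine and the reverse direction contains the substance.

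\emph{Forward direction.} Suppose $V\subseteq G\cdot v_{0}$ for some $v_{0}$. Orbits of smooth Lie group actions are initial (weakly embedded) immersed submanifolds of $M$, so the smooth inclusion $V\hookrightarrow M$ factors as a smooth map $V\to G\cdot v_{0}$. Hence $T_{v}V\subseteq T_{v}(G\cdot v)$ for every $v\in V$, giving (a). For (b), $\dim T_{v}(G\cdot v)=\dim G-\dim G_{v}$, and stabilizers at points on a single orbit are conjugate, hence have the same dimension.

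\emph{Reverse direction.} Assume (a) and (b), fix $v_{0}\in V$, and let $r$ denote the common value of $\dim T_{v}(G\cdot v)$ for $v\in V$. I would prove that every $v_{1}\in V$ lies in $G\cdot v_{0}$. Since $V$ is connected, choose a smooth path $\gamma\colon[0,1]\to V$ with $\gamma(0)=v_{0}$ and $\gamma(1)=v_{1}$. The strategy is to construct a smooth path $g\colon[0,1]\to G$ with $g(0)=e$ and $\gamma(t)=g(t)\cdot v_{0}$, in two steps.

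\emph{Step 1 (smooth lift into $\mathfrak{g}$).} For each $t$, consider the surjective linear map $L_{t}\colon\mathfrak{g}\to T_{\gamma(t)}(G\cdot\gamma(t))$, $X\mapsto X^{*}(\gamma(t))$, where $X^{*}$ is the fundamental vector field on $M$ associated with $X\in\mathfrak{g}$. By (a), $\gamma'(t)\in T_{\gamma(t)}V\subseteq T_{\gamma(t)}(G\cdot\gamma(t))$, so a preimage under $L_{t}$ exists. The kernel of $L_{t}$ equals the Lie algebra of the stabilizer $G_{\gamma(t)}$, and by (b) it has constant dimension $\dim G-r$ along $\gamma$. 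Fixing an inner product on $\mathfrak{g}$, the assignment $t\mapsto \ker L_{t}$ is a smooth subbundle of the trivial bundle $[0,1]\times\mathfrak{g}$, and passing to orthogonal complements yields a smooth right inverse $s_{t}$ of $L_{t}$. Setting $X(t):=s_{t}(\gamma'(t))$ produces a smooth curve $X\colon[0,1]\to\mathfrak{g}$ with $X(t)^{*}(\gamma(t))=\gamma'(t)$.

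\emph{Step 2 (integration on $G$ and ODE uniqueness).} Solve the ODE $g'(t)=dR_{g(t)}X(t)$ on $G$ with $g(0)=e$, where $R_{h}$ denotes right translation; a solution on $[0,1]$ exists by the standard theory of time-dependent right-invariant flows on Lie groups. A direct computation using $g(t+\varepsilon)g(t)^{-1}=\exp(\varepsilon X(t))+O(\varepsilon^{2})$ gives $\tfrac{d}{dt}(g(t)\cdot v_{0})=X(t)^{*}(g(t)\cdot v_{0})$. Thus both $t\mapsto g(t)\cdot v_{0}$ and $t\mapsto\gamma(t)$ solve the ODE $\dot u(t)=X(t)^{*}(u(t))$ on $M$ with the same initial value $v_{0}$. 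Uniqueness of ODE solutions forces $\gamma(t)=g(t)\cdot v_{0}$ for all $t$, and in particular $v_{1}=g(1)\cdot v_{0}\in G\cdot v_{0}$, as required.

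\emph{Main obstacle.} The crux is Step 1. Without hypothesis (b), the kernels of $L_{t}$ could change dimension along $\gamma$, the family of stabilizer Lie algebras would fail to form a smooth subbundle, and a smooth (even continuous) right inverse would generally be unavailable; the ODE construction of Step 2 could then not be carried out. This is precisely the role of the constant rank hypothesis (b): (a) alone ensures pointwise solvability of the lifting problem, while (b) upgrades it to a \emph{smoothly parametrized} solvability, which is exactly what is needed to integrate back to $G$.
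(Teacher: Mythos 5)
The paper states this lemma without proof, citing Mather, so there is no in-text argument to compare against; judged on its own, your proof is correct and is essentially Mather's original argument: lift $\gamma'(t)$ to a smooth curve $X(t)$ in $\mathfrak{g}$ (using the constant-rank hypothesis (b) to make the kernels of $L_{t}$ a smooth subbundle and hence obtain a smooth right inverse), integrate the resulting time-dependent right-invariant field on $G$, and conclude by uniqueness of solutions of the ODE $\dot u=X(t)^{*}(u)$ on $M$. Your closing remark correctly isolates the role of (b): without constancy of $\dim T_{v}(G\cdot v)$ the pointwise solvability guaranteed by (a) cannot be upgraded to a smooth lift, which is exactly where the argument would break.
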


Our groups are not Lie groups, and our function spaces are not Banach
manifolds. But, the solution to the problem of finding sufficient
conditions for a germ $f\in \mathcal{E}^{p}_{n}$ to be finitely
determined, consists in reducing our infinitesimal approach to jet
spaces.

Suppose $f$ is $k$-$G$-determined. Then, given $g\in
\mathcal{E}^{p}_{n},$ $j^{k}g(0)=j^{k}f(0),$ the one-parameter family
\begin{align*}
  \bar f:(\mathbb R^{n}\times \mathbb R, 0 \times \mathbb R) &\to (\mathbb R^{p}\times \mathbb
                                       R,0)\\
  (x,t) &\mapsto \bar f(x,t)=(1-t)f(x) + t g(x)
\end{align*}
has a constant $k$-jet $j^{k}\bar f_{t}(0)=j^{k}f(0) + t j^{k}(g-f)(0)=
j^{k}f(0).$

We will identify $\bar f$ with a ``line'' $L_{t}$ in
$\mathcal{E}^{p}_{n}.$ Our problem is to show that $L_{t}$ is
contained in a unique orbit.

A sufficient condition is to find a $1$-parameter family $h_t$ of
elements in $\mathcal{G}$ such that $h_{0}=1 \in \mathcal{G},$
$h_{t}(0)=0,$ $h_{t}\cdot f_{t}=f,$ for any $t\in \mathbb R.$
\index{Trivial family!$\mathcal{G}-$}
These conditions say that the family $\bar f$ is
$\mathcal{G}$-\emph{trivial}. As in the case of stable singularities,
the next step is to search for an infinitesimal condition, giving an
equivalent characterization of triviality in terms of vector fields.

This step, in principle, is not hard: the equation $h_{t}\cdot
f_{t}= f$ implies that $\frac{\partial}{\partial t}(h_{t}\cdot
f_{t})=0$ leading to the desired infinitesimal condition. The converse
follows from integration of vector fields.

For any group $\mathcal{G}$ acting on $\mathcal{E}_{n}^{p},$ we call
this result  ``the Thom-Levine lemma.''  We now specialize to $\mathcal{G}=\mathcal{A},$ as this case
includes all difficulties  of the proof of the infinitesimal criterion.

\begin{definition}\index{Trivial family!$\mathcal{A}-$}
  A $1$-parameter family $\bar f:(\mathbb R^{n}\times \mathbb R, 0) \to
  (\mathbb R^{p},0)$, $\bar f(x,0)=f(x)$ is $\mathcal{A}$-trivial if there is a pair $(h,k)$ of
  $1$-parameter families of germs of diffeomorphisms

  \begin{align*}
    h:(\mathbb R^{n}\times \mathbb R, 0) &\to (\mathbb R^{n},0) 
                                &k:(\mathbb R^{p}\times \mathbb R, 0) &\to (\mathbb R^{p},0)\\  
    (x,t) &\mapsto h(x,t)    &(y,t) &\mapsto k(y,t)
  \end{align*}
  such that $h(x,0)=x$, $k(y,0)=y,$ $ h_{t}(0)=0$, $k_{t}(0)=0$ and
  $$
  k_{t}\circ f_{t}\circ h_{t} =f.
  $$
\end{definition}

\begin{remark}
  We also use the notation $F(x,t)=(\bar f(x,t),t),$
  $H(x,t)=(h(x,t),t)$ and   $K(y,t)=(k(y,t),t)$  for the corresponding
  $1$-parameter unfoldings. In this  notation $F$ is
  $\mathcal{A}$-trivial if $K\circ F \circ H = f\times
  \mathrm{Id}_{\mathbb R}.$
{We denote by $\partial\cdot F$ the vector field in $(\mathbb
R^{n}\times\mathbb R, 0)$ with zero component in the
$\frac{\partial}{\partial t}$ direction, that is $dF(\frac{\partial}{\partial
  t})=(\partial\cdot F, 1).$}
\end{remark}

The next result is known as the Thom-Levine lemma (see \cite{Mat68-2,
  Ple80, MonNun}).
\index{Lemma!Thom-Levine}\index{Thom-Levine|see{ Lemma}}
\begin{proposition}% [Thom-Levine lemma, Mather \cite{Mat68-2}, du Plessis \cite{Ple80}]
  \label{thom-levine}
  Let $f\in \mathcal{E}_{n}^{p}$ and $F:(\mathbb R^{n}\times \mathbb R, 0) \to
  (\mathbb R^{p}\times \mathbb R,0),$ $F(x,t)=(\bar f(x,t),t),$ $\bar
  f(0,t)=0, \,\bar f(x,0)=f(x),$ the germ at $0$ of a $1$-parameter unfolding of $F.$ Then $F$ is
  $\mathcal{A}$-trivial if and only there exist vector fields
  $V:(\mathbb R^{n}\times \mathbb R, 0) \to (\mathbb
  R^{n}\times \mathbb R,0)$ with $V(x,t)=v(x,t)
  +\frac{\partial}{\partial t},$ $v(x,t)=
  \sum_{i=1}^{n}v_{i}(x,t)\frac{\partial}{\partial x_{i}},$
  $v_{i}(0,t)=0$ for  $i=1,\dots,n$ 
and   $ W:(\mathbb R^{p}\times \mathbb R, 0) \to (\mathbb
  R^{p}\times \mathbb R,0)$ with $W(y,t)=w(y,t)
  +\frac{\partial}{\partial t},$ $w(y,t)=
  \sum_{j=1}^{p}w_{j}(y,t)\frac{\partial}{\partial y_{j}},$
  $w_{j}(0,t)=0$ for  $j=1,\dots,p.$ 
  such that
  \begin{equation}
    \label{eq:2}
\partial\cdot F(x,t)=\sum_{i=1}^{n}\frac{\partial \bar f}{\partial
  x_{i}}(x,t)\cdot v_{i}(x,t) + w \circ F(x,t).    
  \end{equation}
\end{proposition}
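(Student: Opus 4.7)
The plan is to prove the two implications by the classical Thom-Levine correspondence between $1$-parameter families of diffeomorphisms and their infinitesimal generators: differentiate the trivialization identity for $(\Rightarrow)$, and integrate vector fields for $(\Leftarrow)$.

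For $(\Rightarrow)$, I would assume $F$ is $\mathcal{A}$-trivial and write the hypothesis as $k_t \circ \bar f_t \circ h_t = f$ with $h_0 = \mathrm{id}_{\mathbb{R}^n}$, $k_0 = \mathrm{id}_{\mathbb{R}^p}$, $h_t(0) = 0$, $k_t(0) = 0$. I then define the spatial components $v(x,t)$ and $w(y,t)$ as the time-dependent infinitesimal generators associated with the families $h_t$ and $k_t$, with the sign convention chosen to reproduce \eqref{eq:2}. The conditions $h_t(0) = 0$ and $k_t(0) = 0$ force $v(0,t) = 0$ and $w(0,t) = 0$ by direct differentiation in $t$. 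Differentiating the trivialization identity in $t$, applying the chain rule at $k_t$, $\bar f_t$, and $h_t$, and then substituting via the defining relations between $v, w$ and the families $h_t, k_t$ gives precisely \eqref{eq:2}.

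For $(\Leftarrow)$, I would start with vector fields $V = v + \partial/\partial t$ and $W = w + \partial/\partial t$ satisfying \eqref{eq:2} and $v(0,t) = w(0,t) = 0$, and integrate the time-dependent vector fields $v$ and $w$ to produce families of local diffeomorphisms $h_t, k_t$ with $h_0 = \mathrm{id}$, $k_0 = \mathrm{id}$. The vanishing of $v$ and $w$ at the origin makes $\{0\} \times I$ an integral curve, so $h_t(0) = 0$ and $k_t(0) = 0$ on the common interval of existence $I$. To verify $k_t \circ \bar f_t \circ h_t = f$, I set $G(x,t) := k_t \circ \bar f_t \circ h_t(x)$ and compute $\partial G/\partial t$ by the chain rule; the terms involving $\partial_t h_t$, $\partial_t k_t$, and $\partial_t \bar f_t$ recombine, via the ODEs defining $h_t, k_t$ together with \eqref{eq:2}, into zero. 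Since $G(\cdot,0) = f$, this gives $G \equiv f$, which is the desired trivialization.

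The main technical obstacle will be guaranteeing a common domain of existence for the flows, i.e., a single neighborhood of $0 \in \mathbb{R}^n$ on which $h_t$ is defined for all $t$ in some interval around $0$, and similarly for $k_t$. Standard ODE theory with parameters handles this: because $v$ and $w$ vanish on $\{0\} \times I$, smooth dependence of ODE solutions on initial data and parameters produces a tube of existence around the equilibrium curve, giving germs of diffeomorphisms of the required form. A secondary bookkeeping difficulty is the sign convention for the infinitesimal generators of $h_t$ and $k_t$; several natural conventions appear in the literature, and one must fix a single one that yields exactly \eqref{eq:2} in both directions rather than its negative.
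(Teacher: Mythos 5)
Your proposal is correct and follows essentially the same route as the paper's proof: differentiate the trivialization identity $k_t\circ \bar f_t\circ h_t=f$ and apply the chain rule for the forward implication, and integrate the time-dependent vector fields $v$ and $w$ (which vanish along $\{0\}\times\mathbb{R}$, so the flows fix the origin) to recover the trivializing families for the converse. The points you flag — the common domain of existence of the flows and the sign convention for the generators — are precisely the details the paper delegates to the references (Mather, du Plessis, Mond--Nu\~no-Ballesteros), and your treatment of them is sound.
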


\begin{proof}
  We give here an idea of the proof.  The reader may consult, for
instance, Mather \cite[p.~144]{Mat68-2}, du Plessis \cite[p.~174]{Ple80}, or
Mond and Nu\~no-Ballesteros \cite[p.~37]{MonNun} for a 
complete proof.

If $F$ is a trivial unfolding of $f,$ $K\circ F \circ H = f\times 1_{\mathbb R}$
and then $\partial \cdot(K\circ F \circ H)=0$ and we apply the chain
rule to get \eqref{eq:2}.

Conversely, if condition \eqref{eq:2} holds, we consider the systems of
differential equations in $(\mathbb R^{n}\times \mathbb R,0)$ and  $(\mathbb
R^{p}\times \mathbb R,0),$ respectively:

\begin{align}\label{eq:4}
 &\begin{cases}\dot x = v(x,t)\\ 
   v(0,t)=0 
\end{cases} \qquad \qquad \qquad \qquad
\begin{cases}
        \dot y= w(y,t)  \\ 
    w(0,t)=0 
  \end{cases}\\  
& \qquad \nonumber
\end{align}

We can integrate these vector fields to obtain $1-$parameter families
$h_{t}$ and $k_{t}$ of diffeomorphisms of $(\mathbb R^{n}\times \mathbb R,0)$ and  $(\mathbb
R^{p}\times \mathbb R,0),$ respectively, such that $h_{0}(x)=x,$
$h_{t}(0)=0;$ $k_{0}(y)=y,$ $k_{t}(0)=0$ and  $k_{t}\circ \bar
f_{t}\circ h_{t}=f.$

\end{proof}

Condition \eqref{eq:2} in Proposition \ref{thom-levine} admits an useful
algebraic formulation. First, we introduce some notation.

Given the $1$-parameter unfolding $F:(\mathbb R^{n}\times \mathbb R,0)
\to (\mathbb R^{p}\times \mathbb R,0),$ $F(x,t)=(\bar f(x,t),t)$ with
$\bar f(x,0)=f(x),$ as before, $\Theta_{F}$ denotes the
$\mathcal{E}_{n+1}$ module of vector fields along $F.$ However, here
it will be more convenient to consider the submodule of $\Theta_{F}$
defined as:
$$
\Psi_{F}=\{\sigma \in \Theta_{F}|\, \mathrm{the}\ \mathbb R\mathrm{-component\ of}
\ \sigma\ \mathrm{is\  zero}\}.
$$

Similarly, $\Psi_{n+1}$ and $\Psi_{p+1}$ denote vector fields in
$(\mathbb R^{n}\times \mathbb R,0)$ and $(\mathbb R^{p}\times \mathbb
R,0)$ respectively, with zero $\mathbb R$-components.

The restrictions of the homomorphisms $tF$ and ${\omega}F$ give respectively
the $\mathcal{E}_{n+1}$-homomorphism $tF:\Psi_{n+1}\to \Psi_{F}$ and
the $\mathcal{E}_{p+1}$-homomorphism via $F^{*},$ ${\omega}F:\Psi_{p+1} \to \Psi_{F}.$

With this notation, we can see that \eqref{eq:2} holds if and
only if
\begin{equation}
  \label{eq:6}
  \partial \cdot F \in tF(\mathcal{M}_{n}\Psi_{n+1}) +
  {\omega}F(\mathcal{M}_{p}\Psi_{p+1})
\end{equation}
holds.

  We call $T\mathcal{A}_{un}(F)=tF(\mathcal{M}_{n}\Psi_{n+1}) +
  {\omega}F(\mathcal{M}_{p}\Psi_{p+1}),$ the $\mathcal{A}$-\emph{tangent space} of the
  unfolding $F.$ Similarly $T\mathcal{K}_{un}(F)=tF(\mathcal{M}_{n}\Psi_{n+1}) +
  F^{*}(\mathcal{M}_{p+1})\Psi_{p+1}$ is the
  $\mathcal{K}-$\emph{tangent space} of $F.$

We now turn to the algebraic tools we need in the proof of theorem
\ref{gaf-wal}.

In the cases $\mathcal{G}=\mathcal{R},\,\mathcal{C}$ or $\mathcal{K}$
the proof of the infinitesimal criterion of $\mathcal{G}$-determinacy
%that $T\mathcal{G}f$ is a finitely generated $\mathcal{E}_{n}$
%submodule
will follow from the following elementary result.
\index{Lemma!Nakayama}\index{Nakayama|see{ Lemma}}

\begin{lemma}[Nakayama's Lemma]\label{naka}
  Let $R$ be a commutative ring, $M$ an ideal such that for $x\in M,$
  $(1+x)$ is invertible. Let $C$ be a finitely generated $R$-module,
  $A$ a submodule, then
  \begin{itemize}
  \item [(i)] if $A+ M\cdot C=C,$ then $A=C,$
  \item [(ii)]\  if $R$ is a $k$-algebra, and
    $\dim_{k}(\frac{C}{A+M^{d+1}C})\leq d $ then $M^{d}\cdot C
    \subseteq A.$
  \end{itemize}
\end{lemma}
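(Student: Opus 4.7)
My plan is to prove the two parts separately, reducing (ii) to (i) via a pigeonhole argument on a finite filtration.

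For (i), I would apply the classical determinant (Cayley--Hamilton) trick. Let $c_{1},\ldots,c_{n}$ be generators of $C$ over $R$. Since $C=A+MC$, for each $i$ one can write $c_{i}=a_{i}+\sum_{j}m_{ij}c_{j}$ with $a_{i}\in A$ and $m_{ij}\in M$, equivalently $(I-(m_{ij}))\vec{c}=\vec{a}$. The determinant $\Delta=\det(I-(m_{ij}))$ has the form $1+x$ with $x\in M$, hence is invertible by hypothesis. Multiplying through by the adjugate matrix yields $\Delta c_{k}\in A$ for every $k$, and since $\Delta$ is a unit each $c_{k}$ lies in $A$, giving $C\subseteq A$.

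For (ii), I would consider the descending chain of $k$-subspaces
\[V_{j}=\frac{A+M^{j}C}{A+M^{d+1}C},\qquad j=0,1,\ldots,d+1,\]
inside $V=C/(A+M^{d+1}C)$. One has $V_{0}=V$ (of dimension $\leq d$) and $V_{d+1}=0$, so this is a chain of $d+2$ subspaces in a $k$-vector space of dimension at most $d$. Pigeonhole forces $V_{i}=V_{i+1}$ for some $0\leq i\leq d$, i.e.\ $A+M^{i}C=A+M^{i+1}C$.

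To finish, I would convert this equality into $M^{i}C\subseteq A$, which immediately gives $M^{d}C\subseteq M^{i}C\subseteq A$. Setting $C'=M^{i}C$, the equality $A+C'=A+MC'$ rearranges to $C'=(A\cap C')+MC'$. Since $C$ is finitely generated and, in the settings of interest, $M$ is a finitely generated ideal (e.g.\ $\mathcal{M}_{n}=(x_{1},\ldots,x_{n})$ inside $\mathcal{E}_{n}$), the submodule $C'$ is itself finitely generated; applying (i) to $C'$ with submodule $A\cap C'$ then yields $C'\subseteq A$ as required. The main obstacle I expect is precisely this finite-generation step: without finite generation of $M$, $C'$ need not be finitely generated, and one must either add that hypothesis or bypass it by running the determinant trick directly on (images of) generators $\bar{c}_{1},\ldots,\bar{c}_{n}$ of $C/A$, carefully tracking that the resulting matrix has all entries in $M^{i+1}$ so that $\det$ is again a unit times a suitable element annihilating the $\bar{c}_{j}$.
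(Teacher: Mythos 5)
Your argument is correct, but there is nothing in the paper to compare it against: the lemma is stated there as ``the following elementary result'' and no proof is supplied. Part (i) is the standard determinant (Cayley--Hamilton) trick and is fine as written: the off-diagonal contributions to $\det(I-(m_{ij}))$ all lie in the ideal $M$, so the determinant is $1+x$ with $x\in M$, hence a unit, and the adjugate identity puts each $c_k$ into $A$. Your pigeonhole reduction in (ii) is also correct --- the chain $V_{0}\supseteq\cdots\supseteq V_{d+1}=0$ of $d+2$ subspaces inside a space of dimension $\leq d$ must stabilize, and the modular-law rearrangement $A+C'=A+MC'\Rightarrow C'=(A\cap C')+MC'$ with $C'=M^{i}C$ is valid. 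The one point you flag, finite generation of $C'=M^{i}C$, is a genuine subtlety: it does not follow from the stated hypotheses alone, and I do not see a way to bypass it (your suggested alternative of running the determinant trick on generators of $C/A$ runs into the same problem, since the relation $M^{i}C\subseteq A+M\cdot M^{i}C$ only becomes a finite matrix equation once $M^{i}C$ has a finite generating set). The honest fix is the one you name: assume $M$ finitely generated, as the paper itself does in the closely analogous Proposition~\ref{bpw}, and as holds in every application here ($\mathcal{M}_{n}=\langle x_{1},\dots,x_{n}\rangle$ in $\mathcal{E}_{n}$ by Hadamard's lemma, $\mathcal{M}_{S}$, $\langle t\rangle+\mathcal{M}_{n}^{k+1}$, $F^{*}(\mathcal{M}_{p+1})\mathcal{E}_{n+1}$). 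With that hypothesis made explicit, your proof is complete.
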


An equivalent formulation of condition (i) in Lemma \ref{naka} is the
following
\begin{itemize}
\item [(i')]\  If $M C = C,$ then $C=0.$
\end{itemize}

  When $\mathcal{G}=\mathcal{L}$ or $\mathcal{A},$ we need
  a fairly deep result, the generalized Malgrange preparation theorem
  (see Golubitsky and Guillemin \cite{GolGui}, Martinet \cite{Mar76, Mar77},
  Wall \cite{Wal81}).

  \index{Malgrange preparation theorem}
  \begin{theorem}[Preparation Theorem]\label{prepa}
    Let $f:(\mathbb R^{n},0) \to (\mathbb R^{p},0)$ be a $C^{\infty}$
    map-germ, $E$ a finitely generated $\mathcal{E}_{n}$-module. If
    $\dim_{\mathbb R}(\frac{E}{f^{*}(\mathcal{M}_{p})\cdot E})<\infty,$ then $E$ is
    finitely generated as $\mathcal{E}_{p}$-module (via $f$).
 \end{theorem}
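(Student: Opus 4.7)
The strategy is to reduce the general statement to its classical analytic core, the Weierstrass–Malgrange preparation theorem for smooth functions of one distinguished variable, by means of two purely algebraic maneuvers: a graph-embedding reduction and an induction that breaks a high-dimensional projection into a chain of single-variable projections.

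First I would reduce to the case where $f$ is a projection. Consider the graph embedding $\gamma:(\mathbb R^{n},0)\to(\mathbb R^{n+p},0)$, $\gamma(x)=(x,f(x))$, and the projection $\pi:\mathbb R^{n+p}\to\mathbb R^{p}$ onto the last $p$ coordinates, so that $f=\pi\circ\gamma$ and $f^{*}=\gamma^{*}\circ\pi^{*}$. Since $\gamma^{*}:\mathcal{E}_{n+p}\to\mathcal{E}_{n}$ is surjective, any $\mathcal{E}_{n}$-module $E$ acquires a structure of $\mathcal{E}_{n+p}$-module via $\gamma^{*}$, and is still finitely generated over $\mathcal{E}_{n+p}$ (the $\mathcal{E}_{n}$-generators suffice). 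The crucial observation is that $\pi^{*}(\mathcal M_{p})\cdot E = f^{*}(\mathcal M_{p})\cdot E$, so the finite-dimensionality hypothesis is unchanged. Hence, assuming the theorem for $\pi$, the module $E$ is finitely generated over $\mathcal{E}_{p}$ via $\pi^{*}$, and therefore also via $f^{*}=\gamma^{*}\circ\pi^{*}$.

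Next I would factor $\pi:\mathbb R^{n+p}\to\mathbb R^{p}$ as a chain of elementary projections $\pi_{j}:\mathbb R^{p+j}\to\mathbb R^{p+j-1}$ that each forget one coordinate, and observe that transitivity of the conclusion along compositions lets us induct on the number of coordinates forgotten. This reduces the problem to a single elementary step: prove that for $\pi:\mathbb R\times\mathbb R^{q}\to\mathbb R^{q}$, $(t,y)\mapsto y$, any finitely generated $\mathcal{E}_{q+1}$-module $E$ with $\dim_{\mathbb R}(E/\pi^{*}(\mathcal M_{q})E)<\infty$ is finitely generated over $\mathcal{E}_{q}$ via $\pi^{*}$. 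At this point I would reduce further to the case $E=\mathcal{E}_{q+1}$ (handling an arbitrary $E$ by presenting it as a quotient of a free module), where the assertion becomes the classical Weierstrass–Malgrange division/preparation statement: any $F\in\mathcal{E}_{q+1}$ whose restriction to $\{y=0\}$ vanishes to finite order $k$ in $t$ generates $\mathcal{E}_{q+1}/F\cdot\mathcal{E}_{q+1}$ as an $\mathcal{E}_{q}$-module by the finitely many classes $1,t,\dots,t^{k-1}$.

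The main obstacle is precisely this last step: the smooth Weierstrass preparation theorem is not algebraic. Its proof requires a genuine piece of analysis — either Malgrange's complex-analytic extension argument, passing to a $\bar\partial$-equation and exploiting almost-analytic extensions of $C^\infty$ functions to a complex neighborhood, or Mather's approach via distribution theory. Granting this analytic input, the reductions above are formal, and Nakayama's Lemma (Lemma \ref{naka}) would not be needed for the preparation theorem itself but is used immediately afterwards to pass from ``generators modulo $f^{*}(\mathcal M_{p})E$'' to genuine $\mathcal{E}_{p}$-module generators of $E$. In this write-up I would cite Malgrange's original argument (or the exposition in Golubitsky–Guillemin \cite{GolGui} or Martinet \cite{Mar76,Mar77}) for the analytic core and give the algebraic reductions in detail.
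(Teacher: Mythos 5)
The paper does not prove Theorem \ref{prepa} at all: it states the generalized Malgrange preparation theorem as a known ``fairly deep result'' and refers the reader to Golubitsky--Guillemin \cite{GolGui}, Martinet \cite{Mar76,Mar77} and Wall \cite{Wal81}. So there is no in-paper argument to compare yours against; what you have sketched is, in its overall architecture, the standard proof from those references --- graph embedding to reduce to a projection, induction that peels off one coordinate at a time, and the one-variable Weierstrass--Malgrange division theorem as the irreducible analytic core. That architecture is correct, and your decision to cite the $\bar\partial$/almost-analytic-extension argument (or Mather's distributional one) for the analytic core is exactly what any reasonable write-up would do.

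There is, however, one genuine gap in the middle of your sketch: the reduction of the elementary one-variable step from a general finitely generated $\mathcal{E}_{q+1}$-module $E$ to the cyclic case $\mathcal{E}_{q+1}/F\cdot\mathcal{E}_{q+1}$ cannot be done by ``presenting $E$ as a quotient of a free module.'' A free module $\mathcal{E}_{q+1}^{r}$ violates the finiteness hypothesis ($\mathcal{E}_{q+1}^{r}/\pi^{*}(\mathcal{M}_{q})\mathcal{E}_{q+1}^{r}$ is infinite-dimensional over $\mathbb{R}$), so the hypothesis does not descend from a free cover, and nothing in that maneuver produces the distinguished function $F$ of finite order in $t$. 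The missing idea is the characteristic-polynomial trick: choose generators $e_{1},\dots,e_{r}$ of $E$, write $t\cdot e_{i}=\sum_{j}a_{ij}e_{j}+m_{i}$ with $a_{ij}\in\mathcal{E}_{q}$ (constants will do) and $m_{i}\in\pi^{*}(\mathcal{M}_{q})E$ --- possible because $E/\pi^{*}(\mathcal{M}_{q})E$ is finite-dimensional and multiplication by $t$ acts on it --- and then apply the division theorem to $F(t,y)=\det\bigl(tI-(a_{ij})\bigr)$, which restricts on $y=0$ to a monic polynomial of degree $r$ in $t$ and annihilates $E$ modulo $\pi^{*}(\mathcal{M}_{q})E$ by Cayley--Hamilton. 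This is where the division theorem actually enters for a general module, and without it your chain of reductions does not close. The rest of the sketch (the surjectivity of $\gamma^{*}$, the identity $\pi^{*}(\mathcal{M}_{p})\cdot E=f^{*}(\mathcal{M}_{p})\cdot E$, the propagation of the finiteness hypothesis along the chain of single-coordinate projections, and the final Nakayama step over $\mathcal{E}_{p}$) is sound.
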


 The next proposition is a consequence of the Preparation theorem. It
 is an useful tool to study $\mathcal{A}$-finite determinacy.

{ 
 \begin{proposition}[Bruce, du Plessis and Wall \cite{BruPleWal}, Lemma 2.6]\label{bpw}
  Let $C$ be a finitely generated $\mathcal{E}_{n}$-module,
  ${B} \subset {C}$ a finitely generated
  ${\mathcal E}_{n}$-submodule, ${A} \subset f^{*}(\mathcal{M}_{p}){C}$ a finitely generated
  $\mathcal{E}_{p}$-submodule   (via $f$), and ${M}$ a proper,
  finitely generated ideal in $\mathcal{E}_{n}.$ If 
$$
{M}{C} \subset {A} + {B} +
{M}(f^{*}(\mathcal{M}_{p}) + {M}){C}
$$
then ${M}{C} \subset {A} + {B}.$
\end{proposition}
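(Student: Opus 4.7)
The plan is to show $MC \subset A + B$ by producing an obstruction module $D$ and proving $D = 0$ via a two-stage Nakayama argument bridged by the Preparation Theorem \ref{prepa}: the first Nakayama in the local ring $\mathcal{E}_n$ kills the $M$-contribution, then Malgrange promotes $\mathcal{E}_n$-finite generation to $\mathcal{E}_p$-finite generation via $f^{*}$, and a second Nakayama in $\mathcal{E}_p$ kills the $f^{*}(\mathcal{M}_p)$-contribution.

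Concretely, I would first reduce modulo $B$, which is $\mathcal{E}_n$-invariant, so that without loss of generality $B = 0$ and the hypothesis reads $MC \subset A + f^{*}(\mathcal{M}_p) MC + M \cdot MC$. Let $K$ be the $\mathcal{E}_n$-submodule of $MC$ generated by $A \cap MC$, and set $D := MC/K$, which is a finitely generated $\mathcal{E}_n$-module (since $M$ and $C$ are finitely generated, so is $MC$). For any $mc \in MC$, decompose $mc = \alpha + \gamma + \delta$ with $\alpha \in A$, $\gamma \in f^{*}(\mathcal{M}_{p}) MC$, $\delta \in M \cdot MC$; since $\gamma, \delta \in MC$, one has $\alpha = mc - \gamma - \delta \in MC$, hence $\alpha \in A \cap MC \subset K$. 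Reducing modulo $K$ this yields $D = f^{*}(\mathcal{M}_{p}) D + M D$. Since $M$ is a proper ideal of the local ring $\mathcal{E}_n$, $M \subset \mathcal{M}_n$, and Lemma \ref{naka} applied to $D / f^{*}(\mathcal{M}_{p})D$, which is annihilated only by its entire self under $M$, gives $D = f^{*}(\mathcal{M}_{p}) D$; in particular $\dim_{\mathbb R} D / f^{*}(\mathcal{M}_{p}) D = 0 < \infty$, so the Preparation Theorem promotes $D$ to a finitely generated $\mathcal{E}_p$-module via $f^{*}$. A final application of Nakayama for the maximal ideal $\mathcal{M}_p \subset \mathcal{E}_p$ then forces $D = 0$, so $MC = K$ and one recovers $MC \subset A + B$.

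The main obstacle is the delicate tracking of which module structure is active at each step: $A$ is only invariant under $f^{*}(\mathcal{E}_p)$ while $B$, $M$ and $MC$ are $\mathcal{E}_n$-objects, so a single Nakayama with the combined ideal $f^{*}(\mathcal{M}_{p}) + M \subset \mathcal{M}_n$ is insufficient to conclude. The Preparation Theorem's role is precisely to bridge these two structures on $D$ once the first Nakayama has trivialised the $M$-factor, so that $f^{*}(\mathcal{M}_{p})$ may thereafter function as the Jacobson radical of the $\mathcal{E}_p$-module structure on $D$ in the decisive second Nakayama step.
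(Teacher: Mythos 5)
Your overall architecture (Nakayama over $\mathcal{E}_n$ to remove the $M$-tail, then the Preparation Theorem to change rings, then Nakayama over $\mathcal{E}_p$) is the right one --- it is the same mechanism the paper deploys in the proof of Theorem \ref{gaf-wal}(II), and the paper itself only cites \cite{BruPleWal} for this proposition rather than proving it. The intermediate steps you carry out on $D=MC/K$ are also individually sound: $D$ is a finitely generated $\mathcal{E}_n$-module, the identity $D=f^{*}(\mathcal{M}_p)D+MD$ follows from the hypothesis exactly as you say, Nakayama kills $D/f^{*}(\mathcal{M}_p)D$, Theorem \ref{prepa} then applies, and the second Nakayama gives $D=0$.

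The final inference, however, is wrong, and the error is precisely the module-structure mismatch that you name as ``the main obstacle'' in your closing paragraph. You define $K$ as the \emph{$\mathcal{E}_n$-submodule} of $MC$ generated by $A\cap MC$. Since $A$ is only an $f^{*}(\mathcal{E}_p)$-module, $K=\mathcal{E}_n\langle A\cap MC\rangle$ contains elements $g\cdot a$ with $g\in\mathcal{E}_n$ and $a\in A\cap MC$ that have no reason to lie in $A$; so $D=0$ proves $MC=\mathcal{E}_n\langle A\cap MC\rangle$, which is strictly weaker than $MC\subset A+B$. If you instead take $K$ to be the $f^{*}(\mathcal{E}_p)$-span, so that $K\subset A$, then $D$ is no longer an $\mathcal{E}_n$-module and both your first Nakayama step and Theorem \ref{prepa} become inapplicable. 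The standard proof resolves this by splitting the argument in two passes: first multiply the hypothesis by $M$, absorb $MA$ into $f^{*}(\mathcal{M}_p)MC$, and apply Nakayama over $\mathcal{E}_n$ to the finitely generated $\mathcal{E}_n$-module $\bigl(f^{*}(\mathcal{M}_p)MC+MB+M^{2}C\bigr)/\bigl(f^{*}(\mathcal{M}_p)MC+MB\bigr)$ to obtain $M^{2}C\subset f^{*}(\mathcal{M}_p)MC+MB$; substituting back yields $MC\subset A+B+f^{*}(\mathcal{M}_p)MC$, and only then does one quotient by the genuine $f^{*}(\mathcal{E}_p)$-module $A+B$, feeding the Preparation Theorem through an auxiliary finitely generated $\mathcal{E}_n$-module such as $(B+MC)/B$ that surjects onto it. This is exactly the pattern of the modules $E$, $E'$ and $E''$ in the paper's proof of Theorem \ref{gaf-wal}, and it is the step your construction sidesteps rather than solves.
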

}
 
We are now ready to prove Theorem \ref{gaf-wal}.

\begin{proof}[of Theorem \ref{gaf-wal}]
First we notice that (i) and (ii)  give respectively the implications
$(1)\Rightarrow (2)$ and  $(2)\Rightarrow (1).$ 
The implication  $(2)\Rightarrow (3)$ is trivial since
$\mathcal{M}^{k}_{n}\Theta_{f}$ has finite codimension.
 
It is easy to prove the equivalence between $(3)$ and $(4).$ {The
implication $(3)\Rightarrow (2)$ will follow from (iii), as we now
explain.}

For any
$\mathcal{G}=\mathcal{R},\mathcal{C},\mathcal{K},\mathcal{L},\mathcal{A}$
let
$$ c_{k}=\dim_{\mathbb K}\frac{\mathcal{M}_{n}\Theta_{f}}{T\mathcal{G}f
  +\mathcal{M}^{k}_{n}\Theta_{f}}, \ k \geq 1.
$$

Since $\mathcal{G}$-$\textrm{cod}\, f< \infty,$ the sequence
$$
0=c_{1}\leq c_{2}\leq \dots \leq \mathcal{G}\textrm{-cod}\, f$$
is finite.

Then, there exists $s$ such that $c_{k}=c_{s}$ for all $k\geq s+1.$ It
follows that
$T\mathcal{G}f+\mathcal{M}^{s}_{n}\Theta_{f} =
T\mathcal{G}f+\mathcal{M}^{k}_{n}\Theta_{f}$ for all $k\geq s+1.$ In
particular $\mathcal{M}^{s}_{n}\Theta_{f}\subseteq
T\mathcal{G}f+\mathcal{M}^{k}_{n}\Theta_{f}$ for all $k\geq s+1.$
Taking $k=s+1,$ when
$\mathcal{G}=\mathcal{R},\mathcal{C},\mathcal{K}$ and $k=2s,$ when
$\mathcal{G}=\mathcal{A},\mathcal{L},$ we obtain the statement in
(iii) from which the result follows.

{It suffices to prove (i), (ii) and (iii). For a clearer presentation,
we first prove (iii).}

If $\mathcal{G}=\mathcal{R},\mathcal{C},\mathcal{K},$ the result
follows easily by Nakayama's Lemma. If $\mathcal{G}=\mathcal{A}$ (the
argument for $\mathcal{G}=\mathcal{L}$ is similar) we apply  Proposition \ref{bpw} taking $C=\Theta_{f},$
$M=\mathcal{M}^{k+1}_{n},$  $B=tf(\mathcal{M}_{n}\Theta_{n})$ and
$A=\omega f(\mathcal{M}_{p}\Theta_{p}).$

We leave the details as an exercise to the reader.
\index{Finite determinacy!necessary condition for}
  \noindent \textbf{(i) Necessary condition for finite determinacy.}

This is not hard. A map-germ $f:(\mathbb
  R^{n},0) \to (\mathbb R^{p},0)$ is $k$-$\mathcal{G}$-determined if
  $\mathcal{G}  f$ contains all germs $g \in \mathcal{E}_{n}^{p},$ such
  that $j^{k}g(0)=j^{k}f(0).$ Let us denote this set by $\mathtt{W}.$

  Let
  \begin{align*}
    \pi^{l}:\mathcal{E}_{n}^{p} &\to J^{l}(n,p) \\
    g &\to j^{l}g(0).
  \end{align*}
As $\mathcal{G} f\supset \mathtt{W},$  then
$\pi^{l}(\mathcal{G} f)\supset \pi^{l}(\mathtt{W}).$ Thus we also get that
  \begin{equation}
    \label{eq:3}
      \textrm{the tangent space of } \pi^{l}(\mathcal{G}f)
  \supset \textrm{the tangent space of } \pi^{l}(\mathtt{W}).
\end{equation}

Notice that for all $l>k,$ the set $\pi^{l}(\mathtt{W})$ is the
  affine subspace of $J^{l}(n,p)$ consisting of all $l$-jets whose
  $k$-jet is $j^{k}f(0).$ Hence we can rewrite  \eqref{eq:3} as
$$ T\mathcal{G}f + \mathcal{M}_{n}^{l+1}\Theta_{f}\supset
\mathcal{M}_{n}^{k+1}\Theta_{f},\, l>k.$$

The result now follows from (iii) taking $l=k+1$ for
$\mathcal{G}=\mathcal{R},\,\mathcal{C}$ or $\mathcal{K}$ and  $l=2k+1$
 when $\mathcal{G}=\mathcal{A}$ or $\mathcal{L}.$ 

\index{Finite determinacy!sufficient condition for} 
\noindent \textbf{(ii) Sufficient condition for finite determinacy.}

  Let $f,g \in \mathcal{E}_{n}^{p},$ $j^{\epsilon k +1}f(0)=
  j^{\epsilon k +1}g(0)$, $\epsilon= 1$ or $2,$ $F(x,t)=(\bar
  f(x,t),t),$ where $\bar f(x,t)= (1-t)f(x) + tg (x), t\in [0,1].$

\noindent (\textbf{I}) $\mathcal{G}=\mathcal{R},\,\mathcal{C}$ or $\mathcal{K}.$

  In these cases the hypothesis

 \begin{equation}
   \label{eq:7}
   T\mathcal{G}f\supset \mathcal{M}_{n}^{k+1}\Theta_{f}
 \end{equation}
implies
\begin{equation}
  \label{eq:8}
  T\mathcal{G}_{un}(F)+\mathcal{M}^{k+2}_{n+1}\Psi_{F}\supseteq \mathcal{M}^{k+1}_{n}\Psi_{F}.
\end{equation}
 
 The proof that \eqref{eq:7} implies \eqref{eq:8} is not hard, but we omit it (the reader may consult Wall
 \cite{Wal81} or du Plessis \cite{Ple80}).

 The tangent spaces $T\mathcal{G}_{un}(F),$ $\mathcal{G}=\mathcal{R},
 \mathcal{C}$ or $\mathcal{K},$ are finitely generated
 $\mathcal{E}_{n+1}$-modules, so we can apply Nakayama's lemma to
 \eqref{eq:8} with $C= T\mathcal{G}_{un}(F)+\mathcal{M}^{k+1}_{n}\Psi_{n+1},$
 $A=T\mathcal{G}_{un}(F)$ and  $M= \mathcal{M}_{n+1}$ to get
 $T\mathcal{G}_{un}(F)\supseteq \mathcal{M}_{n}^{k+1}\Psi_{F}. $

Now, $\partial\cdot F = g-f \in \mathcal{M}_{n}^{k+2}\Psi_{F},$ and we
can apply the Thom-Levine lemma to prove that $F$ is
$\mathcal{G}$-trivial in some neighborhood of $t=0.$ For a proof of
the Thom-Levine lemma for $\mathcal{G}=\mathcal{K}$ see 
du Plessis et all \cite{GibWirPleLoo}. Notice that
$j^{k+1}\bar f_{t}(0)= j^{k+1}f(0), $ and the hypothesis (ii) holds
for $\bar f_{a},$ for any $a\in [0,1]$ , so
that the arguments of the proof also hold to prove that $F$ is
$\mathcal{G}$-trivial in a small neighborhood of $t=a$ for any $a \in
[0,1].$ Hence f is $(k+1)$-$\mathcal{G}$-determined, $\mathcal{G}=\mathcal{R},\,\mathcal{C}$ or $\mathcal{K}.$

\noindent (\textbf{II}) $\mathcal{G}=\mathcal{L}$ or $\mathcal{A}.$

In these cases, $T\mathcal{G}_{un}(F)$ is not an $\mathcal{E}_{n+1}$-module
 in general. Let $\mathcal{G}=\mathcal{A}$ ( the case
 $\mathcal{G}=\mathcal{L}$ follows as a particular case).

 $$TA_{un}(F)=tF(\mathcal{M}_{n}\Psi_{n+1})+{\omega}F(\mathcal{M}_{p}\Psi_{p+1}),$$
 $$F(x,t)=(\bar f(x,t), t),\ \bar f(x,t)= (1-t)f(x)+tg(x),$$
 and
 $j^{2k+1}f(0)=j^{2k+1}g(0)$
 
 First notice that if $F_{0}(x,t)=(f(x),t)$ is the suspension of $f,$
 the hypothesis $\mathcal{M}^{k+1}_{n}\Theta_{f}\subseteq
 tf(\mathcal{M}_{n}\Theta_{n}) +{\omega}f(\mathcal{M}_{p}\Theta_{p})$ implies
 that $$
 \mathcal{M}^{k+1}_{n}\Theta_{F_{0}}\subseteq
 tF_{0}(\mathcal{M}_{n}\Psi_{n+1})+{\omega}F_{0}(\mathcal{M}_{p}\Psi_{p+1})+(t\mathcal{M}_{n}^{k+1} +
 \mathcal{M}_{n}^{2k+2})\Psi_{F_{0}}
.$$
 Notice that $\mathcal{M}_{n}^{k+1}\Psi_{F_{0}}\subset
 \mathcal{M}_{n}^{k+1}\Theta_{f}+t\mathcal{M}_{n}^{k+1}\Psi_{F_{0}}.$

 The next step is to verify that similar  inclusion holds replacing
 $F_{0}$ by $F,$ $j^{2k+1}\bar f_{t}(0)=j^{2k+1}f(0), $ that is

 \begin{equation}
   \label{eq:9}
   \mathcal{M}_{n}^{k+1}\Psi_{F} \subset  tF(\mathcal{M}_{n}\Psi_{n+1})
   + {\omega}F(\mathcal{M}_{p}\Psi_{p+1})+(t\mathcal{M}_{n}^{k+1}+ \mathcal{M}_{n}^{2k+2})\Psi_{F}
 \end{equation}
 (see sublemma 2.2 in du Plessis \cite{Ple80}).

 If we can show that the term $(t\mathcal{M}_{n}^{k+1}+
 \mathcal{M}_{n}^{2k+2})\Psi_{F}$ can be eliminated in \eqref{eq:9}
 then the Thom-Levine lemma can be applied to prove that $F$ is
 $\mathcal{A}$-trivial.

 To achieve this goal Malgrange's  preparation theorem will be the
 fundamental tool.

 Multiplying \eqref{eq:9} by $\mathcal{M}_{n}^{k+1}$ and since $\mathcal{M}_{n}^{k+1}{\omega}F(\mathcal{M}_{p}\Psi_{p+1})\subset
 F^{*}(\mathcal{M}_{p})\mathcal{M}_{n}^{k+1} \Psi_{F},$
we get
\begin{equation}
  \label{eq:10}
  \mathcal{M}_{n}^{2k+2}\Psi_{F}\subset
  tF(\mathcal{M}_{n}^{k+2}\Psi_{n+1}) +
  F^{*}(\mathcal{M}_{p})\mathcal{M}_{n}^{k+1}\Psi_{F} +(t +\mathcal{M}_{n}^{k+1})\mathcal{M}_{n}^{2k+2}\Psi_{F}.
\end{equation}

The $\mathcal{E}_{n+1}$-module

$$
E=\frac{tF(\mathcal{M}_{n}^{k+2}\Psi_{n+1}) +
  F^{*}(\mathcal{M}_{p})\mathcal{M}_{n}^{k+1}\Psi_{F} +
  \mathcal{M}_{n}^{2k+2}\Psi_{F}.}
{tF(\mathcal{M}_{n}^{k+2}\Psi_{n+1}) +  F^{*}(\mathcal{M}_{p})\mathcal{M}_{n}^{k+1}\Psi_{F}}
$$
is finitely generated, since it is a quotient of finitely generated modules.
Moreover, from \eqref{eq:10} we get that $E=(t +\mathcal{M}_{n}^{k+1})E,$ and by
Nakayama's lemma it follows that $E=0.$ Then, we get
\begin{equation}
  \label{eq:11}
 \mathcal{M}_{n}^{2k+2}\Psi_{F}\subset  tF(\mathcal{M}_{n}^{k+2}\Psi_{n+1}) +
  F^{*}(\mathcal{M}_{p})\mathcal{M}_{n}^{k+1}\Psi_{F}.
\end{equation}

Using \eqref{eq:11} to replace part of the remainder term in
\eqref{eq:9}, we get
\begin{equation}
  \label{eq:12}
 \mathcal{M}_{n}^{k+1}\Psi_{F} \subset tF(\mathcal{M}_{n}\Psi_{n+1})
 + {\omega}F(\mathcal{M}_{p}\Psi_{p+1})+ (t+F^{*}(\mathcal{M}_{p}))\mathcal{M}_{n}^{k+1}\Psi_{F}.
\end{equation}

Let $E'$ be the $F^{*}(\mathcal{E}_{p+1})$-module
$$
E'=\frac{tF(\mathcal{M}_{n}\Psi_{n+1}) +{\omega}F(\mathcal{M}_{p}\Psi_{p+1})+
  \mathcal{M}_{n}^{k+1}\Psi_{F}}{tF(\mathcal{M}_{n}\Psi_{n+1}) +{\omega}F(\mathcal{M}_{p}\Psi_{p+1})}.
$$

Using \eqref{eq:12}, it follows that
$E'=(t+F^{*}(\mathcal{M}_{p}))E'.$ Notice that the ideal $\langle
t\rangle + F^{*}(\mathcal{M}_{p})$ is contained in
$F^{*}(\mathcal{M}_{p+1}),$ so it follows that $E'=F^{*}(\mathcal{M}_{p+1})E'.$

To apply Nakayama's lemma, one has to show that $E'$ is a
$F^{*}(\mathcal{E}_{p+1})$-module finitely generated. For this, let
the finitely generated $\mathcal{E}_{n+1}$-module 
$$
E''=\frac{tF(\mathcal{M}_{n}\Psi_{n+1}) +\mathcal{M}_{n}^{k+1}\Psi_{F}}{tF(\mathcal{M}_{n}\Psi_{n+1})}.
$$

Notice that the inclusion
$$tF(\mathcal{M}_{n}\Psi_{n+1})
+\mathcal{M}_{n}^{k+1}\Psi_{F}\subset tF(\mathcal{M}_{n}\Psi_{n+1}) +
{\omega}F(\mathcal{M}_{p}\Psi_{p+1}) +\mathcal{M}_{n}^{k+1}\Psi_{F}$$
induces an epimorphism of $F^{*}(\mathcal{E}_{n+1})$-modules $E'' \to E'$ so
that if $E''$ is a finitely generated $F^{*}
(\mathcal{E}_{p+1})$-module, then $E'$ also is.

From Malgrange preparation theorem, $E''$ is a finitely generated  
$F^{*}(\mathcal{E}_{p+1})$-module if and only if
\begin{equation}
  \label{eq:13}
\dim_{\mathbb R}\frac{E''}{F^{*}
(\mathcal{M}_{p+1})E''}< \infty.
\end{equation}

% We prove in Lemma \ref{lema614}  that \eqref{eq:13} holds.
% \begin{lemma}\label{lema614}
%   With the same conditions as in the proof of Theorem \ref{gaf-wal} it
%   follows that $E''$ is finitely generated as an
%   $F^{*}(\mathcal{E}_{p+1})$-module.
% \end{lemma}
%   \begin{proof}

Now    
    $$
    \frac{E''}{F^{*}(\mathcal{M}_{p+1})E''} \simeq
    \frac{tF(\mathcal{M}_{n}\Psi_{n+1})+\mathcal{M}_{n}^{k+1}\Psi_{F}}
 {tF(\mathcal{M}_{n}\Psi_{n+1}) +F^{*}(\mathcal{M}_{p+1})\mathcal{M}_{n}^{k+1}\Psi_{F}}
 $$

 It follows from \eqref{eq:11} that 
$$
tF(\mathcal{M}_{n}^{k+2}\Psi_{n+1})+
F^{*}(\mathcal{M}_{p+1})\mathcal{M}_{n}^{k+1}\Psi_{F} \supset \mathcal{M}_{n}^{2k+2}\Psi_{F}.
$$

As $t\in F^{*}(\mathcal{M}_{p+1}),$ we also get that 

$$ 
tF(\mathcal{M}_{n}^{k+2}\Psi_{n+1})+
F^{*}(\mathcal{M}_{p+1})\mathcal{M}_{n}^{k+1}\Psi_{F} \supset
\mathcal{M}_{n+1}^{k+1}\mathcal{M}_{n}^{k+1}\Psi_{F}, 
$$

so that,

$$
\dim_{\mathbb R} \frac{E''}{F^{*}(\mathcal{M}_{p+1})E''} \leq
\dim_{\mathbb R} \frac{\mathcal{M}_{n}^{k+1}\Psi_{F}}{\mathcal{M}_{n+1}^{k+1}\mathcal{M}_{n}^{k+1}\Psi_{F}} <\infty 
$$
Then we can
apply Nakayama's lemma to \eqref{eq:12} to get that $E'=0,$ so
that $\mathcal{M}_{n}^{k+1}\Psi_{F} \subset
tF(\mathcal{M}_{n}\Psi_{n+1}) + {\omega}F(\mathcal{M}_{p}\Psi_{p+1}).$

To conclude we proceed as in part (I).
\end{proof}

% \begin{corollary}\label{cor:6.14}
% If
% $$
% \mathcal{M}_{n}^{k+1}\Theta_{f} \subset tf(\mathcal{M}_{n}\Theta_{n})
% + {\omega}f(\mathcal{M}_{p}\Theta_{p}) + \mathcal{M}_{n}^{2k+2}\Theta_{f}
% $$  
% then $\mathcal{M}_{n}^{k+1}\Theta_{f} \subset tf(\mathcal{M}_{n}\Theta_{n})
% + {\omega}f(\mathcal{M}_{p}\Theta_{p})$
% \end{corollary}

The following result follows from Theorem \ref{gaf-wal} and Mather's
lemma.

\begin{proposition}\label{th:eps}
  Let $f\in \mathcal{E}_{n}^{p},$  $\epsilon =1$ when 
  $\mathcal{G}=\mathcal{R},\mathcal{C}$ or $\mathcal{K}$ and
  $\epsilon=2$ when   $\mathcal{G}=\mathcal{L},\mathcal{A}.$
  Then $f$ is $k$-$\mathcal{G}$-determined if and only if
  $\mathcal{M}_{n}^{k+1}\Theta_{g}\subset T\mathcal{G}g+
  \mathcal{M}_{n}^{\epsilon(k+1)}\Theta_{g}$ for all $g \in
  \mathcal{E}_{n}^{p}$ such that $j^{k}g(0)=j^{k}f(0).$
\end{proposition}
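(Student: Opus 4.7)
The plan is to combine Theorem \ref{gaf-wal} with Mather's lemma on Lie group actions applied to an affine fiber in a suitable jet space, in the spirit of the proof of \cite[Lemma 3.1]{Mat69-2}.

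For the necessary condition, the argument is direct. If $f$ is $k$-$\mathcal{G}$-determined, then every $g\in \mathcal{E}_{n}^{p}$ with $j^{k}g(0)=j^{k}f(0)$ is $\mathcal{G}$-equivalent to $f$, and hence is itself $k$-$\mathcal{G}$-determined. Applying Theorem \ref{gaf-wal}(i) to such a $g$ gives $T\mathcal{G}g\supset \mathcal{M}_{n}^{k+1}\Theta_{g}$, from which the required inclusion follows trivially.

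For the sufficient direction, I would first use the assumed inclusion together with Theorem \ref{gaf-wal}(iii) to upgrade the hypothesis, for every $g$ in the affine fiber $\mathcal{F}=\{g\in \mathcal{E}_{n}^{p}:j^{k}g(0)=j^{k}f(0)\}$, to the stronger statement $T\mathcal{G}g\supset \mathcal{M}_{n}^{k+1}\Theta_{g}$. Theorem \ref{gaf-wal}(ii) then implies that each such $g$ is $(\epsilon k+1)$-$\mathcal{G}$-determined, so it suffices to prove that the $l$-jets of all $g\in \mathcal{F}$ lie in a single $\mathcal{G}^{l}$-orbit for $l=\epsilon k+1$. To this end, set $V=(\pi^{k,l})^{-1}(j^{k}f(0))\subset J^{l}(n,p)$; this is an affine, and hence connected, submanifold of $J^{l}(n,p)$.

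Now apply Mather's lemma to $V$ under the $\mathcal{G}^{l}$-action. Condition (a) (the tangent inclusion $T_{v}V\subset T_{v}(\mathcal{G}^{l}\cdot v)$ at each $v=j^{l}g\in V$) unwinds to the statement $\mathcal{M}_{n}^{k+1}\Theta_{g}\subset T\mathcal{G}g+\mathcal{M}_{n}^{l+1}\Theta_{g}$, which follows from the strengthened hypothesis. Condition (b) (constancy of $\dim T_{v}(\mathcal{G}^{l}\cdot v)$ on $V$) is the delicate point: the inclusion $\mathcal{M}_{n}^{k+1}\Theta_{g}\subset T\mathcal{G}g$ ensures that the orbit fills the fiber directions of $V$ at every point, and what remains of $T\mathcal{G}g$ modulo $\mathcal{M}_{n}^{k+1}\Theta_{g}$ is controlled by the fixed $k$-jet of $f$. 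With both conditions verified, Mather's lemma concludes that $V$ is contained in a single $\mathcal{G}^{l}$-orbit, hence every $g\in \mathcal{F}$ is $\mathcal{G}^{l}$-equivalent, and consequently (by $l$-determinacy) $\mathcal{G}$-equivalent, to $f$.

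The main obstacle is the rigorous verification of condition (b) of Mather's lemma. The subtle issue is that $T\mathcal{G}g$ genuinely depends on the whole germ $g$, not only on $j^{k}g$, through the operators $tg$ and $\omega g$; the constancy of the orbit dimension must therefore be argued by showing that the higher-order contributions to $T\mathcal{G}g$ already lie inside $\mathcal{M}_{n}^{k+1}\Theta_{g}$ modulo $\mathcal{M}_{n}^{l+1}\Theta_{g}$, so that the image of $T\mathcal{G}g$ in $J^{l}(n,p)$ depends only on the fixed $k$-jet $j^{k}f(0)$ plus a piece of constant dimension coming from $\mathcal{M}_{n}^{k+1}\Theta_{g}/\mathcal{M}_{n}^{l+1}\Theta_{g}$.
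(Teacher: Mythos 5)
Your proposal follows exactly the route the paper indicates (it gives no details beyond ``follows from Theorem \ref{gaf-wal} and Mather's lemma''): necessity via part (i), and sufficiency by upgrading the hypothesis via part (iii), invoking part (ii) for $(\epsilon k+1)$-determinacy, and applying Mather's lemma to the affine fiber over $j^{k}f(0)$ in $J^{\epsilon k+1}(n,p)$. The ``delicate point'' you flag is in fact settled by the very observation you make: since $T_{v}V\subset T_{v}(\mathcal{G}^{l}\cdot v)$, the orbit dimension equals $\dim T_{v}V$ plus the dimension of the image of $T\mathcal{G}g$ in $\Theta_{g}/\mathcal{M}_{n}^{k+1}\Theta_{g}$, and for each of the five groups that image is determined by $j^{k}g(0)=j^{k}f(0)$ alone, so condition (b) holds and your argument is complete.
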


We see in the next example that the converse of condition $(i)$ in theorem
\ref{gaf-wal} does not hold, that is, the condition $T\mathcal{G}f\supseteq
\mathcal{M}_{n}^{k+1}\Theta_{f}$ does not imply in general that $f$ is $k$-$\mathcal{G}$-determined.

\begin{example}
  Let $f:(\mathbb R^{2},0) \to (\mathbb R, 0)$, $f(x,y)= x^{3}+y^{3},$ and
  $\mathcal G= \mathcal R.$ Then $$T\mathcal{R} f= \langle \frac {\partial f}{\partial x}, \frac
  {\partial f}{\partial y}\rangle \mathcal{M}_{2}= \mathcal{M}_{2}^{3}$$
  but $f$ is not $2$-$\mathcal{R}$-determined as $j^{2}f(0)\equiv 0.$
\end{example}

  A successful approach to a necessary and sufficient condition for
  finite determinacy appears in \cite{BruPleWal} where J. Bruce, A. du
  Plessis and C.T.C. Wall prove this condition for \emph{unipotent subgroups} of
  $\mathcal{G}=\mathcal{R},\,\mathcal{C},\,
  \mathcal{K},\,\mathcal{L}\,$ or $\mathcal{A}.$ 

  % A subgroup
 %  $\mathcal{H}$ of $\mathcal{G}$ is strongly closed in $\mathcal{G}$
 % if $\mathcal{G}_{s}\subset \mathcal{H}$ for some $s,$ and
 % $J^{s}\mathcal{H}$ is closed subgroup of $J^{s}\mathcal{G}.$

  Let $\mathcal{G}_{s}=\{h\in \mathcal{G}|\, j^{s}h(0)=j^{s}1_{\mathcal{G}}\}$
 where $1_{\mathcal{G}}$ is the identity of $\mathcal{G},$ and
 $J^{s}\mathcal{G}$ the Lie group of $s$-jets
 of elements of $\mathcal{G}.$ The sets $\mathcal{G}_{s},$ $s\geq 1$ are
 unipotent subgroups of $\mathcal{G}.$ A special case of the main
 result in \cite{BruPleWal} is the following:

 \begin{theorem}[Bruce, du Plessis, Wall \cite{BruPleWal}]
A $C^{\infty}$ map-germ $f:(\mathbb R^{n},0) \to (\mathbb R^{p},0)$ is
$r$-$\mathcal{G}_{s}$-determined $(s\geq 1)$ if and only if
$\mathcal{M}_{n}^{r+1}\Theta_{f}\subset T\mathcal{G}_{s} (f).$ 
 \end{theorem}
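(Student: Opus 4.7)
The plan is to adapt the Thom--Levine plus Nakayama scheme used in Theorem~\ref{gaf-wal}, with the key observation that the unipotence of $\mathcal{G}_s$ (i.e.\ that its elements have identity $s$-jet) eliminates the factor $\epsilon=2$ that was necessary for $\mathcal{A}$ and $\mathcal{L}$.

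For necessity, I would mimic the argument in part~(i) of Theorem~\ref{gaf-wal}: if $f$ is $r$-$\mathcal{G}_s$-determined, then the orbit $\mathcal{G}_s\cdot f$ contains $\mathtt{W}:=\{g\in\mathcal{E}_n^p:j^rg(0)=j^rf(0)\}$. Projecting to $J^l(n,p)$ for $l\geq r$ and comparing the tangent space to $\pi^l(\mathcal{G}_s\cdot f)$ at $j^lf(0)$ with the tangent space to $\pi^l(\mathtt{W})$ yields
\[
T\mathcal{G}_sf+\mathcal{M}_n^{l+1}\Theta_f\supseteq\mathcal{M}_n^{r+1}\Theta_f
\qquad\textrm{for every }l\geq r.
\]
Since $T\mathcal{G}_sf\subseteq\mathcal{M}_n^{s+1}\Theta_f$ (because the elements of $\mathcal{G}_s$ have trivial $s$-jet) and the approximating codimension chain stabilizes, a Nakayama-type step---essentially the analogue of Theorem~\ref{gaf-wal}(iii) with $\epsilon=1$---collapses the error term and yields $\mathcal{M}_n^{r+1}\Theta_f\subseteq T\mathcal{G}_sf$.

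For sufficiency, I would take $g$ with $j^rg(0)=j^rf(0)$, form $\bar f(x,t)=(1-t)f(x)+tg(x)$ and its unfolding $F(x,t)=(\bar f(x,t),t)$, and note that $\partial\cdot F=g-f\in\mathcal{M}_n^{r+1}\Theta_f\subseteq T\mathcal{G}_sf$ by hypothesis. The Thom--Levine lemma (Proposition~\ref{thom-levine}) adapted to $\mathcal{G}_s$ then reduces the problem to establishing the unfolding-level inclusion $\mathcal{M}_n^{r+1}\Psi_F\subseteq T\mathcal{G}_{s,un}(F)$, from which the trivializing one-parameter families $h_t,k_t\in\mathcal{G}_s$ are obtained by integrating vector fields.

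The hard part will be this last step---promoting the germ-level hypothesis to the unfolding-level inclusion. In Theorem~\ref{gaf-wal}(ii) for $\mathcal{A}$ and $\mathcal{L}$ this is precisely where Malgrange's preparation theorem was invoked and where the factor $\epsilon=2$ entered. My plan is to exploit unipotence directly: every element of $\mathcal{G}_s$ strictly raises the $\mathcal{M}_n$-filtration, so that $T\mathcal{G}_{s,un}(F)$ carries a bona fide $\mathcal{E}_{n+1}$-module structure at the orders relevant for $r$-determinacy. Nakayama's lemma alone should then close up the inclusion without any preparation-theorem step and without any loss of order, restoring the clean $\epsilon=1$ in the if-and-only-if. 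The principal technical verification is that the unipotence hypothesis really does produce this module structure uniformly for all $s\geq 1$, so that no auxiliary bootstrap (and no doubling of the exponent) is required.
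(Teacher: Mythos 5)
The paper does not actually prove this statement; it quotes it as a special case of the main theorem of \cite{BruPleWal}, so the comparison must be with the known Bruce--du Plessis--Wall argument. Your proposal has a genuine gap at precisely the step you identify as the hard part. The claim that unipotence of $\mathcal{G}_{s}$ endows $T\mathcal{G}_{s,un}(F)$ with a bona fide $\mathcal{E}_{n+1}$-module structure is false for $\mathcal{G}=\mathcal{A}$ or $\mathcal{L}$: the left-composition summand ${\omega}f(\mathcal{M}_{p}^{s+1}\Theta_{p})$ of the tangent space is a module only over $f^{*}(\mathcal{E}_{p})$, and raising the order of vanishing of the target diffeomorphisms does nothing to cure this --- the failure of the $\mathcal{E}_{n}$-module structure comes from the target variables, not from the filtration level. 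Hence Nakayama's lemma cannot be applied directly, and the doubling $\epsilon=2$ in Theorem \ref{gaf-wal} --- which enters exactly when one multiplies through by $\mathcal{M}_{n}^{k+1}$ to convert the ${\omega}F$-terms into $F^{*}(\mathcal{M}_{p})$-module terms before invoking the preparation theorem --- is not removed by your scheme. For the groups with $\epsilon=1$ the statement you would recover is essentially already contained in Theorem \ref{gaf-wal}, so the real content of the theorem lies in the cases your argument does not reach.

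The ingredient that actually produces the clean if-and-only-if, and which is absent from your proposal, is the theory of unipotent algebraic group actions. For $s\geq 1$ and each $l$ the jet group $J^{l}\mathcal{G}_{s}$ is a unipotent algebraic group acting algebraically on $J^{l}(n,p)$, and by the Rosenlicht--Kostant theorem its orbits are Zariski closed. This yields a strengthened form of Mather's Lemma in which the constant-dimension hypothesis (b) can be dropped and the tangent-space condition becomes necessary \emph{and sufficient} for the affine subspace of $l$-jets with prescribed $r$-jet to lie in a single orbit; letting $l$ vary then gives determinacy with no loss of order. In other words, the Thom--Levine/Nakayama/preparation machinery is bypassed rather than repaired. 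Your necessity argument is essentially sound (it parallels part (i) of Theorem \ref{gaf-wal}, with the error term collapsed via Proposition \ref{bpw}), but the sufficiency direction requires this algebraic-group input, not a module-structure claim.
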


\subsection{Classification of stable singularities}
\label{sec:class-stable-sing}

We consider here the problem of classification of stable germs with
respect to $\mathcal{A}$-equivalence. The main result is the following

\begin{theorem}[Mather \cite{Mat69-2}]\label{th:10}
  If $f, g$ are  stable germs then
  $f\widesim{\mathcal{A}}g$ if and only if the algebras $Q(f)$ and
  $Q(g)$ are isomorphic.
\end{theorem}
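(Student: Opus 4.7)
The forward direction is immediate: any $\mathcal{A}$-equivalence is in particular a $\mathcal{K}$-equivalence, and since stable germs are infinitesimally stable (hence of finite $\mathcal{A}_e$-codimension, and thus of finite $\mathcal{K}$-codimension by Theorem \ref{gaf-wal}), Theorem \ref{pr:9} gives $Q(f)\cong Q(g)$. So the content of the theorem is the converse: $Q(f)\cong Q(g) \Rightarrow f\widesim{\mathcal{A}}g$.

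My plan has three steps. First, reduce to rank-zero germs. Write $f$ and $g$ in the normal form of Example \ref{ex:6.5}: after an $\mathcal{A}$-equivalence,
\[
f(x,y)=(x,\tilde f(x,y)), \qquad g(x,y)=(x,\tilde g(x,y)),
\]
with $j^1\tilde f(0)=j^1\tilde g(0)=0$, and let $f_0(y)=\tilde f(0,y)$, $g_0(y)=\tilde g(0,y)$. Since $f,g$ are stable they are $\mathcal{K}$-finite (Theorem \ref{gaf-wal}), and $\mathcal{K}$-equivalence preserves rank; Theorem \ref{pr:9} applied to $f,g$ gives $f\widesim{\mathcal{K}}g$, so the ranks agree and the source/target dimensions of $f_0$ and $g_0$ coincide. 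Using $Q(f)=Q(f_0)$ and $Q(g)=Q(g_0)$ together with the hypothesis $Q(f)\cong Q(g)$, Theorem \ref{pr:9} now gives $f_0\widesim{\mathcal{K}}g_0$.

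Second, realize $f$ and $g$ as stable unfoldings of one and the same germ. After changing coordinates in the source and target of $f$ we may assume $f_0=g_0=:f_0$. Then $f$ and $g$ are two $r$-parameter unfoldings (where $r$ is the common rank) of the rank-zero germ $f_0$, and both are infinitesimally stable.

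Third, and this is the main step, prove the uniqueness of stable unfoldings: two infinitesimally stable $r$-parameter unfoldings of the same $\mathcal{K}$-finite rank-zero germ $f_0$ are $\mathcal{A}$-equivalent as map-germs. The strategy is the standard interpolation argument, analogous to the sufficiency proof of Theorem \ref{gaf-wal}. Form the one-parameter family $F_t=(1-t)f+tg$ inside the space of unfoldings of $f_0$; infinitesimal stability is an open condition in jet space, so $F_t$ is a stable unfolding for each $t$ in a neighbourhood of any point. For each $t_0$, infinitesimal stability of $F_{t_0}$ says
\[
\Theta_{F_{t_0}}=tF_{t_0}(\Theta_{n,0})+\omega F_{t_0}(\Theta_{p,0}),
\]
and $\partial\cdot F = g-f$ lies in $\mathcal{M}_n\Theta_{F_{t_0}}$ since $f$ and $g$ share the same restriction $f_0$ to $\{x=0\}$. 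One then solves the Thom--Levine infinitesimal equation \eqref{eq:2} for $F$ locally in $t$ using the Preparation Theorem \ref{prepa}, exactly as in the sufficiency half of the proof of Theorem \ref{gaf-wal}; the resulting vector fields integrate to a pair of isotopies trivializing $F_t$. Compactness of $[0,1]$ gives a global $\mathcal{A}$-equivalence between $f=F_0$ and $g=F_1$.

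The main obstacle is the third step: producing the infinitesimal trivialization at each $t$ and pasting the local solutions. This requires both the openness of infinitesimal stability in $J^k$ (so that one can invoke \eqref{eq:6} for every $F_{t}$, not just $F_0$) and Malgrange's Preparation Theorem to eliminate higher-order remainder terms, precisely the machinery developed in the proof of Theorem \ref{gaf-wal}(ii) for the group $\mathcal{A}$.
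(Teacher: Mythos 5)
Your forward direction and the reduction via Example \ref{ex:6.5} to rank-zero cores $f_0$, $g_0$ with $f_0\widesim{\mathcal{K}}g_0$ are fine, but the argument breaks at two places. First, ``after changing coordinates in the source and target of $f$ we may assume $f_0=g_0$'' is not justified: $\mathcal{K}$-equivalence of $f_0$ and $g_0$ means $g_0(h(x))=M(x)\,f_0(x)$ for a diffeomorphism $h$ and an invertible matrix of function germs $M$, and the factor $M$ is not implemented by coordinate changes in source and target, so it cannot be absorbed into an $\mathcal{A}$-equivalence of the unfoldings $f$ and $g$. Proving that a stable unfolding of $f_0$ is $\mathcal{A}$-equivalent to a stable unfolding of $M\cdot f_0$ is essentially the theorem itself, so as written the reduction is circular. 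Second, and more seriously, in the interpolation $F_t=(1-t)f+tg$ the openness of infinitesimal stability only guarantees that $F_t$ is stable for $t$ near $0$ and near $1$; nothing forces the straight segment to stay in the stable locus. Concretely, stability of an $r$-parameter unfolding of $f_0$ is equivalent to its initial speeds $\sigma_1,\dots,\sigma_r$ spanning the finite-dimensional space $Nf_0$ of \eqref{eq:33}, i.e.\ to a $d\times r$ matrix having full rank $d$, and the segment joining two full-rank matrices can cross the degeneracy locus (take $\sigma^g_i=-\sigma^f_i$ and $t=1/2$). The connectedness of the set of stable jets inside a single $\mathcal{K}^{p+1}$-orbit is precisely the content of Mather's key lemma, quoted in the paper as $\mathcal{A}^{p+1}z=\mathcal{K}^{p+1}z\cap St^{p+1}$; your argument assumes it rather than proves it.

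The paper's (sketched) route avoids both problems: Corollary \ref{cor:inf-sta} reduces everything to $(p+1)$-jets, Theorem \ref{pr:9} shows that $Q(f)\cong Q(g)$ puts $j^{p+1}f(0)$ and $j^{p+1}g(0)$ in one $\mathcal{K}^{p+1}$-orbit, and the orbit identity $\mathcal{A}^{p+1}z=\mathcal{K}^{p+1}z\cap St^{p+1}$ (proved by showing each $\mathcal{A}^{p+1}$-orbit is open in the manifold $\mathcal{K}^{p+1}z\cap St^{p+1}$ and that the latter is connected) then forces the two jets to be $\mathcal{A}^{p+1}$-equivalent. Your Thom--Levine machinery could be salvaged only after replacing the straight segment by a path that provably stays in the stable locus (e.g.\ a path in the set of surjections $\mathbb{R}^{r}\to Nf_0$, which is connected when $r>\dim Nf_0$ but needs an extra component argument when $r=\dim Nf_0$), and after supplying the missing comparison between stable unfoldings of $\mathcal{K}$-equivalent, rather than equal, cores.
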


The proof of this theorem follows from the following property holding
for infinitesimally stable germs: $\mathcal{A}^{p+1}z=
\mathcal{K}^{p+1}z \cap St^{p+1},$ where $z=j^{p+1}f(0),$ and
$St^{p+1}$ is the set of all stable jets in $J^{p+1}(n,p).$ We omit
the complete proof, however the main steps leading to the proof are given.

\begin{example}
  The hypothesis that $f$ and $g$ are stable is essential. For
  instance, let $f(x,y)=(x,y^{3}+xy)$ and $g(x,y)= (x,y^{3}).$ Both
  algebras $Q(f)$ and $Q(g)$ are isomorphic to
  $\frac{\mathcal{E}_{1}}{(y^{3})},$ but $f$ and $g$ are not
  $\mathcal{A}$-equivalent. In fact, $f$ is stable and
  $g$ is not.
\end{example}

The condition that $f\in \mathcal{E}_{n}^{p}$ is infinitesimally
stable is determined by its $p+1$-jet. In fact
the following holds:

\begin{proposition}[Mather \cite{Mat69-2}, Proposition I.I] \label{prop:69-2}
  The map-germ  $f:(\mathbb R^{n},S)\to (\mathbb R^{p},0)$ is
   stable if and only if
  \begin{equation}
    \label{eq:5}
    tf(\Theta_{(n,S)})+{\omega}f(\Theta_{p}) + (f^{*}(\mathcal{M}_{p}) + \mathcal{M}_{S}^{p+1})\Theta_{f}=\Theta_{f}.
  \end{equation}
\end{proposition}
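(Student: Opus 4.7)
The forward implication is immediate: by Theorem \ref{th:3.10}, stability coincides with infinitesimal stability, so $tf(\Theta_{(n,S)})+\omega f(\Theta_p)=\Theta_f$ trivially contains $(f^*(\mathcal{M}_p)+\mathcal{M}_S^{p+1})\Theta_f$ and hence yields \eqref{eq:5}. All the content is in the converse.

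Assume \eqref{eq:5}. The plan is to introduce the quotient $E:=\Theta_f/tf(\Theta_{(n,S)})$, which is a finitely generated $\mathcal{E}_{n,S}$-module and, via $f^*$, carries an additional $\mathcal{E}_p$-module structure. In $E$, condition \eqref{eq:5} reads
$$
E=\omega f(\Theta_p)+f^*(\mathcal{M}_p)E+\mathcal{M}_S^{p+1}E.
$$
Reduce further modulo $f^*(\mathcal{M}_p)E$ and let $V$ be the image of $\omega f(\Theta_p)$ in $\bar E:=E/f^*(\mathcal{M}_p)E$. Since $\omega f(\Theta_p)$ is generated over $f^*(\mathcal{E}_p)$ by the $p$ elements $\partial/\partial y_j\circ f$, the $\mathbb{R}$-space $V$ has dimension at most $p$. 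The hypothesis becomes $\bar E=V+\mathcal{M}_S^{p+1}\bar E$, hence $\dim_{\mathbb{R}}\bar E/\mathcal{M}_S^{p+1}\bar E\leq p$. The quantitative form of Nakayama's lemma (Lemma \ref{naka}(ii)) applied with $R=\mathcal{E}_{n,S}$, $M=\mathcal{M}_S$, $A=0$, $d=p$ then delivers $\mathcal{M}_S^p\bar E=0$, and substituting back gives $\bar E=V$, so $\dim_{\mathbb{R}}\bar E<\infty$.

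At this point the Malgrange preparation theorem (Theorem \ref{prepa}) applies to the finitely generated $\mathcal{E}_{n,S}$-module $E$, upgrading it to a finitely generated $\mathcal{E}_p$-module via $f^*$. Moreover, any lifts to $E$ of an $\mathbb{R}$-spanning set of $\bar E=V$ form a generating set over $\mathcal{E}_p$; we may choose such lifts inside $\omega f(\Theta_p)$. Since $\omega f(\Theta_p)$ is already closed under the $\mathcal{E}_p$-action via $f^*$, this forces $E\subseteq\omega f(\Theta_p)$, and hence $\Theta_f=tf(\Theta_{(n,S)})+\omega f(\Theta_p)$, which is infinitesimal stability.

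The main obstacle is the mismatch between the $\mathcal{E}_{n,S}$- and $\mathcal{E}_p$-module structures: $\omega f(\Theta_p)$ is an $f^*(\mathcal{E}_p)$-submodule of $\Theta_f$ but not in general an $\mathcal{E}_{n,S}$-submodule, so one cannot conclude by a direct Nakayama over $\mathcal{E}_{n,S}$. Malgrange's preparation theorem is the essential non-elementary tool that transfers control from one ring to the other, while the exponent $p+1$ in \eqref{eq:5} is precisely calibrated so that the bound $\dim_{\mathbb{R}}V\leq p$ on the image of $\omega f(\Theta_p)$ matches the hypothesis of quantitative Nakayama with $d=p$.
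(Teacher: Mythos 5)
Your proof is correct and follows essentially the same route as the paper's: bound $\dim_{\mathbb R}\,\Theta_{f}/\bigl(tf(\Theta_{(n,S)})+f^{*}(\mathcal{M}_{p})\Theta_{f}+\mathcal{M}_{S}^{p+1}\Theta_{f}\bigr)$ by $p$ via the $p$ generators of ${\omega}f(\Theta_{p})$, invoke the quantitative Nakayama lemma (Lemma \ref{naka}(ii)), and finish with the Malgrange preparation theorem. The only difference is expository: the paper stops after the Nakayama step and refers to the proof of Proposition \ref{bpw} for the preparation-theorem part, whereas you write that final step out explicitly.
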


\begin{proof}
  We need to show that \eqref{eq:5} implies
  $$
  tf(\Theta_{(n,S)})+{\omega}f(\Theta_{p})= \Theta_{f}.
  $$

  The proof is similar to the proof of Proposition  \ref{bpw} but simpler.

  Let $D=tf(\Theta_{(n,S)})+ f^{*}(\mathcal{M}_{p})\Theta_{f}.$ Note that
  $$
  {\omega}f(\mathcal{M}_{p}\Theta_{p})\subset
  f^{*}(\mathcal{M}_{p})\Theta_{f}\subset D.$$
  Then
  $$
  \dim_{\mathbb R}\frac{\Theta_{f}}{\mathcal{M}_{S}^{p+1}\Theta_{f}+D}\leq
  \dim_{\mathbb R}\frac{{\omega}f(\Theta_{p})}{{\omega}f(\mathcal{M}_{p}\Theta_{p})}
  \leq p.
  $$

  The result then follows by Lemma \ref{naka} (ii).
  \end{proof}

  \begin{remark}\label{rem:4.21}
    Mather gives  in \cite{Mat69-2}, Proposition (I.6), a simple
    geometric characterization of a stable multigerm $f:(\mathbb R^{n},S)\to (\mathbb R^{p},0),$ $S=\{x_{1},
    x_{2}, \dots, x_{r}\}.$
    Recall that if $V$ is a vector space and
    $H_{1}, \dots, H_{r}$ are subspaces of $V,$ then  $H_{1},\dots,
    H_{r}$ are in \emph{general position} if for every sequence of
    integers $i_{1}, \dots, i_{l}$ with $1\leq i_{1}\leq \dots \leq
    i_{l}\leq r$, we have $\cod (H_{i_{1}}\cap \dots \cap H_{i_{l}})
    =\cod (H_{i_{1}})+ \dots + \cod (H_{i_{l}}).$

    Let $f_{i}:U_{i}\to \mathbb R^{p},$ $i=1, \dots,
    r$ be a representative of the germ $f_{i}:(\mathbb R^{n}, x_{i})
    \to (\mathbb R^{p},0).$ Denote by $X_{i}=\{x \in U_{i}|\,
    (f_{i},x)\widesim{\mathcal{A}}(f_{i},x_{i})\}$ where $(f_{i},x) $
    denotes the germ $f_{i}:(\mathbb R^{n}, x)  \to (\mathbb
    R^{p},0),$ $i=1,\dots, r.$ Since $f$ is infinitesimally stable,
    the sets $X_{i}$ are submanifolds.
    Mather's result states that the multigerm $f$ is stable if and only if
     each branch $f_{i}:(\mathbb R^{n},x_{i})\to (\mathbb R^{p},0),$
     $i=1\dots r$ is infinitesimally stable and the images 
    $f_{i}(X_{i}), i=1, \dots, r$ are in general position.
     \end{remark}
  \begin{corollary}\label{cor:inf-sta}
     An infinitesimally stable germ $f:(\mathbb R^{n},0)\to (\mathbb R^{p},0)$ is  $(p+1)$-$\mathcal{A}$-determined.
  \end{corollary}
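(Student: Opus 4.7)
The plan is to establish $(p+1)$-$\mathcal{A}$-determinacy of $f$ by trivializing the linear homotopy connecting $f$ to any germ $g$ with the same $(p+1)$-jet.

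Fix $g$ with $j^{p+1}g(0)=j^{p+1}f(0)$, set $\bar f(x,t)=(1-t)f(x)+tg(x)$, and form the unfolding $F(x,t)=(\bar f(x,t),t)$. The first observation is that the infinitesimal stability criterion of Proposition \ref{prop:69-2},
\[
tf(\Theta_{n}) + \omega f(\Theta_{p}) + \bigl(f^{*}(\mathcal{M}_{p})+\mathcal{M}_{n}^{p+1}\bigr)\Theta_{f} = \Theta_{f},
\]
depends only on the $(p+1)$-jet of the map, so every fibre $\bar f_{t}$, sharing that jet with $f$, is infinitesimally stable as well.

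By the Thom-Levine lemma (Proposition \ref{thom-levine}), the equivalence $f\widesim{\mathcal{A}}g$ reduces to verifying
\[
\partial\cdot F \;=\; g-f \;\in\; tF(\mathcal{M}_{n}\Psi_{n+1}) + \omega F(\mathcal{M}_{p}\Psi_{p+1}).
\]
Because $g-f\in\mathcal{M}_{n}^{p+2}\Psi_{F}$, it suffices to prove the containment
\[
\mathcal{M}_{n}^{p+2}\Psi_{F} \;\subset\; tF(\mathcal{M}_{n}\Psi_{n+1}) + \omega F(\mathcal{M}_{p}\Psi_{p+1}),
\]
which is the crux of the argument.

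To obtain it I would mimic the scheme of Part~(II) of the proof of Theorem \ref{gaf-wal}, but exploit the stronger hypothesis $T\mathcal{A}_{e}f=\Theta_{f}$. The fibrewise decomposition supplied by infinitesimal stability of $f$ yields, after a direct computation in the unfolding, the weaker inclusion
\[
\mathcal{M}_{n}^{p+2}\Psi_{F} \subset tF(\mathcal{M}_{n}\Psi_{n+1}) + \omega F(\mathcal{M}_{p}\Psi_{p+1}) + \bigl(t+F^{*}(\mathcal{M}_{p})\bigr)\mathcal{M}_{n}^{p+2}\Psi_{F}.
\]
Applying the Malgrange Preparation Theorem (Theorem \ref{prepa}) to realize the appropriate quotient as a finitely generated $F^{*}(\mathcal{E}_{p+1})$-module, and then Nakayama's Lemma to absorb the $(t+F^{*}(\mathcal{M}_{p}))$ remainder, would deliver the desired containment and hence the equivalence $f\widesim{\mathcal{A}}g$.

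The main obstacle is precisely this parametric upgrade: the two summands on the right sit in different module categories---$\mathcal{E}_{n+1}$ for $tF(\mathcal{M}_{n}\Psi_{n+1})$, and $\mathcal{E}_{p+1}$ via $F^{*}$ for $\omega F(\mathcal{M}_{p}\Psi_{p+1})$---so Nakayama alone cannot close the argument and the Preparation Theorem is indispensable for bridging the two structures. The value of the hypothesis $T\mathcal{A}_{e}f=\Theta_{f}$ is that the base decomposition holds already on all of $\Theta_{f}$, not merely on $\mathcal{M}_{n}^{k+1}\Theta_{f}$, and this is what forces the sharper order of determinacy $p+1$ in place of the generic bound $2k+1$ supplied by Theorem \ref{gaf-wal}(ii).
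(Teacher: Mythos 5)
Your proof is correct in substance but follows a genuinely different route from the paper's. The paper disposes of the corollary in two lines: Proposition \ref{prop:69-2} shows that infinitesimal stability is a condition on the $(p+1)$-jet, so every $g$ in the jet fibre of $f$ is infinitesimally stable and hence $\mathcal{A}$-finitely determined, and the conclusion then follows from Proposition \ref{th:eps}, which (via Mather's lemma on Lie group actions) reduces $(p+1)$-determinacy to a tangent-space condition holding uniformly over the affine jet fibre. You instead re-run the homotopy method: connect $f$ to $g$ by the linear path, observe that every fibre $\bar f_t$ is infinitesimally stable, and trivialize the unfolding by Thom--Levine after absorbing the remainder with the Preparation Theorem and Nakayama, exactly as in Part (II) of the proof of Theorem \ref{gaf-wal}. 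Both routes are legitimate; yours is more self-contained (it does not need Proposition \ref{th:eps} or Mather's lemma) at the price of repeating the analytic work of Theorem \ref{gaf-wal}(II), while the paper's exploits machinery already in place. One caveat on your sketch: the step you describe as ``a direct computation in the unfolding'' is not quite direct. Passing from the fibrewise identity $\Theta_{\bar f_t}=t\bar f_t(\Theta_n)+\omega\bar f_t(\Theta_p)$ to a parametric inclusion for $\Psi_F$ already requires handling the constant vector fields in $\omega F(\Psi_{p+1})\setminus \omega F(\mathcal{M}_p\Psi_{p+1})$ and the $\mathcal{M}_n^{p+1}$ error term of Proposition \ref{prop:69-2}; this forces a first application of Nakayama and of the Preparation Theorem before one even reaches the inclusion you display. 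Since you invoke exactly those tools to finish, this is a matter of where the work is located rather than a genuine gap, but the intermediate inclusion should not be presented as following from infinitesimal stability by inspection.
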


  \begin{proof}
    Notice that Proposition \ref{prop:69-2} implies that if
    $j^{p+1}g(0)=j^{p+1}f(0),$ then $g$ is also infinitesimally
    stable.

    It is also clear that every such $g$ is $\mathcal{A}$-finitely determined,
    say $l$-$\mathcal{A}$-determined. Then, we can apply Proposition
    \ref{th:eps} to get the result.
  \end{proof}
  As the local algebra is a complete invariant for the
  classification of stable germs, we can ask:

  \noindent -- Can we provide a normal form of a stable germ whose
  local algebra is a given algebra $Q?$

 {
  The answer  was given by  Mather \cite{Mat69-2} and we review
  it here (see also section 1.2.5 of the Mond and Nu\~no-Ballesteros
  in this Handbook \cite{Han}).
}

{  We start with a rank zero $\mathcal{K}$-finitely determined
  $f:(\mathbb R^{n},0)\to (\mathbb R^{p},0)$, $f=(f_{1}, f_{2},\dots,
  f_{p}).$ Let
 $$
  Q(f)=\frac{\mathcal{E}_{n}}{f^{*}(\mathcal{M}_{p})\mathcal{E}_{n}}=\frac{\mathcal{E}_{n}}
  {\langle f_{1},\dots,f_{p}\rangle\mathcal{E}_{n}}.
  $$
  Since $f$ is $\mathcal{K}$-finitely determined, the quotient
}

{  \begin{equation}
    \label{eq:33}
    Nf =\frac{\Theta_{f}}{tf(\Theta_{n})+f^{*}(\mathcal{M}_{p})\Theta_{f}+{\omega}f(\Theta_{p})}
  \end{equation}
is a finite dimensional $\mathbb R$-vector space of dimension $r$ and
we can choose $\sigma_{i}\in \mathcal{E}^{p}_{n}, i=1, \dots,r$ such
that
\begin{equation}
  \label{eq:14}
  Nf = \mathbb R\{\sigma_{1}, \dots, \sigma_{r}\},
\end{equation}
}
{For practical purposes, note that the vector space $Nf$ admits the
following simpler characterization:
$$
Nf\simeq \frac{\mathcal{M}_{n}\Theta_{f}}{tf(\Theta_{n}) +f^{*}\mathcal{M}_{p}\Theta_{f}}
$$}

Let $F:(\mathbb R^{n}\times \mathbb R^{r},0)\to (\mathbb R^{p}\times
\mathbb R^{r},0)$ be the linear $r$-parameter unfolding of $f$ defined
by
\begin{equation}
  \label{eq:17}
  F(x,u)=(f(x)+\sum_{i=1}^{n}u_{i}\sigma_{i}(x), u).
\end{equation}

Then $F$ is infinitesimally stable. In, fact from \eqref{eq:14} we get
$$
  \Theta_{f}=tf(\Theta_{n})+
  {\omega}f(\Theta_{p})+f^{*}(\mathcal{M}_{p})\Theta_{f}+\mathbb
  R\{\sigma_{1}, \dots, \sigma_{r}\},
$$
which implies that
\begin{equation}
  \label{eq:15}
  \Psi_{F}= tF(\Psi_{n+r})+
  {\omega}F(\Psi_{p+r})+F^{*}(\mathcal{M}_{p+r})\Psi_{F}+\mathcal{E}_{r}\{\sigma_{1},
  \dots, \sigma_{r}\},
\end{equation}
where $\mathcal{E}_{r}\{\sigma_{1},  \dots, \sigma_{r}\}$ denotes  the
$\mathcal{E}_{r}$-module generated by $ \{\sigma_{1}, \dots,
\sigma_{r}\}.$ Notice that
$F^{*}(\mathcal{M}_{p})\mathcal{E}_{n+r}\supset \langle u_{1},\dots,
u_{r}\rangle \mathcal{E}_{n+r}.$ Then, it follows from that
$$
\Theta_{F}=tF(\Theta_{n+r})+
  {\omega}F(\Theta_{p+r})+F^{*}(\mathcal{M}_{p+r})\Theta_{F},
$$
and it follows from Proposition \ref{prop:69-2} that $F$ is
infinitesimally stable.

\begin{example}
  \noindent (\textbf{a}) $\mathcal{A}_{k}$ singularities

  Let  $f:(\mathbb R,0)\to (\mathbb R,0),$ $f(x)=x^{k+1}.$ Then 
$  Nf = \mathbb R\{1, x, \dots, x^{k-1}\}.$ From the above
construction, we obtain that

\begin{align*}
 F:\mathbb R\times \mathbb R^{k-1} &\to \mathbb R\times
  \mathbb R^{k-1}\\
(x,u) &\mapsto F(x,u)=(x^{k+1}+\sum_{i=1}^{k-1}u_{i} x^{i}, u),
\end{align*}
is infinitesimally stable.

\noindent (\textbf{b}) $\Sigma^{2,0}$ singularities $B^{\pm}_{2,2}=(x^{2}\pm
y^{2}, xy)$

(We use here du Plessis and Wall notation \cite{PleWal}. They are
denoted $\text{I}_{2,2}=(x^{2}+ y^{2}, xy)$ and $\text{II}_{2,2}= (x^{2}- y^{2}, xy)$
by Mather \cite{Mat69-2}.)

Normal forms for infinitesimally stable singularities whose local algebra are
$B^{\pm}_{2,2}$ are
\begin{align*}
 F:(\mathbb R^{2}\times \mathbb R^{2},0) &\to (\mathbb R^{2}\times
  \mathbb R^{2},0)\\
(x,y,u,v) &\mapsto F(x,y,u,v)=(x^{2}\pm y^{2}+ ux+vy, xy, u, v).
\end{align*}
\end{example}

As a consequence of the  results of this section we can state the
following addendum to Theorem \ref{th:3.10}.

\begin{theorem}[Mather \cite{Mat70}, Addendum to Theorem 4.1]\label{th:11}
Let $r\leq p+1$ and $k\geq p.$ Let  $f:N\to P$ be a proper
$C^{\infty}$ mapping. Then the following conditions are equivalent
\begin{itemize}
\item [(a)] $f$ is stable.
\item [(b)] $ _{r}j^{k}f$ is transversal to every contact class in $ _{r}j^{k}(N,P).$
\item [(c)] For every subset $S$ of $N$ having $r$ or fewer points,
  such that $f(S)$ is a single point $y\in P,$ we have
  $$ tf(\Theta_{(N,S)}) +{\omega}f(\Theta_{(P,y)} ) +\mathcal{M}_{S}^{k+1}\Theta_{f}=\Theta_{f}$$
\end{itemize}
\end{theorem}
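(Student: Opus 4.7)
My plan is to run the equivalences (a) $\Leftrightarrow$ (c) $\Leftrightarrow$ (b) by reducing each pair to results already established in the section, anchoring on the specialization $r=p+1$, $k=p$, where (c) coincides with the infinitesimal stability criterion of Theorem \ref{delta}. The forward direction (a) $\Rightarrow$ (c) is immediate from Theorem \ref{th:3.10}: stability implies infinitesimal stability, which yields $\Theta_{f}=tf(\Theta_{(N,S)})+\omega f(\Theta_{(P,y)})$ at every finite $S\subset f^{-1}(y)$, so (c) holds trivially for any $r\leq p+1$ and $k\geq p$. For the converse, observe first that (c) with $k>p$ implies (c) with $k=p$, since $\mathcal{M}_S^{k+1}\Theta_f\subset \mathcal{M}_S^{p+1}\Theta_f$ only makes the equality easier to satisfy; this reduces matters to $k=p$. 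Then, to bridge from $r<p+1$ to the full $r=p+1$ version, I would feed the hypothesized equalities into Proposition \ref{prop:69-2} together with a Nakayama reduction, exploiting the fact that on multigerms with more than $r$ preimages the condition splits over the branches.

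For (b) $\Leftrightarrow$ (c), I would mimic the algebraic unpacking behind Proposition \ref{seila}, substituting $\mathcal{K}^{k}$-orbits (the contact classes) for $\mathcal{A}^{k}$-orbits. At a multijet $z={_{r}j^{k}f(x)}$ with $x=(x_1,\ldots,x_r)$ and $f(x_i)=y$, the tangent space to the contact class through $z$ is generated modulo $\mathcal{M}_S^{k+1}\Theta_f$ by $tf(\Theta_{(N,S)})$ and $f^{*}(\mathcal{M}_p)\Theta_f$, together with the tangent to the diagonal in $P^r$. Transversality of $_{r}j^{k}f$ to this contact class thus unpacks to
\[
tf(\Theta_{(N,S)})+f^{*}(\mathcal{M}_p)\Theta_f+\omega f(\Theta_{(P,y)})+\mathcal{M}_S^{k+1}\Theta_f=\Theta_f.
\]
Proposition \ref{prop:69-2} then shows that the $f^{*}(\mathcal{M}_p)\Theta_f$ summand can be absorbed into $\omega f(\Theta_{(P,y)})$ modulo $\mathcal{M}_S^{k+1}\Theta_f$ once the other conditions hold, matching this algebraic equality to the one in (c).

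The main obstacle I anticipate is controlling the slack between contact-class transversality and $\mathcal{A}^{k}$-orbit transversality. A priori, $\mathcal{K}^{k}$-orbits are strictly larger than $\mathcal{A}^{k}$-orbits, so (b) looks formally weaker than the $\mathcal{A}$-theoretic content of (c) and (a). The resolution should go through Theorem \ref{th:10} combined with Corollary \ref{cor:inf-sta}: within the stable jet locus the two group actions have the same orbits, since stable germs are classified by their local algebra, and the bound $k\geq p$ guarantees that the $k$-jet already captures this algebra (by $(p+1)$-$\mathcal{A}$-determinacy). The complementary bound $r\leq p+1$ is in turn what allows Theorem \ref{delta} to close the loop, reducing global infinitesimal stability to checking the condition on multigerms of at most $p+1$ preimages, so that verification over the $r$-fold jet bundle is genuinely sufficient.
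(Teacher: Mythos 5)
Your overall architecture is sound and is essentially the derivation the paper has in mind (the paper gives no explicit proof, only the remark that the statement follows from the results of the section): anchor everything at $r=p+1$, $k=p$, where (c) is literally the finite-order criterion of Theorem \ref{delta}; get (a) $\Rightarrow$ (c) from Theorem \ref{th:3.10}; and identify transversality to a contact class with the algebraic condition of Proposition \ref{prop:69-2}, whose Nakayama argument absorbs the extra summand $f^{*}(\mathcal{M}_{p})\Theta_{f}+\mathcal{M}_{S}^{k+1}\Theta_{f}$ into full infinitesimal stability. Your point that the slack between $\mathcal{K}^{k}$-classes and $\mathcal{A}^{k}$-orbits is closed on the stable locus by $\mathcal{A}^{p+1}z=\mathcal{K}^{p+1}z\cap St^{p+1}$ together with $(p+1)$-determinacy (Corollary \ref{cor:inf-sta}) is also the intended mechanism, although the direct route through Proposition \ref{prop:69-2} already suffices.

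The step that fails is the proposed bridge from $r<p+1$ to $r=p+1$ ``exploiting the fact that on multigerms with more than $r$ preimages the condition splits over the branches.'' It does not split: by Mather's geometric criterion (Remark \ref{rem:4.21}), a multigerm is stable iff each branch is stable \emph{and} the images $f_{i}(X_{i})$ are in general position, and general position of $p+1$ branches is not detected by any proper subcollection. Concretely, for $n=1$, $p=2$, a proper immersed curve with an ordinary triple point satisfies condition (c) for every $S$ with at most $2$ points (each branch and each transverse pair of branches is an infinitesimally stable multigerm), yet it is not stable, since three lines through a point of the plane are not in general position and here $p+1=3$. So no argument can upgrade the hypothesis on sets of at most $r<p+1$ points to the $(p+1)$-point condition, and the implication (c) $\Rightarrow$ (a) is genuinely false for a fixed $r<p+1$. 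The hypothesis has to be read so that the case $r=p+1$ is available --- either the inequality should run the other way ($r\geq p+1$), or conditions (b) and (c) are to be imposed for all $r\leq p+1$ simultaneously, which is how Mather's Addendum (``it suffices to test $r\leq p+1$ and $k\geq p$'') is meant. Under that reading your remaining steps close the loop and no bridge is needed; the branch-splitting claim should simply be deleted.
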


\subsection{Maps of finite singularity type}
\label{sec:maps-finite-sing}
\index{Finite singularity type}\index{FST|see{ finite singularity type}}
Another fundamental notion introduced by Mather in \cite{Mat71-3} was the
notion of mappings of finite singularity type, denoted by
FST. Properties of such mappings are also discussed in \cite{GibWirPleLoo}.

A mapping $f:N\to P$ will be said of \emph{finite singularity type} if
$E=\frac{\Theta_{f}}{tf(\Theta_{N})}$ is a finite module over
$C^{\infty}(P)$ via $f.$

We can also define similarly the notion of FST for multigerms
$f:(\mathbb R^{n}, S)\to (\mathbb R^{p}, 0).$

Local properties of mappings of finite singularity type follow from
our previous discussion. The critical set of $f$ is the set $\Sigma(f)$ of
non-submersive points  of $f.$

Let $F:(\mathbb R^{n}\times \mathbb R^{r},0) \to (\mathbb R^{p}\times
\mathbb R^{r},0)$ with $F(x,u)=(\bar f(x,u),u)$ and $\bar
f(x,0)=f(x).$ If $F$ is a stable germ, we say that $F$ is a
\emph{parametrized stable unfolding} of $f.$

\begin{theorem}
  Let $f:(\mathbb R^{n}, S)\to (\mathbb R^{p}, 0).$ The following are equivalent.
  \begin{itemize}
  \item [(1)]\  $f$ is of FST.
  \item [(2)]\ $f$ is $\mathcal{K}$-finitely determined.
  \item [(3)]\  $f$ admits a stable parametrized unfolding.
  \end{itemize}    
Moreover, these conditions imply
\begin{itemize}
\item [(4)]\  for every sufficiently small representative $f:U\to V,$
  $f|_{\Sigma(f)}:\Sigma(f) \to V$ is proper and has finite fibers.  
  \end{itemize}
\end{theorem}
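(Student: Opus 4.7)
My plan is to close the loop $(1)\Rightarrow(2)\Rightarrow(3)\Rightarrow(1)$ and then deduce (4) from $\mathcal{K}$-finiteness.

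For $(1)\Leftrightarrow(2)$, I would start from the module identification
\[
\frac{\Theta_f}{T\mathcal{K}_e f}\;=\;\frac{\Theta_f}{tf(\Theta_{n,S})+f^*(\mathcal{M}_p)\Theta_f}\;\cong\;\frac{E}{f^*(\mathcal{M}_p)\,E},\qquad E:=\Theta_f/tf(\Theta_{n,S}),
\]
where the right-hand side is viewed as an $\mathcal{E}_p$-module via $f^*$. By Theorem \ref{gaf-wal}, $f$ is $\mathcal{K}$-finitely determined iff $\mathcal{K}_e$-$\cod f<\infty$, i.e.\ iff $E/f^*(\mathcal{M}_p)E$ is finite-dimensional over $\mathbb R$. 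Since $E$ is a finitely generated $\mathcal{E}_{n,S}$-module (as a quotient of the free module $\Theta_f$ of rank $p$), the Preparation Theorem (Theorem \ref{prepa}) converts this into the equivalent statement that $E$ is finitely generated as an $\mathcal{E}_p$-module via $f^*$, which is precisely the FST condition. The Preparation Theorem is the deep ingredient here.

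For $(2)\Rightarrow(3)$ I would invoke the construction carried out just above the theorem: if $f$ is $\mathcal{K}$-finitely determined, the space $Nf$ of \eqref{eq:33} has finite dimension $r$; choosing representatives $\sigma_1,\dots,\sigma_r$ and setting
\[
F(x,u)=\Bigl(f(x)+\sum_{i=1}^{r}u_i\sigma_i(x),\,u\Bigr)
\]
as in \eqref{eq:17} yields a parametrized unfolding whose infinitesimal stability is verified in \eqref{eq:15}, so $F$ is stable by Theorem \ref{th:3.10}.

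For $(3)\Rightarrow(2)$, suppose $f$ admits a stable parametrized unfolding $F(x,u)=(\bar f(x,u),u)$ with $\bar f(x,0)=f(x)$. Stability of $F$ gives, via Corollary \ref{cor:inf-sta}, that $F$ is $\mathcal{A}$-finitely determined, hence $\mathcal{K}$-finite (since $T\mathcal{A}F\subset T\mathcal{K}F$ forces $\mathcal{K}_e$-$\cod F\le\mathcal{A}_e$-$\cod F<\infty$). Thus $\dim_{\mathbb R}Q(F)<\infty$. The surjection $\mathcal{E}_{n+r}\twoheadrightarrow\mathcal{E}_n$ obtained by setting $u=0$ carries $F^*(\mathcal{M}_{p+r})\mathcal{E}_{n+r}=\langle\bar f_1,\dots,\bar f_p,u_1,\dots,u_r\rangle$ onto $f^*(\mathcal{M}_p)\mathcal{E}_n$, giving a surjection $Q(F)\twoheadrightarrow Q(f)$; hence $\dim_{\mathbb R}Q(f)<\infty$, and since $\mathcal{K}_e$-$\cod f\le\mathcal{C}_e$-$\cod f=p\cdot\dim_{\mathbb R}Q(f)$, $f$ is $\mathcal{K}$-finite. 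Finally, $(4)$ follows by the standard Mather--Gaffney isolated instability argument: $\mathcal{K}$-finiteness of $f$ guarantees that on a sufficiently small representative the only $\mathcal{K}$-unstable germs occur at $S$, so every other critical point carries a stable germ whose critical set meets fibres in finitely many points; a routine shrink-the-source construction (compact neighbourhood of $S$, then restrict $V$ to miss the image of its boundary) delivers properness and finite fibres of $f|_{\Sigma(f)}$. The main obstacle throughout is the Preparation Theorem step in $(1)\Leftrightarrow(2)$; everything else is unfolding bookkeeping and the isomorphism $Q(F)/\langle u\rangle\cong Q(f)$.
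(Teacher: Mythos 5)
Your $(1)\Leftrightarrow(2)$ and $(2)\Rightarrow(3)$ are exactly the paper's arguments (the Preparation Theorem applied to $E=\Theta_f/tf(\Theta_{n,S})$, then the linear unfolding over a basis of $Nf$), and they are fine. The other two steps each contain a genuine gap.

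In $(3)\Rightarrow(2)$ the inference ``$F$ is $\mathcal{K}$-finite, thus $\dim_{\mathbb R}Q(F)<\infty$'' is false whenever the source dimension of $F$ exceeds its target dimension. Already the fold $f(x,y)=x^{2}+y^{2}:(\mathbb R^{2},0)\to(\mathbb R,0)$ is its own stable parametrized unfolding (with $r=0$) and has $\mathcal{K}_{e}$-$\cod f=1$, yet $Q(f)=\mathcal{E}_{2}/\langle x^{2}+y^{2}\rangle$ is infinite-dimensional; so your chain through $\dim_{\mathbb R}Q(F)<\infty$ and the bound $\mathcal{K}_{e}$-$\cod f\le p\cdot\dim_{\mathbb R}Q(f)$ never gets off the ground when $n>p$, a case the theorem certainly covers. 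The repair is to avoid the local algebra as intermediary: $f$ and its unfolding $F$ have the same rank-zero genotype $f_{0}$, and $\mathcal{K}$-finiteness of a germ is equivalent to that of its genotype --- this is what the paper extracts from Example \ref{ex:6.5} (equivalently, one compares $T\mathcal{K}_{e}F$ with $T\mathcal{K}_{e}f$ directly modulo the unfolding parameters).

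For $(4)$, your key claim that $\mathcal{K}$-finiteness forces every critical point other than $S$ of a small representative to carry a stable germ is false: that isolated-instability statement is the Mather--Gaffney characterization of $\mathcal{A}$-finiteness (and even then only for holomorphic germs), and $\mathcal{K}$-finiteness is much weaker. The germ $f(x,y)=(x,y^{3})$ has $\dim_{\mathbb R}Q(f)=3$, hence is $\mathcal{K}$-finite, but every point of its critical line $\{y=0\}$ is a non-stable cusp-type point, so there is no punctured neighbourhood of stability. (The conclusion of $(4)$ still holds there, since $f$ restricted to $\{y=0\}$ is injective, but not by your argument.) The paper instead proves $(4)$ for the stable parametrized unfolding $F$ supplied by $(3)$: for an infinitesimally stable germ the general position property of Remark \ref{rem:4.21} bounds $\#\bigl(F^{-1}(y')\cap\Sigma(F)\bigr)$ by the target dimension, and $f^{-1}(y)\cap\Sigma(f)$ injects into such a fibre because $\Sigma(F)\cap\{u=0\}=\Sigma(f)\times\{0\}$; properness then comes from shrinking the representative, as you indicate.
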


\begin{remark}
  We say that $f:X\to Y$ has finite fibers (or, is finite-to-one) if
  for every $y\in Y,$ $ f^{-1}(y)$ has a finite number of points.
\end{remark}

\begin{proof}
  The equivalence $(1)\Leftrightarrow (2)$ follows from the Preparation
  Theorem. In fact $E=\frac{\Theta_{f}}{tf(\Theta_{(n,S)})}$ is a finitely
  generated $f^{*}(\mathcal{E}_{p})$-module if and only if
  $\mathcal{K}_{e}$-$\cod f=\dim_{\mathbb R}\frac {\Theta_{f}}{tf(\Theta_{(n,S)})+f^{*}(\mathcal{M}_{p})\Theta_{f}}
  < \infty.$

  We saw in section \ref{sec:class-stable-sing} that a
  $\mathcal{K}$-finitely determined germ has a stable unfolding; so
  that $(2)\Rightarrow (3).$ We saw in Example \ref{ex:6.5} that
  $Q(f)=Q(f_{0}),$ so that $(3)\Rightarrow (2).$

  It is sufficient  to prove  $(4)$ for infinitesimally stable
  germs. In this case, the general position condition implies that  for any $y\in V,$
  $f^{-1}(y) \cap \Sigma(f)$ has at most $p$ points (see Remark \ref{rem:4.21}).
\end{proof}

We shall need some extra conditions to formulate the theory of
FST mappings $f:N\to P.$ The condition that $f$ has a parametrized
stable unfolding  is fairly easily computable, but it does
not always have a global version (see Mather \cite{Mat71-3} for
counter examples).

\begin{definition}\index{Unfolding}
  Let $f:N\to P$ be  smooth. We say that $\{F,N', P', i, j\}$ is an \emph{unfolding}
  of $f$ if we have a commutative diagram
  $$
  \xymatrix{
    N'  \ar[r]^{F}    & P'  \\
    N \ar[u]_{i} \ar[r]^{f}   & P\ar[u]_{j} }
  $$
where $N', P'$ are smooth manifolds, $F$ is a smooth mapping, $i, j$
are closed smooth embeddings, $i(N)= F^{-1}(j(P))$ and $F$ is
transverse to $j.$
\end{definition}

\begin{theorem}[Mather \cite{Mat76}, Proposition 7.2]

  Let $f:N \to P$ be smooth and $N$ compact. Then $f$ is of finite
  singularity   type  if and only if there exists an unfolding
  $\{F,N', P', i, j\}$ of $f$ such that $F$ is proper and
  infinitesimally stable.
\end{theorem}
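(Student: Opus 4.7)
The plan is to treat the two implications separately, with the bulk of the work devoted to building a global stable unfolding from an FST mapping on a compact source.

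For the ``if'' direction, assume $F$ is proper and infinitesimally stable. Because $F$ is transverse to $j(P)$ and $i(N)=F^{-1}(j(P))$, in local coordinates adapted to the embeddings one can write $F(x,u)=(\bar f(x,u),u)$ with $\bar f(x,0)=f(x)$, so the local algebra of $F$ at a multi-point $i(S)$ coincides with that of $f$ at $S$, as noted in Example \ref{ex:6.5}. Since $F$ is infinitesimally stable, Corollary \ref{cor:inf-sta} and Theorem \ref{gaf-wal} give $\mathcal{K}_{e}$-$\cod F_{i(S)}<\infty$, hence $\mathcal{K}_{e}$-$\cod f_{S}<\infty$ for every multi-point $S$ of $f$. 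Compactness of $N$ combined with the generalized Malgrange Preparation Theorem (Theorem \ref{prepa}) applied over a finite cover of $f(\Sigma(f))$ upgrades this pointwise finiteness to the global statement that $\Theta_{f}/tf(\Theta_{N})$ is finitely generated as a $C^{\infty}(P)$-module via $f^{*}$, which is the FST condition.

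For the ``only if'' direction, assume $f$ is FST with $N$ compact. Then $f$ is proper, and by the previous theorem $f|_{\Sigma(f)}$ is proper with finite fibers, so each set $S_{y}=f^{-1}(y)\cap\Sigma(f)$ is finite and $f(\Sigma(f))\subset P$ is compact. Each multi-germ $f_{S_{y}}$ is $\mathcal{K}$-finite, so the normal module $Nf_{S_{y}}$ of \eqref{eq:33} is a finite-dimensional $\mathbb{R}$-vector space. The plan is to choose global sections $\sigma_{1},\dots,\sigma_{r}\in\Theta_{f}$ whose germs at every multi-point $S$ of $f$ generate $Nf_{S}$ over $\mathbb{R}$, and then set
\[
N'=N\times\mathbb{R}^{r},\quad P'=P\times\mathbb{R}^{r},\quad i(x)=(x,0),\quad j(y)=(y,0),
\]
\[
F(x,u)=\Big(f(x)+\sum_{a=1}^{r}u_{a}\,\sigma_{a}(x),\ u\Big).
\]
The properties $i(N)=F^{-1}(j(P))$, transversality of $F$ to $j(P)$ along $i(N)$, and properness of $F$ are immediate from compactness of $N$ and the freedom of the $u$-coordinate. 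Infinitesimal stability of $F$ at each multi-point $i(S)$ reduces to the multi-germ version of the computation leading from \eqref{eq:17} to \eqref{eq:15}, together with Proposition \ref{prop:69-2}.

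The principal obstacle is the \emph{simultaneous global generation}: producing a single finite tuple $(\sigma_{1},\dots,\sigma_{r})$ whose germs generate $Nf_{S}$ at every multi-point $S\subset N$ at once. The strategy is (a) to bound $\dim_{\mathbb{R}}Nf_{S}$ uniformly in $S$, using upper semicontinuity of the $\mathcal{K}_{e}$-codimension on the compact set $f(\Sigma(f))$; and (b) to cover $f(\Sigma(f))$ by finitely many open neighborhoods on each of which the local construction \eqref{eq:17} supplies sections generating $Nf$ at every nearby multi-point, and then combine them via a partition of unity, padding with zeros so that all local tuples have the same length $r$. Openness of the ``generates $Nf_{S}$'' condition at a multi-point ensures persistence under small perturbations, so this finite-cover argument produces the desired tuple. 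This is precisely where the compactness of $N$ enters essentially: without it, neither the uniform bound on $r$ nor the properness of $F$ can be guaranteed, consistent with the counterexamples to the non-compact analogue referenced just before the statement.
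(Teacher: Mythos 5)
The paper states this result with only a citation to Mather and gives no proof of its own, so I am judging your argument on its merits. The overall architecture (pull back stability/$\mathcal{K}$-finiteness along the transversal $j$ for ``if''; build $F$ from generators of $\Theta_{f}/tf(\Theta_{N})$ for ``only if'') is the right one, and the ``if'' direction is fine in outline. But the ``only if'' direction as written contains a step that fails: you verify infinitesimal stability of $F(x,u)=\bigl(f(x)+\sum_{a}u_{a}\sigma_{a}(x),u\bigr)$ only at multi-points lying in $i(N)=N\times\{0\}$, whereas infinitesimal stability is a condition at \emph{every} finite fibre of $F:N'\to P'$, and on all of $N\times\mathbb{R}^{r}$ it is genuinely false in general. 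Concretely, let $f:S^{1}\to\mathbb{R}^{2}$ be the unit circle and $\sigma_{1}=\nu$ the inward unit normal: since $f$ is an embedding, $f^{*}$ is surjective and $\nu$ alone generates $\Theta_{f}/tf(\Theta_{S^{1}})$ over $C^{\infty}(\mathbb{R}^{2})$, so $r=1$; but $F(s,u)=\bigl((1-u)(\cos s,\sin s),u\bigr)$ collapses the whole circle $u=1$ to a point and is nowhere near stable there. The repair is standard but must be made: the set of $y'\in P'$ over which the multigerm of $F$ on $F^{-1}(y')\cap\Sigma(F)$ is infinitesimally stable is open (Theorem \ref{delta} together with Proposition \ref{seila} and properness of $F|_{\Sigma(F)}$) and contains $j(P)$ by your computation, so one must take $P'$ to be such an open neighbourhood $V'$ of $j(P)$ and $N'=F^{-1}(V')$; properness survives this restriction because $N$ is compact. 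With $N'=N\times\mathbb{R}^{r}$ as you wrote it, the theorem's conclusion is not achieved.

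Two further remarks. First, the ``principal obstacle'' you identify --- simultaneous global generation of the spaces $Nf_{S}$ --- is not an obstacle: the FST hypothesis says precisely that $\Theta_{f}/tf(\Theta_{N})$ is finitely generated over $C^{\infty}(P)$ via $f$, so you may take $\sigma_{1},\dots,\sigma_{r}$ to be any finite set of global generators, and Nakayama's Lemma applied to each stalk shows their germs span every $Nf_{S}$. Your finite-cover construction is therefore unnecessary, and as phrased it is also slightly wrong: one must take the \emph{union} of the cut-off local tuples rather than combine them ``via a partition of unity'' (a partition-of-unity sum of generating tuples need not generate), and no padding to equal length is needed. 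Second, the expression $f(x)+\sum_{a}u_{a}\sigma_{a}(x)$ does not literally make sense when $P$ is not Euclidean; one should first embed $P$ in some $\mathbb{R}^{q}$, or use an exponential map, before writing the unfolding in this form.
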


\subsection{Notes}

All the results in this section  remain
true if we replace smooth germs by real analytic or complex analytic germs.
\index{Finite determinacy!of Mather's groups $\mathcal{G}$}
In particular, the notion of $\mathcal{G}$-finite determinacy for $\mathcal{G}=
\mathcal{R}, \mathcal{L}, \mathcal{A}, \mathcal{C}$ and $\mathcal{K}$
is independent of whether we consider $f$ as a real analytic,
$C^{\infty}$ or complex analytic map-germ. The Infinitesimal Criterion
of $\mathcal{G}$-finite determinacy holds with essentially the same
proof replacing Malgrange Preparation Theorem by Weirstrass
Preparation Theorem.
\index{Finite determinacy!infinitesimal criterion of}
We use the same notation $\mathcal{O}_{n}$ for the local rings of real
analytic or complex analytic map-germs at the origin. The maximal
ideal in both cases is also denoted by $\mathcal{M}_{n}.$ The set
$\mathcal{O}_{n}^{p}$ denotes the $\mathcal{O}
_{n}$-module of real or complex analytic map-germs from $(\mathbb K^{n},0)
\to (\mathbb K^{p},0),$ $\mathbb K=\mathbb R$ or $\mathbb C.$ The
following result explains the relation among finite determined germs
in these different modules.

\begin{proposition}
  Let $f:(\mathbb R^{n},0) \to (\mathbb R^{p},0) $ be a real analytic
  map-germ. The following are equivalent
  \begin{itemize}
  \item [(i)]\  $f$ is $k$-$\mathcal{G}$-determined in the space of real
    analytic map-germs $\mathcal{O}_{n}^{p}.$
  \item [(ii)]\  $f$ is $k$-$\mathcal{G}$-determined in $\mathcal{E}_{n}^{p}.$
  \item [(iii)] \hspace{1mm} The complexification of $f$,
    $f_{\mathbb C}:(\mathbb C^{n},0) \to
(\mathbb C^{p},0),$ is $k$-$\mathcal{G}$-determined in the space
$\mathcal{O}_{n}^{p}$ of holomorphic map-germs.
\end{itemize}
\end{proposition}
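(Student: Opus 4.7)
The plan is to reduce each of the three conditions to a common algebraic statement on finite jets of $f$ via the infinitesimal criterion (Theorem \ref{gaf-wal}) and its jet-level refinement (Proposition \ref{th:eps}), both of which, as stated at the opening of this Notes subsection, hold verbatim in the real analytic and complex analytic settings with Weierstrass preparation replacing Malgrange. In each of the three settings, $k$-$\mathcal{G}$-determinacy of $f$ is equivalent to a universal inclusion of the form $\mathcal{M}_n^{k+1}\Theta_g \subset T\mathcal{G}g + \mathcal{M}_n^{N}\Theta_g$ holding for every $g$ in the respective space with $j^{k}g(0)=j^{k}f(0)$, where $N=N(k,\mathcal{G})$ is a fixed integer provided by the infinitesimal criterion. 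After passing to the quotient modulo $\mathcal{M}_n^{N}\Theta_g$, this becomes a finite-dimensional linear algebra condition depending only on the jet $j^{N-1}g(0)$.

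For (i) $\Leftrightarrow$ (ii): after the mod $\mathcal{M}_n^{N}$ reduction, the universal quantifier ranges over the same set of jets in $J^{N-1}(n,p)$ in both the smooth and the real analytic settings, because every such jet is realized by a polynomial germ, which lies simultaneously in $\mathcal{E}_n^p$ and $\mathcal{O}_n^p$. The finite-dimensional quotient $\Theta_g / \mathcal{M}_n^{N}\Theta_g$ and the image of $T\mathcal{G}g$ therein are intrinsic to the jet $j^{N-1}g(0)$ and do not depend on whether the ambient ring is $\mathcal{E}_n$ or $\mathcal{O}_n$. Hence conditions (i) and (ii) are logically equivalent.

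For (ii) $\Leftrightarrow$ (iii): exploit the faithful flatness of the inclusion $\mathbb{R}\{x_1,\dots,x_n\} \hookrightarrow \mathbb{C}\{x_1,\dots,x_n\}$, a consequence of faithful flatness of $\mathbb{C}$ over $\mathbb{R}$ preserved under the relevant base change. For finitely generated $\mathbb{R}\{x\}$-submodules $A\subset B$, one has $A=B$ if and only if $A\otimes_{\mathbb{R}}\mathbb{C} = B\otimes_{\mathbb{R}}\mathbb{C}$. Applied to $T\mathcal{G}f$ and $T\mathcal{G}f + \mathcal{M}_n^{k+1}\Theta_f$, and more generally at each polynomial $g$ in the $k$-jet fiber, this identifies the real analytic infinitesimal condition with its holomorphic counterpart for $f_{\mathbb{C}}$. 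The complex jet fiber $\{w \in J^{N-1}_{\mathbb{C}}(n,p) : \pi^k(w) = (j^{k}f(0))_{\mathbb{C}}\}$ is the complexification of the real fiber, and the failure locus of the condition, being cut out by the vanishing of a cokernel module with real-coefficient equations, is empty over $\mathbb{C}$ if and only if it is empty over $\mathbb{R}$ by the same faithful flat descent.

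The main obstacle lies in the last step of (ii) $\Rightarrow$ (iii), since the universal quantifier over complex jets is nominally stronger than the one over real jets — a generic complex jet in the fiber of $f_{\mathbb{C}}$ need not be the complexification of any real jet. The resolution is to formulate the determinacy condition not pointwise on individual $g$'s but as a single module-theoretic inclusion over the convergent power series ring, at which level faithful flat descent gives the equivalence cleanly and absorbs the ostensibly stronger complex universal quantifier into the module formulation.
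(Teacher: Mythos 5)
The paper offers no proof of this proposition: it is stated in the \emph{Notes} subsection as a known fact, justified only by the remark that the infinitesimal criterion (Theorem \ref{gaf-wal}) and its consequences hold verbatim in all three categories once Malgrange's preparation theorem is replaced by Weierstrass's. Your reduction of $k$-determinacy to the finite-jet criterion of Proposition \ref{th:eps} is the natural elaboration of that remark, and it does dispose of (i) $\Leftrightarrow$ (ii): the condition $\mathcal{M}_n^{k+1}\Theta_g\subset T\mathcal{G}g+\mathcal{M}_n^{\epsilon(k+1)}\Theta_g$ depends only on a finite jet of $g$, every jet in the fibre is realized by a polynomial germ, and modulo $\mathcal{M}_n^{\epsilon(k+1)}$ the tangent space is computed identically in $\mathcal{E}_n$ and $\mathcal{O}_n$.

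The gap is in (ii) $\Rightarrow$ (iii), at precisely the point you flag. Your proposed resolution rests on the claim that the failure locus of the jet-level condition, being defined by real polynomial data, ``is empty over $\mathbb{C}$ if and only if it is empty over $\mathbb{R}$ by faithful flat descent.'' That principle is false. The condition $W\subset L_z$ (with $W=\mathcal{M}_n^{k+1}\Theta_g/\mathcal{M}_n^{\epsilon(k+1)}\Theta_g$ and $L_z$ the image of $T\mathcal{G}g$ in the jet space) is a rank condition on a polynomial family of matrices; its failure locus is constructible, not the zero set of a single coherent module, and even for honest real algebraic sets emptiness of the real points does not imply emptiness of the complex points: the module $\mathbb{R}[t]/(t^2+1)$ has vanishing fibre at every real point of the line yet nonzero fibre at $t=i$, and cokernel constructions produce exactly such modules. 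So the hypothesis ``the condition holds at every real jet in the fibre'' genuinely does not yield it at every complex jet. Faithfully flat descent does apply to a single module inclusion such as $T\mathcal{G}f\supset\mathcal{M}_n^{k+1}\Theta_f$, and complexifying that via parts (i) and (ii) of Theorem \ref{gaf-wal} gives $(\epsilon k+1)$-determinacy of $f_{\mathbb{C}}$ --- but not $k$-determinacy, and the paper's example following Proposition \ref{th:eps} ($f=x^3+y^3$) shows this single inclusion cannot characterize $k$-determinacy. Your closing sentence promises to repackage $k$-determinacy as ``a single module-theoretic inclusion over the convergent power series ring,'' but no such inclusion is exhibited; the only candidate in the paper is the Bruce--du Plessis--Wall criterion $\mathcal{M}_n^{r+1}\Theta_f\subset T\mathcal{G}_s(f)$, which characterizes determinacy for the unipotent subgroups $\mathcal{G}_s$ rather than for $\mathcal{G}$ itself, so an additional argument (comparing $\mathcal{G}$- and $\mathcal{G}_1$-determinacy degrees, or using the group action to propagate the tangent-space condition from the real points of the fibre to the whole complex fibre) is still required to close (ii) $\Rightarrow$ (iii).
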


In the complex case there are  useful geometric characterization of
$\mathcal{G}$-finite determinacy. The main result characterizes
$\mathcal{G}$-finite determined germs as map-germs with isolated
instability. The case $\mathcal{G}=\mathcal{A}$ was stated by Mather
and proved by Gaffney. For a complete account we refer to Wall
\cite{Wal81} or Mond and Nu\~no-Ballesteros \cite{MonNun}. See also
Mond and Nu\~no-Ballesteros article in this Handbook \cite{Han}
\index{Finite determinacy!Geometric criterion of}
\begin{theorem}[Geometric criterion of finite determinacy]
  A holomorphic map-germ $f:(\mathbb C^{n},0) \to
(\mathbb C^{p},0),$ is $\mathcal{A}$-finite  if and only if
there is a neighborhood $U$ of $0$ in $\mathbb C^{n}$ such that for
every finite subset $S\subset U\setminus \{0\},$ the multigerm of $f$
at $S$ is $\mathcal{A}$-stable.
\end{theorem}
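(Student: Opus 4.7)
My plan is to prove both directions via a single coherent sheaf on the target that encodes the $\mathcal{A}_e$-instability of multigerms, and then use isolated-support arguments via coherence and the Nullstellensatz in the complex analytic setting.

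First I would fix a representative $f \colon U \to V$ on small open neighborhoods of the origin and form the $\mathcal{O}_{V,y}$-module (via $f^*$)
\[
N\mathcal{A}_e f_S \;=\; \Theta_{f,S}\,/\,\bigl(tf(\Theta_{n,S})+\omega f(\Theta_{p,y})\bigr),
\]
where $S=f^{-1}(y)\cap U$ is finite. The key observation is that, assuming $f$ is of FST on $U$ (so that $f|_{\Sigma(f)}$ has finite fibers after shrinking), these modules are the stalks of a single coherent analytic sheaf $\mathcal{N}$ on $V$: one obtains $\mathcal{N}$ as the cokernel of a morphism of coherent $\mathcal{O}_V$-modules, using that $f_*\Theta_{U/V}$ is coherent on $V$ (direct image of a coherent sheaf under a finite map, which applies once we have shrunk to a proper finite representative). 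By Proposition~\ref{prop:69-2} (and Remark~\ref{rem:4.21}), the multigerm of $f$ at $S$ is $\mathcal{A}$-stable if and only if $\mathcal{N}_y=0$.

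For the implication ``$\mathcal{A}$-finite $\Rightarrow$ isolated instability'', I would argue as follows. If $f$ is $\mathcal{A}$-finite, then by Theorem~\ref{gaf-wal}(3) we have $\mathcal{A}_e\text{-}\cod f<\infty$; in particular $\mathcal{K}_e\text{-}\cod f<\infty$, so $f$ is FST and admits a stable unfolding, justifying the shrinking above. The stalk $\mathcal{N}_0$ is finite-dimensional over $\mathbb{C}$ (it is exactly the $\mathcal{A}_e$-normal space at $0$ once one verifies that the multigerm at $f^{-1}(0)=\{0\}$ recovers it). By coherence, the support of $\mathcal{N}$ in a neighborhood of $0$ is a complex analytic set; since $\dim_{\mathbb{C}}\mathcal{N}_0<\infty$, the Rückert Nullstellensatz forces this support to be zero-dimensional at $0$, hence isolated. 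Shrinking $U$ so that $V\cap\operatorname{supp}\mathcal{N}=\{0\}$, every finite $S\subset U\setminus\{0\}$ has $f(S)\ne 0$ in $V$, so $\mathcal{N}_{f(S)}=0$ and the multigerm at $S$ is stable.

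For the converse, assume isolated instability. Then for every $y\in V\setminus\{0\}$ (after shrinking), $\mathcal{N}_y=0$, so $\operatorname{supp}\mathcal{N}\subset\{0\}$. Coherence of $\mathcal{N}$ together with isolated support implies that $\mathcal{N}_0$ is a finite-dimensional $\mathbb{C}$-vector space (this is the standard ``coherent sheaf with $0$-dimensional support has finite-dimensional stalks'' statement). This gives $\mathcal{A}_e\text{-}\cod f<\infty$, i.e.\ $f$ is $\mathcal{A}$-finite by Theorem~\ref{gaf-wal}.

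The main obstacle, and the step requiring the most care, is the construction and coherence of $\mathcal{N}$: one must first ensure that $f$ admits a finite proper representative (which is where $\mathcal{K}$-finiteness / FST is genuinely needed on the forward side, and where on the converse side one has to bootstrap this from isolated instability, typically by first showing $\mathcal{K}_e\text{-}\cod f<\infty$ using a parallel coherent sheaf built from $tf(\Theta_n)+f^*(\mathcal{M}_p)\Theta_f$). Once the coherence is in place, the dichotomy between zero stalks (stability) and support concentrated at the origin (finite codimension) is a direct application of the Nullstellensatz, so the whole argument reduces to the standard ``isolated zero locus of a coherent sheaf has finite-dimensional stalk'' dictionary, which is what makes this criterion work in the complex analytic category but not in the $C^{\infty}$ one.
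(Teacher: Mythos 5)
The paper itself does not prove this theorem: it states it and refers to Wall \cite{Wal81} and to Mond and Nu\~no-Ballesteros \cite{MonNun}, so there is no in-paper argument to compare with. Your proposal reproduces, in outline, the standard proof from those references: push the $\mathcal{A}_{e}$-normal modules of the fibrewise multigerms down to a coherent sheaf $\mathcal{N}$ on the target, identify $\mathrm{supp}\,\mathcal{N}$ with the instability locus, and translate between ``$\dim_{\mathbb C}\mathcal{N}_{0}<\infty$'' and ``$\mathrm{supp}\,\mathcal{N}\subseteq\{0\}$'' by coherence and the R\"uckert Nullstellensatz. The architecture is correct and is indeed what makes the criterion specifically complex-analytic.

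Two steps are imprecise enough that they would need repair. First, the multigerm defining the stalk must be taken at $S=f^{-1}(y)\cap\Sigma(f)$, which is finite once $f$ is of FST and the representative is shrunk, not at $S=f^{-1}(y)\cap U$: for $n>p$ the latter is positive-dimensional, so $\Theta_{f,S}$ and the multigerm are undefined; you also need the (easy but necessary) remark that adjoining regular points to $S$ does not affect stability, so that the hypothesis about arbitrary finite $S\subset U\setminus\{0\}$ reduces to finite subsets of $f^{-1}(y)\cap\Sigma(f)$ --- your claim that every finite $S\subset U\setminus\{0\}$ satisfies $0\notin f(S)$ is false when $n>p$. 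Second, the $\mathcal{K}$-finiteness bootstrap in the converse direction cannot be run on ``a coherent sheaf built from $tf(\Theta_{n})+f^{*}(\mathcal{M}_{p})\Theta_{f}$'': the ideal $f^{*}(\mathcal{M}_{f(x)})$ moves with $x$, so these modules are not the stalks of a coherent sheaf on $U$, and in any case the vanishing of $N\mathcal{K}_{e}(f;x)$ only detects submersiveness (by Nakayama its support is all of $\Sigma(f)$), not $\mathcal{K}$-finiteness. The standard fix is to work with the coherent $\mathcal{O}_{U}$-module $\mathcal{F}=\Theta_{f}/tf(\Theta_{U})$: by the Preparation Theorem the germ at $x$ is $\mathcal{K}$-finite if and only if $x$ is isolated in $\mathrm{supp}\,\mathcal{F}\cap f^{-1}(f(x))$, and Remmert's upper semicontinuity of fibre dimension, combined with the stability (hence $\mathcal{K}$-finiteness) of the germs at all $x\neq 0$, forces $0$ to be isolated in $\mathrm{supp}\,\mathcal{F}\cap f^{-1}(0)$ as well. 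With these two corrections your argument agrees with the proof in the cited sources.
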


The geometric condition of this theorem (isolated instability)
holds for any real $\mathcal{A}$-finite map-germ. However, the converse
statement does not hold. For a simple example, let $f(x,y)= (x^{2}
+y^{2})^{2}.$ As $\Sigma (f)=\{0\},$ the origin is an isolated
instability, but $f$ is not $\mathcal{A}$-finitely determined.

\section{The nice dimensions}\index{Nice dimensions|(}
\label{sec:nice-dimensions}

We discuss in this section the main steps in the proof of theorem A.
Mather proved in \cite{Mat69-2} that for a pair of positive integers $(n,p),$ there exists a
smallest Zariski closed $\mathcal{K}^{k}$-invariant set
$\Pi^{k}(n,p)$ in the set $J^{k}(n,p)$
such that 
$J^{k}(n,p) \setminus \Pi^{k}(n,p)$  is the union of finitely many
$\mathcal{K}^{k}$-orbits.
The set $\Pi^{k}(n,p)$ is the ``bad set.'' It is in fact the set of
$k$-jets in  $J^{k}(n,p)$  of ``modality'' ( $\mathcal{K}$-modality)
greater than or equal to $1$ (see Section \ref{sec:thom-math-strat}
for the definition of modality).

We review Mather's construction of $\Pi^{k}(n,p).$ For each $r, k \in
\mathbb N$ we define $W^{k}_{r}(n,p)$ as the set of $z\in J^{k}(n,p)$
such that ${\mathcal K}^{k}$-$\cod{z} \geq r.$ This set is a closed
algebraic subset of $J^{k}(n,p).$  Let $W^{k}_{r}(n,p)^{*}$ denote the
union of all irreducible components of $W^{k}_{r}(n,p)$ whose
codimension is less than $r.$ We let
$\Pi^{k}(n,p)=\cup_{r\geq 0}W^{k}_{r}(n,p)^{*}.$ The following
properties hold:
\begin{itemize}
\item $\Pi^{k}(n,p)$ is a closed algebraic subset of $J^{k}(n,p).$
\item Let $\pi_{k}:J^{k+1}(n,p) \to
J^{k}(n,p)$ be the projection . It follows that
$\pi^{-1}_{k}(\Pi^{k}(n,p))\subset \Pi^{k+1}(n,p),$ hence $\cod
\Pi^{k+1}(n,p)\leq \cod \Pi^{k}(n,p).$ 
\item There exists a $k$ big enough for which the codimension
 of $\Pi^{k}(n,p)$ attains its minimum. For this $k$, $\cod
 \Pi^{k}(n,p)$ is denoted $\sigma(n,p).$
\end{itemize}
Mather calculated $\sigma(n,p)$ in \cite{Mat71} and the result is as follows:

 \noindent Case 1: $n\leq p$
 \begin{align*}
    \sigma(n,p) &=
 \begin{cases}
   6(p-n)+8 &\textrm{if } p-n \geq 4\, \textrm{ and }\, n\geq 4\\
   6(p-n)+9 &\textrm{if } 3\geq p-n \geq 0 \, \textrm{ and }\, n\geq 4\,
   \mathrm{ or\  if }\, n=3\\
   7(p-n) +10 &\textrm{if } n=2\\
   \infty &\textrm{if } n=1
 \end{cases}
\intertext{Case 2: $n>p$}
\sigma(n,p) &=
  \begin{cases}
    9 &\textrm{if } n=p+1\\
    8 &\textrm{if } n=p+2\\
    n-p+7 &\textrm{if } n\geq p+3\\
  \end{cases}
 \end{align*}

 \begin{figure}[h]
	\begin{center}
          \def\svgwidth{0.7\textwidth}
          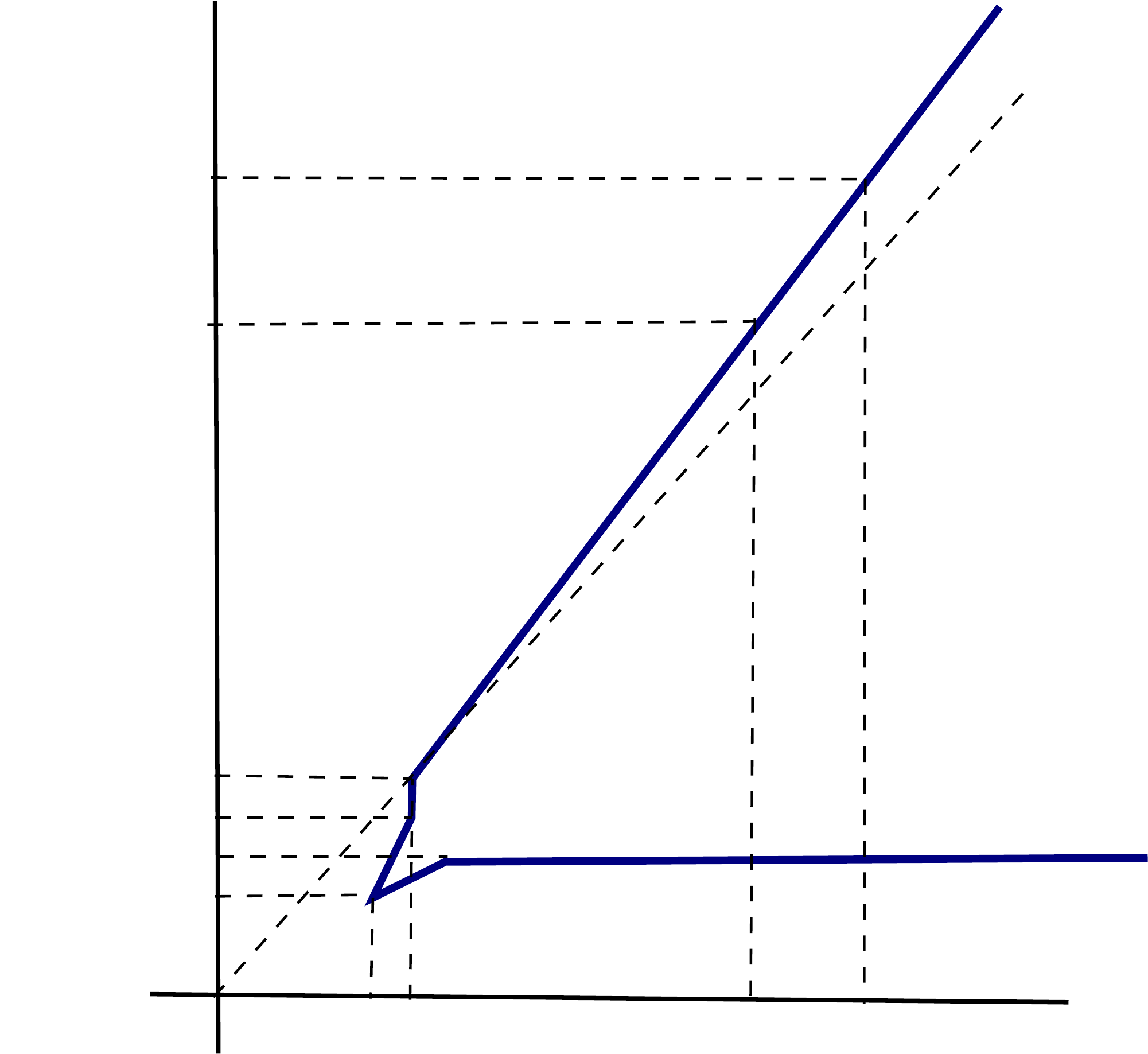
		\caption{Boundary of nice dimensions}
		\label{Sg}
	\end{center}
\end{figure}

 \begin{definition}\index{Nice dimensions}
   A pair $(n,p)$ is in the \emph{nice dimensions} if $n<\sigma(n,p).$
 \end{definition}

 Suppose $k$ has the property that  $\cod \Pi^{k}(n,p)=\sigma(n,p).$
If $(n,p)$ is in the nice dimensions, then there exists an
analytically trivial 
stratification $\mathcal{S}^{k}(n,p)$ of $J^{k}(n,p)\setminus \Pi^{k}(n,p)$ such that the
strata are a finite number of $\mathcal{K}$-orbits. To get a 
stratification of the whole jet space $J^{k}(N,P),$ we add to
$S^{k}(n,p)$  a Whitney regular stratification of $\Pi^{k}(n,p)$ (it
exists since $\Pi^{k}(n,p)$ is an algebraic closed set of
$J^{k}(n,p)$).

This stratification of $J^{k}(n,p)$ induces a
partition of $J^{k}(N,P)$ by $\mathcal{K}$-orbit bundles whose
restriction to $J^{k}(N,P)\setminus \Pi^{k}(N,P)$ is 
denoted by $S^{k}(N,P).$ 

As we saw in Theorem \ref{th:11}, stable mappings can be characterized
by transversality of the 
$k$-jet extension $j^{k}f:N\to J^{k}(N,P)$ to the $\mathcal
K^{k}$-orbits.

When $ \sigma(n,p)>n,$ transversality  to the strata
of the stratification $J^{k}(N,P),$ implies that 
 $j^{k}f(N)\cap\Pi^{k}(N,P)=\emptyset.$  Hence Theorem A  follows
from Thom's transversality theorem.
\index{Stable singularities!when $n=p\leq 8$}
\begin{example}[Stable singularities when $n=p\leq 8$]
  We refer to \cite{MonNun} for the list of stable singularities in
  the nice dimensions.

  When $n=p,$ $\sigma(n,p)=9$, then $(n,n)$ is a nice pair of
  dimensions if and only if $n\leq 8.$
  The set $\Pi^{k}(n,n)\subset J^{k}(n,n),$ $k\geq n+1,$ $n\leq 8$ is
  the closure of all ${\cal K}^{k}$-orbits of ${\cal
    K}^{k}$-codimension greater than or equal to $n+1.$ In particular,
  $\Sigma^{3}(n,n) \subset \Pi^{k}(n,n),$   where $n\leq 8$ since
  $\mathrm{cod}\, \Sigma^{3} =9.$ The strata of the stratification
  ${\cal S}^{k}(n,n),$ $k\geq n+1,$ $n\leq 8$ are presented  in
  Table \ref{tab:0}:

  \begin{table}[h]
  \centering
  \begin{tabular}[center]{|c|c|c|c|c|}
    \hline
    Type & Name & Normal form & Conditions &
                                             $\mathcal{K}$-$\textrm{cod}
                                             \leq n$\\
    \hline \hline
$\Sigma^{1}$ &$A_{j}$         &$(x^{j+1})$ & $ 1\leq j\leq n$ & $j$ \\\hline
$\Sigma^{2,0}$ &$B^{\pm}_{p,q}$ &$(xy, x^{p}\pm y^{q})$ & $2\leq p,q \leq n-2$ &
                                               $p+q $ \\\hline
$\Sigma^{2,0}$ & $B^{*}_{p,p}$ & $(x^{2}+y^{2}, x^{p})$  & $3 \leq p\leq 4$
                                           & $2p$ \\\hline
$\Sigma^{2,1}$ &$C_{2k-1}$     &$(x^{2}+y^{3},y^{3})$  &  &
                                                             $7$\\\hline
$\Sigma^{2,1}$ & $C_{2k}$      &$(x^{2}+y^{3},  xy^{2})$ &
                                           &$8$ \\
\hline    \hline
  \end{tabular}
  \caption{${\cal K}$-orbits of stable germs $n=p \leq 8$}
  \label{tab:0}
\end{table}

\end{example}

\begin{remark}\index{Stable singularities! in the nice dimensions}
  \emph{Classification of stable singularities in the nice dimensions.}
{Mather classified the stable germs in the nice dimensions as an 
application of results and arguments in \cite{Mat71}.
 He gave complete proofs of the classification of the local algebras
 of singularities of type $\Sigma^{1}$ and $\Sigma^{2,0}$ and outlined
the classification of $\Sigma^{2,1}$ and $\Sigma^{n-p+1}$
singularities. Further classification of simple and unimodular
algebras were performed by Arnold \cite{Arn75}, Wall \cite {Wal85},
Dimca and Gibson \cite{DimGib79, DimGib, DimGib85} and Damon
\cite{Dam75, Dam79-1,Dam79-3}.}

{A remarkable property  of stable map-germs in the nice dimensions is
that, with  respect  to suitable
coordinates, all singularities are weighted homogeneous. For many
years, this property was considered to be true but there was no
reference of a written proof.}

{This result was recently proved by Mond and Nu\~no-Ballesteros \cite{MonNun}
theorem 7.6. Their proof is based on Mather's classification of local
algebras of stable germs in the nice dimensions and on the direct
construction of the normal forms of their minimal stable
unfoldings. This property of the nice dimensions plays a crucial role
in the proof of Damon and  Mond \cite{DamMon} that the
$\mathcal{A}_{e}$-codimension is less than or equal to the rank of the
vanishing homology of the  discriminant ( the discriminant Milnor
number) for map germs $(\mathbb C^{n},0) \to (\mathbb C^{p},0)$ with
$n\geq p$ and $(n,p)$ nice dimensions.}
\end{remark}
\index{Nice dimensions|)}
\subsection{Notes}
\label{sec:notes-1}
\index{Map!non-proper stable}
\emph{Non proper stable mappings.} $C^{\infty}$ non-proper stable mappings were discussed by du Plessis
and Vosegaard \cite{PleVos} and more recently by Kenta Hayano
\cite{Hay}.

For proper maps $f:N\to P,$ Mather proves that stability,
strong stability, infinitesimal stability an local infinitesimal
stability are equivalent notions.
In \cite{PleVos}, du Plessis and Vosegaard prove that these notions
are equivalent  when $f$ is a quasi-proper map with closed discriminant.

The purpose of Hayano's paper, \cite{Hay}, is to give a sufficient
condition for strong stability of non-proper smooth functions $f:N\to
\mathbb R.$ He introduces the notion of \emph{end-triviality} of smooth
mappings, which controls the behavior of $f$ around the ends of the
source manifold $N.$ He shows that a Morse function is stable if it is
end-trivial at any point in its discriminant.
\index{Extra-nice dimensions}
\emph{The extra-nice dimensions.}  When the pair $(n,p)$ is in the
nice dimensions and 
the source $N$ is compact, an important problem  in the applications
of singularity theory  to topology of manifolds is the
characterization of generic singularities of $1$-parameter  paths
between two stable maps; they are also known as \emph{pseudo-isotopies.} A
$1$-parameter family $F:N\times[0,1] \to P$ connecting two
non equivalent stable maps always intersects the set of non stable maps
at a finite number of values of the parameter, the bifurcation
points. The classification of singularities of bifurcation points in
generic families of maps is an important  step in results on
elimination of singularities (see for instance \cite{Lev,BehHay}) and
on results about the topology of the space of smooth maps such as
\cite{Cer,Igu,Vas}.

We say that a family $F:N\times[0,1] \to P$ is a \emph{locally stable family}
if $F_{t}:N \to P$ is stable for all $t\in [0,1]$ except possibly a
finite number of values $\{t_{1}, \dots, t_{k}\}$ and the non stable
singularities of $F_{t}$ are a finite number of points $x_{j}$ at
which $\mathcal{A}_{e}$-cod$(F_{t_{i}})=1.$

In \cite{AtiRuaSin} Sinha, Ruas and
Atique obtain a result parallel to Mather's characterization of the
nice dimensions. They define the \emph{extra-nice dimensions} and (see
Figure \ref{fig:extra})
prove that the subset of stable $1$-parameter families in
$C^{\infty}(N\times [0,1],P)$ is dense if and only $(n,p)$ is in the
extra-nice dimensions.

\begin{figure}[h]
  \centering
 \scalebox{0.40}{\includegraphics{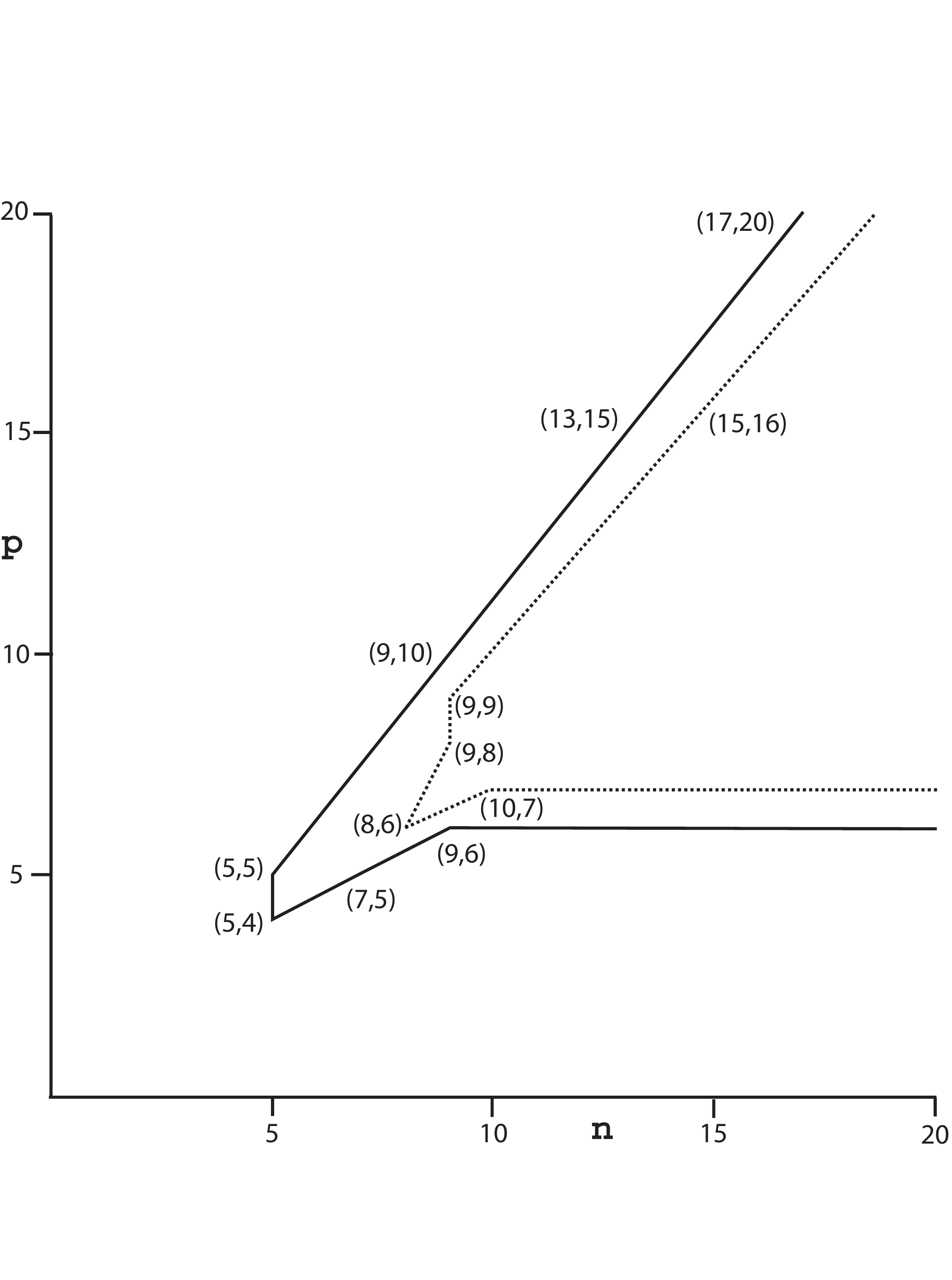}}
  \caption{Extra-nice dimensions}
  \label{fig:extra}
\end{figure}

In section \ref{sec:open-problems} we relate the condition that
$(n,p)$ is in the extra-nice dimensions to the geometry of sections of
the discriminant of stable maps in dimensions $(n+1,p+1).$

\section{Thom's example}
\label{sec:thoms-example}

If a pair of dimensions $(n,p)$ is not in the nice range of dimensions,
then there exists an open non void subset $U$ of $C^{\infty}(N,P),$
such that $U$ is the union of an uncountable number of $\mathcal{A}_{e}$-orbits.
This property was first proved by Ren\'e Thom when $n=p=9.$ We review
Thom's example (\cite{Tho68}) here.
\index{Nice dimensions!boundary of}
The pair $n=p=9$ is in the \emph{boundary of the nice dimensions},
which consists  of pairs $(n,p)$ such that $\sigma(n,p)=n.$

\label{ExemThom}
  The construction of Thom's example was based on the following
  \begin{enumerate}
  \item The set of mappings $F:N\to P,$ $\dim N= \dim P=n,$ such that $j^{k}F\pitchfork
    \Sigma^{r}(N,P),$ where $\Sigma^{r}(N,P)=\{\sigma \in J^{k}(N,P)|\, \mathrm{corank}\,
    \sigma =r\},$ $0\leq r\leq n$ is a residual set of $C^{\infty}(N,P).$
  \item $\cod_{J^{k}(N,P)}\Sigma^{r}(N,P)= r^{2}.$
  \item When $r=3,$ $n=9,$ there exists a $1$-parameter family of non
    $\mathcal{K}$-equivalent 
    mappings $F_{\lambda}:\mathbb R^{9} \to \mathbb R^{9},$ such that
    $j^{k}_{1}F:\mathbb R \times {\mathbb R}^{9} \to J^{k}(\mathbb R^{9},
    \mathbb R^{9})$ is transversal to $\Sigma^{3}(\mathbb
    R^{9},\mathbb R^{9}),$ where $j^{k}_{1}F$ denotes the $k-$jet with
    respect do the variable $x.$
  \end{enumerate}

  The sets $\Sigma^{r}$ are the first order Boardman symbols and it is
  an easy exercise to prove that they are codimension $r^{2}$
  submanifolds of $J^{k}(N,P)$ when $\mathrm{dim}(N)=\mathrm{dim}(P).$ Hence (1) follows from Thom's
  transversality theorem.

  It is sufficient to verify (3) for map-germs $F:(\mathbb R^{9},0)
  \to (\mathbb R^{9},0),$ such that $\mathrm{corank}\;F(0)=3.$ By changing
  coordinates in source and target, it follows that $F$ can be written
  in the form $F(x,u)=(f(x,u),u),$ $x=(x_{1}, x_{2}, x_{3}),$
  $u=(u_{1}, \dots, u_{6}), $ $ f_{0}(x)=f(x,0),$ where
  $f_{0}:(\mathbb R^{3},0) \to (\mathbb R^{3},0)$ has zero rank.

  The local algebras $Q(F)$ and $Q(f_{0})$ are isomorphic. As we saw
  in Example \ref{ex:6.5}, $F$ is $\mathcal{K}$-equivalent  to a suspension of $f_{0}.$
  The $2$-jet $j^{2}f_{0}$ is a quadratic polynomial mapping
  $q:\mathbb R^{3} \to \mathbb R^{3},$ which determines a net of real
  quadrics.
\index{Net of quadrics}
 Non degenerate nets of quadrics over the complex numbers were
 classified by C. T. C. Wall in \cite{Wal80}. Over the reals, the
 classification was given by Wall and Edwards in \cite{EdwWal}. The
 complete classification of real nets of quadrics can be found in \cite{PleWal}
 chapter 8, table 8.21.

 For our purpose here, it suffices to remark that the set
 $\Sigma^{3,3}$ has a Zariski open set, denoted by
 $W_{2},$   defined by the union of the
 $J^{2}\mathcal{K}$-orbits of the unimodular family:

 \begin{align}
   \label{eq:1}
   (f_{0})_{\lambda}:&(\mathbb R^{3},0) \to (\mathbb R^{3},0)\\
   &(x_{1},x_{2},x_{3})\to (x_{1}^{2}+\lambda
     x_{2}x_{3},x_{2}^{2}+\lambda x_{1}x_{3},x_{3}^{2}+\lambda x_{2}x_{3}) \nonumber
 \end{align}
 with $\lambda(\lambda^{3}+8)(\lambda^{3}-1)\neq 0.$

 For each $\lambda,$ $(f_{0})_{\lambda}$ is a homogeneous polynomial
 map of degree $2,$ hence the $J^{2}\mathcal{K}$-action in $W_{2}$
 coincides with the action of the linear group
 $\mathcal{G}=GL(3)\times GL(3)$ on $W_{2}.$ Notice that the dimension
 of the linear group $\mathcal{G}$ is $18,$ as well as the dimension
 of $W_{2}.$

 However $\mathcal{G}$ contains a one dimensional subgroup which acts
 trivially on $W_{2},$ namely $\{(cI_{\mathbb R^{3}}, \frac 1
 {c^{2}}I_{\mathbb R^{3}})\},$ $c$ a non zero number. Hence the orbits
 have codimension at least $1$ in $W_{2}.$

 We can prove that the family \eqref{eq:1} is $2$-determined with respect
to $\mathcal{K}$-equivalence. It follows that $W_{2}$ determines the
$\mathcal{K}$-invariant sets $W_{2}^{k}=(\pi_{2}^{k})^{-1}(W_{2}),$
where $\pi^{k}_{2}:J^{k}(9,9)\to J^{2}(9,9).$ Moreover,
$\cod_{J^{k}(9,9)} W_{2}=9,$ and $\mathcal{K}$-$\cod
(f_{0})_{\lambda}=10.$

In other words, $\sigma(9,9)= \cod W_{2}=9,$ so that the
unimodular stratum $W_{2}$ cannot be avoided by a generic set of
proper mappings $F:\mathbb R^{9} \to \mathbb R^{9}.$ As a consequence,
stable mappings are not dense when $n=p=9.$

For each $\lambda \notin \{0, -2, 1\},$ $(f_{0})_{\lambda}$ admits the topologically stable
unfolding 

 \begin{align}\label{nove}
   F_{\lambda}:&(\mathbb R^{9},0) \to (\mathbb R^{9},0)\\
   &(x,u) \mapsto (f_{\lambda}(x, u), u) \nonumber
 \end{align}
 where $f_{\lambda}(x,u)=( x_{1}^{2}+\lambda x_{2}x_{3} +u_{1}x_{2} +u_{2}x_{3},
 x_{2}^{2}+\lambda x_{1} x_{3} + u_{3}x_{1} +u_{4}x_{3}, x_{3}^{2}+\lambda x_{1}x_{2} + u_{5}x_{1} +u_{6}x_{2}).$

 We will discuss the topological stability of $F_{\lambda}$ in section \ref{sec:bound-nice-dimens}.

\section{Density of topologically stable mappings}
\label{sec:dens-topol-stable-1}

From the previous example, it becomes clear that outside the nice
dimensions, one has to loosen the formulation of Problem \ref{pro} to
obtain a solution. Mather considered in \cite{Mat71-2} two
possible ways.

One might hope that the space of mappings $f$ whose germ $f_{x}$ at
each point $x\in N$ is $\mathcal{A}$-finitely determined is an open
and dense subset in $C^{\infty}_{pr}(N,P).$ However, Mather gave in
\cite{Mat69-3} an example which shows that this set is not always
dense. { In \cite{Ple} du Plessis defined the \emph{semi-nice dimensions} as the
pairs $(n,p)$  for which finite determinacy holds in general (see
Definition \ref{nova}). } The complement of the semi-nice dimensions is
essentially made of pairs $(n,p)$ where singularities of
$\mathcal{K}$-modality greater than or equal to $2$ occur generically (see
\cite{Ple}, \cite {Wal85}).

The second way to try to solve the problem is based on ideas due to
Thom, and led to Theorem B on density of $C^{0}$stable mappings
in $C_{pr}^{\infty}(N,P).$

In his article \emph{Local topological properties of differentiable
  mappings} \cite{Tho64}, Thom describes the topological
structure of differentiable mappings, outlining the proof of the
topological stability theorem.

\begin{theorem}[Theorem 4, \cite{Tho64}]
  Let $z$ be any jet in $J^{r}(n,p).$ Then, there exists a positive
  integer $s$ depending only on $r, n$ and $p,$ and a proper algebraic
  variety $\Sigma$ in $\pi_{s}^{-1}(z) \subset J^{r+s}(n,p)$ such that
  any jet in $\pi_{s}^{-1}(z)$ outside $\Sigma$ is
  $C^{0}$-$\mathcal{A}$-finitely determined. Moreover, any two
  mappings realizing such jet are locally weakly stratified and
  isotopic. 
  \end{theorem}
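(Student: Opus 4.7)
The plan is to exploit the fact that, for $s$ large relative to $r,n,p$, the fiber $\pi_s^{-1}(z)$ decomposes under the action of $\mathcal{K}^{r+s}$ into finitely many orbits away from a proper algebraic set corresponding to jets of $\mathcal{K}$-modality $\geq 1$, and then to upgrade $\mathcal{K}$-equivalence of jets to $C^0$-$\mathcal{A}$-equivalence of germs via Thom's isotopy machinery.

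First I would invoke Mather's construction of the ``bad set'' $\Pi^{r+s}(n,p)\subset J^{r+s}(n,p)$ described in Section \ref{sec:nice-dimensions}, which is a closed algebraic subset outside of which $J^{r+s}(n,p)$ consists of finitely many $\mathcal{K}^{r+s}$-orbits. Since the codimensions $\operatorname{cod}\Pi^{k}(n,p)$ are non-increasing in $k$ and stabilize, one can pick $s=s(r,n,p)$ so that $\pi_s^{-1}(z)\not\subset \Pi^{r+s}(n,p)$ and such that the residual jets in $\pi_s^{-1}(z)$ fall into finitely many $\mathcal{K}^{r+s}$-classes. Set
\[
\Sigma \;=\; \pi_s^{-1}(z)\cap \Pi^{r+s}(n,p),
\]
a proper algebraic subvariety of the fiber by construction.

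Second, fix a jet $w\in \pi_s^{-1}(z)\setminus\Sigma$ and let $f,g:(\mathbb{R}^n,0)\to(\mathbb{R}^p,0)$ be two realizations of $w$. By Theorem \ref{pr:9} both germs have isomorphic local algebras, and by the infinitesimal criterion (Theorem \ref{gaf-wal}) each is $\mathcal{K}$-finitely determined; hence $f\widesim{\mathcal{K}}g$. Choose a minimal stable $\mathcal{A}$-unfolding $F$ of $f$ as in Section \ref{sec:class-stable-sing}; then $g$ is induced from $F$ by a parameter germ $\varphi$, and the pair $(f,g)$ is identified with two sections of the same stable unfolding. The $\mathcal{K}$-triviality of the family joining $f$ to $g$ inside $\pi_s^{-1}(z)\setminus\Sigma$ is then promoted to topological $\mathcal{A}$-triviality by applying the Thom--Mather first isotopy theorem to a Whitney stratification of the discriminant of $F$: integrating controlled lifts of $\partial/\partial t$ in source and target yields a germ of homeomorphism conjugating $f$ to $g$.

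Third, for the ``locally weakly stratified and isotopic'' clause, I would equip a neighbourhood of $0$ in source and target with the canonical Thom--Mather stratification pulled back from $J^{r+s}(N,P)$ via $j^{r+s}f$. Because the controlled vector fields constructed above are tangent to these strata by construction, the conjugating homeomorphism is stratum-preserving, which is precisely the weak stratified isotopy Thom refers to.

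The main obstacle, and the place where the integer $s$ must be chosen with care, is the passage from $\mathcal{K}^{r+s}$-triviality in jet space to $C^0$-$\mathcal{A}$-triviality of the underlying map-germs: $\mathcal{K}$-equivalence acts on graphs by diffeomorphisms of $\mathbb{R}^n\times\mathbb{R}^p$ preserving $\mathbb{R}^n\times\{0\}$ but need not respect the projection $\pi_1$, so the source/target splitting of the conjugating homeomorphism is not automatic. Realising it requires constructing stratified controlled vector fields on the unfolding $F$ that are simultaneously $\pi_1$-related to vector fields downstairs; the determinacy estimate in part (ii) of Theorem \ref{gaf-wal}, applied to the unfolding of bounded $\mathcal{K}$-codimension determined by the orbit of $w$, is what fixes $s$ as a function of $r,n,p$ alone, independent of $z$.
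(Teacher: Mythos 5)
The paper does not actually prove this theorem: it records Thom's statement and defers the complete proof to Varchenko \cite{Var74}, while the machinery it does develop (finite singularity type, stable unfoldings, the Thom--Mather stratification, the second isotopy lemma) is deployed in Section \ref{sec:dens-topol-stable-1} for the global density theorem. Judged on its own terms, your proposal has a genuine gap at the very first step. You set $\Sigma=\pi_s^{-1}(z)\cap\Pi^{r+s}(n,p)$, where $\Pi^{k}(n,p)$ is Mather's modality bad set, and assert that $s$ can be chosen so that $\pi_s^{-1}(z)\not\subset\Pi^{r+s}(n,p)$. This is false in exactly the cases the theorem is about: the paper records that $\pi_k^{-1}(\Pi^{k}(n,p))\subset\Pi^{k+1}(n,p)$, so if $z\in\Pi^{r}(n,p)$ (for instance the $2$-jet of Thom's unimodular $\Sigma^{3,0}$ family in $J^{2}(9,9)$), then the \emph{entire} fiber $\pi_s^{-1}(z)$ lies in $\Pi^{r+s}(n,p)$ for every $s$; your $\Sigma$ is the whole fiber and the construction collapses. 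The monotonicity $\cod\Pi^{k+1}(n,p)\leq\cod\Pi^{k}(n,p)$ works against you, not for you. The entire point of Thom's theorem is that the topologically bad set is far smaller than the smoothly bad set $\Pi$; the correct set to avoid is one whose codimension grows without bound with the jet level, such as $W^{l}(n,p)$ of Section \ref{sec:how-strat-mapp}, for which the escape property you need (given $z\in W^{l}(n,p)$, some $z'\in\pi_s^{-1}(z)$ lies outside $W^{l+s}(n,p)$) is precisely how Proposition \ref{prop:gwpl}(ii) is proved.

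Even after repairing the choice of $\Sigma$, your second step moves too fast from $f\widesim{\mathcal{K}}g$ to $C^{0}$-$\mathcal{A}$-equivalence. Avoiding $W^{r+s}(n,p)$ yields only finite singularity type, hence a common stable unfolding $F$ up to $\mathcal{A}$-equivalence; the two realizations are then pullbacks of $F$ by two germs of immersions, and Thom's second isotopy lemma applies only when both immersions are transverse to the canonical minimal Whitney stratification associated to $F$ (conditions (i)--(iii) in Section \ref{sec:proof-that-topol}). That transversality can fail on a further proper algebraic subset of the fiber, which must also be absorbed into $\Sigma$; locating and stratifying this locus is where the substance of Varchenko's and Mather's arguments lies, and your sketch leaves it implicit by treating transversality as automatic once the jet is ``generic.''
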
 

  A complete proof of this theorem  follows from the proof of the \emph{Main
    Theorem} in A. Varchenko's article with the same title, \emph{Local topological
    properties of differentiable mappings} \cite{Var74} ( see also
  \cite{Var73, Var75}). He also proves in \cite{Var74} a 
  \emph{stratification theorem}, although he states in the paper he
  does not know
  whether  Mather's density theorem follows from his
stratification theorem, or whether the stratification theorem can be
proved by Mather's methods.

Mather gave in 1970, an outline of a complete proof of Theorem
B. His proof was published in the Proceedings of the
Symposium of Dynamical Systems, held in Salvador, Bahia
\cite{Mat71-2}. As remarked by him, he expected to publish a book in
which the details of the proof would appear. In the Spring 1970, he
gave a series of lectures  and the notes appeared as a booklet
published  in the same year by the Harvard Printing Office.
The notes also discuss the Thom-Whitney theory of stratified sets
and stratified mappings. They were recently republished in the
Bulletin of the American Mathematical Society \cite{Mat12}.

Complete proofs of Theorem B  were given in 1976, independently, by Gibson,
Wirthm\"{u}ller, du Plessis and Looijenga in \cite{GibWirPleLoo} and by
Mather in \cite{Mat76}. Both proofs are based on Thom's ideas and Mather's outline
\cite{Mat71-2}. In what follows we refer to Theorem B as the Thom-Mather
theorem. 

The book \cite{GibWirPleLoo} comprises the notes of a seminar on
Topological Stability of Smooth Mappings held at the Department of
Pure Mathematics in the University of Liverpool, during the academic
year 1974-75. The main objective was to organize a complete proof of
the Topological Stability Theorem, for which no published complete
account existed.
The book has become a fundamental reference on the subject.

The proof in \cite{GibWirPleLoo} and \cite{Mat76} are similar and they
rely on the following ingredients:

\begin{enumerate}
\item [(1)] Properties of Whitney regular stratifications
\item [(2)] \L ojasiewicz theorem, giving the existence of Whitney
  regular stratification of semialgebraic sets.
\item [(3)] Properties of stable mappings and mappings of finite
  singularity type (FST). A fundamental property of mappings of FST is
  the existence of a stable unfolding.
\item [(4)] Thom's second isotopy theorem, applied to show that
  families of mappings transverse to the Thom-Mather stratification are
  topologically trivial.
\end{enumerate}

For a review of stratification theory and Thom's isotopy theorems
in the differentiable category, we also refer to the paper by David Trotman, in
Volume I of this Handbook. We only make a brief presentation of basic
concepts and results.

\index{Stratification!$C^{k}-$}
Let $V$ be a subset of a smooth manifold $N$ of class $C^{k}.$ A
$C^{k}$-\emph{stratification} of $V$ is a filtration by closed subsets
$$
V=V_{d}\supset V_{d-1}\supseteq \cdots \supseteq V_{1}\supseteq V_{0}
$$
such that each difference $V_{i}\setminus V_{i-1}$ is $C^{k}$-manifold
of dimension $i$, or is empty. Each connected component of
$V_{i}\setminus V_{i-1}$ is a \emph{stratum} of dimension $i.$
It follows that $V$ is disjoint union of
strata $\{X_{\alpha}\}_{\alpha\in A},$ and we say that $V$ is a
\emph{stratified set.}

For the purposes of these notes we assume that the stratified sets
$V=\cup_{\alpha \in A} X_{\alpha}$ are \emph{locally finite} and
satisfy the \emph{frontier condition} (see Gibson et al. book
\cite{GibWirPleLoo} or Trotman \cite{Tro} for the definition).

Let $V$ be a subset of $\mathbb R^{n}$ and $\{X_{\alpha}\}_{\alpha\in
  A}$ a stratification of $V.$ Whitney defined regularity conditions
(a) and (b), seeking for stratifications topologically trivial
along strata.
\index{Whitney!conditions (a) and (b)}
\begin{definition}[Whitney's conditions (a) and (b)]
  Let $X$ and $Y$ be strata of $\{X_{\alpha}\}_{\alpha\in  A},$ such
  that $Y\subset \overline X \setminus X.$
  \begin{itemize}
  \item [(a)]\ The pair $(X,Y)$ satisfies Whitney's condition (a) at
    $y\in Y$ if: for all sequences $(x_{m}) \in X$ with $x_m \to y,$
    such that $T_{x_{m}}X$ converges to a subspace $T \subset \mathbb
    R^{n}$ ( in Grassmannian of $\dim X$- planes in $\mathbb R^n$),
    then $T\supset T_{y}Y.$ 
  \item [(b)]\ The pair $(X,Y)$ satisfies Whitney's condition (b) at
    $y\in Y$ if: for all sequences $(x_{m}) \in X$ and $(y_{m}) \in Y,$ with $x_m \to y,$
    $y_m \to y,$ such that $\{T_{x_{m}}X\}$ converges to  $T$ and the
    lines $\overline{x_{m}y_{m}}$ converges to a line $\ell$ one has
    $\ell \in T.$
  \end{itemize}
 { It was pointed out by Mather in his notes on topological stability
  that Whitney (b) implies Whitney (a).} The reader may verify this as
  an exercise. We say that the stratification is \emph{Whitney regular} if every pair
of strata $(X_{\alpha},X_{\beta})$ satisfies  (b) ( hence also
satisfies (a)) at every point in $X_{\beta}.$
\end{definition}

These regularity conditions are local and can be easily  extended to
stratified sets of a manifold $N.$

Whitney \cite{Whi65-1, Whi65-2} proved in 1965  that any analytic
variety in $\mathbb R^{n}$ or $\mathbb C^{n}$ admits a regular
stratification whose strata are analytic. This result was extended to
semi-analytic sets by \L ojasiewicz \cite{Loj65}, also in 1965. For the
purposes of this section, the relevant result is the existence theorem
for semialgebraic sets. We refer to Thom \cite{Tho95} and Wall
\cite{Wal75} for accessible proofs.

\begin{definition}\label{def:str}
  Let $f:N\to P$ be a smooth mapping and $A\subseteq N, B\subseteq P$
sets with $f(A)\subset B.$ A \emph{stratification} of $f:A \to B$ is a
pair $(\mathcal{X},\mathcal{X}'),$ such that $\mathcal{X}$ is a
Whitney stratification of $A$, $\mathcal{X}'$ is a Whitney
stratification of $B,$ and the following conditions hold
\begin{itemize}
\item f maps strata to strata.
\item If $X\in \mathcal{X},$ $X' \in \mathcal{X}'$, $f(X) \subset X'$
  then $f:X \to X'$ is a submersion.
\end{itemize}
\end{definition}
\begin{definition}
  Let $f:N\to P$  and $\mathcal{X}$ and $\mathcal{X}'$ as in
  definition \ref{def:str}. Given $X_{\alpha}, X_{\beta}$ strata of
  $\mathcal{X},$ $x\in X_{\beta}$ we say that $X_{\alpha}$ is
  \emph{Thom regular} over $X_{\beta}$ at $x\in X_{\beta}$ relative
  to $f$ when the following holds: for every sequence $(x_{i}) \in
  X_{\alpha},$ $x_{i}\to x$ such that
  $\textrm{ker\;}(d_{x_{i}}(f|_{X_{\alpha}}))$ converges to $T$ in the
  appropriate Grasmannian, then $\textrm{ker\;}d_{x}(f|_{X_{\beta}})
  \subseteq T.$ We say that \emph{$X_{\alpha}$ is Thom regular over
    $X_{\beta}$ relative to $f$} when this condition hold for all
  $x\in X_{\beta}.$The pair
  $(\mathcal{X},\mathcal{X}')$ is a \emph{Thom stratification} for $f$
  when Thom's regularity condition holds for all pair of strata
  $(X_{\alpha}, X_{\beta})$ with $X_{\beta}\subset \overline
  X_{\alpha}.$ The triple $(f,\mathcal{X},\mathcal{X}')$ with $f$ a
  smooth mapping and $(\mathcal{X},\mathcal{X}')$ a Thom
  stratification for $f$ is called a \emph{Thom stratified mapping}.\index{Map!Thom stratified}
\end{definition}

\subsection{How to stratify mappings and jet spaces}
\label{sec:how-strat-mapp}

\index{Stratification!Thom-Mather}
We first discuss the Thom-Mather stratification in jet space and how
 to stratify stable mappings and mappings of finite singularity
 type. Then, we discuss why mappings transverse to  the Thom-Mather
 stratification are topologically stable.

 The idea of the proof is to  construct a stratification
 $\mathcal{A}^{l}(N,P),$ of  a big open subset of $J^{l}(N,P),$ with
 the following property: if $l$ is sufficiently large, then for any
 mapping $f:N\to P$ which is multitransverse to
 $\mathcal{A}^{l}(N,P),$ then the locally finite manifold partition
 $\mathcal{B}=((j^{l}f)^{-1} \mathcal{A}^{l}(N,P))$ is a Whitney
 stratification which extends to a Thom stratification
 $(\mathcal{B},\mathcal{B}')$ of $f.$
 
 Let $z \in J^{l}(n,p)$ and let $f:(\mathbb R^{n},0) \to (\mathbb
 R^{p},0)$ such that $j^{l}f(0)=z.$

 Following Gibson et al. \cite{GibWirPleLoo}, we let
 
 $$
 \chi_{z}=\dim_{\mathbb R}\frac{\Theta_{f}}{tf(\Theta_{n})+(f^{*}(\mathcal{M}_{p})+\mathcal{M}_{n}^{l})\Theta_{f}}
$$

We define $W^{l}(n,p)=\{z\in J^{l}(n,p)\,|\, \chi_{z}\geq l\}.$
$W^{l}(n,p)$ is the \emph{bad set}, and the following hold

\begin{itemize}
\item [(a)] If $z \in J^{l}(n,p) \setminus W^{l}(n,p),$ then any $f\in
  \mathcal{E}^{p}_{n}$ such that $j^{l}f(0)=z$ is
  $l$-$\mathcal{K}$-determined.
\item [(b)] $W^{l}(n,p)$ is  $\mathcal{K}$-invariant.
\item [(c)] $W^{l}(n,p)$ is a real algebraic variety in $J^{l}(n,p).$
\end{itemize}

To verify (a) notice that,  if $\chi_{z}\leq l-1,$ then
  \begin{equation}
    \label{eq:16}
 tf(\Theta_{n})+(f^{*}(\mathcal{M}_{p})+\mathcal{M}_{n}^{l})\Theta_{f}
 \supset \mathcal{M}_{n}^{l-1}\Theta_{f}.  
  \end{equation}
Then we can multiply \eqref{eq:16} by $\mathcal{M}_{n}$ and the result
follows from Theorem \ref{gaf-wal}.

It follows from (a) that map-germs $f\in \mathcal{E}_{n}^{p}$ such
that $z=j^{l}f(0)$ satisfy $\chi_{z}\leq l-1$ are of finite singularity
type. In the following proposition we prove that the property of
FST holds in general.

\begin{proposition}[Gibson et all \cite{GibWirPleLoo}, Theorem 7.2]\label{prop:gwpl}
  The following conditions hold:
  \begin{itemize}
  \item[(i)] $\cod W^{l+1}(n,p) \geq \cod W^{l}(n,p).$
  \item[(ii)]\  $\lim_{l\to\infty}\cod W^{l}(n,p)= \infty.$
  \item[(iii)]\ \  There is a subbundle $W^{l}(N,P)\subset J^{l}(N,P)$
    naturally associated to $W^{l}(n,p).$ Moreover, when $N$ is
    compact, mappings $f:N\to P$ such that $j^{l}f(N)\cap W^{l}(N,P)=
    \emptyset$ are of finite singularity type.
  \end{itemize}
\end{proposition}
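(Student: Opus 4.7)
The plan is to treat (i), (ii), (iii) in turn. The central input for (i) and (ii) is fact (a): via Theorem \ref{gaf-wal}, $z\notin W^{l}(n,p)$ forces $\mathcal{M}_{n}^{l}\Theta_{f}\subset T\mathcal{K}f\subset T\mathcal{K}_{e}f$ for any $f$ with $j^{l}f(0)=z$. Here $T\mathcal{K}_{e}f = tf(\Theta_{n})+f^{*}(\mathcal{M}_{p})\Theta_{f}$, so $\chi_{z}=\dim\Theta_{f}/(T\mathcal{K}_{e}f+\mathcal{M}_{n}^{l}\Theta_{f})$.

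For (i) I would establish $W^{l+1}(n,p)\subset\pi_{l}^{-1}(W^{l}(n,p))$, where $\pi_{l}:J^{l+1}(n,p)\to J^{l}(n,p)$ is the projection. If $z\notin W^{l}$, take any lift $z'\in\pi_{l}^{-1}(z)$ and any $f$ with $j^{l+1}f(0)=z'$. Since $\mathcal{M}_{n}^{l+1}\Theta_{f}\subset\mathcal{M}_{n}^{l}\Theta_{f}\subset T\mathcal{K}_{e}f$ by (a), one has $T\mathcal{K}_{e}f+\mathcal{M}_{n}^{l+1}\Theta_{f}=T\mathcal{K}_{e}f$ and hence $\chi_{z'}=\dim\Theta_{f}/T\mathcal{K}_{e}f=\chi_{z}<l<l+1$, so $z'\notin W^{l+1}$. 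Since $\pi_{l}$ is a surjection with vector-space fibres, $\cod\pi_{l}^{-1}(W^{l})=\cod W^{l}$, whence $\cod W^{l+1}\geq\cod W^{l}$.

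For (ii) I would prove the explicit bound $\cod W^{l}(n,p)\geq l-p$, which immediately forces the limit to be infinite. Since $W^{l}$ is closed and $\mathcal{K}^{l}$-invariant by (b), (c), the orbit--stabilizer theorem identifies
\begin{equation*}
\cod_{J^{l}}(\mathcal{K}^{l}\cdot z)=\dim\mathcal{M}_{n}\Theta_{f}/(T\mathcal{K}f+\mathcal{M}_{n}^{l+1}\Theta_{f}).
\end{equation*}
The key inequality comes from the universal inclusion $T\mathcal{K}f+\mathcal{M}_{n}^{l+1}\Theta_{f}\subset T\mathcal{K}_{e}f+\mathcal{M}_{n}^{l}\Theta_{f}$, which gives a quotient surjection and so $\chi_{z}\leq\dim\Theta_{f}/(T\mathcal{K}f+\mathcal{M}_{n}^{l+1}\Theta_{f})$. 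Because $T\mathcal{K}f\subset\mathcal{M}_{n}\Theta_{f}$ and $\Theta_{f}/\mathcal{M}_{n}\Theta_{f}\cong\mathbb{R}^{p}$, the right-hand side equals $p+\cod_{J^{l}}(\mathcal{K}^{l}\cdot z)$. Hence every $\mathcal{K}^{l}$-orbit contained in $W^{l}$ has codimension $\geq\chi_{z}-p\geq l-p$; since $W^{l}$ is a union of such orbits and its irreducible components are closures of generic orbits, $\cod W^{l}\geq l-p\to\infty$.

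For (iii) the first assertion is formal: by (b), $W^{l}(n,p)$ is $\mathcal{K}^{l}$-invariant, hence $\mathcal{A}^{l}$-invariant because $\mathcal{A}=\mathcal{R}\times\mathcal{L}\subset\mathcal{K}$; the transition cocycle of the bundle $J^{l}(N,P)\to N\times P$ takes values in $\mathcal{A}^{l}$, so $W^{l}(n,p)$ descends to a well-defined closed subbundle $W^{l}(N,P)\subset J^{l}(N,P)$. For the second assertion, assume $N$ is compact and $j^{l}f(N)\cap W^{l}(N,P)=\emptyset$. By fact (a) at every $x\in N$, the germ $f_{x}$ is $l$-$\mathcal{K}$-determined, so $\mathcal{K}_{e}\text{-}\cod f_{x}<\infty$; the Preparation Theorem \ref{prepa} then makes the stalk $(\Theta_{f}/tf(\Theta_{N}))_{x}$ finitely generated as an $\mathcal{E}_{p,f(x)}$-module via $f^{*}$, with a uniform bound on the number of generators coming from the fixed bound $\chi_{j^{l}f(x)}<l$. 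The main obstacle is the globalization: I would cover $N$ by finitely many charts $U_{i}$ (possible by compactness) on each of which a finite family of local sections $\sigma^{(i)}_{j}$ generates $\Theta_{f}/tf(\Theta_{N})$ over $C^{\infty}(f(U_{i}))$ via $f^{*}$, and then use a subordinate partition of unity $\{\rho_{i}\}$ on $N$ to assemble $\{\rho_{i}\sigma^{(i)}_{j}\}$ into a finite global generating family of $\Theta_{f}/tf(\Theta_{N})$ as a $C^{\infty}(P)$-module via $f^{*}$, establishing that $f$ is of finite singularity type.
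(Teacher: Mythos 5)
Your treatment of (i) is correct and is essentially what the paper intends (the text only remarks that (i) ``can be easily verified''): you show $W^{l+1}(n,p)\subset\pi_l^{-1}(W^l(n,p))$ by observing that $z\notin W^l$ forces $\mathcal{M}_n^l\Theta_f\subset T\mathcal{K}_e f$, so that adding $\mathcal{M}_n^{l+1}\Theta_f$ changes nothing, and then you use that $\pi_l$ has linear fibres. Both halves of (iii) are also in order: the bundle statement is the standard descent of a $\mathcal{K}^l$-invariant (hence $\mathcal{A}^l$-invariant) fibre to $J^l(N,P)$, and your globalization of the local finite generation by compactness and a partition of unity is the standard route (the same gluing as in the proof of Theorem \ref{delta}), although as written it is only a sketch.

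The genuine problem is in (ii). Your computation that every $\mathcal{K}^l$-orbit contained in $W^l(n,p)$ has codimension at least $\chi_z-p\geq l-p$ is correct. But the final step --- ``since $W^{l}$ is a union of such orbits and its irreducible components are closures of generic orbits, $\cod W^{l}\geq l-p$'' --- is unjustified and, as a general principle, false. An irreducible $\mathcal{K}^l$-invariant algebraic subset of $J^l(n,p)$ need not contain a dense orbit; when it does not, its codimension is strictly smaller than the codimension of every orbit it contains, the difference being the modality. This is exactly the phenomenon the rest of the paper is about: in Thom's example the invariant set $W_2\subset J^2(9,9)$ has codimension $9$ while every $\mathcal{K}^2$-orbit inside it has codimension $10$, and the unimodular and bimodular strata of Section \ref{sec:bound-nice-dimens} all behave the same way, with higher modality widening the gap further. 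Hence a uniform lower bound on orbit codimensions does not transfer to a lower bound on $\cod W^l$ unless you also bound the modality of the jets in $W^l$, which you do not (and which grows with $l$). The paper's proof of (ii) avoids this issue entirely and argues in the opposite direction: given $z\in W^l(n,p)$ one produces a lift $z'\in J^{l+q}(n,p)$ with $\pi_l(z')=z$ and $z'\notin W^{l+q}(n,p)$, so that $W^{l+q}$ meets each fibre of $\pi_l^{-1}(W^l)\to W^l$ in a proper algebraic subset; this gives $\cod W^{l+q}\geq\cod W^l+1$, and iterating yields $\cod W^l\to\infty$. You would need to replace your last step by an argument of this kind, or supply an actual modality bound.
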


\begin{definition}\label{nova} \index{Properties holding in general}
  
We say that a property $\mathcal{P}$ of map-germs \emph{holds in
  general} if the sets 
$ W_{\mathcal{P}}^{l}(n,p)= \{z\in  J^{l}(n,p)|\; z \textrm{ does not
  satisfy } \mathcal{P}\}, $ satisfy (i) and (ii) (see \cite{Wal81}).
\end{definition}
While condition (i) in Proposition \ref{prop:gwpl} can be easily
verified, we can prove (ii) as  follows.

Given $z \in W^{l}(n,p), $ find $z' \in W^{l+q}(n,p),$ $\pi_{l}(z')
=z,$ where $\pi_{l}:W^{l+q}(n,p)\to W^{l}(n,p)$ is the projection,
such that $z' \notin W^{l+q}(n,p)$ (see Bruce, Ruas and Saia
\cite{BruRuaSai}, for a simpler proof of this result).   

As $W^{l}(n,p)$ is a real algebraic variety, it follows from
\L ojasiewicz's result \cite{Loj65} that it has a Whitney stratification
with semialgebraic strata. Condition (iii) is
immediate. Notice that conditions (i) and
(ii) imply that we can choose sufficiently high $l$ for which $\cod
W^{l}(n,p)>n.$ Then, the mappings $f:N \to P$ which are multitransverse
to $\mathcal A^{l}(N,P)$ satisfy the condition $j^{l}f(N)\cap
W^{l}(N,P)=\emptyset.$

{Our problem now is to construct a stratification $\mathcal{A}^{l}(n,p)$ of $J^{l}(n,p)
\setminus W^{l}(n,p)$ whose members are ${\cal K}$-invariant sets  $S_{j}=\{z \in J^{l}(n,p)
\setminus W^{l}(n,p)|\; {\mathbf{cod}\,}z =j\}$, for $j=0,1,2,\dots.$ The
definition of ${\mathbf{cod}\,}z$ will be given in the sequel.}

We shall see that ${\cal K}^{l}$-equivalent jets $z$ and $z'$ have the
same codimension, i.e., ${\mathbf{cod}\,}z= {\mathbf{cod}\,}z'.$ This number does
not coincide with the ${\cal K}^{l}$-codimension.

Although  we know that contact classes are smooth submanifolds of the
jet spaces, it is not clear at this point that the collection $S_{j}$
defines a stratification of $J^{l}(n,p) \setminus W^{l}(n,p).$ To
define ${\mathbf{cod}\,}z$ and to understand the structure of the strata $S_{j}$ in  ${\cal
  A}^{l}(n,p),$ we first discuss shortly  how to stratify
infinitesimally stable mappings and mappings of FST. Recall that
for any smooth map $f:N\to P,$ the \emph{critical set} of $f$ is $\Sigma(f)=\{x \in N|\,
df_{x}:T_{x}N \to T_{f(x)}\mathrm{\ is\  not\  surjective} \}$ and the
\emph{discriminant} of $f$ is $\Delta(f)=f(\Delta(f)).$
\index{Critical set}
\index{Discriminant}

We saw in section \ref{sec:finite-determ-math} that if $f:N\to P$ is infinitesimally
stable, the restriction $f|_{\Sigma(f)}:\Sigma(f) \to P$ is proper and
uniformly finite-to-one. In fact for any $y \in P$, $\#(f^{-1}(y)\cap
\Sigma(f)) \leq p.$ Moreover, if $f^{-1}(y)\cap \Sigma(f)=\{x_{1},
x_{2}, \dots, x_{s}\}$ the  multigerm $f:(N,S) \to (P,y)$ has a 
representative equivalent to a polynomial mapping $f:U \subset \mathbb R^{n} \to V
\subset \mathbb R^{p},$ where $U$ and $V$ are open sets in $\mathbb
R^{n}$ and  $\mathbb R^{p}$ respectively. In other  words $f$ is a
semialgebraic map defined on semialgebraic subsets.
Then we can apply the basic theorems of Whitney  and
Lojasiewicz  to construct Whitney stratifications
$\mathcal{S}$ of $N$ and $\mathcal{S}'$ of $P$ with the following properties

\begin{enumerate}\label{cond}
\item For each stratum $X$ of $\mathcal{S},$ there is a stratum $Y$ of
  $\mathcal{S}'$ such that $f(X)\subset Y.$
\item For each stratum $Y$ of $\mathcal{S}',$ it follows that
  $f^{-1}(Y) \setminus \Sigma(f)$ is a stratum of $\mathcal{S}.$
\item For each stratum $X$ of $\mathcal{S},$ such that $X\subset
  \Sigma(f),$   we have that $\dim X= \dim Y$ and $f:X \to Y$ is an
  immersion, where $Y$ is the stratum of $\mathcal{S}'$ which contains $f(X).$
\end{enumerate}

Notice that from 2. it follows that $N\setminus \Sigma(f)$ is a
union of strata. Hence, $\Sigma(f)$ is also a union of strata.

Now, if $f:(N,x_{0})\to (P,y_{0})$ is a stable germ, for any  small
representative that we also denote by $f,$ the stratum $X\in
\mathcal{S}$ which contains $x_{0}$ is connected  and its codimension
is strictly greater than the codimension of any other stratum of
$\mathcal{S}.$ This number depends only of $f.$ We call it \emph{the
  codimension } of $f,$ and we write ${\mathbf{cod}\,}f.$ A germ $f$ has
codimension zero if and only if it is of maximal rank.

This notion generalizes to map-germs of finite singularity  type.

\begin{definition}
  Let $f:(\mathbb R^{n}, 0) \to (\mathbb R^{p}, 0)$ be a map of
  finite singularity type. We define \emph{${\mathbf{cod}\,}f$ at $x=0$} as the
  codimension of a stable unfolding of $f.$
\end{definition}

Notice that this number is well defined. In fact, if $F:(\mathbb
R^{n}\times\mathbb R^{s}, 0) \to (\mathbb R^{p}\times\mathbb R^{s},
0)$ and $F':(\mathbb
R^{n}\times\mathbb R^{r}, 0) \to (\mathbb R^{p}\times\mathbb R^{r},
0)$ are stable unfoldings of $f$ and if, say, $r=s+k,$ then it follows
that $F\times Id$ is equivalent to $F',$ where $Id$ is the identity
map in $\mathbb R^{k}.$ Then ${\mathbf {cod}\,}(F\times Id) = {\mathbf{cod}\,}F',$ and it easy
to see that ${\mathbf{cod}\,}F ={\mathbf {cod}\,}(F\times Id).$ Now the following result
follows easily.

\begin{proposition}
  If $f \widesim{\mathcal{K}}f'$ then ${\mathbf{cod}\,}f= {\mathbf{cod}\,} f'.$
\end{proposition}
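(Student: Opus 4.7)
The plan is to reduce the assertion to the $\mathcal{A}$-classification of stable germs by their local algebras, Theorem \ref{th:10}. First, I would choose stable unfoldings $F$ of $f$ and $F'$ of $f'$, which exist because both are of finite singularity type by the equivalence $(1)\Leftrightarrow(3)$ of the previous theorem. If these unfoldings have different numbers of parameters, say $s$ and $r=s+k$, I suspend the smaller one trivially by the identity on $\mathbb{R}^{k}$; by the paragraph immediately preceding the statement, which asserts $\mathbf{cod}(F\times \mathrm{Id})=\mathbf{cod}\,F$, this suspension does not alter the codimension. So I may assume $F$ and $F'$ are stable germs on the same source and target.

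Next, I would transfer the $\mathcal{K}$-equivalence of $f$ and $f'$ into an $\mathcal{A}$-equivalence of $F$ and $F'$ through local algebras. Since $f\widesim{\mathcal{K}}f'$ and both have finite $\mathcal{K}$-codimension (being FST), Theorem \ref{pr:9} yields $Q(f)\cong Q(f')$. By Example \ref{ex:6.5}, $Q(F)\cong Q(f)$ and $Q(F')\cong Q(f')$, hence $Q(F)\cong Q(F')$. Because $F$ and $F'$ are now stable germs with isomorphic local algebras, Theorem \ref{th:10} provides an $\mathcal{A}$-equivalence $F\widesim{\mathcal{A}}F'$, realised by a pair of germs of diffeomorphisms $(h,k)$ with $F'=k\circ F\circ h^{-1}$.

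It remains to observe that $\mathbf{cod}$ is an $\mathcal{A}$-invariant of stable germs. The codimension of $F$ at the base point is the codimension of the connected stratum through that point in the canonical Whitney stratification $\mathcal{S}$ of the source satisfying conditions (1)--(3) listed before the definition of $\mathbf{cod}$. Pushing $\mathcal{S}$ forward by $h$ (and the target stratification by $k$) produces a stratification for $F'$ that again satisfies (1)--(3); the uniqueness coming from the Thom--Mather theory of stratifications of stable mappings then identifies it with the canonical stratification of $F'$. Since $h$ carries the base point to the base point and preserves codimensions of strata, $\mathbf{cod}\,F=\mathbf{cod}\,F'$, and combining with the suspension invariance in the first step gives $\mathbf{cod}\,f=\mathbf{cod}\,f'$. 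The main obstacle is this last step: one must justify that the canonical Whitney stratification attached to a stable germ is essentially unique so that it is transported by any $\mathcal{A}$-equivalence, which is not an algebraic computation but rather an appeal to Whitney regularity and the uniqueness results of Thom--Mather stratification theory.
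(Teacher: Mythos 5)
Your proof is correct and follows essentially the route the paper intends: the paper leaves the proposition as ``follows easily'' from the preceding well-definedness discussion (suspension invariance of $\mathbf{cod}$ and equivalence of stable unfoldings), and your chain --- $\mathcal{K}$-equivalence gives $Q(f)\cong Q(f')$, stable unfoldings inherit these algebras, Theorem \ref{th:10} makes them $\mathcal{A}$-equivalent after suspension, and the canonical stratification (hence $\mathbf{cod}$) is transported by $\mathcal{A}$-equivalence --- is precisely the intended filling-in of that gap.
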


The properties of the stratification $\mathcal{A}^{l}(N,P)$ can be
summarized in the following results.

\begin{proposition}
  Let $f:(N,x_{0})\to (P,y_{0})$ be a smooth map-germ with an
  unfolding $F:(N',x'_{0})\to (P',y'_{0})$, as in the diagram
$$
  \xymatrix{ 
(N',x'_{0})  \ar[r]^{F} &(P',y'_{0})   \\
(N,x_{0}) \ar[u]^{i}\ar[r]_f   &   (P,y_{0}). \ar[u]^{j} }
$$
Then the following conditions are equivalent
\begin{itemize}
\item[(i)] $j^{l}f \notin W^{l}(N,P)$ and $j^{l}f$ is transverse to
  $\mathcal{A}^{l}(N,P).$  
\item[(ii)]\  $j^{l}F \notin W^{l}(N',P')$ and $j^{l}F$ is transverse to
  $\mathcal{A}^{l}(N',P'),$ and in addition if $X\in
  (j^{l}F)^{-1}\mathcal{A}^{l}(N',P')$ contains $x'_{0},$ then $i$ is
  transverse to $N'.$  
\end{itemize}
\end{proposition}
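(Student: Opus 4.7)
The plan is to first reduce the statement to a standard local model for the unfolding, then handle the two parts (non-membership in $W^l$ and transversality) separately, using the fact that both $W^l$ and the strata of $\mathcal{A}^l$ are defined by $\mathcal{K}$-invariants that behave functorially under unfolding.

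First I would exploit the defining conditions of the unfolding: $i$ and $j$ are closed embeddings, $i(N)=F^{-1}(j(P))$, and $F\pitchfork j$. Standard transversality yields local coordinates around $x'_{0}\in N'$ and $y'_{0}\in P'$ in which $i$ and $j$ become inclusions of a coordinate subspace and $F$ acquires the form $F(x,u)=(\bar f(x,u),u)$ with $\bar f(x,0)=f(x)$. All subsequent computations reduce to comparing $f$ and its parametrized unfolding $F$ in this normal form. Because we are dealing with germs, I may further assume source and target are germs at the origin, so $i(x)=(x,0)$ and $j(y)=(y,0)$.

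Next I would address the condition $j^{l}f\notin W^{l}(N,P)\Leftrightarrow j^{l}F\notin W^{l}(N',P')$. Recall that $W^{l}$ is cut out by the invariant $\chi_{z}$, which is essentially controlled by the local algebra: the relevant quotient $\Theta_{f}/(tf(\Theta_{n})+(f^{*}\mathcal{M}_{p}+\mathcal{M}_{n}^{l})\Theta_{f})$ depends only on the $\mathcal{K}$-class of the jet. By Example~\ref{ex:6.5} and the discussion after equation~\eqref{eq:17}, $F$ is $\mathcal{K}$-equivalent to a trivial unfolding of $f$, and in particular $Q(F)\cong Q(f)$; an identical argument applies to the truncations defining $\chi$. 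Consequently $\chi_{j^{l}F(x'_{0})}=\chi_{j^{l}f(x_{0})}$, giving the claimed equivalence.

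Then I would tackle the transversality statement. The strata of $\mathcal{A}^{l}$ are $\mathcal{K}$-invariant and indexed by the invariant $\mathbf{cod}$, which is preserved under unfolding (Proposition on $\mathbf{cod}$ above). In the local model $F(x,u)=(\bar f(x,u),u)$, the jet $j^{l}F(x,u)$ at a nearby point is $\mathcal{K}$-equivalent to the unfolding of the germ $\bar f(\,\cdot\,,u)$, and its $\mathcal{K}$-class depends only on that of $j^{l}\bar f(\,\cdot\,,u)$. Hence near $x'_{0}$, the pullback stratification $(j^{l}F)^{-1}\mathcal{A}^{l}(N',P')$ is locally a product: a stratum $X'$ through $x'_{0}$ is (after a local diffeomorphism) $X\times(\text{parameter slice})$, where $X=(j^{l}f)^{-1}\mathcal{A}^{l}(N,P)$ is the corresponding stratum through $x_{0}$. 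Given this product structure, the transversality calculus yields the standard equivalence: $j^{l}F\pitchfork\mathcal{A}^{l}(N',P')$ at $x'_{0}$ \emph{together with} $i\pitchfork X'$ at $x_{0}$ is equivalent to $j^{l}f\pitchfork\mathcal{A}^{l}(N,P)$ at $x_{0}$, since $j^{l}f$ factors (up to the local diffeomorphism) as $j^{l}F\circ i$.

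The principal obstacle is the local-product claim for the Thom-Mather stratification along the unfolding parameter. Getting genuine smooth trivializations (rather than mere orbit-wise identifications) requires that the $\mathcal{K}$-equivalence between $F$ and the trivial unfolding of $\bar f(\,\cdot\,,u)$ be realized by a $C^{\infty}$ family of contact diffeomorphisms depending smoothly on $u$. This follows from Mather's unfolding theorem for $\mathcal{K}$ applied fibrewise, combined with the preservation of finite singularity type under unfolding (which holds precisely because $j^{l}F\notin W^{l}$ on a neighborhood of $x'_{0}$). With this in hand, bookkeeping of the codimension-indexed strata produces the product structure and the proposition follows.
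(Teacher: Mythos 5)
The paper states this proposition without proof (it is lifted from Chapter 4 of Gibson et al.\ \cite{GibWirPleLoo}), so your attempt is measured against the standard argument rather than one printed here. Your reduction to the local model $F(x,u)=(\bar f(x,u),u)$, $i(x)=(x,0)$, $j(y)=(y,0)$ is the right first move, and the $W^{l}$ half is essentially sound: the invariant $\chi$ is a truncated $\mathcal{K}_{e}$-normal space and the usual unfolding computation shows it is unchanged in passing from $f$ to $F$ — though you should carry out that computation directly rather than route it through Example \ref{ex:6.5}, whose hypothesis ($\mathcal{K}$-finiteness) is part of what the condition $j^{l}f\notin W^{l}$ is meant to guarantee.

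The gap is in the transversality half, and it is the heart of the proposition. $\mathcal{K}$-invariance of the strata gives only the fibrewise description $X'=\{(x,u): x\in X_{u}\}$, where $X_{u}$ is the corresponding locus of $\bar f_{u}=\bar f(\cdot,u)$; since $X_{u}$ moves with $u$, no product structure follows. Concretely, for $f(x)=x^{3}$ and $F(x,u)=(x^{3}+ux,u)$ the $A_{2}$-stratum of $F$ is the single point $\{(0,0)\}$, while $X\times\mathbb{R}=\{0\}\times\mathbb{R}$ is a line, so no local diffeomorphism identifies them. (Both (i) and (ii) fail here, consistently with the proposition, but your product claim was derived from $\mathcal{K}$-invariance alone, with no transversality hypothesis, so this is a counterexample to the step; and when (ii) does hold, the product structure is a consequence of the proposition rather than an available ingredient, so the argument is circular.) The closing step also does not parse: $j^{l}f$ and $j^{l}F\circ i$ take values in the different spaces $J^{l}(N,P)$ and $J^{l}(N',P')$, and the strata $S\subset J^{l}(N,P)$ and $S'\subset J^{l}(N',P')$ are matched only through the invariant $\mathbf{cod}$, not through any map of jet spaces along which transversality could be pushed. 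What is actually needed is a comparison of the two transversality conditions at the infinitesimal level, in the spirit of Proposition \ref{seila}: transversality of the jet extension to a $\mathcal{K}$-invariant stratum is an algebraic condition on $tf(\Theta_{n})+\omega f(\Theta_{p})$ modulo the tangent space of the stratum, and the corresponding condition for $F$ differs from that for $f$ exactly by the unfolding directions $\partial\bar f/\partial u_{i}$ — which is precisely where the extra hypothesis $i\pitchfork X'$ enters. Alternatively, one embeds $f$ and $F$ in a common stable unfolding $G$, uses that transversality of $j^{l}f$ (resp.\ $j^{l}F$) to $\mathcal{A}^{l}$ is equivalent to transversality of the inclusion of $N$ (resp.\ $N'$) into the source of $G$ to the canonical stratification of $G$, and concludes by the elementary transitivity of transversality for nested submanifolds. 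Either route supplies the missing link; the product structure does not.
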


\begin{proposition}[Gibson et all, \cite{GibWirPleLoo}, Proposition
  3.3, Chapter 4]\label{prop:gib}
Let $f:N\to P$ be a proper smooth mapping multi-transverse to
$\mathcal{A}^{l}(N,P)$
% $j^{l}f \in W^{l}(N',P')$ and $j^{l}f$ is transverse to
% $\mathcal{A}^{l}(N,P)$
and such that $j^{l}f(N)\cap W^{l}(N,P)=
  \emptyset .$ Let $\mathcal{S}= (j^{l}f)^{-1}\mathcal{A}^{l}(N,P)$
  and $\mathcal{S}'= \{f(X)\,|\,X\in \mathcal{S} \}\cup \{P\setminus
  f(N) \}.$ Then $(\mathcal{S},\mathcal{S}')$ is a Thom stratification
  of $f.$   
\end{proposition}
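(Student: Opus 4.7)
The plan is to reduce the global assertion to local statements, exploiting the properness of $f$ and the fact that the hypothesis $j^{l}f(N)\cap W^{l}(N,P)=\emptyset$ forces $f$ to be of finite singularity type at every point of $N$ (by Proposition \ref{prop:gwpl}(iii) and assertion (a) of the preceding discussion), hence to admit a local stable unfolding $F:(N',x_{0}')\to(P',y_{0}')$ whose ${\cal K}^{l}$-contact class is constant along strata.

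First I would establish that $\mathcal{S}$ is a Whitney stratification of $N$. The strata of $\mathcal{A}^{l}(N,P)$ are (connected components of) $\mathcal{K}^{l}$-orbit bundles in $J^{l}(N,P)\setminus W^{l}(N,P)$, which are smooth submanifolds organized into a Whitney regular stratification (by Mather's analytic triviality in the nice dimensions, together with a \L{}ojasiewicz Whitney stratification of $W^{l}(N,P)$; here the argument is local on the fibres of $J^{l}(N,P)\to N\times P$). Multi-transversality of $j^{l}f$ to $\mathcal{A}^{l}(N,P)$ then implies, by the standard fact that transversality preserves Whitney regularity, that $\mathcal{S}=(j^{l}f)^{-1}\mathcal{A}^{l}(N,P)$ is locally finite and Whitney regular. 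Next, $\mathcal{S}'$ is a Whitney stratification of $P$: for each $X\in\mathcal{S}$, the general-position characterization of stable multigerms (Remark \ref{rem:4.21}), applied to the stable unfolding $F$, shows that $f|_{X\cap\Sigma(f)}$ is an immersion while $f$ is a submersion on $X\setminus\Sigma(f)$, so $f(X)$ is a submanifold of $P$; properness of $f$ makes $P\setminus f(N)$ open, and the Whitney conditions for $\mathcal{S}'$ descend from those of $\mathcal{S}$ via the immersion property plus properness.

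I would then verify the two conditions of Definition \ref{def:str}. That $f$ maps strata to strata is built into the definition of $\mathcal{S}'$. That $f|_{X}:X\to f(X)$ is a submersion onto the stratum $f(X)\in\mathcal{S}'$ follows from the preceding paragraph, since the rank of $f$ is constant along $X$ (points of the same ${\cal K}^{l}$-stratum have isomorphic local algebras, hence equivalent stable unfoldings by Theorem \ref{th:10}, so the same differential rank).

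The hard part, and the heart of the proposition, is Thom's $(a_{f})$ condition for each pair $X_{\alpha},X_{\beta}\in\mathcal{S}$ with $X_{\beta}\subset\overline{X_{\alpha}}$. The strategy is to work locally at a point $x_{0}\in X_{\beta}$: choose a stable unfolding $F:N'\to P'$ of the germ $f:(N,x_{0})\to(P,f(x_{0}))$, and let $Y_{\alpha},Y_{\beta}$ be the strata of the canonical contact-class stratification of $F$ containing $i(X_{\alpha})$, $i(X_{\beta})$. By Mather's theorem (\cite{Mat12}; see also \cite{GibWirPleLoo}) the canonical stratification of a stable map is a Thom stratification, so $(Y_{\alpha},Y_{\beta})$ satisfies $(a_{F})$ at $x_{0}'$. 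The equivalence of transversality statements proved in the preceding proposition guarantees that $X_{\alpha}=i^{-1}(Y_{\alpha})$, $X_{\beta}=i^{-1}(Y_{\beta})$, and that $i$ is transverse to $Y_{\alpha}$ and $Y_{\beta}$, so that $\ker d(f|_{X_{\alpha}})=\ker d(F|_{Y_{\alpha}})\cap T_{\cdot}X_{\alpha}$. A short linear-algebra argument — taking limits of these kernels along a sequence $x_{m}\to x_{0}$ and using $(a_{F})$ together with transversality of $i$ — yields $(a_{f})$ for $(X_{\alpha},X_{\beta})$ at $x_{0}$. The main technical obstacle is precisely this passage from $(a_{F})$ in the unfolding to $(a_{f})$ for the restriction; it requires a careful choice of unfolding and a clean bookkeeping of how the kernels of $df|_{X_{\alpha}}$ sit inside the kernels of $dF|_{Y_{\alpha}}$ under the transverse embedding $i$.
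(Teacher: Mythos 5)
The paper gives no proof of this proposition: it is quoted from Gibson, Wirthm\"uller, du Plessis and Looijenga, and the surrounding text only assembles the inputs (the construction of $\mathcal{A}^{l}(n,p)$, the codimension function $\mathbf{cod}$, and the proposition relating transversality of $j^{l}f$ to transversality of $j^{l}F$ for a stable unfolding $F$). Your outline does follow the strategy of the cited source — pull back Whitney regularity through the multi-transverse jet section, reduce the local analysis to a proper infinitesimally stable unfolding, and transfer the Thom condition from the canonical stratification of $F$ to $f$ via the transverse embeddings $i$ and $j$ — so the architecture is right. Two concrete points, however, need repair.

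First, your justification for the Whitney regularity of $\mathcal{A}^{l}(N,P)$ is wrong: the proposition holds for \emph{every} pair $(n,p)$, and outside the nice dimensions the $\mathcal{K}^{l}$-orbits in $J^{l}(n,p)\setminus W^{l}(n,p)$ do not form a locally finite family (modular families occur), so they cannot themselves be the strata, and ``Mather's analytic triviality in the nice dimensions'' is not available. The regularity of $\mathcal{A}^{l}(n,p)$ is part of its construction as the canonical minimal Whitney stratification of the semialgebraic sets $S_{j}=\{z : \mathbf{cod}\,z=j\}$, obtained from \L ojasiewicz's theorem applied to $J^{l}(n,p)\setminus W^{l}(n,p)$ (not to $W^{l}$); once that input is taken as given, your transversality argument for $\mathcal{S}$ is fine. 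Second, your treatment of $\mathcal{S}'$ never actually uses multi-transversality, yet it is essential there: distinct strata of $\mathcal{S}$ lying over different points of a fibre $f^{-1}(y)$ can share image points, and it is transversality of ${}_{r}j^{l}f$ that forces their images into general position so that $\{f(X)\}$ is a locally finite Whitney regular partition; the source sidesteps this by instead pulling back the canonical target stratification of the unfolding through the transverse embedding of $P$ into $P'$. Finally, the ``short linear-algebra argument'' converting $(a_{F})$ into $(a_{f})$ is precisely where the content of the proposition lives and is left unexecuted; it is worth noting that it is greatly eased by the observation that $f$ restricted to any stratum of $\mathcal{S}$ contained in $\Sigma(f)$ is an immersion (inherited from the general-position property of the stable unfolding), so $\ker d(f|_{X_{\beta}})=0$ for such $X_{\beta}$ and Thom regularity is only genuinely at issue for pairs involving non-critical strata.
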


\begin{remark}
  The pair $(\mathcal S, \mathcal S')$ in Proposition \ref{prop:gib}
  has a minimality property which uniquely characterizes it among all
  possible pairs. We refer to Gibson et al., \cite{GibWirPleLoo} or
  Mather \cite{Mat76} for details.
\end{remark}

\subsection{Proof that topologically stable mappings are dense (
  Mather, \cite{Mat76}, \S 8) }
\label{sec:proof-that-topol}

Initially, we state the Thom-Mather topological stability theorem, whose
proof we outline in this section. Theorem B will follow from this
result and Thom's transversality theorem.
\index{Thom!topological stability theorem}
\begin{theorem}
  If $f:N\to P$ is proper and for some (and hence for all) $k\geq p+1,$ $j^kf$ is
multitransverse to the Thom-Mather stratification of $J^{k}(N,P),$ then $f$
is strongly $C^{\infty}$-stable.
\end{theorem}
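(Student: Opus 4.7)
The plan is to mimic the strategy used for infinitesimal stability in Theorem~\ref{star}, but replacing the integration of smooth vector fields by Thom's second isotopy theorem. First I would fix $k\geq p+1$ and, given $f$ proper and multitransverse to $\mathcal A^{k}(N,P)$ with $j^{k}f(N)\cap W^{k}(N,P)=\emptyset$, use openness of multitransversality together with properness of $f$ to pick a neighbourhood $U$ of $f$ in the Whitney topology on $C^{\infty}(N,P)$ so that every $g\in U$ is again proper and multitransverse, and so that the affine homotopy $\bar F_{g,t}=(1-t)f+tg$ has the same property for all $t\in[0,1]$.

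Next I would package this homotopy into the one-parameter unfolding
\[
  F_{g}\colon N\times[0,1]\to P\times[0,1],\qquad F_{g}(x,t)=(\bar F_{g,t}(x),t).
\]
Viewed as a proper smooth map between manifolds of dimensions $n+1$ and $p+1$, it inherits multitransversality to $\mathcal A^{k}(N\times\mathbb R,P\times\mathbb R)$ and avoids the bad set (adding a trivial parameter direction does not alter $\mathcal K$-codimensions, so the relevant bad bundle only grows by the parameter factor, which is transverse by construction). By Proposition~\ref{prop:gib}, the pair $(\mathcal S_{F},\mathcal S'_{F})$ obtained by pulling back $\mathcal A^{k}$ via $j^{k}F_{g}$ and pushing forward by $F_{g}$ is then a Thom stratification of $F_{g}$.

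Because multitransversality forces every stratum to meet each level $\{t=\text{const}\}$ transversally, the projections $\pi_{N}\colon N\times[0,1]\to[0,1]$ and $\pi_{P}\colon P\times[0,1]\to[0,1]$ restrict to proper stratified submersions on $\mathcal S_{F}$ and $\mathcal S'_{F}$. Thom's second isotopy theorem, applied to the stratified mapping $F_{g}$ over $[0,1]$, yields a pair of stratum-preserving homeomorphisms
\[
  h_{g}\colon N\times[0,1]\to N\times[0,1],\qquad k_{g}\colon P\times[0,1]\to P\times[0,1],
\]
trivialising $F_{g}$ from $\bar F_{g,0}=f$, i.e.\ $k_{g}\bigl(f(x),t\bigr)=\bigl(\bar F_{g,t}(h_{g}(x,t)),t\bigr)$. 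Setting $H_{1}(g):=h_{g}(\cdot,1)$ and $H_{2}(g):=k_{g}(\cdot,1)$ produces homeomorphisms with $g=H_{2}(g)\circ f\circ H_{1}(g)^{-1}$ and $H_{1}(f)=1_{N}$, $H_{2}(f)=1_{P}$.

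The main obstacle, and the genuine technical heart of the argument, is establishing that $g\mapsto(H_{1}(g),H_{2}(g))$ is continuous in the Whitney $C^{\infty}$ topology on $U$. Thom's isotopy theorem builds the trivialising homeomorphisms by integrating controlled stratified lifts of $\partial/\partial t$ against a system of control data (tubular neighbourhoods, retractions, tube functions) on the stratifications $\mathcal S_{F}$ and $\mathcal S'_{F}$. These stratifications vary continuously with $g$ near $f$ because they are pulled back by the continuous maps $j^{k}F_{g}$ from a fixed stratification in jet space, so one can choose the control data and the stratified vector field to depend continuously on $g$; continuous dependence of flows on parameters then yields the required continuity of $H_{1}$ and $H_{2}$, establishing strong (topological) stability. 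Theorem~B then follows immediately by combining this with Thom's transversality theorem, since multitransversality to $\mathcal A^{k}(N,P)$ with $j^{k}f(N)\cap W^{k}(N,P)=\emptyset$ is a dense condition in $C^{\infty}_{pr}(N,P)$.
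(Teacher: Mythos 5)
Your endpoint --- realize $f$ and a nearby $g$ as the two ends of a one-parameter family, equip that family with a Thom stratification, and apply Thom's second isotopy lemma over $[0,1]$ --- is the same as the paper's, but you reach it by a different route and two steps do not hold up as written. The smaller problem is the construction of the family: the affine homotopy $(1-t)f+tg$ is meaningless when $P$ is a general manifold, so one must take a small arc $g_t$ in $C^{\infty}(N,P)$ and argue separately (via openness of multitransversality) that every $g_t$ stays transverse. More substantively, your justification that $F_g$ is multitransverse to $\mathcal{A}^{k}(N\times\mathbb{R},P\times\mathbb{R})$ and avoids the bad set --- ``adding a trivial parameter direction does not alter $\mathcal{K}$-codimensions'' --- conflates suspension with unfolding: $F_g$ is a genuine one-parameter unfolding, not $f\times\mathrm{id}$. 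What you actually need is the unfolding proposition preceding Proposition \ref{prop:gib}, applied to multigerms, to pass from slicewise transversality to transversality of $F_g$ together with transversality of the levels $N\times\{t\}$. This is available, but it is precisely to avoid re-deriving the regularity of the stratified family in dimensions $(n+1,p+1)$ that the paper takes its detour: it passes to a fixed proper infinitesimally stable unfolding $F:N'\to P'$, uses Theorem \ref{star} to trivialize the lifted family $G_t$ by diffeomorphisms $(H_t,K_t)$, and then transports the fixed semialgebraic canonical stratification of $F$ onto the family via $\tilde H,\tilde K$, so that Whitney and Thom regularity of the stratified family are manifest before the second isotopy lemma is invoked.

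The genuine gap is the continuity step you yourself identify as the heart of the matter. The claim that one ``can choose the control data and the stratified vector field to depend continuously on $g$'' so that $g\mapsto(H_1(g),H_2(g))$ is continuous cannot be asserted as routine: the trivializing homeomorphisms produced by the second isotopy lemma come from integrating controlled lifts that are discontinuous across strata and depend on many non-canonical choices, and making the outcome depend continuously on the map is exactly the content of the Murolo--du Plessis--Trotman work on the smooth Whitney fibering conjecture, which this paper presents (Corollaries \ref{mtp1} and \ref{con}) as a recent, long-awaited strengthening of Mather's theorem. The paper's own argument for the present statement establishes only that each sufficiently close $g$ is topologically equivalent to $f$, with no continuity of the conjugating homeomorphisms; your proposal promises the strong form but does not deliver it, and no argument at the level of detail you give is known to close that gap.
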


Given $f:N\to P,$ we will show that we can approximate it by a
topologically stable mapping. First, we approximate $f$ by a mapping
$f_{1}:N \to P$ of finite singularity type (Proposition
\ref{prop:gwpl}). Then, we can choose an unfolding  $(F,N',P', i, j)$ of $f_{1}$
such that $F$ is proper and infinitesimally stable. Let
$\mathcal{S}'_{N'}$ and $\mathcal{S}'_{P'}$ be stratifications  of $N'$
and  $P',$ respectively satisfying conditions (1)-(3) in Section \ref{sec:how-strat-mapp}.
\index{Map!Thom stratified}

By Thom's transversality theorem, we can approximate $j$ by
$j_{2}:P\to P'$ such that $j_{2}$ is transverse to the strata of
$\mathcal{S}'_{P'}.$ Moreover we may suppose $j_{2}=j$ outside a compact
neighborhood of $f(N).$

Since $F$ is transverse to $j,$
it follows  that $F$ is transverse  to $j_{2}$ for $j_{2}$
sufficiently close to $j.$

The set $N_{2}=F^{-1}(j_{2}(P))$ is a smooth manifold. One can show
that there is a diffeomorphism $i_{2}:N\to N_{2}$ close 
to $i:N\to N'.$

We let $f_{2}:j_{2}^{-1}\circ F\circ i_{2}:N\to P.$ It follows from construction that
$f_{2}$ is close to $f$ in the $C^{\infty}$ topology. We claim that
$f_{2}$ is topologically stable.

The proof is based in the
following facts from the construction we have made:
\begin{enumerate}
\item[(i)]\, $(F,N',P',i_{2},j_{2})$ is an unfolding of $f_{2};$
\item[(ii)]\, $F$ is proper and infinitesimally stable;
\item[(iii)]\, $j_{2}$ is transverse to the stratification $\mathcal{S}'_{P'}$ of
$P'.$
\end{enumerate}

Let $g$ be a small perturbation of $f_{2},$ so that we can suppose
$f_{2}$ and $g$ are connected by a small arc $g_{t}$ in
$C^{\infty}(N,P), t\in [0,1],$ $g_{0}=f_{2}, g_{1}=g.$ We can lift $g_{t}$ to an arc $G_{t}$ in 
$C^{\infty}(N',P')$ such that $G_{0}=F$ and $(G_{t},N',P',i,j)$ is an
unfolding of $g_{t}.$ Moreover, we may suppose that $G_{t}=F$ outside
of a sufficiently small compact neighborhood of $i(N).$

From Theorem \ref{star}, it follows that there exist one parameter
families of diffeomorphisms $(H_{t},K_{t})\in \mathcal{A}, $
$H_{0}=Id_{N'},$ $K_{0}=Id_{P'},$ such that $F=K_{t}\circ G_{t}\circ
H_{t}^{-1},$ for all $t\in [0,1].$

Now consider the commutative diagram

$$
  \xymatrix{ 
N \ar[d]_{g_{t}} \ar[r]^{i} &N' \ar[d]_{G_{t}}\ar[r]^{H_{t}} &N'\ar[d]_{F}   \\
P \ar[r]^{j}   &   P'  \ar[r]^{K_{t}} &P'  }
$$

Since $(G_{t},N',P',i,j)$ is an unfolding of $g_{t},$ it follows that
$(F,N',P',H_{t}\circ i,K_{t} \circ j)$ is also an
unfolding of $g_{t}.$ Let $G(x,t)=(g_{t}(x),t),$ $\tilde
H(x,t)=H_{t}(x)$ and $\tilde K(y,t)=K_{t}(y).$ Then we have the
following commutative diagram  

$$
  \xymatrix{ 
{N\times I}\ar[d]_{G}  \ar[r]^{\tilde H} &N'\ar[d]^{F}   \\
{P\times I} \ar[r]^{\tilde K}   &   P'  
}
$$

So,we have that the triple $(F,  \mathcal{S}'_{N'},
\mathcal{S}'_{P'})$ is a Thom stratified map, and $i$ and $j$ are
transverse  respectively to $\mathcal{S}'_{N'}$ and 
$\mathcal{S}'_{P'}.$ Then taking $g$ sufficiently close to $f_{2}$,
$H_{t}\circ i$ and $K_{t}\circ j$ are also transverse to  $\mathcal{S}'_{N'}$ and 
$\mathcal{S}'_{P'},$  respectively.

It follows that these stratifications pull back to the Whitney's
stratifications $\tilde H^{*}(\mathcal{S}_{N'})$ and $\tilde
K^{*}(\mathcal{S}'_{P'})$ in $N\times I$ and $P\times I,$ respectively. 

Moreover, each $N\times\{t\}, P\times\{t\}$ is transverse to $\tilde H^{*}(\mathcal{S}_{N'})$ and $\tilde
K^{*}(\mathcal{S}'_{P'}),$ and conditions (1)-(3) are satisfied.

\index{Thom!second isotopy theorem}
Then, we may apply the Thom's second isotopy  lemma (Gibson
et al., \cite{GibWirPleLoo}, theorem 5.8, Chapter II) and
conclude that $f_{2}=g_{0}$ is topologically equivalent to $g=g_{1}.$

\subsection{The geometry of topological stability}
\label{sec:ult}

Whether $C^{0}$-stability and $C^{\infty}$-stability are equivalent
notions in the nice dimensions is  a question not answered by
the Thom-Mather theory. The first steps towards such result appear in
Robert May's thesis \cite{May73, May74}. Mays's results were followed
by a series of papers by Damon \cite{Dam79-1, Dam79-2, Dam79-3}, who
proved in \cite{Dam79-2} that $C^{\infty}$-stability is equivalent to a stronger notion
of $C^{0}$-stability.

Some of the ideas introduced in these papers form part of the basis
for Andrew du Plessis and Terry Wall's book on topological stability.
The book, \emph{The geometry of
  topological stability,} \cite{PleWal} published in 1995, is a deep
contribution to the subject of topological stability of smooth
mappings. They are  motivated by the problems left unanswered in the
Thom-Mather theory. One such problem is that it is very
difficult to determine explicitly the Thom-Mather stratification
$\mathcal{A}^k(n,p)$ in the complement of the nice dimensions and its
boundary. Another problem is that the transversality to the Thom-Mather
stratification is not a \emph{necessary} condition for topological
stability. In fact, this follows from a combination of results of
Looijenga \cite{Loo77} and Bruce\cite{Bru80} as we see in examples
\ref{E8} and \ref{mdu} below. du Plessis and Wall  give partial answers to the following two
conjectures:

\noindent \emph{Conjecture (i)} (Conjecture 1.3 in \cite{PleWal}) The smooth map $f:N\to P$ is $W$-strongly
$C^{0}$-stable if and only if it is quasi-proper and locally $C^{0}$-stable. 

Following \cite{PleWal}, we say that a map $f$ is \emph{quasi-proper}\index{Map!quasi-proper}
if there is a neighborhood $V$ of the discriminant $\Delta (f)$ in $P$
such that the restriction of $f$ to $f^{-1}(V),$ $f:f^{-1}(V) \to V$, is
a proper map.

\noindent \emph{Conjecture (ii)} If $N$ is compact, $f:N \to P$ is
$C^{0}$-stable if and only if it is locally $C^{0}$-stable.

\noindent \emph{Conjecture (iii)} (Conjecture 1.4 in \cite{PleWal}) There
exist a $\mathcal{K}$-invariant semi-algebraic stratification
$\mathcal{B}^{k}(n,p)$ of $J^{k}(n,p)\setminus W^{k}(n,p)$ such that a
smooth map $f:N\to P$ is locally $C^{0}$-stable if and only if, for
$k$ such that $\mathrm{cod}\, W^{k}(n,p) > n,$ $j^k f$ avoids
$W^{k}(n,p)$ and is multitransverse to $\mathcal{B}^{k}(n,p).$

We summarize  now the main results of  \cite{PleWal}.

\begin{theorem}[Theorem 1.5, \cite{PleWal}]\label{plewal}
  \begin{enumerate}
  \item [(i)] If $f:N\to P$ is $W$-strongly $C^{0}$-stable, then it is
    quasi-proper and locally $C^{0}$-stable. 
  \item [(ii)] If $f:N\to P$ is quasi-proper, of a finite singularity
    type over a neighborhood of its discriminant, and locally tamely
    $P$-$C^{0}$-stable, then it is $W$-strongly $C^{0}$-stable. 
  \end{enumerate}
\end{theorem}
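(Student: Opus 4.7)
The plan splits along the two implications. For (i), $W$-strong $C^{0}$-stability provides a neighborhood $U$ of $f$ and continuous maps $H_{1}:U\to\mathrm{Homeo}(N)$, $H_{2}:U\to\mathrm{Homeo}(P)$ with $H_{1}(f)=1_{N}$, $H_{2}(f)=1_{P}$ and $g=H_{2}(g)\circ f\circ H_{1}(g)^{-1}$ for $g\in U$. Local $C^{0}$-stability at a finite $S\subset f^{-1}(y)$ then follows by restriction: given any small perturbation of the germ of $f$ at $S$, extend it to a global $g\in U$ with support in an arbitrarily small neighborhood of $S$, and restrict $H_{1}(g),H_{2}(g)$ to neighborhoods of $S$ and of $y$; the restrictions give a germ-level conjugacy. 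Quasi-properness I would prove contrapositively: if no neighborhood $V$ of $\Delta(f)$ exists over which $f$ is proper, one can produce arbitrarily small perturbations $g_{n}$ of $f$ creating singular fibers that escape towards infinity (points $x_{n}\in N$ leaving every compact set with $f(x_{n})$ clustering on $\Delta(f)$). Such behaviour cannot be absorbed by any homeomorphisms $H_{1}(g_{n}),H_{2}(g_{n})$ close to the identities, violating the continuity of $H_{1},H_{2}$ at $f$.

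For (ii), my plan mirrors the Thom--Mather density argument of Section \ref{sec:proof-that-topol}, upgraded from ``there exists a conjugacy'' to ``there is a continuous choice of conjugacies''. By quasi-properness, shrink to a neighborhood $V$ of $\Delta(f)$ over which $f$ is proper; on the complement of $f^{-1}(\overline{V'})$ for $V'\Subset V$, the map $f$ is a submersion, so perturbations can be trivialized continuously by a partition-of-unity construction adapted to the submersion. The real work is over $V$: finite singularity type lets me embed $f|_{f^{-1}(V)}$ into a proper, infinitesimally stable unfolding $(F,N',P',i,j)$ as in Section \ref{sec:how-strat-mapp}, so $f$ corresponds to the section of $F$ cut out by the transverse embedding $j$. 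Small $C^{\infty}$-perturbations $g$ of $f$ correspond to perturbations $j_{g}$ of $j$, all transverse (for $g$ close enough to $f$) to the Thom--Mather stratification $(\mathcal{S}_{N'},\mathcal{S}_{P'})$ of $F$. Thom's Second Isotopy Lemma applied to the Thom stratified map $F$ then integrates the stratified vector fields induced by the deformation $t\mapsto j_{g_{t}}$ into homeomorphisms $\widetilde{H}_{t}$ of $N'$ and $\widetilde{K}_{t}$ of $P'$ conjugating $F$ to itself; restricting to the slices and gluing with the submersive trivialization outside yields candidates for $H_{1}(g),H_{2}(g)$.

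The main obstacle is the continuous dependence $g\mapsto(H_{1}(g),H_{2}(g))$, not the mere existence of a conjugacy. The Thom isotopy construction produces controlled vector fields only after choosing a tube system for the stratification, and one must show that a single tube system serves all $g$ in a Whitney neighborhood of $f$, so that the integration of the resulting time-dependent fields $V_{g},W_{g}$ depends continuously on $g$. This is precisely what the hypothesis of \emph{locally tame} $P$-$C^{0}$-stability is designed to supply: it forces the local topological trivializations to come from uniformly controlled stratified flows rather than arbitrary homeomorphisms, and controlled flows can be patched by a partition of unity to yield an assignment $g\mapsto H_{i}(g)$ continuous in the Whitney topology. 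A second, related delicacy is that composition $(H_{1},H_{2})\mapsto H_{2}\circ f\circ H_{1}^{-1}$ is not continuous in the Whitney $C^{\infty}$-topology outside the proper category, as noted after Theorem \ref{star}; here the quasi-properness together with the FST hypothesis restricts everything to the proper regime where composition is continuous, closing the argument.
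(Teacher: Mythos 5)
The paper does not actually prove this theorem: it is quoted from du Plessis and Wall's monograph \cite{PleWal}, whose proof occupies a large part of that book, so there is no in-paper argument to compare yours against. Judged on its own terms, your outline of part (i) is reasonable in its first half: localizing the continuous family $(H_{1},H_{2})$ to obtain local $C^{0}$-stability is essentially the right move (modulo care with the fact that homeomorphic conjugacy does not obviously match up singular sets). But the quasi-properness step is asserted rather than proved: you still owe an argument that a perturbation supported near infinity, producing fibres escaping every compact set over points clustering on $\Delta(f)$, genuinely cannot be undone by homeomorphisms depending continuously on the perturbation. That is the harder half of (i) and "such behaviour cannot be absorbed" is not yet a proof.

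The genuine gap is in part (ii). You route the argument through the Thom--Mather stratification $(\mathcal{S}_{N'},\mathcal{S}_{P'})$ of the stable unfolding $F$ and Thom's second isotopy lemma, asserting that the perturbed sections $j_{g}$ are all transverse to that stratification for $g$ close to $f$. Nothing in the hypotheses of (ii) gives this: local tame $P$-$C^{0}$-stability is strictly weaker than multitransversality to $\mathcal{A}^{k}(N,P)$, and Examples \ref{E8} and \ref{mdu} of the paper exhibit topologically stable maps whose jet extensions are \emph{not} transverse to the Thom--Mather stratification. Indeed the whole point of Theorem \ref{plewal} is to characterize strong $C^{0}$-stability without that transversality hypothesis, which is already handled by the density argument of Section \ref{sec:proof-that-topol}. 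So the isotopy-lemma step, and with it your construction of the controlled fields $V_{g},W_{g}$ from a single tube system, is unavailable under the stated hypotheses. The actual proof in \cite{PleWal} glues the local tame $P$-$C^{0}$-trivializations directly --- the tameness hypothesis exists precisely to make the local trivializing homeomorphisms compatible with a gluing scheme --- together with a separate trivialization over the submersive part away from the discriminant; no jet-space stratification enters. Your closing paragraph gestures at this role of tameness, but the argument as written still rests on a transversality you cannot assume.
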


The local $P$-$C^{0}$-stability is a very strong form of local
$C^{0}$-stability. We refer to \cite[p. 113]{PleWal}, for the
definition of \emph{tame $P$-$C^{0}$-stability}.

\begin{theorem}[Theorem 1.6, \cite{PleWal}]
  There exist $\mathcal{K}$-invariant algebraic subsets $Y^{k}(n,k)$
  in   $J^{k}(n,k)$ with  $W^{k}(n,k) \subseteq Y^{k}(n,k),$ and a
  $\mathcal{K}$-invariant stratification $\mathcal{B}^{k}(n,p)$ of
  $J^{k}(n,k)\setminus Y^{k}(n,k)$ with the following properties:
  \begin{enumerate}
  \item [(a)] If $f:N\to P$ is locally $C^{0}$-stable, or if $N$ is
    compact and $f$ is $C^{0}$-stable, then $j^{k}f$ is
    multitransverse to $\mathcal{B}^{k}(N,P);$ moreover, if
    $\mathrm{codim\;} Y^{k}(n,p)\geq n,$ then $j^{k}f$ avoids $Y^{k}(N,P).$ 
  \item [(b)] If $f:N\to P$ is such that $j^{k}f$ avoids $Y^{k}(N,P)$
    and is multitransverse to $\mathcal{B}^{k}(N,P),$ then $f$ is
    locally tamely $C^{0}$-stable.
\end{enumerate}
\end{theorem}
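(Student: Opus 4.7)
The plan is to construct both $Y^k(n,p)$ and $\mathcal{B}^k(n,p)$ as semialgebraic $\mathcal{K}$-invariant refinements of the contact-orbit decomposition of $J^k(n,p)$, arranged so that each stratum of $\mathcal{B}^k(n,p)$ corresponds to a fixed topological type of stable unfolding. The first step is to enlarge the bad set $W^k(n,p)$ of Proposition \ref{prop:gwpl} to an algebraic set $Y^k(n,p)$ that also excludes $k$-jets where the topological equisingularity classification is ill-behaved — for instance, jets of infinite topological modality, or jets at which a topological equisingularity stratum fails to be Whitney regular over its neighbours. These conditions cut out $\mathcal{K}$-invariant semialgebraic subsets whose Zariski closure can be taken, and by an argument parallel to (i)--(ii) of Proposition \ref{prop:gwpl} one chooses $k$ large enough that $\mathrm{cod\,}Y^k(n,p)$ exceeds any prescribed bound. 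The second step is to produce $\mathcal{B}^k(n,p)$ as a Whitney regular semialgebraic refinement of the partition of $J^k(n,p)\setminus Y^k(n,p)$ into \emph{topological contact classes}, that is, classes of jets whose stable unfoldings are $C^0$-$\mathcal{A}$-equivalent. Existence follows from \L ojasiewicz's theorem applied to these semialgebraic sets, with $\mathcal{K}$-invariance preserved by passing to connected components of $\mathcal{K}$-saturations. Here I would lean heavily on Damon's results \cite{Dam79-1, Dam79-2, Dam79-3} showing that, outside a thin $\mathcal{K}$-invariant set, the $\mathcal{K}$-moduli are generally topologically trivial, so each topological stratum is a semialgebraic family of $\mathcal{K}$-orbits parametrised by topologically trivial moduli.

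For the necessity assertion (a), I would argue by contradiction. Suppose $f$ is locally $C^0$-stable and $j^k f$ fails to be multi-transverse to some stratum $S$ of $\mathcal{B}^k(N,P)$ at a multi-point $T=f^{-1}(y)\cap \Sigma(f)$. Thom's transversality theorem then furnishes arbitrarily small perturbations of $f$ whose $k$-jets at nearby multi-points lie in a different stratum of $\mathcal{B}^k(N,P)$; since distinct strata correspond to distinct topological types of unfoldings by construction, the germ of the perturbation is $C^0$-inequivalent to the germ of $f$ at $T$, contradicting local $C^0$-stability. The same dimension-counting argument, together with $\mathrm{cod\,}Y^k(N,P)\geq n$, forces $j^k f$ to avoid $Y^k(N,P)$. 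The case of $C^0$-stability on compact $N$ reduces to the local statement in the standard way.

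For sufficiency (b), given multi-transversality and avoidance of $Y^k(N,P)$, the pullback $(j^k f)^{-1}\mathcal{B}^k(N,P)$ together with its $f$-image and the complement of $f(N)$ form a Whitney regular Thom stratification of $f$ in the sense of Section \ref{sec:how-strat-mapp}, by an argument entirely parallel to Proposition \ref{prop:gib} (the constant topological type along strata plays the role that constant $\mathcal{A}$-type plays in the nice dimensions). Local tame $P$-$C^0$-stability of $f$ then follows by combining Thom's second isotopy theorem, adapted to the topological category as in \cite{GibWirPleLoo, Mat76}, with the topological equisingularity built into the strata, and one concludes via Theorem \ref{plewal}(ii) if strong stability is desired. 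The main obstacle — and the technical heart of the whole construction — is the second step: proving that one can partition $J^k(n,p)\setminus Y^k(n,p)$ into Whitney regular semialgebraic strata of constant topological type while keeping $\mathrm{cod\,}Y^k(n,p)$ large. Outside the nice dimensions, $\mathcal{K}$-moduli appear generically, and one must show both that the topological type is constant along these moduli (the topological triviality results of May, Damon and Looijenga) and that the strata patch together with the required Whitney regularity, which is where du Plessis and Wall expend the bulk of their effort.
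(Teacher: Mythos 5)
The paper does not prove this theorem; it is quoted from du Plessis and Wall's book \cite{PleWal}, and the only hint the paper gives about the method is in the Notes to Section \ref{sec:bound-nice-dimens}, where the analogous $C^{1}$ result is described: stability is first shown to imply only a \emph{weak} form of transversality (the preimage of a stratum is a topological or $C^{1}$ submanifold of the right dimension), and this is upgraded to genuine transversality using unfolding theory and a perturbation lemma of R.~D.~May \cite{May73}. Your proposal replaces this delicate upgrade with a direct contradiction argument in part (a): perturb a non-transverse jet into a different stratum and conclude the germs are $C^{0}$-inequivalent. That step is the genuine gap. The paper's own Examples \ref{E8} and \ref{mdu} are counterexamples to the principle you are invoking. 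In Example \ref{E8} the germ $F_{0}$ is topologically stable although $j^{k}F_{0}$ is \emph{not} transverse to the Thom--Mather stratification, because Whitney condition (b) fails at $\lambda=0$ and the $\lambda=0$ orbit becomes a separate stratum of the minimal Whitney stratification even though all nearby members of the family are topologically equivalent. In Example \ref{mdu} a $C^{0}$-stable map $f:\mathbb R\to\mathbb R$ fails to be transverse to $\Sigma^{1}$, and the paper states explicitly that $C^{0}$-stability of such maps ``cannot be characterized by multitransversality to any stratification.'' Your argument, applied verbatim, would ``prove'' both of these maps unstable. The resolution in \cite{PleWal} is precisely the distinction between local and global $C^{0}$-stability (the map of Example \ref{mdu} is not \emph{locally} $C^{0}$-stable) together with the weak-to-strong transversality mechanism, neither of which appears in your proof.

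The same examples undermine your construction of $\mathcal{B}^{k}(n,p)$ as ``a Whitney regular semialgebraic refinement of the partition into topological contact classes.'' Refining to achieve Whitney regularity reintroduces exactly the small strata (such as the $\lambda=0$ locus in the $\tilde E_{8}$ family, detected by the jump in the number of cusps on plane sections of the discriminant) to which locally $C^{0}$-stable maps need not be transverse; so a stratification built this way cannot satisfy (a). du Plessis and Wall must instead arrange for such regularity-failure loci either to be absorbed into the enlarged bad set $Y^{k}(n,p)$ or to be merged into coarser strata, while simultaneously controlling $\operatorname{codim}Y^{k}(n,p)$ --- and showing that this can be done is the technical heart of their book, not a routine application of \L ojasiewicz's theorem. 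Your part (b) is closer in spirit to the paper's Thom--Mather argument (Section \ref{sec:proof-that-topol}), but note that the asserted conclusion is local \emph{tame} $P$-$C^{0}$-stability, a specific technical notion from \cite[p.~113]{PleWal} that your appeal to the second isotopy theorem and Theorem \ref{plewal}(ii) does not address.
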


As remarked by the authors,  in the range of dimensions $n <
\mathrm{codim\;} Y^{k}(n,p),$ the results imply that Conjectures 1.3 and
1.4, with $W^{k}$ replaced by $Y^{k},$ hold. 

We finish this section with two examples illustrating two rather
delicate questions in the theory of $C^{0}$-stability.

\index{Simple elliptic singularity}
\begin{example}[The simple elliptic singularity $\tilde{E}_8$]\label{E8}
  The simple elliptic singularities $\tilde{E}_8$ in $\mathbb{K}^{3}$,
  $\mathbb{K}=\mathbb{R}$ or $\mathbb{C},$ is the
  $\mathcal{K}$-unimodular family of hypersurfaces with isolated
  singularities defined by
  $$
  \tilde E_{8}:\ \ \ f_{\lambda}(x_{0},x_{1},x_{2})=x_{0}^{2}+x_{1}^{3}+x_{2}^{6}+\lambda
  x_{0}x_{1}x_{2}.
  $$

The family $f_{\lambda}$ is weighted homogeneous of type $(3,2,1;6),$
then the Milnor number $\mu(f_{\lambda})$ is constant and equal to
$10.$ When $\mathbb K=\mathbb C,$ it was shown by Looijenga
\cite{Loo77} that  the stable unfolding of
$f_{\lambda}$ is topologically trivial along the moduli parameter
$\lambda.$

From section \ref{sec:class-stable-sing}, \eqref{eq:17}, it follows
that the stable unfolding of $f_{\lambda}$ can be given as

\begin{align*}
  F:(\mathbb C^{3} \times \mathbb C^{8}\times \mathbb C,0) &\to
(\mathbb C \times\mathbb C^{8}\times\mathbb C,0)\\
(x,u,\lambda) &\mapsto (\tilde f(x,u,\lambda), u,\lambda)
\end{align*}
with $x=(x_{0},x_{1},x_{2})$, $u=(u_{1}, \dots, u_{8}),$
$\tilde f_{\lambda}(x,u)=\tilde f(x,u,\lambda),$ $\tilde
f_{\lambda}(x,0)=f_{\lambda}(x),$ and
\begin{multline*}
  \tilde f(x,u,\lambda)=x_{0}^{2}+x_{1}^{3}+x_{2}^{6}+\lambda
  x_{0}x_{1}x_{2}+u_{1}x_{1}+u_{2}x_{2}+
  u_{3}x_{1}x_{2}\\ +u_{4}x_{2}^{2} 
  +u_{5}x_{1}x_{2}^{2}+u_{6}x_{2}^{3}+u_{7}x_{1}x_{2}^{3}+u_{8}x_{2}^{4}.
\end{multline*}

For all $\lambda$ sufficiently small, including
$\lambda=0,$ $F_{\lambda}:(\mathbb C^{11},0) \to (\mathbb C^{9},0)$ is
topologically stable. See Looijenga \cite{Loo77} and Bruce \cite{Bru80}.

On the other hand, the construction of the  Thom-Mather
stratification $\mathcal{A}^{k}(n,p)$ in $J^{k}(n,p)\setminus
W^{k}(n,p)$ as discussed in sections \ref{sec:proof-that-topol} and
\ref{sec:ult} reduces to the problem of finding a minimal
Whitney stratification of jets of finite singularity
type. However, Bruce proved that at $\lambda=0$ the Whitney condition
$(b)$ fails (see \cite{Bru80}, Proposition 2 and Example 3(a)). The
failure of condition (b) can be geometrically detected as follows: the
number of cusps (${\cal A}_2$-singularities) of the intersection of
the discriminant $\Delta(F)$ with a family of $2$-planes transversal
to $\Delta(F)$ jumps from $12$ to $13$ at $\lambda=0.$ This number is
an invariant of the Thom-Mather stratification (\cite{Bru80},Proposition
2).

If follows that the germ $F_{0}:(\mathbb C^{11},0) \to (\mathbb
C^{9},0)$ is topologically stable, but $j^k F_{0}$ is not transverse to
the Thom-Mather stratification.

% As a consequence  the $\tilde E_{8}$ singularities do not form a
% canonical stratum of the Thom-Mather stratification
\end{example}

\begin{example}[May \cite{May73} and  du Plessis and
Wall \cite{PleWal}, Section 4.1 ]\label{mdu}

Lef $f:\mathbb R \to \mathbb R$ be the proper map whose graph is
illustrated in Figure \ref{fig:co}. Its singular set $\Sigma(f)$ is
$\mathbb Z \subset \mathbb R,$ and the critical values are $F(0)=0,$
$f(n)=n+1,$ for $n>0$ and odd, and $f(n)=n-1$ for $n>0$ and even;
while $f(-x)=-f(x).$ For example, we may define, as in du Plessis and
Wall \cite{PleWal},
$$
f(x)=
\begin{cases}
   x^{3}  & x \in [\frac{-1}{4}, \frac{1}{4}], \\
   n+1 -(x-n)^{2}  &x\in[n-\frac{1}{4},n+ \frac{1}{4}], n\in \mathbb
  N, n\ \mathrm{odd} \\
   n-1 +(x-n)^{2}  &x\in[n-\frac{1}{4},n+ \frac{1}{4}], n\in \mathbb
  N, n\ \mathrm{even} 
\end{cases}
$$
with $f$ defined on the remaining intervals so that it is monotone
(with $f'\neq 0$) on each interval and $C^{\infty}$ everywhere.

\begin{figure}[h]
  \centering
\scalebox{0.60}{\includegraphics{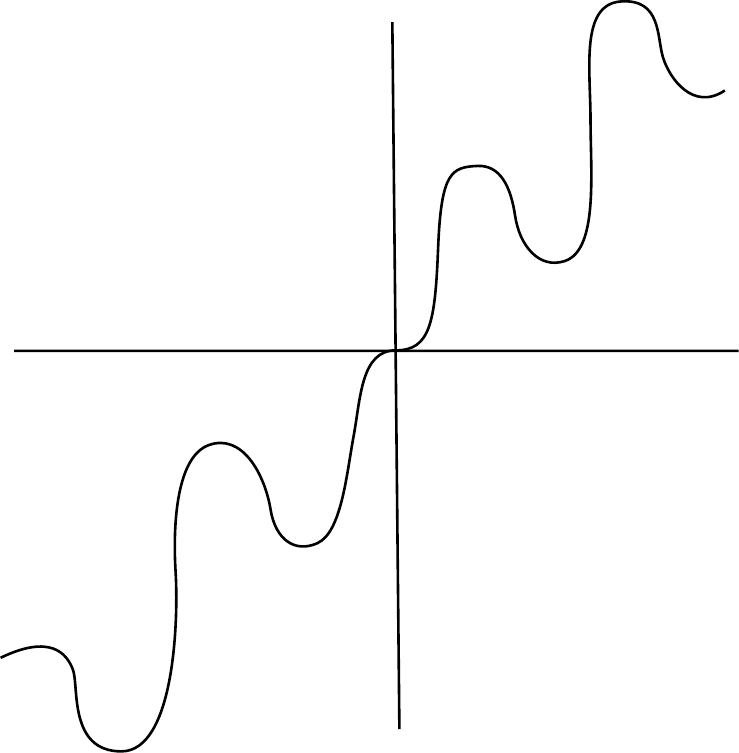}}
  \caption{$C^{0}$-stable non transversal map}
  \label{fig:co}
\end{figure}
%\index{Map!$C^{0}$-stable non transversal}

One can see that $f$ is $C^{0}$-stable. However it is not transverse to
the Boardman manifold $\Sigma^{1}$ at the origin. In fact, $f$ cannot
be transverse to any invariant stratification of jet space. Thus
$C^{0}$-stability  of proper maps $f:\mathbb R \to \mathbb R$ cannot be
characterized by multitransversality to any stratification.

Notice that $f$ is not locally $C^{0}$-stable, then it follows from
Theorem \ref{plewal}(i) that $f$ is not strongly stable.

\end{example}

\subsection{Notes}
\label{sec:notes-3}
\index{Whitney!fibering conjecture}
In the recent paper \emph{On the smooth Whitney fibering
  conjecture} \cite{MurTroPle} Murolo, du Plessis and Trotman give a
remarkable improvement of the first Thom-Mather isotopy theorem for Whitney
stratified sets. The result follows from their proof, in the same
paper, of the smooth version of the Whitney fibering conjecture for
Bekka (c)-regular stratifications. The original conjecture made by
Whitney in \cite{Whi65-2} in the real and complex, local analytic
and global algebraic cases, was proved by Parusinski and
Paunescu \cite{ParPau} in 2014.

As an application of the results, in section 9 of
the paper,  the authors give a sufficient condition for a smooth
map between two smooth manifolds to be strongly topologically stable
(\cite[Theorem 13]{MurTroPle}).

This result in turn, implies the long-awaited improvements of Mather's
topological stability theorem, which we state below. 

\begin{corollary}[Corollary 11, \cite{MurTroPle}]\label{mtp1}
Lef $f:N \to P$ be a quasi-proper smooth map of finite singularity
type whose $l$-jet   avoids $W^{l}(N,P)$ and is multi-transverse to
$\mathcal{A}^{l}(N,P).$ Then $f$ is strongly topologically stable.
\end{corollary}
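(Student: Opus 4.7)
The plan is to deduce the corollary from Theorem 13 of \cite{MurTroPle}, which gives strong topological stability from (a) the existence of a Thom stratification of $f$ and (b) a continuous trivialization mechanism supplied by the smooth Whitney fibering conjecture proved earlier in the same paper. Accordingly, the proof splits into two main parts: first, manufacture the Thom stratification, and second, feed it into the improved isotopy machinery.

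First I would verify that the hypotheses produce a Thom stratification of $f$ in the sense of Definition \ref{def:str}. Since $j^{l}f$ avoids $W^{l}(N,P)$, every germ $f_x$ is finitely $\mathcal{K}$-determined (property (a) of $W^{l}(n,p)$ following Proposition \ref{prop:gwpl}), hence of finite singularity type, so each $f_x$ admits a stable unfolding and the local codimension $\mathbf{cod}\,f_x$ is well defined. Combined with multi-transversality of $j^{l}f$ to $\mathcal{A}^{l}(N,P)$, the construction of Proposition \ref{prop:gib} applies verbatim on the source side to yield a Whitney stratification $\mathcal{S}=(j^{l}f)^{-1}\mathcal{A}^{l}(N,P)$ of $N$ together with its image $\mathcal{S}'=\{f(X):X\in\mathcal{S}\}\cup\{P\setminus f(N)\}$ and with the Thom regularity condition holding along strata. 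Proposition \ref{prop:gib} is stated for proper maps, but properness enters only to guarantee that (i) $f(X)$ is a stratum (i.e.\ $f|_X$ is a proper immersion on critical strata) and (ii) the frontier/closure conditions on $\mathcal{S}'$ are satisfied; quasi-properness provides exactly this over a neighborhood $V$ of $\Delta(f)$, and on the complement $f$ is a submersion where everything holds trivially.

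Next I would run the scheme of section \ref{sec:proof-that-topol} with this stratification to conjugate $f$ to a nearby map $g$. Choose a proper stable unfolding $(F,N',P',i,j)$ of $f$ (existence is guaranteed since $f$ is FST, on each chart over $V$, then glued using quasi-properness of $f$ over $V$). For $g$ close to $f$ in the Whitney topology, connect $f$ to $g$ by a path $g_t$ and lift to a path $G_t$ of unfoldings of the form $(G_t,N',P',i,j)$ with $G_0=F$ and $G_t\equiv F$ outside a compact neighborhood of $i(N)$. Theorem \ref{star} applied to the proper infinitesimally stable germ $F$ yields one-parameter families $(H_t,K_t)$ of diffeomorphisms of $N',P'$ conjugating $F$ to $G_t$, and the resulting pulled-back stratifications $\tilde H^{\ast}\mathcal{S}'_{N'}$, $\tilde K^{\ast}\mathcal{S}'_{P'}$ on $N\times I$, $P\times I$ remain Whitney regular and compatible with the slices $N\times\{t\}$, $P\times\{t\}$.

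The main obstacle is precisely at the final step: the classical Thom second isotopy lemma, as used in section \ref{sec:proof-that-topol}, produces a topological equivalence between $g_0$ and $g_1$ but does \emph{not}, a priori, guarantee that the trivializing homeomorphisms $H_1(g),H_2(g)$ depend continuously on $g$ in the Whitney topology, which is what \textbf{strong} topological stability demands. This is exactly where the smooth Whitney fibering conjecture (Murolo--du Plessis--Trotman, extending Parusiński--Păunescu) intervenes: it provides, near each stratum of a Bekka $(c)$-regular (in particular, Whitney plus Thom) stratification, a smooth foliation by diffeomorphic fibers whose leaves vary continuously with parameters, and hence gives a continuous choice of the conjugating homeomorphisms along the one-parameter family. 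The quasi-properness of $f$ is essential one more time to patch these local fiberings into a global continuous choice: it ensures compactness of the relevant source strata over a neighborhood of the discriminant, where nontriviality lives, while on the complement the submersion $f$ is strongly stable automatically. Putting these pieces together yields continuous maps $H_1:U\to\operatorname{Homeo}(N)$, $H_2:U\to\operatorname{Homeo}(P)$ with $H_2(g)\circ f\circ H_1(g)=g$, which is strong topological stability.
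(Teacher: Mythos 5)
The paper offers no proof of this statement: it is quoted verbatim as Corollary 11 of \cite{MurTroPle} and the surrounding text says only that it follows from Theorem 13 of that paper, whose proof rests on the smooth Whitney fibering conjecture for Bekka $(c)$-regular stratifications. Your overall route — build the Thom stratification from the jet-transversality hypotheses via Proposition \ref{prop:gib}, run the unfolding/isotopy scheme of Section \ref{sec:proof-that-topol}, and invoke the fibering conjecture to upgrade topological equivalence to a continuous trivialization — is the intended one, so the architecture is right.

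Two steps are genuinely gapped. First, you take for granted a global proper infinitesimally stable unfolding $(F,N',P',i,j)$ of $f$. The existence theorem quoted in the paper (Mather \cite{Mat76}, Proposition 7.2) assumes $N$ compact; for a quasi-proper map on a non-compact source the construction of such a global unfolding (and the gluing of chartwise unfoldings you allude to) is not automatic and is part of what \cite{PleVos} and \cite{MurTroPle} must actually do. Second, and more seriously, your final paragraph does not close the gap you correctly identify. Strong ($W$-strong) stability requires continuous maps $H_{1}:U\to\mathrm{Homeo}(N)$, $H_{2}:U\to\mathrm{Homeo}(P)$ on a whole Whitney-open neighborhood $U$ of $f$, i.e.\ continuity of the conjugating homeomorphisms \emph{as functions of $g$}. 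What your argument supplies is continuity of the isotopy in the arc parameter $t$ for each fixed path $g_{t}$; that already follows from the classical second isotopy lemma and yields only ordinary topological stability. The content of Theorem 13 of \cite{MurTroPle} is precisely that the horizontally-$C^{1}$ foliated structure coming from the fibering conjecture lets one choose the local trivializations canonically enough that they assemble into a single section over $U$; saying the leaves "vary continuously with parameters" restates the desired conclusion rather than deriving it. As written, the proof establishes topological stability of $f$ but not strong topological stability.
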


Corolllary \ref{mtp1}  has the following immediate consequence.
\begin{corollary}[Corollary 12, \cite{MurTroPle}]\label{con}
  The space of strong topologically stable maps is dense in the space
  of quasi-proper maps between two smooth manifolds.
\end{corollary}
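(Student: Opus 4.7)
The plan is to derive Corollary \ref{con} from Corollary \ref{mtp1} by two successive Thom-type transversality arguments, combined with the observation that quasi-properness is preserved under sufficiently small Whitney perturbations. Given a quasi-proper $f : N \to P$ and an arbitrary Whitney $C^{\infty}$-neighborhood $U$ of $f$, I aim to produce $g \in U$ that is quasi-proper, of finite singularity type, and whose $l$-jet (for some large $l$) avoids $W^{l}(N,P)$ and is multi-transverse to $\mathcal{A}^{l}(N,P)$; then Corollary \ref{mtp1} will apply.

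First I would fix $l$ large enough that $\operatorname{cod} W^{l}(n,p) > n$, which is possible by Proposition \ref{prop:gwpl}(ii). Since $W^{l}(n,p)$ is a real algebraic set, \L ojasiewicz's theorem provides a Whitney regular stratification of $W^{l}(N,P)$ with semialgebraic strata, all of codimension greater than $n$. By Thom's transversality theorem, the set of maps $h \in C^{\infty}(N,P)$ such that $j^{l}h$ is transverse to every stratum of this stratification is residual; for dimension reasons, any such $h$ satisfies $j^{l}h(N) \cap W^{l}(N,P) = \emptyset$, and hence is of finite singularity type by Proposition \ref{prop:gwpl}(iii). Pick $f_{1} \in U$ with this property. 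Next, the Thom-Mather stratification $\mathcal{A}^{l}(N,P)$ of $J^{l}(N,P) \setminus W^{l}(N,P)$ is a locally finite partition into $\mathcal{K}$-invariant semialgebraic strata, so Thom's multi-jet transversality theorem again gives a residual set of maps whose $l$-jet is multi-transverse to $\mathcal{A}^{l}(N,P)$; since avoiding $W^{l}(N,P)$ is an open condition, I can pick such a map $f_{2}$ inside $U$.

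The genuinely delicate step is verifying that $f_{2}$ remains quasi-proper. Quasi-properness is a condition about the behavior of $f$ along a neighborhood $V$ of its discriminant. Since the Whitney $C^{\infty}$-topology controls $f$ outside compact sets — in particular, for sufficiently small perturbations, $f_{2} \equiv f$ outside a compact $K \subset N$ — one can argue that $\Delta(f_{2})$ is a small perturbation of $\Delta(f)$ inside $f(K)$ and coincides with $\Delta(f)$ outside. Shrinking $V$ slightly to a neighborhood $V'$ of $\Delta(f_{2})$ contained in $V$, the properness of $f|_{f^{-1}(V)}$ together with the equality $f_{2} \equiv f$ away from $K$ implies properness of $f_{2}|_{f_{2}^{-1}(V')}$. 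Hence quasi-proper maps form an open set in the Whitney topology, and by intersecting $U$ with this open set at the outset, all approximations stay quasi-proper.

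Having secured all the hypotheses of Corollary \ref{mtp1} for $f_{2}$ — quasi-properness, finite singularity type, avoidance of $W^{l}(N,P)$, and multi-transversality to $\mathcal{A}^{l}(N,P)$ — I conclude that $f_{2}$ is strongly topologically stable. Since $f_{2} \in U$ and $U$ was arbitrary, the set of strongly topologically stable maps is dense in the space of quasi-proper smooth maps $N \to P$. The main obstacle is really the openness of quasi-properness: if this fails in some pathological generality one must restrict to the subclass where it does hold, but in the setting of Corollary \ref{mtp1} the argument above should be sufficient.
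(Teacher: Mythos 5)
Your overall strategy --- approximate a quasi-proper $f$ by transversality so as to land in the hypotheses of Corollary \ref{mtp1} --- is exactly what is intended, since the paper presents Corollary \ref{con} as an immediate consequence of Corollary \ref{mtp1} and offers no further argument. However, the step you yourself single out as delicate, the preservation of quasi-properness, is justified by a false premise. You assert that for sufficiently small Whitney perturbations one has $f_{2}\equiv f$ outside a compact set $K\subset N$. That is the characterization of \emph{convergent sequences} in the Whitney topology (recalled in Section 2 of the paper); it is not a property of the maps in a basic neighborhood $B_{\delta}(f)$, which in general differ from $f$ at every point of $N$, merely by less than $\delta(x)$. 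Consequently your claim that $\Delta(f_{2})$ coincides with $\Delta(f)$ outside a compact set does not follow, and openness of quasi-properness is not established this way. A correct argument has to control the critical set instead: choosing $\delta$ pointwise smaller than the (positive, continuous) distance from $j^{1}f(x)$ to the non-submersive jets on $N\setminus\Sigma(f)$ forces $\Sigma(f_{2})$ into a prescribed neighborhood $W\subset f^{-1}(V)$ of $\Sigma(f)$, so that $\Delta(f_{2})=f_{2}(\Sigma(f_{2}))$ lies in a slightly enlarged $V$, and the properness of $f|_{f^{-1}(V)}$ can then be transferred to $f_{2}$ over a shrunken neighborhood of $\Delta(f_{2})$.

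There is a second, smaller gap: you invoke Proposition \ref{prop:gwpl}(iii) to conclude that avoidance of $W^{l}(N,P)$ gives finite singularity type, but that implication is stated in the paper only for compact $N$, whereas Corollary \ref{con} concerns arbitrary manifolds. For non-compact $N$, avoiding $W^{l}(N,P)$ only yields that every germ of $f_{2}$ is $\mathcal{K}$-finite; the global finiteness required in Corollary \ref{mtp1} (or the finiteness over a neighborhood of the discriminant appearing in Theorem \ref{plewal}(ii)) is precisely where quasi-properness must be used a second time, to control $f_{2}|_{\Sigma(f_{2})}$ over compacta. Neither gap is fatal to the strategy, but the two places where non-compactness of $N$ genuinely matters are exactly the two places where your argument is not yet a proof.
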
 

\section{The boundary of the nice dimensions }
\label{sec:bound-nice-dimens}
\index{BND|see{ nice dimensions, boundary of}}
In this section we give a systematic presentation of  the Thom-Mather
singularities in the boundary of the nice dimensions (BND). Much of the
material presented here is well known to experts. However, it seems that
the organized presentation of the construction of the Thom-Mather
stratification of $J^{k}(n,p)$ when $(n,p)$ is a pair in BND  combined
with the discussion of the properties of topologically stable mappings
in these dimensions do not appear in the literature. The results come
from Mather \cite{Mat69-2,Mat71}, Damon \cite{Dam80,Dam82}, du Plessis and
Wall \cite{PleWal} and  Ruas \cite{Rua} and recent results by Ruas and
Trivedi \cite{RuaTri}.

We only give an outline of most of the proofs but we present the full
details in the case $n=p=9.$ 

We also review du Plessis and Wall main result in \cite{PleWal89} that
$C^{1}$-stable mappings are dense if and only if $(n,p)$ is in the
nice dimensions.

\subsection{A candidate for the Thom-Mather stratification in BND}
\label{sec:thom-math-strat}

The main reference for this section is Ruas and Trivedi \cite{RuaTri}.
We saw that a pair $(n,p)$ is in \emph{the boundary
of nice dimensions} if $\sigma(n,p)=n,$ where $\sigma(n,p)=\cod
\pi^{k}(n,p),$ $k\geq p+1,$ and $\pi^{k}(n,p)$ is the smallest Zariski
closed $\mathcal{K}^{k}$-invariant set in $J^{k}(n,p)$ such that its
complement in $J^{k}(n,p)$ is the union of finitely many
$\mathcal{K}^{k}$-orbits.

In the nice dimensions $\sigma(n,p)>n,$ so it follows that the strata of the stratification of
$J^{k}(n,p)\setminus \pi^{k}(n,p)$ are the simple
$\mathcal{K}^{k}$-orbits of $\mathcal{K}$-codimension $\leq n.$
However, at the BND, there are strata of codimension $n$ in
$\pi^{k}(n,p);$ these strata cannot be avoided by transversal
maps.We shall see that for all pairs $(n,p)$ in BND with the exception
of the pair $(10,7)$ these strata are unimodular
strata\index{Unimodular strata|(} consisting of
the union of a one-parameter family of ${\cal K}$-orbits. When
$(n,p)=(10,7),$ surprisingly, the Thom-Mather stratification also has a
bimodal strata which is the union of a two parameter  family of ${\cal
  K}$-orbits. We call the pair $(10,7)$ \emph{the exceptional pair in BND.}

We recall here the notion of modality (or modularity). This notion can be defined
for any geometric subgroup of $\mathcal{K},$ but here we refer to
modularity for group $\mathcal{K}.$

Let $z \in J^{k}(n,p)$ and denote by $K^{*}(z)$ the union of all
$\mathcal{K}^{k}$-orbits of codimension equal to the codimension of
$\mathcal{K}^{k}(z)$ in $J^{k}(n,p).$ Suppose $K_{*}(z)$ is the connected component of
$K^{*}(z)$ in which $z$ lies. Then we say that $z \in J^{k}(n,p)$ is
$r$-modular if
$$
\cod K_{*}(z)=\cod K^{k}\cdot z-r  .
$$
We say that $1$-modular jets are \emph{unimodular}, $2$-modular jets
are \emph{bimodular} and so on.
Also, if the union of unimodular jets is a submanifold of
$J^{k}(n,p),$ as it happens in our case, we call this union a
\emph{unimodular stratum.} 

The bad set $\tilde \Pi^{k}(n,p)$ in this case is a proper Zariski
closed subset of $\Pi^{k}(n,p)$ such that $\cod \tilde \Pi^{k}(n,p)
\geq n+1$ and $\Pi^{k}(n,p) \setminus \tilde  \Pi^{k}(n,p)$ is the
union of the connected components of a unique unimodular family, while
for the pair $(10,7)$ this set is the union of the unimodular and
the bimodular families.

We stratify $J^{k}(n,p) \setminus \tilde  \Pi^{k}(n,p)$ by taking
as strata the $\mathcal{K}$-orbits of the stable maps and the
modular strata. We call this stratification $\Sigma_{bnd}^{k}(n,p)$
(see \cite{RuaTri}).

In
the global setting we have the following situation. Let $N,P$ and
$J^{k}(N,P)$ as before. Denote by $\tilde \Pi (N,P)$ the subbundle of
$J^{k}(N,P)$ with fibers $\tilde \Pi(n,p).$ Then the
codimension of $J^{k}(N,P) \setminus \tilde \Pi(N,P)$ is equal to the
codimension of $\tilde \Pi(n,p)$ in $J^k(n.p).$ Moreover, the
stratification $\Sigma^{k}_{bnd}(n,p)$ induces a stratification on  $J^{k}(N,P)
\setminus \tilde \Pi(N,P),$ denoted by $\Sigma^{k}_{bnd}(N,P).$

 The
following result appears in  \cite{RuaTri}.

\begin{theorem}[Ruas and Trivedi, \cite{RuaTri}, Theorem 3.1]
  The set of maps $f:N \to P$ such that $j^{k}f(N)\cap \tilde \Pi
  (N,P) =\emptyset$ and $j^{k}f$ is transverse to the strata of
  $\Sigma_{bnd}^{k}(N,P)$ is open in $C^{\infty}(N,P)$ with the Whitney topology.
\end{theorem}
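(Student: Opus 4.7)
The theorem asserts that a certain subset of $C^{\infty}(N,P)$ is open in the Whitney topology. Since the subset is the intersection of two conditions, the plan is to establish openness of each separately.

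For the first condition, I would argue that $\tilde \Pi(N,P)$ is closed in $J^{k}(N,P)$: it is a subbundle whose fibers are the Zariski closed sets $\tilde \Pi(n,p) \subset J^{k}(n,p)$, so closedness in the base $N \times P$ is inherited fiberwise. Setting $U = J^{k}(N,P) \setminus \tilde \Pi(N,P)$, we get that $U$ is open, and by the very definition of the Whitney $C^k$-topology recalled in the paper, the set $M(U) = \{ f : j^{k}f(N) \subset U \}$ is a basic open set. So the first condition defines an open subset on its own.

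For the second condition, I would fix $f_{0}$ satisfying both properties and construct a basic Whitney neighborhood $B_{\delta}(f_{0})$ in which every $g$ still has $j^{k}g$ transverse to $\Sigma^{k}_{bnd}(N,P)$. For each $x \in N$, let $X_{x}$ be the unique stratum containing $j^{k}f_{0}(x)$; transversality of $j^{k}f_{0}$ to $X_{x}$ at $x$ is an open condition on $j^{k+1}f_{0}(x)$, expressible as the non-vanishing of certain maximal minors of the matrix whose image spans $T_{j^{k}f_{0}(x)} X_{x} + d(j^{k}f_{0})_{x}(T_{x}N)$. This produces an open neighborhood $V_{x} \subset J^{k+1}(N,P)$ of $j^{k+1}f_{0}(x)$ on which the linear-algebraic transversality inequality persists. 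The role of Whitney regularity of $\Sigma^{k}_{bnd}(N,P)$ is now essential: if a small perturbation $g$ has $j^{k}g(x')$ landing in a stratum $X'$ distinct from $X_{x}$ but with $X_{x} \subset \overline{X'}$, Whitney condition (a) forces $T_{j^{k}g(x')}X' $ to contain a subspace close to $T_{j^{k}f_{0}(x)}X_{x}$, and so the transversality condition passes automatically to $X'$. Local finiteness of the stratification bounds the number of strata near $j^{k}f_{0}(x)$ that need attention. Patching the $V_{x}$ through a continuous function $\delta: N \to \mathbb{R}_{\geq 0}$, suitably refined using a locally finite cover of $N$, produces the required $B_{\delta}(f_{0})$; the Whitney topology's strong requirement on noncompact $N$ (that perturbations agree with $f_0$ off a compact set outside any given compact) is respected by construction.

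The main obstacle is the propagation step: without Whitney regularity of $\Sigma^{k}_{bnd}(N,P)$, a tiny perturbation of $f_{0}$ could push $j^{k}f(x)$ into a lower-dimensional stratum in which transversality silently fails. I would therefore need to verify (or invoke from the construction of $\Sigma^{k}_{bnd}$) that the strata — the simple $\mathcal{K}$-orbits of stable jets together with the unimodular (and, in the exceptional pair $(10,7)$, bimodular) strata — form a Whitney regular, locally finite stratification of the semi-algebraic set $J^{k}(N,P) \setminus \tilde \Pi(N,P)$. This reduces, via \L ojasiewicz's theorem, to a question about the algebraic/semi-algebraic structure of the modular families, which is essentially the content of the classification preceding this statement. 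Once Whitney regularity is in hand, the openness argument above is straightforward.
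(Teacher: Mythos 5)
There is a genuine gap, and it sits exactly at the heart of the theorem. Your argument for the first condition (openness of $\{f : j^{k}f(N)\subset J^{k}(N,P)\setminus\tilde\Pi(N,P)\}$ as a basic set $M(U)$ of the Whitney topology) is fine. But your argument for the second condition takes Whitney regularity (in particular condition (a)) of $\Sigma^{k}_{bnd}(N,P)$ as an \emph{input}, and this is precisely what cannot be assumed here. In the paper the logical order is the reverse: the $(a)$-regularity of $\Sigma^{k}_{bnd}$ is stated as a \emph{corollary} of this openness theorem, via Trotman's Main Theorem, which asserts that openness of the set of maps with jet extension transverse to a stratification is \emph{equivalent} to $(a)$-regularity. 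So a proof that begins by invoking $(a)$-regularity is, relative to the paper's architecture, circular: you are assuming the statement you are asked to prove, in its equivalent form.

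Your proposed escape --- that Whitney regularity of the strata ``reduces, via \L ojasiewicz's theorem, to a question about the algebraic/semi-algebraic structure of the modular families'' --- does not close the gap. \L ojasiewicz's theorem produces \emph{some} Whitney regular stratification of a semialgebraic set, possibly after refining a given partition; it does not certify that the \emph{prescribed} partition into $\mathcal{K}^{k}$-orbits of stable jets together with the unimodular (and, for $(10,7)$, bimodular) strata is Whitney regular. That this is a real danger is illustrated in the paper itself: Bruce's analysis of the $\tilde E_{8}$ family shows Whitney condition (b) failing at $\lambda=0$ for a natural candidate stratification of jet space. So the regularity of $\Sigma^{k}_{bnd}$ requires a specific verification tied to the structure of the modular strata (e.g.\ that transversality to a $\mathcal{K}^{k}$-orbit of a stable jet is equivalent to local infinitesimal stability, an open condition, and that the codimension-$n$ modular strata are smooth manifolds met in isolated points whose nearby jets fall only into strata over which transversality propagates); none of this is supplied by the classification tables alone. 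A minor additional point: your worry about perturbations pushing $j^{k}f(x)$ into a \emph{lower}-dimensional stratum is backwards --- by the frontier condition a neighborhood of a point of a stratum $Y$ meets only strata $X$ with $Y\subset\overline X$, so the issue is propagation of transversality to the \emph{higher}-dimensional adjacent strata, which is exactly what $(a)$-regularity of $X$ over $Y$ controls.
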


The $(a)$ regularity of $\Sigma^{k}_{bnd}(N,P)$ follows from the above
result and the Main Theorem in Trotman \cite{Tro}.

\begin{corollary}
  The stratification $\Sigma^{k}_{bnd}(n,p)$ is  $(a)$-regular.
% Whitney  stratification of $J^{k}(n,p)\setminus \tilde \Pi (n,p).$
\end{corollary}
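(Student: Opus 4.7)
The plan is to invoke the main result of Trotman cited just after the previous theorem, which gives a characterization of Whitney's condition $(a)$ in terms of the openness of transversality. Specifically, Trotman's theorem states that a stratification $\Sigma$ of a smooth manifold $M$ is $(a)$-regular at a stratum $X \in \Sigma$ if and only if the set of smooth maps $g : Q \to M$ from an arbitrary smooth manifold $Q$ which are transverse to $\Sigma$ at $X$ is open in $C^{\infty}(Q, M)$ endowed with the Whitney topology. Thus the corollary should be deduced as a purely formal consequence of the theorem immediately preceding it, which establishes exactly such an openness statement for the stratification $\Sigma^{k}_{bnd}(N,P)$ on the complement of $\tilde{\Pi}(N,P)$ in $J^{k}(N,P)$.

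The key steps of the proof I would write are as follows. First, I would recall that the stratification $\Sigma^{k}_{bnd}(n,p)$ on $J^{k}(n,p) \setminus \tilde{\Pi}(n,p)$ induces the corresponding stratification $\Sigma^{k}_{bnd}(N,P)$ on $J^{k}(N,P) \setminus \tilde{\Pi}(N,P)$, since the construction is $\mathcal{K}$-invariant and hence passes to the bundle. Next, I would note that the previous theorem asserts that the set
\[
T = \{ f \in C^{\infty}(N,P) : j^{k}f(N) \cap \tilde{\Pi}(N,P) = \emptyset \text{ and } j^{k}f \pitchfork \Sigma^{k}_{bnd}(N,P) \}
\]
is open in $C^{\infty}(N,P)$ in the Whitney topology. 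The map $f \mapsto j^{k}f$ is continuous from $C^{\infty}(N,P)$ to $C^{\infty}(N, J^{k}(N,P))$, and by the universal property of jet extensions, transversality of arbitrary smooth maps $g : Q \to J^{k}(N,P) \setminus \tilde{\Pi}(N,P)$ to $\Sigma^{k}_{bnd}(N,P)$ can be tested locally via $k$-jet extensions of map-germs, reducing the openness statement for arbitrary $g$ to the openness statement for $j^{k}f$ already established.

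Finally, I would apply Trotman's theorem in the direction that openness of transversality implies $(a)$-regularity: since the set of maps transverse to $\Sigma^{k}_{bnd}(N,P)$ is open in the Whitney topology, every pair of strata $(X,Y)$ with $Y \subset \overline{X} \setminus X$ in $\Sigma^{k}_{bnd}(N,P)$ satisfies Whitney's condition $(a)$ at every point of $Y$. Transferring back to fibers, this yields $(a)$-regularity of $\Sigma^{k}_{bnd}(n,p)$, as claimed. The main obstacle, and the step requiring the most care, is the reduction from transversality of $k$-jet extensions $j^{k}f$ (which is what the preceding theorem controls) to transversality of arbitrary smooth maps into the jet bundle (which is what Trotman's characterization requires); this is handled by a standard local argument using coordinate charts in $J^{k}(N,P)$ together with the fact that every smooth map into jet space can locally be written as $j^{k}f$ of some map after composition with a submersion, so openness for jet extensions transfers to openness for general maps.
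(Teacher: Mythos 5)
Your proposal follows essentially the paper's own (one-line) argument: the corollary is deduced from the preceding openness theorem for $\Sigma^{k}_{bnd}(N,P)$ together with the Main Theorem of Trotman characterizing $(a)$-regularity by openness of the set of transverse maps in the Whitney topology. The only place you diverge is the reduction from transversality of jet extensions $j^{k}f$ to transversality of arbitrary maps into $J^{k}(N,P)$, which you bridge by an ad hoc local factorization claim that is not literally correct (a general map into jet space need not factor through a jet extension); this step is unnecessary, since Trotman's theorem was formulated (and motivated) precisely for stratifications of jet bundles tested against jet-extension maps, which is how the paper invokes it.
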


We prove in Theorem \ref{th:8.4} that maps transverse to
$\Sigma^{k}_{bnd}(N,P)$ are Thom-Mather  maps  for any pair $(n,p)$ in BND.

\subsection{The unimodular strata in BND}
\label{sec:unim-strata-b.n.d}

The results in this section are local and hold for map-germs
$f:(\mathbb K^{n},0) \to (\mathbb K^{p},0)$ for $\mathbb K=\mathbb R$
or $\mathbb C,$ $f\in \mathcal{E}^p_n$ or $f\in \mathcal{O}^{p}_{n}.$
From  Mather's calculations  in \cite{Mat71}, it follows that the
following pairs lie in the boundary of the nice dimensions:

\noindent $\mathbf{(i)\ n\leq p:}$
\begin{enumerate}
\item [(1)] The case $\sigma(n,p)= 6(p-n) +9 $ for $3\geq p-n\geq 0$
  and $n\geq 4$ or $n=3,$ gives $(n,p)\in \{(9,9), (15,16), (21,23),(27,30)\}.$
\item [(2)] The case $\sigma(n,p)= 6(p-n) +8 $ for $p-n\geq 4$ and
  $n\geq 4,$ gives $(n,p) \in \{(6t+2, 7t+1); t\geq 5\}.$
\end{enumerate}

\noindent $\mathbf{(ii)\  n> p:}$ 
\begin{enumerate}
\item [(1)] The case $\sigma(n,p)= 9 $ for $n=p+1,$ gives $(n,p)=(9,8).$
\item [(2)] The case $\sigma(n,p)= 8 $ for $n=p+2,$ gives $(n,p)=(8,6).$
\item [(3)] The case $\sigma(n,p)= n-p +7 $ for $n\geq p +3$ gives
  $(n,p) \in \{(10+k,7):\, k\geq 0\}.$
\end{enumerate}

The strategy to find the strata of $\Sigma_{bnd}(n,p)$ has the following
steps:

\noindent (1) inspecting the classification of the local algebras
$Q(z), z\in J^{k}(n,p),$ such that  $\mathcal{K}$-$\cod(z) \leq n.$ By
Mather's results these algebras are simple and for each such algebra $Q(z)$
there exists a stable germ $f:(\mathbb K^n ,0)\to (\mathbb K^p ,0),$
such that $Q(f)\simeq Q(z);$

\noindent (2) listing the unimodular algebras of $\mathcal{K}$-codimension
$n+1,$ whose union makes the unimodular strata of the stratification;

\noindent (3) Excluding the existence of bimodular strata of
codimension $n$ for pairs $(n,p)$ in BND except $(10,7).$ For
$(n,p)=(10,7)$ we include the classification of the bimodular strata.

A detailed discussion of simple and unimodular algebras appears in
Chapter 8 of the book of du Plessis and Wall \cite{PleWal}. For the
convenience of the reader we give the precise references of the classifications.
First a word about the notation. we use mainly  Thom's notation, and the relevant here are the first and second
order the Thom-Boardman symbols $ \Sigma^{r}$ and $\Sigma^{r,s},$
respectively, $r=1,2,3,4.$ Mather's adaptation $\Sigma^{r(s)}$ also
appears, as it is useful for $2$-jet classification. A germ $f$ in
$\Sigma^{r}$ may be regarded as an unfolding of a germ $f_{0}$ with rank zero
and source dimension $r.$  {When we look at the second degree terms, the
notation $s$ in $\Sigma^{r(s)}$ indicates 
how many independent components the $2$-jet of $f_0$ has.}

% \subsubsection{The case $n>p$ (see  \cite{PleWal}, Section 7)}
% \label{sec:case-np}
% \begin{itemize}
% \item Simple and unimodal singularities of function germs, normal
%   forms and adjacencies:Section 7.2 to 7.5.
% \item Complete intersection curves: Section 7.6, tables 7.4 to 7.6 
% \item Complete intersection cusp singularities: Section 7.7
% \end{itemize}

% \subsubsection{The case $n\leq p$ (see  \cite{PleWal}, Section 8)}
% \label{sec:case-nleq-p}
% \begin{itemize}
% \item $\Sigma^{1}$ and $\Sigma^{2,0}$ singularities: Section 8.3,
%   Table 8.2
% \item $\Sigma^{2,1}$  singularities: Section 8.4, Tables 8.3 to 8.9
% \item $\Sigma^{2,2}$  singularities: Section 8.5, Tables 8.10 and 8.11
% \item $\Sigma^{n(2)}$  singularities $n=3,4$: Section 8.6, Tables 8.15
%   to 8.20
% \item $\Sigma^{3(3)}$  singularities: Section 8.7, Table 8.21
% \end{itemize}

% See also the Table of equidimensional strata in the Appendix of
% Chapter 8.

 We first describe the unimodular
strata in the boundary of the nice dimensions, based on the
presentation in Ruas and Trivedi \cite{RuaTri},

\subsubsection{Case 1: $n\leq p$}
\label{sec:case1:-nleq-p}

\noindent $ \mathbf{(1)\ (n,p)=(9,9)} $

The first unimodular family in this case is the one parameter family
of type $\Sigma^{3,0}$ ( $\Sigma^{3(3)}$ in Mather's notation) introduced in
section \ref{sec:thoms-example}:
\begin{align}
  \label{eq:18}
  f_{\lambda}:(\mathbb K^{3},0)&\to (\mathbb K^{3},0)\\\nonumber
  (x,y,z) &\mapsto (x^{2}+\lambda y z, y^{2}+\lambda xz, z^{2}+\lambda xy)
\end{align}
with $\lambda\neq 0,-2,1.$

Calculating the $\mathcal{K}$-tangent space of $f_{\lambda}$ we find
that $\mathcal{K}$-$\cod (f_{\lambda})=10,$ for $\lambda\neq 0,-2,1.$
The sets $(-\infty, 0), (0,-2), (-2,1), (1,\infty)$ parametrize
orbits in the connected components of the unimodular strata of
codimension $9.$

\noindent $\mathbf{(2)\ (n,p)=(15,16)}$

The unimodular stratum in these dimensions is related to the moduli
stratum in dimensions $(9,9)$  in the 
following way. From a result of Serre and Berger (see Eisenbud
(\cite{Eis}, Proposition 2) it follows that for  
analytic map-germs $f:(\mathbb{K}^{n},0) \to (\mathbb{K}^{n},0)$ the class of
the Jacobian $J(f)$ is a non-zero element in the local algebra $Q(f).$
Moreover, the ideal generated by $J(f)$ in this algebra is the unique
minimal non-zero  ideal in $Q(f).$ It also follows that the
residue class of $J^{2}(f)$ in $Q(f)$ is zero.

The unimodular family here is
\begin{equation}
  \label{eq:29}
  f_{1\lambda}:(\mathbb
K^{3},0)\to(\mathbb
K^{4},0),\,f_{1\lambda}(x,y,z)=(f_{\lambda}(x,y,z),J(f_{\lambda})(x,y,z)),
\end{equation}
where $f_{\lambda}$ is the map given in
\eqref{eq:18} and $J(f_{\lambda})(x,y,z)=xyz.$  The following holds
$$
\mathcal{K}\textrm{-}\cod(f_{1\lambda})=\mathcal{K}\textrm{-}\cod(f_{\lambda})+(\delta(f_{\lambda}
)-2) =16
$$
where $\delta(f_{\lambda})=\mathrm{dim}_{\mathbb K}Q(f_{\lambda})=8.$ The unimodular stratum in $J^{k}(15,16),$
$k\geq 3$ is the union of all corank $3$ $k$-jets $z \in
J^{k}(15,16),$ $\mathcal{K}$-equivalent to a suspension of $f_{1\lambda}.$

\noindent $\mathbf{(3)\  (n,p)=(21,23)}$

In this case the unimodular family is
\begin{equation}
  \label{eq:30}
  f_{2\lambda}:(\mathbb K^{3},0)\to (\mathbb K^{5},0),\
  f_{2\lambda}(x,y,z)=(f_{1\lambda}(x,y,z),0).
\end{equation}

\noindent $\mathbf{(4)\  (n,p)=(27,30)}$

The unimodular family here is 
\begin{equation}
  \label{eq:31}
    f_{3\lambda}:(\mathbb K^{3},0)\to
    (\mathbb K^{6},0),\  f_{3\lambda}(x,y,z)= (f_{2\lambda}(x,y,z),0).
  \end{equation}
  
\begin{remark}
The following formula holds (du Plessis and Wall \cite{PleWal},
Chapter 8)
  $$
\mathcal{K}\textrm{-}\cod(f_{i\lambda})=\mathcal{K}\textrm{-}\cod(f_{\lambda})+(p-n)(\mathrm{dim}_{\mathbb
    R}Q(f_{\lambda})-2),
  $$
for $i=1,2,3,p=n+i.$ 
\end{remark}

\noindent $\mathbf{(5)\ (n,p)=(6t+2, 7t+1)\  \mathbf{for}\ t\geq 5}$

When $t=5$ the unimodular stratum is defined by
$$
  f_{\lambda}:(\mathbb K^{4},0)\to(\mathbb K^{8},0),  f_{\lambda}(x,y,z,w)=(u_{1},u_{2}, \dots, u_{8})
$$
where
\begin{align*}
  u_{1}&= x^{2}+y^{2}+z^{2}   &u_{2}&=y^{2}+\lambda z^{2}+w^{2} &u_{3}&=xy
  &u_{4}&=xz \\
  u_{5}&=xw  &u_{6}&=yz  &u_{7}&=yw  &u_{8}&=zw
\end{align*}

\subsubsection{Case 2: $n>p$}
\label{sec:case-2:-np}

\noindent $\mathbf{(6)\  (n,p)=(8,6)}$

The smallest pair $(n,p)$ with $n>p$ in the boundary of the nice dimensions
is $(8,6).$ The unimodular stratum is given by the following
one-parameter family of maps
\begin{align*}
  &f_{\lambda}:(\mathbb K^{4},0)\to(\mathbb K^{2},0),\\
   &f_{\lambda}(x,y,z,w)=(x^{2}+y^{2}+z^{2},
     y^{2}+\lambda z^{2}+w^{2}), \ \lambda \neq 0,1.
\end{align*}

\noindent $\mathbf{(7)\  (n,p)=(9,8)}$

The unimodular family here is
\begin{align*}
  &f_{\lambda}:(\mathbb K^{2},0)\to(\mathbb K,0),\\
&f_{\lambda}(x,y)= x^{4}+y^{4}+\lambda x^{2}y^{2},
                                                      \lambda\neq \pm 2.
\end{align*}

\noindent $\mathbf{(8)\  (n,p)=(10+k, 7)\  for\ k\geq 0}$

In this case, the unimodular family is
\begin{align*}
  &f_{\lambda}:(\mathbb K^{4+k},0)\to (\mathbb K,0),\\
  &f_{\lambda}(x,y,z,w_{0}, \dots, w_{k})=
    x^{3}+y^{3}+z^{3} +\lambda xyz +\sum_{i=0}^{k}\delta_{i}w_{i}^{2},
\end{align*}
for $\delta_{í}=\pm 1,\, i=0,\dots, k,\, \lambda^{3}\neq -1.$

The pair $(n,p)=(10,7)$ is the exceptional pair in BND. It follows from
Wall \cite{Wal85} that the following two parameter moduli family of
$\Sigma^{5}$ singularities has codimension $n=10,$ providing for this
pair of dimensions a new relevant strata.
\begin{equation}
  \label{eq:24}
  \begin{aligned}
    &f_{\lambda}:(\mathbb K^{5},0)\to(\mathbb K^{2},0),\\
&f_{\lambda}(x)= (\sum_{i=1}^{5}a_{i}x_{i}^{2},
\sum_{i=1}^{5}b_{i}x_{i}^{2}),\ \ 
                                                           a_{i}b_{j}-a_{j}b_{i}\neq
                                                           0, \, i\neq j.
                                                         \end{aligned}
                                                       \end{equation}

\begin{theorem}\label{th:8.4}
  For each pair $(n,p)$ in the boundary of the nice dimensions the
  following hold:
  \begin{enumerate}
  \item [(a)]\ If $(n,p)\neq (10,7)$ the strata of 
    $\Sigma^{k}_{bnd}(n,p)$ are are the $\mathcal{K}^{k}$-orbits of the stable germs of
  $\mathcal{K}$-codimension $\leq n$ and the  unimodular strata of
  codimension $n$ defined  by the connected 
  components of the unimodular families described in
  \ref{sec:case1:-nleq-p} and \ref{sec:case-2:-np}.   If
  $(n,p)=(10,7),$ besides the unimodular  strata defined in
  \ref{sec:case-2:-np}\textbf{(8)}, there is an exceptional bimodular
  strata as  defined in \eqref{eq:24}.

  \item [(b)]\, Maps $f:N\to P$ such that $j^{k}f$ is transverse to
    the strata of $\Sigma^{k}_{bnd}(n,p)$ are Thom-Mather maps for
    any pair $(n,p)$ in BND.
  \end{enumerate}
  \end{theorem}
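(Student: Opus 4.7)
The argument splits into the enumeration statement (a) and the topological stability statement (b); I would handle them in that order, since (b) depends on knowing the strata explicitly.

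For part (a), my plan is to work pair by pair through the finite list of BND cases recorded in subsections \ref{sec:case1:-nleq-p} and \ref{sec:case-2:-np}. Since $\Sigma^k_{bnd}(n,p)$ stratifies $J^k(n,p)\setminus \tilde\Pi^k(n,p)$, I need to describe all $\mathcal{K}^k$-orbits of codimension $\leq n$ and all modal families of codimension exactly $n$. For the former, Mather's classification in \cite{Mat71} combined with Theorem \ref{pr:9} (local algebra is a complete $\mathcal{K}$-invariant for finite $\mathcal{K}$-codimension) reduces the problem to a finite enumeration of local algebras; by Corollary \ref{cor:inf-sta}, each such algebra is represented by an infinitesimally stable germ, and the stratum is the corresponding $\mathcal{K}^k$-orbit. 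For the codimension-$n$ modal strata, I would tabulate the unimodal $\mathcal{K}$-finite algebras using the classifications of Arnold \cite{Arn75}, Wall \cite{Wal85}, and Damon \cite{Dam75,Dam79-1,Dam79-3}, and verify in each case that the parametrized families \eqref{eq:18}--\eqref{eq:24} sweep out exactly these codimension-$n$ strata. The count $\mathcal{K}\textrm{-}\cod(f_{i\lambda})=\mathcal{K}\textrm{-}\cod(f_\lambda)+(p-n)(\dim_{\mathbb K}Q(f_\lambda)-2)$ from du Plessis--Wall handles the cascade $(9,9),(15,16),(21,23),(27,30)$. The exceptional feature at $(10,7)$ is that, by Wall's classification of nets of quadrics \cite{Wal80,EdwWal}, there is an additional bimodal $\Sigma^5$ family \eqref{eq:24} of codimension $10$ that still meets the bound $\sigma(n,p)=n$, and one verifies by direct calculation of $T\mathcal{K} f_\lambda$ that this family is genuinely $2$-modal and that no such bimodal family exists for the other BND pairs (essentially because the ambient $\mathcal{K}$-moduli already satisfy $\sigma(n,p)<n+1$ there).

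For part (b), the strategy is to show that transversality to $\Sigma^k_{bnd}(N,P)$ gives both (i) a Thom stratification in the sense of Section \ref{sec:how-strat-mapp} and (ii) topological triviality along the modal strata, which together imply topological stability via Thom's second isotopy theorem. The first step is to note that for $k \geq p+1$, on the complement of the unimodal/bimodal strata the map $j^k f$ is multitransverse to $\mathcal{A}^k(N,P)\cap (J^k(N,P)\setminus \tilde\Pi^k(N,P))$, hence the restriction of $f$ to that open set is stable and stratifiable in the usual way. The second and more delicate step is to handle the closure of the modal strata. Here I would observe that each modal family in the list is weighted homogeneous with constant weights along $\lambda$; by the argument of Mond--Nu\~no-Ballesteros \cite{MonNun} (Theorem 7.6 and the discriminant Milnor number computation in Damon--Mond \cite{DamMon}), the minimal stable unfolding $F_\lambda$ of each $f_\lambda$ is weighted homogeneous and its discriminant is topologically trivial along $\lambda$, in the spirit of Looijenga's result \cite{Loo77} for $\tilde E_8$ (Example \ref{E8}). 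Combining this trivialization with a Whitney stratification of the closure of the modal stratum (which exists by \L ojasiewicz since the strata are semi-algebraic) yields a Thom stratification $(\mathcal{B},\mathcal{B}')$ of $f$ satisfying the hypotheses of Thom's second isotopy theorem, hence $f$ is topologically stable; since by construction $j^k f$ is transverse to this stratification, $f$ is a Thom-Mather map in the sense defined in the introduction.

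The main obstacle is step (ii) of part (b): namely, Bruce's observation (Example \ref{E8}) that Whitney regularity may fail along a modal stratum (the cusp count of transverse slices of $\Delta(F)$ can jump), so $\Sigma^k_{bnd}$ need not be Whitney $(b)$-regular, only $(a)$-regular. One therefore cannot use the Thom-Mather first isotopy theorem directly; one must instead verify that the weaker Bekka $(c)$-regularity, or equivalently the Thom $a_F$-condition along the modal parameter, is satisfied for each family, and invoke the improved isotopy theorem of Murolo--du Plessis--Trotman \cite{MurTroPle} (Corollary \ref{mtp1}) to get strong topological stability. Verifying the $a_F$-condition for the families \eqref{eq:18}--\eqref{eq:24} is the technical heart of the argument and requires the weighted homogeneity of the unfolding together with the explicit structure of $\Delta(F_\lambda)$ as a one-parameter deformation with constant topological type. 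The exceptional pair $(10,7)$ requires the same verification for the bimodal family, with the additional check that the two-dimensional moduli space is itself a smooth submanifold of the jet space along which the discriminant is topologically trivial.
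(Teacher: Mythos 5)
Your treatment of part (a) is essentially the paper's own proof: a case-by-case inspection of the classification tables of simple and unimodular $\mathcal{K}$-algebras (Dimca--Gibson, du Plessis--Wall Chapter 8, Arnold's adjacencies for the $n>p$ cases), the codimension formula $\mathcal{K}\textrm{-}\cod(f_{i\lambda})=\mathcal{K}\textrm{-}\cod(f_{\lambda})+(p-n)(\dim_{\mathbb K}Q(f_{\lambda})-2)$ for the cascade $(9,9),(15,16),(21,23),(27,30)$, and the verification that $\cod\tilde\Pi^{k}(n,p)\geq n+1$. One small misattribution: the existence of a stable germ realizing each simple algebra comes from the explicit construction of the minimal stable unfolding in \eqref{eq:17}, not from Corollary \ref{cor:inf-sta}, which only gives the determinacy degree.

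For part (b) your route diverges from the paper's, and this is where the issues lie. The paper does not prove (b) inside this proof at all; it is carried by the machinery of Section \ref{sec:topol-triv-unim}: each unimodular normal form $F_{\lambda_{0}}$ is weighted homogeneous, is shown (Theorem \ref{th:8.14}, via Proposition \ref{cidinha}) to have its $\mathcal{A}$-orbit open in its $\mathcal{K}$-orbit, hence $\mathcal{A}_{e}$-codimension $1$, and Damon's Theorem \ref{th:dam80} then gives topological triviality of the one-parameter unfolding along the modulus. Your proposal instead goes through stratification regularity and isotopy theorems. You are right to flag that only $(a)$-regularity of $\Sigma^{k}_{bnd}$ is established and that Whitney $(b)$ can fail along modal strata, but two points do not hold up. First, Bekka $(c)$-regularity of a stratified set is not "equivalently the Thom $a_{F}$-condition" for a map; these are distinct conditions and conflating them leaves your appeal to Corollary \ref{mtp1} unanchored. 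Second, and more seriously, your final sentence concludes that $f$ is a Thom-Mather map "since by construction $j^{k}f$ is transverse to this stratification" --- but "this stratification" is the one you built from $\Sigma^{k}_{bnd}$, whereas the definition requires transversality to the canonical Thom-Mather stratification $\mathcal{A}^{k}(N,P)$. Topological triviality of $F_{\lambda}$ along $\lambda$ does not by itself imply that the minimal Whitney stratification keeps each connected component of the modal family as a single stratum: Example \ref{E8} is precisely a topologically trivial family in which the minimal stratification is forced to refine at special parameter values. So the step from "transverse to $\Sigma^{k}_{bnd}$" to "transverse to $\mathcal{A}^{k}(N,P)$" needs an argument specific to the families \eqref{eq:18}--\eqref{eq:24} (e.g.\ constancy along $\lambda$ of the invariants that detect refinement, as in Bruce's cusp-counting criterion), and your proposal does not supply it.
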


\begin{proof}
  The proof consists on a careful inspection of the tables of simple
  and unimodular singularities in order to list the relevant strata
  and to verify that the codimension of the set
  $\tilde \Pi^{k}(n,p), \, k\geq p+1$ is greater than or
  equal to $n+1.$ We give an outline of the proof.

  \noindent $\mathbf{I.\ n\leq p}$

  For $(n,p)\in \{(9,9),(15,16),(21,23),(27,30)\}$ the relevant
  Boardman types are $\Sigma^{1},\Sigma^{2,0}, \Sigma^{2,1}$ and
  $\Sigma^{3}.$  We first analyze the pair $(9,9).$

  \noindent $\mathbf{Case\,(1)\  (n,p)= (9,9)}$

All singularities of type $\Sigma^{1}$  and $\Sigma^{2,0}$ are simple. A
complete list of strata of type $\Sigma^{2,1}$ has been given by Dimca
and Gibson \cite{DimGib}.  See also Table 8.4 in du
Plessis and Wall \cite{PleWal}.

The first unimodular family of type $\Sigma^{2,1}$ is
\begin{equation} \label{eq:20}
I_{2,3}:
(x^{2}-\eta y^{4}, xy^{3}+c y^{5}), \, c^{2} \neq 0, \eta.
\end{equation}
It follows
that the $\mathcal{K}$-codimension of each orbit is $12,$ the unimodular stratum has
codimension $11,$ so that this family does not appear generically when
$n=p=9.$ As a consequence, the relevant $\Sigma^{2,1}$ strata in this
case are simple $\mathcal{K}$-orbits. Notice that
$\mathrm{cod}\,\Sigma^{2,2}(9,9) \geq 10$ and then the $\Sigma^{2,2}$
singularities 
do not appear generically in $J^{k}(9,9).$

The next Boardman symbol is $\Sigma^{3},$ and as we saw in
\ref{sec:case1:-nleq-p}, the relevant strata are the connected
components of the unimodular family $(1).$

We list all the strata in Table \ref{tab:4}.

The set $\tilde \Pi^{k}(9,9)$ is the finite union of the following
Zariski closed sets of codimension $\geq 10$ in $J^{k}(9,9), k\geq 10:$

$$
\tilde \Pi^{k}(9,9)=\tilde \Pi_{1}^{k} \cup \tilde \Pi_{2}^{k}\cup
\tilde \Pi_{j\geq 3}^{k}
$$
where
\begin{align*}
\tilde \Pi_1^{k}&= \{\sigma \in J^{k}(9,9), \sigma \in
  \Sigma^{1},\,\mathcal{K}^{k}\textrm{-cod}(\sigma) \geq 10 \}\\
\tilde \Pi_2^{k}&=\{\sigma \in J^{k}(9,9), \sigma \in
  \Sigma^{2},\,\mathcal{K}^{k}\textrm{-cod}(\sigma) \geq 10 \}\\
\tilde \Pi^k_{j\geq 3}&=\{\sigma \in J^{k}(9,9),\, \sigma \in \Sigma^{j}, j\geq 3,
\mathcal{K}^{k}\textrm{-cod}(\sigma) \geq 11 \}
\end{align*}

\end{proof}

\begin{table}[h]
  \centering
  \begin{tabular}[center]{|c|c|c|c|c|}
    \hline
    Type & Name & Normal form & Conditions &
                                             $\mathcal{K}$-$\textrm{cod}
                                             \leq 9$\\
    \hline \hline
$\Sigma^{1}$ &$A_{j}$         &$(x^{j+1})$ & $1\leq j\leq 9$ & $j\leq 9$ \\
$\Sigma^{2,0}$ &$B^{\pm}_{p,q}$ &$(xy, x^{p}\pm y^{q})$ &$2\leq p\leq q \leq 8$ &
                                               $4\leq p+q \leq 9$ \\
$\Sigma^{2,0}$ & $B^{*}_{p,p}$ & $(x^{2}+y^{2}, x^{p})$  & $p=3,4$
                                           & $5\leq 2p\leq 9$ \\
$\Sigma^{2,1}$ &$C_{2k-1}$     &$(x^{2}+y^{3},y^{k+2})$  &$k=1,2$ &
                                                             $2k+5\leq 9$\\
$\Sigma^{2,1}$ & $C_{2k}$      &$(x^{2}+y^{3},  xy^{k+1})$ &$k=1$
                                           &$2k+6\leq 9$ \\
$\Sigma^{3,0}$ &    &$(x^{2}+\lambda yz,y^{2}+\lambda xz,z^{2}+\lambda
                      xy)$ & $\lambda\neq -2, 0, 1$ & 10\\
\hline    \hline
  \end{tabular}
  \caption{$(n,p)=(9,9)$}
  \label{tab:4}
\end{table}

\noindent $\mathbf{Cases\  (2)\, (15,16);\  (3)\, (21,23);\   (4)\, (27,30)}$

  The singularities of type $\Sigma^{1}$ and $\Sigma^{2,0}$ are
simple. The classification of the singularities of type $\Sigma^{2,1}$
and their invariants in these cases can be found in Tables 8.7, 8.8 and
8.9 of \cite{PleWal}. The first unimodular family of type
$\Sigma^{2,1},$ when $n<p,$ is ${\overline
  D}_{3,5}$( also denoted by ${\overline J}_{2,3,5,5}$ in \cite{PleWal}).

The normal forms are
\begin{align*}
  f_{1\lambda}(x,y)&=(x^{2}\pm y^{4}, xy^{3}+cy^{5}, y^{6})  \\
  f_{2\lambda}(x,y)&=(x^{2}\pm y^{4}, xy^{3}+cy^{5}, y^{6},0)  \\
  f_{3\lambda}(x,y)&=(x^{2}\pm y^{4}, xy^{3}+cy^{5}, y^{6}, 0, 0) 
\end{align*}

From \eqref{eq:20}, we get
$$
\mathcal{K}\textrm{-cod}(f_{i\lambda})=\mathcal{K}\textrm{-cod}(f_{\lambda})+
+ i(\textrm{dim}_{\mathbb R}Q(f_{\lambda})-2),
$$
for $i=1,2,3$ where
\begin{equation}
  \label{eq:21}
f_{\lambda}(x,y)=(x^{2}\pm y^{4}, xy^{3}+cy^{5}).
 \end{equation}

 Then $\mathcal{K}\textrm{-cod}(f_{i\lambda})= 12 +i(10-2), i=1,2,3$
 and these singularities do not appear generically in BND. As in Case
 (1), for $n=9+6i,\, i=1,2,3$ with the help of Tables 8.7, 8.9, 8.9  and 8.11 in
 \cite{PleWal} we can verify that the strata of type
 $\Sigma^{1}, \Sigma^{2,0}, \Sigma^{2,1}$ and $\Sigma^{2,2}$ are
 $\mathcal{K}$-orbits of $\mathcal{K}$-codimension $\leq 9+6i,\,
 i=1,2,3$ and the unimodular strata defined in \eqref{eq:29},
 \eqref{eq:30} and \eqref{eq:31}. Moreover, $\mathrm{cod}\,\tilde
 \Pi^{k}(n,p)\geq n+1.$ 

 \noindent $\mathbf{Cases\  (5)\, (6t+2,7t+1), t\geq 5}$

 The relevant Boardman types here are
 $\Sigma^{1},\Sigma^{2,0},\Sigma^{2,1}, \Sigma^{2,2},\Sigma^{3}$ and
 $\Sigma^{4}.$ As before $\Sigma^{1},\Sigma^{2,0}$ are simple, and the
 moduli strata of type $\Sigma^{2,1}$ has normal form
 $f_{\lambda}:(\mathbb K^{2},0) \to (\mathbb K^{t+1},0), t\geq 5,$
 $$f_{3\lambda}(x,y)= (x^{2}\pm y^{4},
 xy^{3}+cy^{5},y^{6},\underbrace{0,\dots, 0}_{\text{t-1}}),$$
 where $f_{\lambda}(x,y)=(x^{2}\pm y^{4}, xy^{3}+cy^{5}).$ Since
 ${\cal K}$-cod$(f_{\lambda})= 12,$ then  ${\cal
   K}$-cod$(f_{3\lambda})\geq 12 + (t-1)(10 -2)=4 +8t> 6t +2,$ and it
 follows that this family is not generic when $(n,p)=(6t+2,7t+1), t\geq 5.$

The $\Sigma^{2,2}$ germs of order $3$ appear in du Plessis and Wall
\cite{PleWal}, Section 8.5, Tables 8.10 and 8.11.  The type
$\Sigma^{2,2}$ is subdivided ( see \cite{PleWal}) into types
$\Sigma^{2,2(j)},$ where $j$ is the rank of the kernel of the third
intrinsic derivative. It follows that $\mathrm{codim}\,
\Sigma^{2,2(j)}= 6e +10 +j(e+j-2),$ where $e=p-n.$ With a simple
calculation we get that the relevant  are $j=0,1.$ Based on Table 8.10
of \cite{PleWal} we can verify that $\tilde\Pi(6t+2,7t+1)$ contains the
closure of the ${\cal K}$-orbit $(x^{3}\pm xy^{2},
x^{2}y,y^{3},0,0,0)$ \ (type $E$-$Q^{I}_{4}$).

Germs of type $\Sigma^{n},$ $n=3,4$ are classified in 
\cite{PleWal}, Section 8.6. 

For $n=3,$ the more delicate analysis is that of singularities of type
$\Sigma^{3(2)}.$ Based on  Tables
8.15, 8.17 and 8.20 in \cite{PleWal}, it follows that the moduli does not occur in
strata of codimension $\leq 6t-2,t\geq 5.$ It follows then that
$\tilde\Pi(6t+2,7t+1)\cap \overline{\Sigma^{3(2)}}$ is the closure of
${\cal K}$-orbits of codimensions $> 6t +2.$

For the singularities  of type $\Sigma^{3(3)},$ the
best algebra of this type is the unimodular family whose normal form
is $f_{4\lambda}=(f_{3\lambda},0),$ where $f_{3\lambda}$ is as in
\ref{sec:case1:-nleq-p} \textbf{(4)}.

We know that ${\cal K}$-cod$(f_{3\lambda})=28$ and
$\delta(f_{3\lambda})=7,$ so that  ${\cal K}$-cod$(f_{4\lambda})=28 +6=34>32.$ As the family is $1$-modal it
follows that the codimension of the stratum is $33,$ then this
singularity does not occur generically in $(32,26).$ It is easy to
extend this argument to all pairs $(6t+2,7t+1),\, t> 5.$

The first singularity of type $\Sigma^{4}$ in $(32,36)$ is the unimodular
family \ref{sec:case1:-nleq-p} \textbf{(5)}. The ${\cal
  K}$-cod$\,(f_{\lambda})$ is $33$ and the codimension of the stratum is $32.$

It follows from our description that $\mathrm{cod}\,\tilde\Pi(6t+2,7t+1)>6t+2.$ 

 \noindent $\mathbf{Cases\  (6)\, (8,6);\  (7)\, (10+k, 7)\, k>0}$ 

 These cases are simpler, since the deformations of the algebras have to
 be a simple function singularity, i.e., a singularity from Arnold's
 list of simple singularities of functions\cite{Arn}. We can obtain
 the complete list from the adjacencies of simple and unimodular
 singularities from Arnold's \cite{Arn76}.

 The exceptional pair $(10,7)$ has two modular strata
 \begin{enumerate}
 \item [(i)]  The unimodular family $f_{\lambda}(x,y,z,w)=x^{3}+y^{3}+z^{3}+\lambda
 xyz +w^{2}$ with ${\cal K}$-cod$(f_{\lambda})=11$ and codimension
 of the stratum equal to $10.$
\item [(ii)] The bimodular family $f_{\lambda}(x)= (\sum_{i=1}^{5}a_{i}x_{i}^{2},
\sum_{j=1}^{5}b_{j}x_{j}^{2}),\ \ a_{i}b_{j}-a_{j}b_{i}\neq 0, \,
1\leq i,j\leq 5,\, i\neq j.$
\end{enumerate}

\subsection{Topological triviality of unimodular families}
\label{sec:topol-triv-unim}

Results on $C^{0}$-$\mathcal{A}$-triviality of the unimodular families
of mappings appeared few
years after Mather's theorem,
due mainly to Eduard Looijenga
\cite{Loo77,Loo78} and Jim Damon \cite{Dam80, Dam82}.

In the 1977 paper Looijenga  obtained explicit
examples of topologically stable map-germs which are not analytically
stable. He studied the simple elliptic singularities:\index{Simple elliptic singularity}
\begin{align*}
  &\tilde E_{6}:\ \ f(z_{0}, \dots, z_{n}) = z_{1}(z_{1}-z_{0})(z_{1}-\lambda
z_{0}) + z_{0}z_{2}^{2}+Q(z_{3}, \dots, z_{n}),\  (n\geq 2);\\
&\tilde E_{7}:\ \ f(z_{0}, \dots, z_{n}) = z_{1}z_{0}(z_{1}-z_{0})(z_{1}-\lambda
z_{0}) + Q(z_{2}, \dots, z_{n}),\  (n\geq 1);\\
&\tilde E_{8}:\ \ f(z_{0}, \dots, z_{n}) = z_{1}(z_{1}-z_{0}^{2})(z_{1}-\lambda
z_{0}^{2}) + Q(z_{2}, \dots, z_{n}),\  (n\geq 1).
\end{align*}
where $Q$ is any nondegenerate quadratic form. He proved that two simple-elliptic singularities  
in the same family have topologically equivalent semi-universal
deformations. As a consequence he obtained the
$C^{0}$-$\mathcal{A}$-triviality of the stable unfolding of these
singularities along the moduli parameter.

\begin{remark}
  The family $\tilde E_{6}$ is analytically  equivalent to the family
  \ref{sec:case-2:-np} \textbf{(8)} and $\tilde E_{7}$ is analytically 
equivalent to the family   \ref{sec:case-2:-np}\,\textbf{(7)}. The family $\tilde
E_{8}$ does not occur generically in BND.
\end{remark}

Looijenga's approach to this problem is based on the weighted homogeneity of
the germs together with algebraic calculations to solve a localized
form of equation for infinitesimal $C^{\infty}$ or analytic
triviality.

Wirthm\"uller \cite{Wir} extended Looijenga's results proving the topological
triviality of the versal unfolding of non-simple hypersurfaces  germs
along the Hessian deformation parameter. These results were further
extended by J.Damon \cite{Dam80,Dam82} for unfoldings $F$ of ``non-negative weight'' of a
weighted homogeneous polynomial germ $f:(\mathbb K^{n},0)\to(\mathbb
K^{p},0).$ His main result applies to a large class of unimodular
families, which includes all unimodular families in the boundary
of the nice dimensions.

\begin{theorem}[Damon, \cite{Dam80}]\label{th:dam80}
  If $f$ is a weighted homogeneous $\mathcal{A}$-finitely determined
  germ, then any polynomial unfolding of $f$ of non-negative weight is
  topologically trivial
 \end{theorem}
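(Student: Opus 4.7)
The plan is to construct a continuous trivialization of the unfolding $F(x,u)=(\bar f(x,u),u)$ by integrating controlled vector fields built from the weighted homogeneous structure of $f$, following the paradigm of the Thom--Levine lemma but at the $C^{0}$ level. Assume $f$ is weighted homogeneous of type $(a_{1},\dots,a_{n};b_{1},\dots,b_{p})$, extend the weights to the parameters $u_{1},\dots,u_{r}$ of the polynomial unfolding in such a way that each component of $\bar f - f$ has non-negative weight, and let $E_{n}=\sum a_{i}x_{i}\partial/\partial x_{i}$ and $E_{p}=\sum b_{j}y_{j}\partial/\partial y_{j}$ be the associated Euler vector fields. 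Weighted homogeneity of $f$ gives the fundamental identity $tf(E_{n})=\omega f(E_{p})$, which will be the engine driving the construction.

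First, I would reduce topological triviality of $F$ to solving, for each parameter direction $\partial/\partial u_{i}$, an infinitesimal equation of the form
\[
\frac{\partial \bar f}{\partial u_{i}} \;\in\; tF(\Psi_{n+r})+\omega F(\Psi_{p+r}),
\]
where the vector fields $\xi\in \Psi_{n+r}$ and $\eta\in \Psi_{p+r}$ are allowed to be merely continuous (and only smooth off the origin). This is the $C^{0}$ version of the Thom--Levine lemma (see the discussion in Section~\ref{sec:finite-determ-math} and \cite{GibWirPleLoo}): once such $\xi$ and $\eta$ are produced and integrated, one obtains one-parameter families of homeomorphisms $h_{u}$ and $k_{u}$ with $h_{0}=\mathrm{id}$, $k_{0}=\mathrm{id}$, $h_{u}(0)=0$, $k_{u}(0)=0$ and $k_{u}\circ \bar f_{u}\circ h_{u}^{-1}=f$.

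Second, the infinitesimal equation would be solved by combining $\mathcal{A}$-finite determinacy with the Preparation Theorem (Theorem~\ref{prepa}) and the filtration by weight. Finite determinacy gives $\mathcal{M}_{n}^{k}\Theta_{f}\subset T\mathcal{A}f$ for some $k$, so the residual obstruction to infinitesimal triviality lies in a finite-dimensional space. Using the weighted homogeneity of $f$, one decomposes $\Theta_{f}$ into weighted graded pieces and checks that $\partial \bar f/\partial u_{i}$, being of non-negative weight, can be written as $tf(\xi_{i})+\omega f(\eta_{i})$ with $\xi_{i},\eta_{i}$ of non-negative weight. The Preparation Theorem, applied to the unfolded map $F$ as in Proposition~\ref{prop:69-2}, promotes this pointwise solvability to a solution in the appropriate $\mathcal{E}_{n+r}$-module.

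The main obstacle is the third step: producing $\xi$ and $\eta$ that are integrable through the origin in the $C^{0}$ sense, since in general a non-negative weight vector field will fail to be Lipschitz at $0$. Here one exploits the Euler fields: any vector field $v$ of non-negative weight can be written, after multiplication by a suitable function, as an $\mathcal{E}_{n}$-linear combination involving $E_{n}$ plus a tangential correction, and the resulting flow is radially controlled by the Euler flow $\exp(tE_{n})$, which contracts toward the origin along weighted rays. This gives continuous integrability at $0$ and compatibility of the source and target flows via the identity $tf(E_{n})=\omega f(E_{p})$. Verifying rigorously that the solutions produced in the second step have this weighted structure, and that the resulting flows glue to a continuous homeomorphism of the unfolding space --- this is the technical heart of Damon's argument, and where the non-negative weight hypothesis is essential; dropping it allows solutions that blow up at $0$ and the $C^{0}$ triviality fails in general.
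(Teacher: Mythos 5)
The paper states Theorem \ref{th:dam80} only as a citation to Damon and does not reproduce the proof, but the mechanism is sketched around equation \eqref{eq:23} in Section \ref{sec:non-trivial-example}, and your proposal diverges from it at the decisive point. Your second step asserts that each $\partial \bar f/\partial u_{i}$, being of non-negative weight, can be written as $tf(\xi_{i})+\omega f(\eta_{i})$ with $\xi_{i},\eta_{i}$ of non-negative weight. That cannot be right: such a decomposition, with graded (hence smooth) vector fields, is precisely infinitesimal $\mathcal{A}$-triviality of the unfolding, so the ordinary Thom--Levine lemma would already give $C^{\infty}$-triviality and the theorem would have no content. It is also false in the motivating examples: for the unimodular families of Section \ref{sec:topol-triv-unim} the modulus direction $\sigma_{m}$ has weight zero yet represents a nonzero class in $N\mathcal{A}_{e}f$ (indeed $\mathcal{A}_{e}$-$\cod F_{\lambda_{0}}=1$ by Theorem \ref{th:8.14}), so it does not lie in $tf(\Theta_{n})+\omega f(\Theta_{p})$. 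Consequently your third step is aimed at the wrong obstacle: the difficulty is not integrating smooth solutions through the origin, but producing solutions of the infinitesimal equation at all, since none exist in the smooth category.

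The missing idea is the control function. One chooses a weighted homogeneous $\rho\geq 0$, vanishing only at the origin and of sufficiently high degree, and solves the \emph{modified} equation $(\rho\circ F)\,\partial\bar f/\partial u_{i}= tF(V)+\omega F(W)$ with $V$, $W$ weighted homogeneous; solvability now does follow from $\mathcal{A}$-finite determinacy, because $(\rho\circ F)\,\partial\bar f/\partial u_{i}$ lies in $\mathcal{M}_{n}^{k}\Theta_{f}\subset T\mathcal{A}f$ (Theorem \ref{gaf-wal}). Dividing by $\rho\circ F$ in the source and by $\rho$ in the target yields $\mathcal{X}=V/(\rho\circ F)$ and $\mathcal{Y}=W/\rho$ with $DF(\mathcal{X})=\mathcal{Y}\circ F$; the non-negative weight hypothesis is exactly what makes these quotients of non-negative filtration, hence continuous at $0$ (though not Lipschitz) and smooth off $0$, so that their flows extend to homeomorphisms fixing the origin --- this, not a generic ``$C^{0}$ Thom--Levine lemma,'' is where the integrability has to be argued, since continuous vector fields do not in general have unique flows. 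Finally, the Euler identity $tf(E_{n})=\omega f(E_{p})$ cannot substitute for this device: it is a relation already internal to $T\mathcal{A}_{e}f$ and produces no new tangent directions with which to reach $\sigma_{m}$.
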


 Damon's result apply to weighted homogeneous ${\cal A}$-finitely
 determined germs $f$ of type $(w_{1}, \dots,
 w_{n}; d_{1}, d_{2}, \dots, d_{p})$ and their unfoldings of weighted
 degree equal to or higher than the weighted degree of $f.$

 The unimodular families in the boundary of the nice dimensions
 satisfy an even stronger condition: up to the addition of a quadratic
 form, the $\mathcal{K}$-orbits $\mathcal{K}(f_{\lambda})$ in
 \ref{sec:case1:-nleq-p} and \ref{sec:case-2:-np} have a
 homogeneous normal form; in other words we can take weights
 $w_{1}=w_{2}=\dots =w_{n} = 1,$ and if we write $f_{\lambda}:(\mathbb
 R^{s},0)\to(\mathbb R^{t},0), $
 $f_{\lambda}=(f_{1\lambda},f_{2\lambda}, \dots, f_{t\lambda}),$ then
 $f_{i\lambda}$ is homogeneous of degree $d_{i},$ $i=1,\dots, t.$  As
 in section \ref{sec:class-stable-sing} let
 $$N(f_{\lambda})\simeq \frac
 {\Theta(f_{\lambda})}{TK_{e}(f_{\lambda})+
   {\omega}f_{\lambda}(\Theta_{t})}.$$
 Notice that since $f_{\lambda}$ has rank
 $0$, it follows that $N(f_{\lambda})\simeq \frac
 {\mathcal{M}_{s}\Theta(f_{\lambda})}{TK_{e}(f_{\lambda})}.$

 Let $J(f_\lambda)$ be the ideal generated by the $t\times t$ minors
 of $f_{\lambda}$ and let $I(f_{\lambda})= J(f_{\lambda}) +
 f^{*}_{\lambda}(\mathcal{M}_{p}).$ Notice that when $s<t,$
 $I(f_{\lambda})=  f^{*}_{\lambda}(\mathcal{M}_{p}).$

 \begin{lemma}\label{lema8.8}
   \begin{enumerate}
   \item [(a)] If
     \begin{align*}
       I^{1}_{\lambda}&=\langle x^{2}+\lambda yz,
                        y^{2}+\lambda xz, z^{2}+ \lambda xy, xyz
                        \rangle,\ \  \lambda\neq -2,0,1\\
       \intertext{and}
     I^{2}_{\lambda}=&\langle x^{2}+y^{2}+z^{2}, y^{2}+\lambda z^{2}+
                       w^{2}, xy, x z, xw, yz, yw \rangle,\ \  \lambda\neq 0,1
     \end{align*}
then $I^{i}_{\lambda}
     \supseteq \mathcal{M}^{3},$ $i =1,2.$
   \item [(b)] For each normal form $(1)$ to $(5)$ in
     \ref{sec:case1:-nleq-p} and $(6)$ in \ref{sec:case-2:-np},
     $TK_{e}(f_{\lambda}) \supseteq \mathcal{M}^3 \Theta(f_{\lambda}).$
      \item [(c)] For the normal form $(8)$ in \ref{sec:case-2:-np},
     $J(f_{\lambda}) \supseteq \mathcal{M}^{4}.$  
   \item [(d)] For the normal form $(7)$ in \ref{sec:case-2:-np},
     $J(f_{\lambda}) \supseteq \mathcal{M}^{5}.$
   \end{enumerate}
 \end{lemma}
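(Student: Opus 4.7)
The plan is to prove all four items by direct graded calculations in the relevant ideals, combining two complementary techniques: (i) a degree-by-degree analysis of the multiplication map for part (a), and (ii) the classical Cramer identity $J(f)\Theta_f\subseteq tf(\Theta_n)$, together with Koszul/Hilbert-series computations for regular sequences of weighted homogeneous polynomials, for parts (b)--(d).

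For (a), I will inspect the degree-$3$ component of $I^1_\lambda$. Multiplying each quadratic generator by each of $x,y,z$ produces nine degree-$3$ elements; three of them, $x^3+\lambda xyz$, $y^3+\lambda xyz$, $z^3+\lambda xyz$, combined with $xyz\in I^1_\lambda$, yield $x^3,y^3,z^3\in I^1_\lambda$. The six remaining ``mixed'' elements split under the cyclic permutation $x\mapsto y\mapsto z$ into two cyclic triples acting on the monomial triples $\{x^2z,xy^2,yz^2\}$ and $\{x^2y,y^2z,xz^2\}$; on each triple the corresponding $3\times 3$ matrix is circulant with first row $(\lambda,1,0)$, hence has determinant $(\lambda+1)(\lambda+\omega)(\lambda+\omega^2)=\lambda^3+1$, so all six mixed cubics lie in $I^1_\lambda$ whenever $\lambda^3+1\neq 0$ (a condition implicit in the $\mathcal{K}$-finiteness of $f_\lambda$). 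For $I^2_\lambda$ the argument is purely syntactic: the bilinear generators $xy,xz,xw,yz,yw,zw$ make every degree-$3$ monomial involving two or more distinct variables lie in $I^2_\lambda$ (e.g.\ $x^2y=x\cdot xy$, $xyz=x\cdot yz$), and the pure cubes $x^3,y^3,z^3,w^3$ follow by multiplying the two quadratic generators by the appropriate single variable and cancelling the mixed cubic terms produced.

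For (b) I will reduce each sub-case to (a) or to a Cramer-rule computation. Cases $(2)$--$(4)$ of Section \ref{sec:case1:-nleq-p} are immediate because the extra component $xyz$ built into $f_{i\lambda}$ makes $I(f_{i\lambda})=I^1_\lambda$, so $T\mathcal{K}_e(f_{i\lambda})\supseteq I^1_\lambda\Theta_{f_{i\lambda}}\supseteq \mathcal{M}^3\Theta_{f_{i\lambda}}$ by (a); case $(5)$ of the same section uses the corresponding observation with $I^2_\lambda$. For case $(1)$, where $f_\lambda\colon (\mathbb{K}^3,0)\to (\mathbb{K}^3,0)$ does not itself carry $xyz$ among its components, I will compute
\[
\det Df_\lambda=2(4+\lambda^3)xyz-2\lambda^2(x^3+y^3+z^3)\equiv 8(1+\lambda^3)xyz\pmod{I(f_\lambda)},
\]
using $x^3\equiv y^3\equiv z^3\equiv -\lambda xyz$ modulo $I(f_\lambda)$; Cramer's rule places $\det Df_\lambda\cdot e_i$ in $tf_\lambda(\Theta_3)$, so $xyz\cdot e_i\in T\mathcal{K}_e(f_\lambda)$ whenever $\lambda^3+1\neq 0$, and (a) finishes the job. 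For case $(6)$, with $f_\lambda=(x^2+y^2+z^2,\,y^2+\lambda z^2+w^2)$, I will compute the six $2\times 2$ minors of the Jacobian explicitly and obtain $J(f_\lambda)=\langle xy,xz,xw,(\lambda-1)yz,yw,zw\rangle=\langle xy,xz,xw,yz,yw,zw\rangle$ for $\lambda\neq 0,1$; Cramer's rule gives $J(f_\lambda)\Theta_{f_\lambda}\subseteq tf_\lambda(\Theta_4)$, and the same syntactic trick as for $I^2_\lambda$ shows $\mathcal{M}^3\subseteq I(f_\lambda)+J(f_\lambda)$ (the pure cubes $x^3,y^3,z^3,w^3$ being recovered by multiplying the two quadratic components by the appropriate variable), whence $\mathcal{M}^3\Theta_{f_\lambda}\subseteq T\mathcal{K}_e(f_\lambda)$.

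For (c) and (d) I will appeal to the Koszul complex for weighted homogeneous complete intersections. In (c), $J(f_\lambda)\supseteq\langle w_0,\dots,w_k\rangle$, so a degree-$4$ monomial not in that subideal is a polynomial in $x,y,z$, and it suffices to show the three-variable ideal $I=\langle 3x^2+\lambda yz,\,3y^2+\lambda xz,\,3z^2+\lambda xy\rangle$ contains $\mathcal{M}_3^4$; multiplying the three defining equations gives $27x^2y^2z^2=-\lambda^3x^2y^2z^2$, so for $\lambda^3+27\neq 0$ any projective common zero satisfies $xyz=0$ and is then forced to be the origin, hence the three quadrics form a regular sequence in $\mathcal{E}_3$, the Koszul Hilbert series $(1+t)^3=1+3t+3t^2+t^3$ vanishes in degree $\geq 4$, and $\mathcal{M}_3^4\subseteq I$. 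In (d), $f_\lambda$ is weighted homogeneous of degree $4$ with isolated singularity for $\lambda\neq\pm 2$ (the degenerate values give $(x^2\pm y^2)^2$); the two partial derivatives are degree-$3$ weighted homogeneous and form a regular sequence, with Milnor number $\mu=(4-1)^2=9$ and graded Hilbert series $(1+t+t^2)^2=1+2t+3t^2+2t^3+t^4$, vanishing in degree $\geq 5$, which gives $\mathcal{M}^5\subseteq J(f_\lambda)$. The main obstacle in the whole lemma is case $(6)$ of (b): unlike the other sub-cases, one cannot read $\mathcal{M}^3\subseteq I(f_\lambda)$ directly from the map components, and the pure cubes $x^3,y^3,z^3,w^3$ have to be produced by the simultaneous use of the quadratic components and of the Jacobian minors transported via Cramer's rule.
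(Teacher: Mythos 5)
Your proposal is correct and follows essentially the same route the paper takes: the paper disposes of (a), (c), (d) as ``easy calculations'' and proves (b) exactly via your observation that $J(f_\lambda)\Theta_{f_\lambda}\subseteq tf_\lambda(\Theta_n)$ (Cramer) gives $I(f_\lambda)\Theta_{f_\lambda}\subseteq T\mathcal{K}_e(f_\lambda)$, so that (b) reduces to (a); your circulant and Hilbert-series computations simply supply the details the paper omits. Two small points. First, your claim that ``every degree-$3$ monomial involving two or more distinct variables'' is divisible by a monomial generator of $I^2_\lambda$ fails for $z^2w$ and $zw^2$, since $zw$ is \emph{not} among the listed generators of $I^2_\lambda$; this is harmless because $z^2w\equiv w\cdot(x^2+y^2+z^2)$ and $zw^2\equiv z\cdot(y^2+\lambda z^2+w^2)-\lambda z^3$ modulo the listed generators (and in the application to (b) the monomial $zw$ is in any case available as a component in case $(5)$ and as a $2\times 2$ minor in case $(6)$). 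Second, your condition $\lambda^3\neq -1$ for the $(9,9)$ family is genuinely needed (at $\lambda=-1$ the point $(1,1,1)$ is a common zero, the two circulant determinants vanish, and $\mathcal{M}^3\not\subseteq I^1_{-1}$), so you are right to flag it: the restriction $\lambda\neq -2,0,1$ stated in the lemma is not sufficient by itself, and $\lambda^3\neq-1$ must be added to the exceptional values.
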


 \begin{proof}
   $(a), (c)$ and $(d)$ follows from easy calculations, using the
   corresponding normal forms.

   To prove $(b)$ notice that if $I(f_{\lambda}) = J(f_{\lambda}) +
   f_{\lambda}^{*}(\mathcal{M}_{t}),$ it follows that
   $I(f_{\lambda})\Theta(f_{\lambda})\subset TK_{e}(f_{\lambda})$, and
   the result follows from $(a).$  
 \end{proof}

With the help of the above Lemma it is an easy task to find, for each
normal form, $(1)$ to $(5)$ in \ref{sec:case1:-nleq-p} and $(6)$ to
$(8)$ in \ref{sec:case-2:-np}, a monomial basis for the normal space
$N(f_{\lambda}),$ so that we can write
$$
N(f_{\lambda})\cong \mathbb K\{\sigma_{1}, \sigma_{2}, \dots
\sigma_{r}, \sigma_{m}\}
$$ where the
$r$ generators $\sigma_{j}=(\sigma_{1j}, \sigma_{2j}, \dots,
\sigma_{tj})\in \theta(f_{\lambda}),$ $j=1,\dots, r$ have the
following property: each coordinate $\sigma_{ij},$ $i=1, \dots, t$ 
of $\sigma_{j}$ satisfies the following condition
$$
\mathrm{degree}\,
\sigma_{ij} < \mathrm{degree} f_{i\lambda}\,\ \ \   i=1,
\dots, t,\,  \ \  j= 1, \dots, r.
$$

The generator $\sigma_{m}=(\sigma_{1m}, \sigma_{2m}, \dots,
\sigma_{tm})$ is the direction of the modulus and the
$\mathrm{degree}\, \sigma_{im} = \mathrm{degree} f_{i\lambda}$ for $i=1, \dots, t.$

For each $\lambda=\lambda_{0},$ the stable unfolding of
$f_{\lambda_{0}}$ is the map-germ
\begin{align}
  F:(\mathbb K^{s}\times \mathbb K^{r} \times \mathbb K, 0) &\to (\mathbb
  K^{t} \times \mathbb K^{r} \times \mathbb K, 0) \label{eq:25} \\
  (x,u,\lambda) &\mapsto (\tilde f(x,u,\lambda), u,\lambda), \nonumber
\end{align}
$x=(x_{1}, \dots, x_{s}),$, $u=(u_{1}, \dots, u_{r}),$ and

$$
  \tilde f(x,u,\lambda)= f(x,\lambda_{0}) +
  \sum_{j=1}^{r}u_{j}\sigma_{j}(x) +\lambda \sigma_{m}(x).
$$

For each $\lambda_{0},$ with the exception of a finite
number of exceptional values, we obtain the normal form of the
unimodular topologically stable   singularity:

$$
  F_{\lambda_{0}}:(\mathbb K^{n}, 0)\to (\mathbb K^{p}, 0),
  $$
  with
  \begin{equation}
    F_{\lambda_{0}}(x,u)=(\tilde f_{\lambda_{0}}(x,u),u), \label{eq:19}
  \end{equation}
where  
\begin{equation}
  \label{eq:26}
  \tilde f_{\lambda_{0}}(x,u)=f(x,\lambda_{0})+ \sum_{j=1}^{r}u_{j}\sigma_{j}(x).
\end{equation}
and $n=s+r,\, p=t+r.$
\begin{remark}
  Notice that $F_{\lambda_{0}}$ is unfolding of $f_{\lambda_{0}}(x)$
  by terms $\sigma_{j}$ of smaller degree. Damon's in \cite{Dam80}
  refers to $F_{\lambda_{0}}$ as unfolding of negative weight of
  $f_{\lambda_{0}}$ ( see section 2 in Damon \cite {Dam82}).
\end{remark}

A similar construction can be made for the exceptional pair
$(n,p)=(10,7).$ The bimodal family $f_{\lambda}=(\mathbb{K}^{5},0)\to
(\mathbb{K}^{2},0),$ $\lambda=\lambda_{1},\lambda_{2}$ has a normal
space
$$ N(f_{\lambda})\simeq \mathcal{R}\{\sigma_{1},\dots,\sigma_{r},\sigma^{1}_{m},\sigma^{2}_{m} \},$$
where $\{\sigma^{1}_{m},\sigma^{2}_{m} \}$ generates the bimodal plane
and $\mathrm{degree}\,\sigma_{m}^{i}=\mathrm{degree}\,f_{\lambda}=2,\,
i=1,2.$ The normal form of the topologically stable singularity is
given by \eqref{eq:19}. 

We display these normal forms in tables below. To simplify notation we denote the canonical basis
in $(\mathbb R^{t},0)$ by $\{e_{i}, i=1, \dots , t\},$ so that an
element  $ g\in \mathcal{E}^{t}_{s}$ can be written as
$g(x)=\sum_{i=1}^{r}g_{i}(x)e_{i}.$
\begin{table}[h]
  \centering
  \begin{adjustbox}{width=\textwidth,center}
    \begin{tabular}{|c|c|c|c|c|}
    \hline
$(n,p)$    & $f=(f_{1},\dots, f_{t})$ & Unfolding monomials $< m$ & $r$ & $\sigma_{m}$\\
\hline\hline
\multirow{2}{*}{$(9,9)$}    & $f_{\lambda}=(x^{2}+\lambda yz, y^{2}+\lambda xz,
                              z^{2}+\lambda xy)$
                                      &$\{y,z\}e_1,\,\{x,z\}e_2,$
                                                            &$6$ &$yze_{1}+xze_{2}+xye_{3}$  \\
           & $\lambda \neq -2, 0, 1 $ & $\{x,y\}e_3$   &   & \\
    \hline
\multirow{3}{*}{$(15,16)$}  & $f_{1\lambda}=(f_{\lambda},Jf_{\lambda}),\,Jf_{\lambda}=xyz$ &
                                                                  $\{y,z\}e_{1},\,\{x,z\}e_{2},$
                                                            & $12$  & $yze_{1}+xze_{2}+xye_{3}$  \\
                            &                &$\{x,y\}e_3,\,\{x,y,z\}e_4,$   &  &  \\
                            &                &$\{yz, xz, xy\}e_4,$   &  &  \\
    \hline
\multirow{4}{*}{$(21,23)$}  & $f_{2\lambda}=(f_{1\lambda},0)$ &$\{y,z\}e_1,\,\{x,z\}e_2,$  & $18$  &  $yze_{1}+xze_{2}+xye_{3}$  \\
                            &                             &$\{x,y\}e_3,\,\{x,y,z\}e_{4} $      &  &  \\
                            &                             & $\{yz,xz,xy\}e_{4} $      &  &  \\
                            &                             & $\{x,y,z\}e_{5},\,\{yz,xz,xy\}e_{5}$  &  &  \\
    \hline
    \multirow{5}{*}{$(27,30)$}  & $f_{3\lambda}=(f_{2\lambda},0)$& $\{y,z\}e_1,\,\{x,z\}e_2,$  & $24$  &  $yze_{1}+xze_{2}+xye_{3}$ \\
                               &
                                      &$\{x,y\}e_3,\,\{x,y,z\}e_{4}$ &
                                                                  & \\
                       &                               & $\{yz,xz,xy\}e_{4}$       &  &  \\
                    &       &$\{x,y,z\}e_{5},\,\{yz,xz,xy\}e_{5}$ 
                                                                  &  & \\
                            &                               & $\{x,y,z\}e_{6},\,\{yz,xz,xy\}e_{6}$  &  &  \\
    \hline
  \end{tabular}
\end{adjustbox}
\caption{$6(p-n) +9= n, \ 3\leq p-n\leq 0$ }
  \label{tab:1}
\end{table}

\begin{table}[h]
  \centering
  \begin{tabular}{|c|c|c|c|c|}
    \hline
$(n,p)$    & $f=(f_{1},\dots, f_{t})$ & Unfolding monomials $< m$ & $r$ & $\sigma_{m}$\\
\hline\hline    
\multirow{4}{*}{$(6s+2, 7s+1)$} &$f_{\lambda}:=( x^{2}+y^{2}+z^{2},
                                  y^{2}+\lambda z^{2}+w^{2}, $ &$\{x,y\}e_1,\,\{z,x\}e_{2}$ & $6s-2$ & $z^{2}e_{2}$\\
                        &$xy,xz,xw,yz, yw, zw, \underbrace{0, \dots, 0})$ &$\{x,y,z,w\}e_{3+i}$ &$s \geq 5 $  &\\
                        &\phantom{$xy,xz,xw,yz, yw, zw,$ }$ s- 5$ & & &\\
              $s\geq 5$ &$t=s+3,\, s\geq 5$ &$0\leq i\leq s,\, s\geq 5$ & &\\
\hline
  \end{tabular}
  \caption{$6(p-n)+8,\, p-n\geq 4,\, n\geq 4 $ }
  \label{tab:2}
\end{table}

\begin{table}[h]
  \centering
  \begin{adjustbox}{width=\textwidth,center}
  \begin{tabular}{|c|c|c|c|c|}
    \hline
$(n,p)$    & $f=(f_{1},\dots, f_{t})$ & Unfolding monomials $< m$ & $r$ & $\sigma_{m}$\\
\hline\hline    
 $(8,6)$ & $(x^{2} +y^{2} +z^{2}, y^{2}+\lambda z^{2}+w^{2}),\,
           \lambda\neq 1$ & $\{y,w\}e_{1},\{x,z\}e_{2}$ & $4$ &
                                                                $z^{2}e_{2}$\\
\hline
\multirow{2}{*}{$(10+k,7),\,k\geq 0$} & $x^{3} +y^{3} +z^{3}+\lambda xyz +\sum_{i=1}^{k}\delta_{i}w_{i}^{2},$
           &$\{x,y,z, yz,xz,xy\}e_{1}$ & $6$& $xyze_{1}$\\
&$\delta_{i}=\pm 1,\, \lambda^{3}\neq -1$ &&&\\
    \hline
$(9,8)$&$x^{4}+y^{4}+\lambda x^{2}y^{2},\,\lambda \neq \pm 2$
                                      &$\{x, y, x^{2},xy, y^{2},
                                        x^{2}y, xy^{2}\}e_{1}$ &$7$&$x^{2}y^{2}e_{1}$ \\
\hline
  \end{tabular}
\end{adjustbox}
\caption{$n>p$}
    \label{tab:3}
\end{table}

\begin{table}[h]
  \centering
  \begin{adjustbox}{width=\textwidth,center}
  \begin{tabular}{|c|c|c|c|c|}
    \hline
exceptional pair  & complex normal form  & Unfolding monomials $< m,$
                                           $m=2$ & $r$ & $\sigma^{1}_{m}, \sigma^{2}_{m}$\\
\hline\hline    
 \multirow{4}{*}{$(10,7)$} & $f_{\lambda_{1}\lambda_{2}}=(p(x),q(x))$     &$\{x_{2},x_{3},x_{4},x_{5}\}e_{1}$
                                          &5 & $\{x_{3}^{2},x_{4}^{2}\}e_{2}$ \\
                            &$p(x)=\sum_{i=1}^{4}x_{i}^{2}$     &  $\{x_{1}\}e_{2}$  & & \\
                            & $q(x)=x_{2}^{2}+\lambda_{1}x_{3}^{2}+\lambda_{2}x_{4}^{2}+x_{5}^{2}$    &  & & \\
                             & $ \lambda_{i}\neq 0,1 \, i=1,2$   &  & & \\
\hline
  \end{tabular}
\end{adjustbox}
\caption{Bimodular strata}
  \label{tab:6}
\end{table}

We remark that, with convenient choices of weights for the variables
$u_{1},\dots, u_{r},$ each normal form $F_{\lambda_{0}}$ is a weighted
homogeneous germ. To apply Damon's result (Theorem \ref{th:dam80}) we need to
show that $F_{\lambda_{0}}$ is ${\cal A}$-finitely determined. The
relevant property of $F_{\lambda_{0}}$ is that the ${\cal A}$-orbit is
open in the ${\cal K} $-orbit, as we now explain.

\begin{definition}
  Let $f:(\mathbb R^{n},0)\to (\mathbb R^{p},0)$ be a
  $\mathcal{A}$-finitely determined map-germ. The
  \emph{$\mathcal{A}$-orbit of $f$ is open in the $\mathcal{K}$-orbit
    of $f$} if $T\mathcal{A}(f)=T\mathcal{K}(f).$ 
\end{definition}
\index{$\mathcal{A}$-orbit open in the $\mathcal{K}$-orbit}
  Given a pair  $(n,p)$ and a $\mathcal{K}^{k}$-orbit in $J^{k}(n,p),$ if
  this $\mathcal{K}^{k}$-orbit does not contain an infinitesimally stable map-germ  $f:(\mathbb
  K^{n},0)\to (\mathbb K^{p},0),$ $j^{k}f(0)\in \mathcal{K}^{k},$ we
  can ask whether there exist $f$ such that $\mathcal{A}^{k}(f)$ is
  open in $\mathcal{K}^{k}(f).$ This was introduced by Ruas \cite{Rua}
  as an approach to the $\mathcal{A}$-classification problem. The non
  existence of $f$ with such property implies that all map-germs $f\in
  \mathcal{K}^{k}$ are non-simple. The following necessary and
  sufficient condition for the existence of an open orbit in
  $\mathcal{K}(f)$ was given in \cite{Rua} (see also Rieger and Ruas \cite{RieRua}).

\begin{proposition}[Ruas, \cite{Rua},Theorem 5.1, Rieger and Ruas,
    \cite{RieRua}, Prop.4.6]\label{cidinha}
  Let  $f:(\mathbb K^{n},0)\to (\mathbb K^{p},0)$ be a
  $\mathcal{K}$-finitely determined germ and denote by
  $\{v_{1},v_{2},\dots, v_{r}\}$ a basis for
  $N=\frac{\theta_{f}}{T\mathcal{A}_{e}f+f^{*}\mathcal{M}_{p}\theta_{f}}.$
  The $\mathcal{A}$-orbit of $f$ is open in the $\mathcal{K}$-orbit
    of $f$ if $f_{i}v_{j}\in T\mathcal{A}f,\, \textrm{mod} (
    f^{*}\mathcal{M}^{2}_{p}\theta_{f})$ for $i=1,\dots, p, \,
    j=1,\dots, r.$
\end{proposition}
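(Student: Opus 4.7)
The statement asserts that the hypothesis forces $T\mathcal{A}f = T\mathcal{K}f$. Since $T\mathcal{A}f \subseteq T\mathcal{K}f$ always, and since the inclusion $\omega f(\mathcal{M}_p\theta_p) \subseteq f^*\mathcal{M}_p\theta_f$ yields $T\mathcal{K}f = T\mathcal{A}f + f^*\mathcal{M}_p\theta_f$, the goal is equivalent to showing $f^*\mathcal{M}_p\theta_f \subseteq T\mathcal{A}f$. My plan is to derive the ``first-order'' inclusion $f^*\mathcal{M}_p\theta_f \subseteq T\mathcal{A}f + f^*\mathcal{M}_p^2\theta_f$ directly from the hypothesis, and then to discard the remainder term by invoking Proposition~\ref{bpw}.

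The key preliminary observation I would establish is an $f^*\mathcal{E}_p$-stability property, namely that $f^*\mathcal{M}_p \cdot T\mathcal{A}_e f \subseteq T\mathcal{A}f$: for any $h \in \mathcal{M}_p$, one has $(h\circ f)\,tf(\xi) = tf((h\circ f)\xi)$ with $(h\circ f)\xi \in \mathcal{M}_n\theta_n$, and $(h\circ f)\,\omega f(\eta) = \omega f(h\eta)$ with $h\eta \in \mathcal{M}_p\theta_p$, for any $\xi \in \theta_n$ and $\eta \in \theta_p$. The same computation shows $T\mathcal{A}f$ itself is closed under multiplication by $f^*\mathcal{E}_p$. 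Now, given $\sigma \in \theta_f$, the basis property of $\{v_1,\dots,v_r\}$ supplies a decomposition $\sigma = \alpha + \beta + \sum c_j v_j$ with $\alpha \in T\mathcal{A}_e f$, $\beta \in f^*\mathcal{M}_p\theta_f$, and $c_j \in \mathbb{R}$. For an arbitrary $\lambda \in f^*\mathcal{M}_p$, I would write $\lambda = \sum_i (h_i\circ f)\,f_i$ with $h_i \in \mathcal{E}_p$. Then $\lambda\alpha \in T\mathcal{A}f$ by the stability just noted, $\lambda\beta \in f^*\mathcal{M}_p^2\theta_f$ is immediate, and $\lambda\sum c_j v_j = \sum_{i,j} c_j(h_i\circ f)(f_i v_j)$ lies in $T\mathcal{A}f + f^*\mathcal{M}_p^2\theta_f$ by the hypothesis $f_i v_j \in T\mathcal{A}f + f^*\mathcal{M}_p^2\theta_f$ combined once more with the $f^*\mathcal{E}_p$-stability of both summands. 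This establishes the desired inclusion $f^*\mathcal{M}_p\theta_f \subseteq T\mathcal{A}f + f^*\mathcal{M}_p^2\theta_f$.

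To finish, I would apply Proposition~\ref{bpw} with $C = \theta_f$, $B = tf(\mathcal{M}_n\theta_n)$, $A = \omega f(\mathcal{M}_p\theta_p)$ and $M = f^*\mathcal{M}_p\mathcal{E}_n = \langle f_1,\dots,f_p\rangle$. Each finite-generation hypothesis is clear: $C$ is free of rank $p$ over $\mathcal{E}_n$; $B$ is generated by the $x_i\,\partial f/\partial x_j$; $A$ sits inside $f^*\mathcal{M}_p C$ and is generated over $\mathcal{E}_p$ via $f^*$ by the $p^2$ sections $y_k\,\partial/\partial y_j$; and $M$ is a proper, finitely generated ideal of $\mathcal{E}_n$. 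Since $M(f^*\mathcal{M}_p + M)C = M^2 C = f^*\mathcal{M}_p^2\theta_f$, the inclusion produced in the previous paragraph is exactly the hypothesis of Proposition~\ref{bpw}, whose conclusion $MC \subseteq A + B$ reads $f^*\mathcal{M}_p\theta_f \subseteq T\mathcal{A}f$, completing the proof. The step I expect to require the most care is the bookkeeping in the middle paragraph, since $T\mathcal{A}_e f$ contains the constant sections $\omega f(\partial/\partial y_j)$ which are \emph{not} in $T\mathcal{A}f$: the argument works only because a single factor of $f^*\mathcal{M}_p$ is enough to absorb this gap, converting $T\mathcal{A}_e f$ into $T\mathcal{A}f$ and $f^*\mathcal{M}_p\theta_f$ into $f^*\mathcal{M}_p^2\theta_f$.
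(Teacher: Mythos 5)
Your proof is correct; note that the paper itself gives no proof of Proposition \ref{cidinha} (it is quoted from Ruas \cite{Rua} and Rieger--Ruas \cite{RieRua}), so there is no in-text argument to compare against. Your route --- reducing openness of the $\mathcal{A}$-orbit to the inclusion $f^{*}(\mathcal{M}_{p})\Theta_{f}\subseteq T\mathcal{A}f$, deriving the first-order version of that inclusion from the hypothesis together with the absorption $f^{*}(\mathcal{M}_{p})\cdot T\mathcal{A}_{e}f\subseteq T\mathcal{A}f$ and the $f^{*}(\mathcal{E}_{p})$-module structure of $T\mathcal{A}f$, and then eliminating the remainder term via Proposition \ref{bpw} --- is exactly the standard preparation-theorem argument used in the cited references and is fully consistent with the machinery the paper develops in Section \ref{subsec:fi-nite-determined}.
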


To apply proposition \ref{cidinha} to the unimodular singularities at
BND we introduce the following notation, where $F_{\lambda}$ is as
in equation \eqref{eq:19}.

Let $$T_{F_{\lambda}}=F^{*}_{\lambda}(\mathcal{M}_{p})\{\sigma_{1},\sigma_{2},\dots,
\sigma_{r}\}+
tF_{\lambda}(\mathcal{M}_{s+n}\Psi_{s+r})+{\omega}F_{\lambda}(\mathcal{M}_{t+r}\Psi_{t+r}).$$ 
This is a $F_{\lambda}^{*}(\mathcal{E}_{t+r})$-submodule of
$\Psi_{F_{\lambda}}$ consisting of elements of
$T\mathcal{A}(F_{\lambda})$ with zero components in the $\mathbb
R^{r}$ direction (see section \ref{subsec:fi-nite-determined}).

\begin{corollary}\label{cor:8.13}
  Let $F_{\lambda}$ as in \eqref{eq:19}. Then
  $\mathcal{A}(F_{\lambda})$ is open in $\mathcal{K}(F_{\lambda})$ is
  and only if
  \begin{enumerate}
  \item [(i)] $(\tilde f_{\lambda})_{i}\cdot \sigma_{m}\in
    T_{F_{\lambda}}+F^{*}(\mathcal{M}^{2}_{p})\Psi_{F_{\lambda}}, \,
    i=1, \dots, t.$
  \item [(ii)] $u_{j}\cdot \sigma_{m}\in
    T_{F_{\lambda}}+F^{*}(\mathcal{M}^{2}_{p})\Psi_{F_{\lambda}}, \,
    i=1, \dots, r.$
  \end{enumerate}
\end{corollary}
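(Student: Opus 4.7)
The plan is to apply Proposition~\ref{cidinha} directly to $F_{\lambda}$ and unwind the resulting condition. First I would compute the normal space $N(F_{\lambda})$. Since $F_{\lambda}(x,u) = (f_{\lambda}(x) + \sum_{j=1}^{r} u_{j}\sigma_{j}(x),\,u)$ is the unfolding of $f_{\lambda}$ by the elements $\sigma_{1},\dots,\sigma_{r}$, which together with $\sigma_{m}$ form a basis of $N(f_{\lambda})$, a standard unfolding computation gives
$$
N(F_{\lambda})\;\cong\;N(f_{\lambda})/\mathbb{K}\{\sigma_{1},\dots,\sigma_{r}\}\;\cong\;\mathbb{K}\{[\sigma_{m}]\},
$$
where $\sigma_{m}$ is viewed as the element $(\sigma_{m}(x),0)\in\Psi_{F_{\lambda}}$. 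Thus Proposition~\ref{cidinha} asserts that $\mathcal{A}(F_{\lambda})$ is open in $\mathcal{K}(F_{\lambda})$ if and only if
$$
(F_{\lambda})_{i}\cdot\sigma_{m}\;\in\;T\mathcal{A}(F_{\lambda})+F_{\lambda}^{*}(\mathcal{M}_{p}^{2})\,\theta_{F_{\lambda}},\qquad i=1,\dots,p=t+r.
$$
Writing the components of $F_{\lambda}$ as $(\tilde f_{\lambda})_{1},\dots,(\tilde f_{\lambda})_{t},u_{1},\dots,u_{r}$ naturally splits this into the $t$ conditions in (i) and the $r$ conditions in (ii).

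The crux is to identify $T_{F_{\lambda}}+F_{\lambda}^{*}(\mathcal{M}_{p}^{2})\Psi_{F_{\lambda}}$ with the projection of $T\mathcal{A}(F_{\lambda})+F_{\lambda}^{*}(\mathcal{M}_{p}^{2})\theta_{F_{\lambda}}$ onto $\Psi_{F_{\lambda}}$. Since each test vector $(F_{\lambda})_{i}\cdot\sigma_{m}$ already lies in $\Psi_{F_{\lambda}}$, only this projection is relevant. To carry this out, I would decompose $\theta_{s+r}=\Psi_{s+r}\oplus\mathcal{E}_{s+r}\{\partial/\partial u_{1},\dots,\partial/\partial u_{r}\}$ and $\theta_{t+r}=\Psi_{t+r}\oplus\mathcal{E}_{t+r}\{\partial/\partial v_{1},\dots,\partial/\partial v_{r}\}$, and expand $tF_{\lambda}$ and $\omega F_{\lambda}$ along each summand. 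The $\Psi$-summands contribute precisely $tF_{\lambda}(\mathcal{M}_{s+r}\Psi_{s+r})$ and $\omega F_{\lambda}(\mathcal{M}_{t+r}\Psi_{t+r})$, while the $\partial/\partial v_{j}$-directions project to zero in $\Psi_{F_{\lambda}}$; the $\partial/\partial u_{j}$-contributions produce $\mathcal{M}_{s+r}\cdot(\sigma_{j},e_{j})$, whose $\Psi_{F_{\lambda}}$-component, after using $u_{l}=F_{\lambda}^{*}(v_{l})\in F_{\lambda}^{*}(\mathcal{M}_{p})$, yields $F_{\lambda}^{*}(\mathcal{M}_{p})\{\sigma_{1},\dots,\sigma_{r}\}$.

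The step I expect to be the main obstacle is the reduction of the \emph{remaining} contribution $\langle x_{1},\dots,x_{s}\rangle\{\sigma_{1},\dots,\sigma_{r}\}$ coming from $\mathcal{M}_{s+r}\cdot\partial F_{\lambda}/\partial u_{j}$ into $tF_{\lambda}(\mathcal{M}_{s+r}\Psi_{s+r})+F_{\lambda}^{*}(\mathcal{M}_{p}^{2})\Psi_{F_{\lambda}}$. Equivalently, one has to exploit the infinitesimal relations linking $\sigma_{j}$ to the partial derivatives $\partial\tilde f_{\lambda}/\partial x_{k}$ modulo $F_{\lambda}^{*}(\mathcal{M}_{p}^{2})$. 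For the normal forms listed in Tables~\ref{tab:1}--\ref{tab:6} this is a finite verification that reduces to the containment statements of Lemma~\ref{lema8.8}: the Jacobian identities there force $\mathcal{M}_{s+r}\sigma_{j}$ to land in $F_{\lambda}^{*}(\mathcal{M}_{p})\{\sigma_{1},\dots,\sigma_{r}\}+tF_{\lambda}(\mathcal{M}_{s+r}\Psi_{s+r})$ up to elements of $F_{\lambda}^{*}(\mathcal{M}_{p}^{2})\Psi_{F_{\lambda}}$. Once this identification is established, specializing Proposition~\ref{cidinha} to the two families of components $i\le t$ and $i>t$ yields conditions (i) and (ii), proving the corollary.
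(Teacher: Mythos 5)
Your overall skeleton is the right one and is essentially what the paper intends: $N(F_{\lambda})$ is spanned by the single class $[\sigma_{m}]$, Proposition~\ref{cidinha} is applied with $v_{1}=\sigma_{m}$, and the $p=t+r$ component conditions split into the $t$ conditions (i) and the $r$ conditions (ii). (Note also that the ``only if'' direction is free: openness means $T\mathcal{A}(F_{\lambda})=T\mathcal{K}(F_{\lambda})\supseteq F_{\lambda}^{*}(\mathcal{M}_{p})\Theta_{F_{\lambda}}\ni (F_{\lambda})_{i}\sigma_{m}$.)

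The gap is in your identification of $T_{F_{\lambda}}+F_{\lambda}^{*}(\mathcal{M}_{p}^{2})\Psi_{F_{\lambda}}$ with the \emph{projection} of $T\mathcal{A}(F_{\lambda})+F_{\lambda}^{*}(\mathcal{M}_{p}^{2})\Theta_{F_{\lambda}}$ onto $\Psi_{F_{\lambda}}$. For an element $v$ of the subspace $\Psi_{F_{\lambda}}$, membership in a module $A$ is \emph{not} equivalent to membership in $\pi_{\Psi}(A)$, so ``only this projection is relevant'' is not a valid reduction; and it is precisely this wrong framing that manufactures your ``main obstacle'' $\langle x_{1},\dots,x_{s}\rangle\{\sigma_{1},\dots,\sigma_{r}\}$, which you then try to absorb by appealing to Lemma~\ref{lema8.8} — a lemma about $TK_{e}(f_{\lambda})\supseteq\mathcal{M}^{3}\Theta(f_{\lambda})$ and ideal containments that does not deliver (and is not needed for) such an absorption. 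The correct step is the one the paper records just before the statement: $T_{F_{\lambda}}$ is exactly the set of elements of $T\mathcal{A}(F_{\lambda})$ with zero components in the $\mathbb{R}^{r}$ direction. Concretely, if $v\in\Psi_{F_{\lambda}}$ and $v=tF_{\lambda}(\xi)+\omega F_{\lambda}(\eta)+b$ with $b\in F_{\lambda}^{*}(\mathcal{M}_{p}^{2})\Theta_{F_{\lambda}}$, write $\xi=\xi_{x}+\sum_{j}c_{j}\partial/\partial u_{j}$ and $\eta=\eta_{X}+\sum_{j}d_{j}\partial/\partial U_{j}$; comparing the $\mathbb{R}^{r}$-components gives $c_{j}=-d_{j}\circ F_{\lambda}-b_{U_{j}}\in F_{\lambda}^{*}(\mathcal{M}_{p})$, so the $\Psi$-part of the decomposition contributes only $F_{\lambda}^{*}(\mathcal{M}_{p})\{\sigma_{1},\dots,\sigma_{r}\}$ from the $\partial/\partial u_{j}$ directions — never $\langle x_{1},\dots,x_{s}\rangle\{\sigma_{j}\}$ — and one lands exactly in $T_{F_{\lambda}}+F_{\lambda}^{*}(\mathcal{M}_{p}^{2})\Psi_{F_{\lambda}}$. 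Together with the easy inclusion $T_{F_{\lambda}}+F_{\lambda}^{*}(\mathcal{M}_{p}^{2})\Psi_{F_{\lambda}}\subseteq T\mathcal{A}(F_{\lambda})+F_{\lambda}^{*}(\mathcal{M}_{p}^{2})\Theta_{F_{\lambda}}$ this closes the equivalence with no case-by-case verification at this stage.
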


\begin{remark}
  Taking the quotient
  $\frac{T_{F_{\lambda}}}{\mathcal{M}_{u}T_{F_{\lambda}}}$ in 
  condition $(i)$ of Corollary \ref{cor:8.13}, we get

    \begin{equation}
      \label{eq:27}
    (i_{0})\ \   (f_{\lambda})_{i}\cdot \sigma_{m} \in
    \frac{T_{F_{\lambda}}}{\mathcal{M}_{u}T_{F_{\lambda}}}\simeq
    f^{*}(m_{t})\{\sigma_{1},\dots
    \sigma_{r}\}+tf_{\lambda}(m_{s}\Theta_{s})
    +{\omega}f_{\lambda}(M_{t}\Theta_{t}.
  \end{equation}

  The $f^{*}(\theta_{t})$-module
  $\frac{T_{F_{\lambda}}}{\mathcal{M}_{u}T_{F_{\lambda}}}$ is
  $im(z_{0})$ in Damon's notation ( see definition of $z_{0}$ in section 1
  of Damon \cite{Dam82}). 

  Condition $(i_{0})$ is a necessary condition for the property
  $T\mathcal{A}(F_{\lambda})=T\mathcal{K}(F_{\lambda})$ to hold.

\end{remark}

We collect in the following proposition the relevant properties of
$F_{\lambda_{0}}.$

\begin{theorem}\label{th:8.14}
  Let $(n,p)$ be a pair in BND and $F_{\lambda_{0}}:(\mathbb
  K^{n}, 0) \to (\mathbb K^{p}, 0)$  the unimodular map-germ as in \eqref{eq:19}. Then for all
  $\lambda_{0}\in \mathbb K,$ except a finite number of exceptional
  values  the following hold:
  \begin{enumerate}
  \item [(a)] $F_{\lambda_{0}}$ is $\mathcal{A}$-finitely determined.
  \item [(b)] $\mathcal{A}_{e}$-$\cod F_{\lambda_{0}}=1.$
  \item [(c)] The $\mathcal{A}$-orbit of $F_{\lambda_{0}}$ is open in $\mathcal{K}(F_{\lambda_{0}}).$
  \end{enumerate}
\end{theorem}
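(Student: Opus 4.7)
The plan is to deduce (a) from (b) via Theorem \ref{gaf-wal}, prove (b) from the explicit construction of $F_{\lambda_0}$ as the $\mathcal{K}$-universal unfolding in the directions $\sigma_1,\dots,\sigma_r$ of weight strictly less than the modulus, and prove (c) by direct verification of the criterion in Corollary \ref{cor:8.13}, relying on the weighted-homogeneous structure summarized by Lemma \ref{lema8.8}.

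First I would prove (b). By the construction in equations \eqref{eq:25}--\eqref{eq:26}, the parameters $u_1,\dots,u_r$ are chosen to be duals of a basis of $N(f_{\lambda_0})$ except for the modulus direction $\sigma_m$. Using the standard formula $\Theta_{F_{\lambda_0}} \cong \Theta_{\tilde{f}_{\lambda_0}} \oplus \mathcal{E}_{s+r}\{\partial/\partial u_1,\dots,\partial/\partial u_r\}$ and the fact that $\omega F_{\lambda_0}$ absorbs the $u$-dependent tails of any element of $\Theta_{\tilde f_{\lambda_0}}$ by the Malgrange Preparation Theorem (exactly as in the proof of Proposition \ref{prop:69-2}), one reduces $N(F_{\lambda_0})$ to the vector space spanned by the natural lift $\widetilde{\sigma_m}$. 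That $\widetilde{\sigma_m} \notin T\mathcal{A}_e(F_{\lambda_0})$ follows because otherwise the whole $1$-parameter family $\{F_\lambda\}_{\lambda}$ would be $\mathcal{A}$-trivial, contradicting unimodality. Thus $\mathcal{A}_e\text{-}\cod F_{\lambda_0} = 1$, giving (b), and then (a) follows immediately from the equivalence $(4)\Leftrightarrow(1)$ in Theorem \ref{gaf-wal} (alternatively, since each $F_{\lambda_0}$ is weighted homogeneous, Theorem \ref{th:dam80} applies).

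For (c), I apply Corollary \ref{cor:8.13}. Condition (ii) there is immediate: $u_j \cdot \sigma_m$ already lies in $F^*(\mathcal{M}_p)\Psi_{F_\lambda}$, hence a fortiori in $T_{F_\lambda}$ once multiplied by any element of $F^*(\mathcal{M}_p)$, so the classes modulo $F^*(\mathcal{M}_p^2)\Psi_{F_\lambda}$ vanish. Condition (i) reduces, by passing to the quotient $T_{F_\lambda}/\mathcal{M}_u T_{F_\lambda}$, to the key identity $(i_0)$ of the remark following Corollary \ref{cor:8.13}:
\[
(f_\lambda)_i \cdot \sigma_m \in f_\lambda^*(\mathcal{M}_t)\{\sigma_1,\dots,\sigma_r\} + tf_\lambda(\mathcal{M}_s\Theta_s) + \omega f_\lambda(\mathcal{M}_t \Theta_t), \quad i=1,\dots,t.
\]
By weighted homogeneity with all $x$-weights equal to $1$, both sides split by total degree, so the verification reduces to finitely many linear equations over $\mathbb{K}[\lambda]$. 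The crucial input is Lemma \ref{lema8.8}: for the families (1)--(6) the ideal $I(f_\lambda)$ contains $\mathcal{M}^3$, and for (7)--(8) one has $J(f_\lambda) \supseteq \mathcal{M}^{4}$ or $\mathcal{M}^{5}$. Since $f_i \cdot \sigma_m$ has total degree at least the bound that appears in Lemma \ref{lema8.8}, it lies in the sum $tf_\lambda(\Theta_s) + f_\lambda^*(\mathcal{M}_t)\Theta_f$, and the finer statement that one can write the representative with the $f_\lambda^*(\mathcal{M}_t)$-coefficients landing in the submodule spanned by $\{\sigma_1,\dots,\sigma_r\}$ modulo $\omega f_\lambda(\mathcal{M}_t \Theta_t) + tf_\lambda(\mathcal{M}_s\Theta_s)$ is a direct calculation from the monomial generators listed in Tables \ref{tab:1}--\ref{tab:6}.

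The main obstacle is the explicit case-by-case execution of the previous paragraph for the larger families, namely $(15,16), (21,23), (27,30)$ and the series $(6t+2,7t+1)$, where the normal space has many generators and the coefficient matrix of the linear system over $\mathbb{K}[\lambda]$ is unwieldy. The uniform strategy is to organize the computation by weighted degree, replace $f^*(\mathcal{M}_t)\{\sigma_1,\dots,\sigma_r\}$ by its generators and use the Jacobian relations from Lemma \ref{lema8.8}(b)--(d) as elementary row operations. The finite exceptional set of $\lambda_0$ to exclude consists of (i) the values already excluded in the normal forms (where $\mathcal{K}$-finiteness fails, e.g.\ $\lambda \in \{0,-2,1\}$ for $(9,9)$), and (ii) the zeros of the determinant of the resulting linear system, which by explicit inspection are always nonzero polynomials in $\lambda$, hence contribute only finitely many further values.
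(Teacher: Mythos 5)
Your overall architecture has a genuine gap in part (b). You claim that the Preparation Theorem, ``exactly as in the proof of Proposition \ref{prop:69-2},'' reduces $N\mathcal{A}_{e}(F_{\lambda_0})$ to the span of the lift of $\sigma_m$. That reduction is not formal: the construction only gives
$\Psi_{F_{\lambda_0}}=tF(\Psi_{n+r})+\omega F(\Psi_{p+r})+F^{*}(\mathcal{M}_{p+r})\Psi_{F_{\lambda_0}}+\mathcal{E}_{r}\{\sigma_{m}\}$, and since $T\mathcal{A}_{e}F_{\lambda_0}$ is \emph{not} an $F^{*}(\mathcal{E}_{p+r})$-module, you cannot absorb the term $F^{*}(\mathcal{M}_{p+r})\{\sigma_{m}\}$ by Nakayama/Preparation as one does for stable germs (where the quotient is zero). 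Showing that $(\tilde f_{\lambda})_{i}\cdot\sigma_{m}$ and $u_{j}\cdot\sigma_{m}$ fall into $T\mathcal{A}F_{\lambda}$ modulo higher order is \emph{exactly} conditions (i) and (ii) of Corollary \ref{cor:8.13}; so your proof of (b) silently presupposes (c), and the order ``(b) first, then (c) independently'' collapses. For the same reason your dispatch of condition (ii) is wrong: $u_{j}\sigma_{m}$ lies in $F^{*}(\mathcal{M}_{p})\Psi_{F_{\lambda}}$, but $F^{*}(\mathcal{M}_{p})\{\sigma_{m}\}$ is precisely the piece of $F^{*}(\mathcal{M}_{p})\Psi_{F_{\lambda}}$ that is \emph{not} obviously contained in $T_{F_{\lambda}}+F^{*}(\mathcal{M}^{2}_{p})\Psi_{F_{\lambda}}$ (the submodule $F^{*}_{\lambda}(\mathcal{M}_{p})\{\sigma_{1},\dots,\sigma_{r}\}$ in $T_{F_{\lambda}}$ deliberately omits $\sigma_{m}$), so (ii) is a genuine condition to verify, not an immediate one.

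The paper's route avoids this circularity: it first establishes $(c)\Leftrightarrow(b)\Rightarrow(a)$ abstractly, using that $\mathcal{K}\textrm{-}\cod(F_{\lambda_{0}})=n+1$ together with Wilson's formula $\mathcal{A}_{e}\textrm{-cod}(f)=\mathcal{A}\textrm{-cod}(f)+s(p-n)-p$ (with $s=1$ this gives $\mathcal{A}_{e}\textrm{-cod}=\mathcal{A}\textrm{-cod}-n$, so $\mathcal{A}\textrm{-cod}\geq\mathcal{K}\textrm{-cod}=n+1$ yields the lower bound $\mathcal{A}_{e}\textrm{-cod}\geq 1$ for free, with equality if and only if $T\mathcal{A}=T\mathcal{K}$). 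Only then does it verify (c) via Corollary \ref{cor:8.13}, and mostly by citation: Damon \cite{Dam82} for $(9,9),(15,16),(21,23),(27,30)$, an explicit computation for $(8,6)$ (with $(32,36)$ left as an exercise), and Looijenga \cite{Loo78} for $(9,8)$ and $(10+k,7)$. Your appeal to unimodality to show $\sigma_{m}\notin T\mathcal{A}_{e}F_{\lambda_{0}}$ is defensible but unnecessary once the Wilson-formula bound is in place; and your plan to redo the case-by-case linear algebra by weighted degree is legitimate in principle, but the assertion that the relevant determinants are nonzero polynomials in $\lambda$ is precisely the content that must be computed, not inspected.
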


\begin{proof}
  First notice that $(c) \Leftrightarrow (b) \Rightarrow (a).$ In fact if
  $(c)$ holds,  $T\mathcal{A}(F_{\lambda_{0}})=
  T\mathcal{K}(F_{\lambda_{0}}).$ We saw that $\mathcal{K}$-$\cod
  (F_{\lambda_{0}})= n+1.$ Now, for any $\mathcal{A}$-finitely
  determined $f:(\mathbb  R^{n}, S) \to (\mathbb R^{p}, 0),$
  $S=\{ x_{1},\dots, x_{s}\},$ the following formula due to L. Wilson
  \cite{Wil} holds (see Rieger \cite{Rie} for a proof):

  $$\mathcal{A}_{e}\textrm{-cod} (f)= \mathcal{A}\textrm{-cod} (f)+ s(p-n) -p.$$

  Applying this formula with $s=1,$ it follows that
  $\mathcal{A}_e$-$\cod (F_{\lambda_{0}})=1 \Leftrightarrow
  \mathcal{A}$-$\cod (f)= n+1$  and the equivalence   $(c)\Leftrightarrow
  (b)$ follows from this. It is also clear that $(b)\Rightarrow (a)$.

  We now want to verify $(c)$ ( or
  equivalently $(b)).$ for
  each normal form $F_{\lambda}:(\mathbb K^{n}, 0) \to (\mathbb
  K^{p},0),$
  with $F_{\lambda}(x,u)=(\tilde f_{\lambda}(x,u),u),$ $\tilde
  f_{\lambda}(x,u)= f_{\lambda}(x)+\sum_{j=1}^{r}u_{j}\cdot
  \sigma_{j}(x),$
  $\mathrm{degree}(\sigma_{j})<\mathrm{degree}(f_{\lambda}),$ $j=1,
  \dots, r.$

  To verify $(c),$ we verify condition $(i)$ and $(ii)$ in Corollary
  \ref{cor:8.13} to $F_{\lambda}.$ We do it case by case, collecting
  calculations that appeared previously in the literature.

  \noindent \textbf{(1)} Cases $(n,p)= \{(9,9), (15,16), (21,23), (27,30)\}.$

  These were solved by Damon  in Example 2 and Proposition 8.2,
  \S 8 in \cite{Dam82}.

  Notice that Damon uses Wall's normal form for the $\Sigma^{3,0}$
    unimodular family

    $$f_{\lambda}=(2xz +y^{2}, 2yz
    , x^{2}+3g y^{2}-c z^{2}), c\neq 0,\ \  c+9g^{2}\neq 0.
    $$
Here $c$ is fixed and $g$ is the modulus.
  \end{proof}

\noindent \textbf{(2)} Cases $(n,p)=(8,6)$ and  $(n,p)=(32,36).$

We first consider  $(n,p)=(8,6).$

$F_{\lambda}:(\mathbb K^{8},0)\to (\mathbb K^{6},0),$
$F_{\lambda}=(\tilde f_{\lambda},u),$ where
$$ \tilde f_{\lambda}(x,y,z,w,u)=(x^{2}+y^{2}+z^{2}+ u_{1}y+u_{2}w,
y^{2}+\lambda z^{2}+w^{2}+u_{3}x+u_{4}z).$$

It follows from Lemma \ref{lema8.8} that $F_{\lambda}$ is $2$-determined
with respect to the group $\mathcal{K},$ if $\lambda\neq 0,1.$ The
following follow from simple calculations
\begin{enumerate}
\item [(i)]\, $J(f_{\lambda})+f^{*}_{\lambda}(\mathcal{M}_{2})$
  contains the mixed monomials $xy, xz, xw, yz,yw,zw.$ 
\item [(ii)]\, If $\alpha= x^{4},y^{4},z^{4},w^{4},$ then $\alpha
  e_{i}\in T\mathcal{A}f_{\lambda}\, i=1,2 (\mathrm{mod}\, J(f_{\lambda})\Theta(f_{\lambda}))$.
\end{enumerate}

Using (i) and (ii) it follows that the conditions of Corollary
\ref{cor:8.13} hold, and $\mathcal{A}(F_{\lambda})$ is open in
$\mathcal{K}(F_{\lambda}).$

We leave the calculations of the pair $(n,p)=(32,36)$ as an exercise
for the reader.

\noindent \textbf{(3)} Cases $(n,p)=(9,8)$ and  $(n,p)=(10+k,7), \,
k\geq 0.$

These cases follows from Looijenga \cite{Loo78}, Lemma 2.2.

\begin{remark}
  A similar result holds for the bimodular strata in the pair $(10,7)$
  replacing $\mathcal{A}_{e}$-cod$(F_{\lambda})=1$ by  $\mathcal{A}_{e}$-cod$(F_{\lambda})=2.$
\end{remark}

We summarize the discussion of this section stating the following
results.

\begin{corollary}
  Let $(n,p)$ be a pair in BND and $F_{\lambda_{0}}:(\mathbb
  K^{n},0)\to (\mathbb K^{p},0)$  the unimodular
  map-germ as in \eqref{eq:19}. Then for all $\lambda_{0} \in \mathbb
  K,$  except for a finite number of exceptional values, the one
  parameter unfolding $F:(\mathbb  K^{n}\times\mathbb K,0)\to (\mathbb
  K^{p}\times \mathbb K,0)$ of $F_{\lambda_{0}},$ as in
  \eqref{eq:25}, is ${\cal A}$-topologically trivial.
\end{corollary}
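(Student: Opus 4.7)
The plan is to invoke Damon's topological triviality theorem (Theorem \ref{th:dam80}) applied to the one-parameter unfolding $F$ of $F_{\lambda_0}$, treating the modulus parameter $\lambda$ as an additional unfolding variable of non-negative weight. Theorem \ref{th:8.14} already provides the essential input, namely that for all but finitely many values of $\lambda_0$, the germ $F_{\lambda_0}$ is $\mathcal{A}$-finitely determined; what remains is to exhibit a weighted-homogeneous structure on $F_{\lambda_0}$ in which the perturbation by $\lambda\sigma_m$ has non-negative weight.

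First, I would fix a set of weights. Since each normal form $f_{\lambda}$ in Sections \ref{sec:case1:-nleq-p}–\ref{sec:case-2:-np} is, up to the addition of a nondegenerate quadratic form, homogeneous of some multi-degree $(d_1,\dots,d_t)$ in the variables $x=(x_1,\dots,x_s)$ with weights $w_1=\cdots=w_s=1$, the underlying germ $f_{\lambda_0}$ is weighted homogeneous. I would then assign weights to the unfolding parameters $u_j$ via the rule $\operatorname{wt}(u_j)+\operatorname{wt}(\sigma_{ij})=d_i$ whenever $\sigma_{ij}\neq 0$, which is consistent precisely because the $\sigma_j$ were chosen (see the construction after Lemma \ref{lema8.8}) to have coordinate degrees strictly smaller than $d_i$. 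This gives $\operatorname{wt}(u_j)>0$ for each $j$, and renders $F_{\lambda_0}(x,u)=(f_{\lambda_0}(x)+\sum_j u_j\sigma_j(x),u)$ a weighted homogeneous map-germ of multi-degree $(d_1,\dots,d_t,\operatorname{wt}(u_1),\dots,\operatorname{wt}(u_r))$. By Theorem \ref{th:8.14}(a), for generic $\lambda_0$ the germ $F_{\lambda_0}$ is $\mathcal{A}$-finitely determined, so the hypotheses of Theorem \ref{th:dam80} are met for $F_{\lambda_0}$.

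Next, I would interpret the one-parameter unfolding $F(x,u,\lambda)=(\tilde f(x,u,\lambda),u,\lambda)$ from \eqref{eq:25} as a polynomial unfolding of $F_{\lambda_0}$ by the single perturbation term $\lambda\,\sigma_m(x)$. The critical observation is that $\sigma_m$ has component degrees equal to $(d_1,\dots,d_t)$ themselves (it is the modulus direction, homogeneous of the same multi-degree as $f_{\lambda_0}$). Consequently, if we assign $\operatorname{wt}(\lambda)=0$, the perturbation $\lambda\sigma_m$ has weight $(d_1,\dots,d_t)$, matching that of $F_{\lambda_0}$, so the unfolding $F$ is of non-negative (in fact, zero) weight relative to the chosen grading. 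Damon's theorem then yields directly that $F$ is $\mathcal{A}$-topologically trivial along $\lambda$, for each $\lambda_0$ outside the finite exceptional set where either $F_{\lambda_0}$ fails to be $\mathcal{A}$-finite or the modular parameter crosses a $\mathcal{K}$-special value (e.g.\ the values $0,1,-2$ in case \eqref{eq:18}, or $\pm 2$ in the $(9,8)$ case, etc.).

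The main obstacle, and the only nontrivial verification, is to check that a common weighting exists for each of the normal forms of Tables \ref{tab:1}–\ref{tab:6}: one must ensure that the equations $\operatorname{wt}(u_j)+\operatorname{wt}(\sigma_{ij})=d_i$ admit a simultaneous positive-weight solution for all relevant $(i,j)$, rather than being over-determined. This is where the explicit tabulation of the generators $\sigma_j$ in Tables \ref{tab:1}–\ref{tab:6} (together with the fact that all $d_i$ coincide in most of the cases, or differ in a controlled way in the cases where $t>1$) becomes essential; I would verify this case by case, noting that in each entry the monomials appearing as components of a given $\sigma_j$ target a single coordinate $e_i$, so the weight of $u_j$ is determined uniquely and is automatically positive. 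Once this bookkeeping is done the conclusion is immediate from Theorem \ref{th:dam80}, with the finite set of exceptional $\lambda_0$ being the union of the $\mathcal{K}$-excluded moduli values and the values of $\lambda_0$ at which $\mathcal{A}$-finite determinacy fails.
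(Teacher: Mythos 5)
Your proof is correct and follows essentially the same route as the paper: the paper's own proof is a two-line citation of Theorem \ref{th:8.14} (for the $\mathcal{A}$-finite determinacy of $F_{\lambda_0}$) together with Damon's Theorem \ref{th:dam80}, relying on the weighted homogeneity of the normal forms (with positive weights for the $u_j$ and the modulus direction $\sigma_m$ having the same degree as $f_{\lambda_0}$, so the unfolding by $\lambda$ has non-negative weight). The extra bookkeeping you supply on the consistency of the weight assignment is exactly the content of the remark preceding the paper's proof and is carried out correctly.
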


\begin{proof}
  The proof follows from Theorem \ref{th:8.14} and Damon's result
  (Theorem \ref{th:dam80}).
\end{proof}

\begin{corollary}\label{azul}
  Let $(n,p)$ be a pair in BND. Then a Thom-Mather map $f:N^{n}\to
  P^{p}$ has at most a finite set of points $S=\{x_{1}, \dots, x_{r}\}$ such that for all
  $x_{i}\in S,$ $j^kf(x_{i}) \in \mathcal{A}_{M},$ $j^{k}f \pitchfork
  \mathcal{A}_{M},$ where $\mathcal{A}_{M}$ is any of the modal
  stratum of ${\cal A}^{k}(N,P).$ Moreover, if 
  $f(x_{i})=y_{i},\,i=1,\dots r$ then
  $f^{-1}(y_{i}) \cap \Sigma(f)=\{x_{i}\}, \ i=1, \dots, r.$ The
  restriction of $f$ to  $N\setminus S$ is a infinitesimally stable map.

\end{corollary}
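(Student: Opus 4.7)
The plan is to exploit three structural features of the stratification $\Sigma^{k}_{bnd}(N,P)$ established in Theorem \ref{th:8.4}: every modal stratum $\mathcal{A}_{M}$ has codimension precisely $n$ in $J^{k}(N,P)$; the complement of the modal strata and of $\tilde\Pi^{k}(N,P)$ in the stratified open set is a disjoint union of $\mathcal{K}^{k}$-orbits of stable germs; and in BND there are only finitely many modal strata (one unimodular family per pair, plus the exceptional bimodular family for $(10,7)$).

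For the finiteness of $S$, since $\cod_{J^{k}(N,P)}\mathcal{A}_{M} = n$ and $j^{k}f \pitchfork \mathcal{A}_{M}$, the preimage $(j^{k}f)^{-1}(\mathcal{A}_{M})$ is a $0$-dimensional submanifold of $N$. Since $f$ is proper and only finitely many modal strata occur, the union $S := \bigcup_{M}(j^{k}f)^{-1}(\mathcal{A}_{M})$ is a closed discrete subset; properness of $f$, together with the fact that the open top stratum is Zariski-dense in $J^{k}(N,P)$ so that $j^{k}f$ must return to it outside a compact set, forces $S$ to be finite.

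For the fiber condition $f^{-1}(y_{i}) \cap \Sigma(f) = \{x_{i}\}$, let $X_{i}$ be the $0$-dimensional stratum of the induced stratification of $N$ containing $x_{i} \in S$; by transversality $X_{i} = \{x_{i}\}$, so $f(X_{i}) = \{y_{i}\}$ is a single point, of codimension $p$ in $P$. Arguing by contradiction, suppose a second point $x' \neq x_{i}$ in $f^{-1}(y_{i}) \cap \Sigma(f)$ lies in a stratum $Y \subset N$ of codimension $c \geq 1$. Multi-transversality of $_{2}j^{k}f$ to the product of Thom-Mather strata at $(x_{i}, x')$ then forces the Mather general-position condition of Remark \ref{rem:4.21}, adapted from stable multigerms to Thom-Mather multigerms: the images $f(X_{i})$ and $f(Y)$ must be in general position at $y_{i}$, so
\[
\cod_{P} f(X_{i}) + \cod_{P} f(Y) \leq p.
\]
Since $\cod_{P} f(X_{i}) = p$ and $\cod_{P} f(Y) \geq 1$, this is impossible, so $x'$ cannot exist.

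For infinitesimal stability of $f|_{N \setminus S}$, any $x \in N \setminus S$ has $j^{k}f(x)$ lying in a $\mathcal{K}^{k}$-orbit of a stable germ by Theorem \ref{th:8.4}(a). The multi-transversality of $_{s}j^{k}f$ (for $s \leq p+1$ and $k \geq p+1$) to all contact classes in $_{s}J^{k}(N \setminus S, P)$ is then equivalent by Theorem \ref{th:11} to $f|_{N \setminus S}$ being stable, hence infinitesimally stable. The hardest step will be verifying that the Thom-Mather transversality hypothesis is strong enough to enforce the multi-jet general-position condition at a pair consisting of a modal point and a critical neighbor; the delicate bookkeeping occurs in BND pairs with $n > p$, where a pure codimension-sum count is insufficient and one must instead exploit the geometric fact that $f(X_{i})$ reduces to a single point in $P$.
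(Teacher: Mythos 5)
The paper states this corollary without a written proof: it is meant to follow from Theorem \ref{th:8.4} together with the multi-transversality built into the definition of a Thom--Mather map, and your reconstruction of the second and third assertions is exactly the intended argument. The modal strata have codimension $n$ in $J^{k}(N,P)$, so $(j^{k}f)^{-1}(\mathcal{A}_{M})$ is a closed $0$-dimensional submanifold; the general-position condition of Remark \ref{rem:4.21}, enforced by multi-transversality of ${}_{2}j^{k}f$, excludes a second critical point over $y_{i}$ because $\cod_{P}f(X_{i})=p$ while $\cod_{P}f(Y)\geq 1$ (the images of critical strata lie in the discriminant, which has dimension at most $p-1$ also when $n>p$, so the bookkeeping you worry about does go through); and away from $S$ the jet meets only $\mathcal{K}^{k}$-orbits of stable germs transversally, so Theorem \ref{th:11} yields infinitesimal stability. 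Strictly speaking, Theorems \ref{delta} and \ref{th:11} are stated for proper maps and $f|_{N\setminus S}$ need not be proper, so the last claim should be read as local infinitesimal stability of all multigerms of $f|_{N\setminus S}$; the paper is equally loose on this point.

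The one step that does not hold up is your derivation of the finiteness of $S$. Transversality gives that $S$ is closed and discrete, but a closed discrete subset of a non-compact manifold can be infinite, and properness of $f$ does not repair this: nothing prevents the modal points from escaping to infinity in $N$ while their images escape to infinity in $P$ (compare Example \ref{mdu}, where a proper map $\mathbb R\to\mathbb R$ has $\Sigma(f)=\mathbb Z$). Your claim that Zariski-density of the top stratum forces $j^{k}f$ into it outside a compact set is unsubstantiated and false in general. The finiteness assertion really rests on compactness of $N$, which is the standing hypothesis in Section 6 of the paper and should be invoked explicitly here; with $N$ compact, closed and discrete immediately gives finite.
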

\index{Unimodular strata|)}

 \subsection{Notes}
 \label{sec:notes-2}
%\index{Map!$C^{1}$stable!density}
\emph{Density of $C^{1}$ stable mappings.} In \cite{PleWal89}, du
Plessis and Wall determine the precise range of dimensions where
$C^{1}$-stable maps are dense. This property holds if and only if the
pair $(n,p)$ is in the nice dimensions.

%\index{Determinacy!$\infty$-$C^{1}-$}
A parallel result is also obtained when $C^{1}$-stability is replaced
by $\infty$-$C^{1}$-determinacy. We say that a map-germ
$f\in\mathcal{E}_{n}^{p}$ is $\infty$-determined with respect to
$C^{1}$-$\mathcal{A}$-equivalence if the $C^{1}$-$\mathcal{A}$-orbit
of $f$ contains all $g\in \mathcal{E}_{n}^{p}$ such that
$j^{\infty}g(0)=j^{\infty}f(0).$ we can also denote the group
$C^{1}$-$\mathcal{A}$ by $\mathcal{A}^{(1)}.$

The paper \cite{PleWal89} appeared in 1989. In contrast with the $C^{0}$
and $C^{\infty}$ cases much less was known in the $C^{1}$ case. Wall \cite{Wal80-2}
sketched in 1980 the proof that $C^{1}$-stable maps are not dense when
$n=8$ and $p=6$ and Mather \cite{Mat69-3} proved that finite
$\mathcal{A}^{(1)}$determinacy does not hold in general for map-germs
$(\mathbb R^{n},0) \to (\mathbb R^{n+1},0),$ with $n\geq 15.$

The main result of \cite{PleWal89} is the following theorem:(A) if
$(n,p)$ is in complement of the nice dimensions, then for any smooth
manifolds $N,P$ there is a nonempty open subset $U\subset C^{\infty}$
containing no $C^{1}$-stable mapping. (B) If $(n,p)$ is in the
complement of semi-nice dimensions (see \cite{Ple, Wal85} for details) with
the exception of the pairs $(14.14),$ $(15,15),$ $(16,16),$ $(12,11),$
$(14,12)$ and $(15,13),$ then for any pair of smooth manifolds $N,P$
there is a nonempty open subset $U\subset C^{\infty}$ containing no
map all of whose point-germs are $\infty$-$\mathcal{A}^{1}$-determined.

The proof of this theorem follows the line of the proof of the
corresponding $C^{\infty}$ result. It is shown that $C^{1}$stability
implies transversality and $\infty$-$\mathcal{A}^{(1)}$-determinacy
implies transversality off the base-point to the fibres of a
$\mathcal{K}$-invariant fibred submanifold of $J^{r}(n,p)$ in the
complement of the set $W^{r}(n,p)$ of $r$-jets with
$\mathcal{K}^{r}$-modality $\geq 1.$ This follows from the property
that stability an determinacy  conditions imply a weak
form of transversality (the preimage is a $C^{1}$-submanifold).
 To strengthen this to actual transversality the use of unfolding theory
 and a perturbation lemma of R.D. May \cite{May73} were the important tools.

Several notions of $C^{1}$-invariance of submanifolds of jet space are
discussed in \cite{PleWal}. In particular, the $C^{1}$-invariance of the Thom-Boardman
varieties and, in some cases, of $\mathcal{K}^{r}$-orbits within them
are obtained.

\section{Density of  Lipschitz stable mappings}
\label{sec:non-trivial-example}

\index{Map!stable!Lipschitz}
We discuss here the problem of density of Lipschitz stable mappings,
which is still widely open.

In \cite{NguRuaTri21} Nguyen, Ruas and\index{Lipschitz nice dimensions}
Trivedi introduced the \emph{Lipschitz nice dimensions (LND)} as the pairs
$(n,p)$ for which the set $\mathcal{S}^{Lip}(N,P)$ of 
Lipschitz stable mappings is \emph{dense}  in $C_{pr}^{\infty}(N^{n}, P^{p}).$

When $N$ is compact, it is clear that the LND contains Mather's nice dimensions, since every
$C^{\infty}$ stable mapping is Lipschitz stable. The main purpose in
Nguyen, Ruas and Trivedi \cite{NguRuaTri21} is to give an answer for
the following conjectures.

\begin{conjecture}\label{lnd}
  The Lipschitz nice dimensions contains  Mather's nice dimensions and
  its boundary.
\end{conjecture}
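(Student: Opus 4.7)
The plan is to treat the two inclusions separately: first the easy containment $\text{Mather's nice dimensions} \subset \text{LND}$, and then the harder containment of the boundary of the nice dimensions in LND. Throughout, the strategy rests on the observation that Lipschitz $\mathcal{A}$-equivalence is coarser than $C^{\infty}$-$\mathcal{A}$-equivalence but finer than $C^{0}$-$\mathcal{A}$-equivalence, so the density of Lipschitz stable maps may be approached by starting from topologically stable maps and upgrading the trivializations.

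For the first inclusion, if $(n,p)$ is in the nice dimensions then by Theorem A the set $\mathcal{S}^{\infty}(N,P)$ of proper $C^{\infty}$-stable mappings is dense in $C^{\infty}_{pr}(N,P)$. Since any $C^{\infty}$-$\mathcal{A}$-equivalence between proper maps restricts, on compact subsets, to a bi-Lipschitz equivalence, every $C^{\infty}$-stable proper map is Lipschitz stable. Hence $\mathcal{S}^{\infty}(N,P) \subseteq \mathcal{S}^{Lip}(N,P)$, and density transfers immediately.

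For the second inclusion, let $(n,p)$ lie in the boundary of the nice dimensions. The proposal is to show that the open dense subset of $C^{\infty}_{pr}(N,P)$ consisting of proper maps $f$ whose $k$-jet extension avoids $\tilde{\Pi}^{k}(N,P)$ and is multitransverse to $\Sigma^{k}_{bnd}(N,P)$ (for $k\ge p+1$) lies inside $\mathcal{S}^{Lip}(N,P)$. By Corollary \ref{azul}, such an $f$ has only finitely many points $S=\{x_{1},\dots,x_{r}\}$ at which $j^{k}f$ meets a modular stratum, and $f$ is infinitesimally stable on $N\setminus S$; away from $S$ the ordinary Mather theory (Theorem \ref{star}) produces a local bi-Lipschitz trivialization. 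Hence the problem localizes to a neighborhood of each multigerm $f^{-1}(y_{i})\cap \Sigma(f)=\{x_{i}\}$, and by Theorem \ref{th:8.14} this multigerm is $\mathcal{A}$-equivalent to the model $F_{\lambda_{0}}$ of \eqref{eq:19} (respectively to the bimodal model of Table \ref{tab:6} when $(n,p)=(10,7)$). The task therefore reduces to showing that the one- (or two-) parameter unfolding \eqref{eq:25} is bi-Lipschitz trivial along the modulus parameter at all but finitely many values of $\lambda_{0}$.

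To carry out this bi-Lipschitz trivialization, I would revisit Damon's proof of Theorem \ref{th:dam80}. His argument produces, via the Preparation Theorem applied to modules of vector fields of nonnegative weight, a pair of controlled vector fields $(V,W)$ on source and target lifting $\partial/\partial\lambda$ through the Thom-Levine equation, and the resulting flows give a $C^{0}$-trivialization. The key step in upgrading this is to prove that, because $F_{\lambda_{0}}$ is weighted homogeneous and $F$ is an unfolding by monomials of strictly smaller weight (with the modulus direction $\sigma_{m}$ of weight equal to $f_{\lambda}$), one can choose $(V,W)$ to be \emph{Lipschitz} vector fields vanishing at the origin, with moduli of continuity controlled by the weights. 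The quasi-homogeneous rescaling will then integrate these vector fields to bi-Lipschitz homeomorphisms. This hinges on two technical ingredients I plan to establish: (i) a Lipschitz version of the division/preparation step in the Thom-Levine equation, using that the relevant modules are finitely generated by weighted-homogeneous elements, and (ii) a quantitative estimate showing that the solution vector fields inherit the weighted-homogeneous structure from $F$, so that the flow map satisfies $|\Phi(x)-\Phi(x')|\le C|x-x'|$ uniformly in a neighborhood of $0$.

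The main obstacle, and the truly delicate part of the proof, is step (i): Malgrange's preparation theorem produces only smooth generators and gives no Lipschitz control, so a naive imitation of Damon's argument will not yield a Lipschitz trivializing vector field. A substitute mechanism must be supplied; my plan is to exploit the explicit weighted-homogeneous normal forms listed in Tables \ref{tab:1}--\ref{tab:6} together with the equality $T\mathcal{A}(F_{\lambda_{0}})=T\mathcal{K}(F_{\lambda_{0}})$ from Theorem \ref{th:8.14}(c) to solve the infinitesimal equation explicitly in the ring of polynomial vector fields, thereby producing solutions whose components are polynomials (hence trivially Lipschitz on compact neighborhoods). Handling the bimodular stratum in the exceptional pair $(10,7)$ is expected to be the most demanding case, because one must trivialize along a two-dimensional modulus direction and the Lipschitz constants must be controlled uniformly in both parameters; this will likely require a separate verification using the specific quadratic structure of the normal form in Table \ref{tab:6}.
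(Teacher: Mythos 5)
The statement you are trying to prove is labelled a \emph{conjecture} in the paper, and the surrounding text states explicitly that the density of Lipschitz stable mappings is still open; the paper offers no proof, only the same programme you sketch (reduce to the finitely many modular points via Corollary \ref{azul} and Theorem \ref{th:8.14}, then try to make Damon's trivializing vector fields Lipschitz). So your proposal cannot be judged ``correct'': it is a plan whose decisive step is precisely the step that is not known. Two concrete problems. First, in the easy inclusion you assert that a $C^{\infty}$-$\mathcal{A}$-equivalence ``restricts, on compact subsets, to a bi-Lipschitz equivalence''; the paper only claims $\mathcal{S}^{\infty}\subseteq\mathcal{S}^{Lip}$ when $N$ is compact, because for noncompact $N$ the diffeomorphisms $h,k$ produced by stability need not be globally bi-Lipschitz, so even this inclusion requires care in the generality of $C^{\infty}_{pr}(N,P)$.

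Second, and more seriously, your proposed mechanism for closing the main gap cannot work as stated. You suggest solving the infinitesimal (Thom--Levine) equation ``explicitly in the ring of polynomial vector fields, thereby producing solutions whose components are polynomials (hence trivially Lipschitz).'' But the families $F_{\lambda}$ are genuinely unimodular: distinct values of $\lambda$ give non-$\mathcal{K}$-equivalent (hence non-$\mathcal{A}$-equivalent) germs. If the equation $\partial\cdot F\in tF(\mathcal{M}_{n}\Psi_{n+1})+\omega F(\mathcal{M}_{p}\Psi_{p+1})$ admitted a polynomial (or even smooth) solution with unit $\partial/\partial\lambda$ component, integration would give smooth $\mathcal{A}$-triviality along the modulus, contradicting modularity. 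This is why Damon's construction, and the paper's equation \eqref{eq:23}, only solve $(\rho\circ F)\,\partial\tilde f/\partial\lambda=\cdots$ for a control function $\rho$, and the trivializing fields are $\mathcal{X}=\frac{1}{\rho\circ F}V$ and $\mathcal{Y}=\frac{1}{\rho}W$: the division by $\rho$ is unavoidable, it destroys smoothness, and whether the quotients are Lipschitz is exactly the open question. The only established Lipschitz result in this direction (Lemma \ref{ruatri} and the Ruas--Trivedi theorem it illustrates) is bi-Lipschitz $\mathcal{K}$- (indeed $\mathcal{C}$-) triviality of the unimodular \emph{strata}, obtained with a control function and a cutoff; this does not yield bi-Lipschitz $\mathcal{A}$-triviality of the stable unfoldings, which is what Lipschitz stability of the ambient map requires. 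Your plan therefore reproduces the known reduction but does not supply the missing ingredient, and the specific substitute you propose is ruled out by the modularity of the families.
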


\begin{conjecture}
 The result in Conjecture \ref{lnd} is sharp, that is, if $(n,p)$ is in the complement of the nice dimensions or its boundary then
$\mathrm{S}^{Lip}(N,P)$ is not a dense set in $C^{\infty}(N,P).$
 \end{conjecture}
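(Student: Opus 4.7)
My plan is to address the two conjectures in tandem, since the proposed upper and lower bounds on the Lipschitz nice dimensions rely on complementary techniques. For Conjecture \ref{lnd}, I would exhibit an open dense set of Lipschitz stable maps by upgrading the Thom--Mather maps of Theorem \ref{th:8.4} to Lipschitz stable maps in the boundary; inside the nice dimensions the density is automatic since $C^{\infty}$--stability implies bi-Lipschitz stability. For the sharpness conjecture, the plan is to construct bi-Lipschitz invariants that vary nontrivially along the unavoidable modular strata once $(n,p)$ falls outside the nice dimensions and their boundary.

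For the boundary of the nice dimensions, the first step is to reduce the problem to the one-parameter families $F_{\lambda_{0}}$ of \eqref{eq:19}. By Theorem \ref{th:8.14} these are $\mathcal{A}$-finitely determined with $\mathcal{A}_{e}$-$\mathrm{cod}(F_{\lambda_{0}})=1$ (respectively $2$ for the bimodular strata in $(10,7)$), and by Corollary \ref{azul} a Thom--Mather map coincides with an infinitesimally stable map off a finite set where it agrees with an $F_{\lambda_{0}}$. Thus it suffices to show that the canonical unfolding $F(x,u,\lambda)=(\tilde f(x,u,\lambda),u,\lambda)$ in \eqref{eq:25} is bi-Lipschitz trivial along $\lambda$. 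The key observation is that each $F_{\lambda_{0}}$ is weighted homogeneous and the unfolding direction $\sigma_{m}$ has weighted degree equal to that of $f_{\lambda_{0}}$ while the $\sigma_{j}$ have strictly lower weighted degree; this is precisely Damon's ``non-negative weight'' hypothesis in Theorem \ref{th:dam80}. I would refine Damon's construction of the trivializing vector fields $V,W$ of Proposition \ref{thom-levine}: instead of only integrating continuous solutions of \eqref{eq:2}, one constructs solutions whose component norms satisfy $|v(x,u,\lambda)|\lesssim |x|+|u|$ and $|w(y,u,\lambda)|\lesssim |y|+|u|$. The weighted-homogeneous structure, together with the openness of the $\mathcal{A}$-orbit in the $\mathcal{K}$-orbit (Corollary \ref{cor:8.13}), provides polynomial divisions inside $T\mathcal{A}F$ that produce vector fields vanishing linearly at the origin; their flows are then bi-Lipschitz by standard Gronwall estimates.

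For Conjecture 2, the plan is to exhibit, for each pair $(n,p)$ outside the nice dimensions and their boundary, an open set of mappings whose germs at some point belong to a modular family of $\mathcal{K}$-modality $\geq 1$ that is \emph{not} bi-Lipschitz trivializable. By Mather's characterization of $\sigma(n,p)$, such strata are unavoidable by transversality arguments in every sufficiently small Whitney neighborhood. The core task is to produce a bi-Lipschitz invariant $\iota(F_{\lambda})$ that depends continuously and nonconstantly on the modulus $\lambda$. Natural candidates, already exploited in \cite{RuaTri,NguRuaTri}, are: the bi-Lipschitz type of the discriminant germ $\Delta(F_{\lambda})$, the Lipschitz density of the image, cross-ratios of tangent cones, or the relative multiplicities of the polar curves of the $f_{i\lambda}$. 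For instance, for the unimodular family $f_{\lambda}(x,y)=x^{4}+y^{4}+\lambda x^{2}y^{2}$ in dimensions $(9,8)$, the four tangent lines to the zero set $f_{\lambda}^{-1}(0)$ determine a bi-Lipschitz invariant cross-ratio depending continuously on $\lambda$ (cf.~the tangent-cone invariance of bi-Lipschitz equivalence), which rules out Lipschitz triviality of the family.

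The main obstacle will be the technical step in the first part: ensuring that the trivializing flows produced from the weighted-homogeneous infinitesimal equation \eqref{eq:2} are genuinely bi-Lipschitz rather than merely continuous or $C^{0}$. Damon's argument trivializes by integrating vector fields whose derivatives may blow up at the origin, which is enough for topological triviality but not for Lipschitz triviality. Overcoming this requires a careful choice of representatives in $T\mathcal{A}(F_{\lambda})$ with controlled Jacobian growth, and this is where the bimodular exceptional pair $(10,7)$ is most delicate because two independent moduli directions must be trivialized simultaneously while preserving the Lipschitz estimates.
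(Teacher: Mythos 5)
This statement is a \emph{conjecture} in the paper: no proof is given, and the paper states it precisely because the density question for Lipschitz stable mappings is open. Your text is therefore a research programme, and it contains a genuine gap at its central point. Your proposed witness for sharpness, the family $f_{\lambda}(x,y)=x^{4}+y^{4}+\lambda x^{2}y^{2}$ arising for $(n,p)=(9,8)$, lies in the \emph{boundary} of the nice dimensions, not in the complement of the nice dimensions and their boundary; so even if the argument worked it would bear on Conjecture \ref{lnd} (and would in fact contradict it), not on the sharpness statement you are asked to prove. Moreover the proposed invariant fails: the cross-ratio of the four tangent lines to $f_{\lambda}^{-1}(0)$ is a projective, hence $C^{1}$, invariant but not a bi-Lipschitz one --- any two configurations of four distinct lines through the origin of $\mathbb{R}^{2}$ are bi-Lipschitz equivalent via a piecewise-linear cone map. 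This is consistent with the theorem of Ruas and Trivedi quoted in Section \ref{sec:non-trivial-example}, which asserts that exactly these unimodular strata in the boundary \emph{are} bi-Lipschitz $\mathcal{K}$-trivial. For pairs genuinely outside the nice dimensions and their boundary, the real task is to produce a bi-Lipschitz $\mathcal{A}$-invariant of the stable unfoldings $F_{\lambda}$ separating uncountably many values of the modulus on a nonempty open subset of $C^{\infty}(N,P)$; the known bi-Lipschitz moduli phenomena (e.g.\ Henry--Parusinski \cite{HenPar}, for $\mathcal{R}$-equivalence of function germs) are not known to transfer to $\mathcal{A}$-equivalence of unfoldings, and neither your proposal nor the paper supplies a candidate.

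On the companion Conjecture \ref{lnd}, your plan correctly reduces to the bi-Lipschitz $\mathcal{A}$-triviality of the families $F_{\lambda}$ of \eqref{eq:19} along the modulus, and correctly identifies the obstacle the paper itself records: the fields $\mathcal{X}=\frac{1}{\rho\circ F}V$ and $\mathcal{Y}=\frac{1}{\rho}W$ produced by Damon's method are continuous but not known to be Lipschitz. But ``a careful choice of representatives with controlled Jacobian growth'' names the unresolved step rather than carrying it out; what is actually proved in the paper (Lemma \ref{ruatri} and its analogues) is only bi-Lipschitz $\mathcal{K}$-triviality of the modular strata, which is strictly weaker than the $\mathcal{A}$-triviality your argument requires.
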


The following result is proved by Ruas and Trivedi \cite{RuaTri}.
\begin{theorem}[Section 6, \cite{RuaTri}]
  The unimodular strata
in the boundary of the nice dimensions are bi-Lipschitz
$\mathcal{K}$-trivial.
\end{theorem}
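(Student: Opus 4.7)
The plan is to adapt Damon's approach to $C^{0}$-$\mathcal{A}$-triviality of weighted-homogeneous unfoldings (Theorem \ref{th:dam80}) to the setting of bi-Lipschitz $\mathcal{K}$-equivalence, exploiting two structural features of the unimodular families $f_\lambda$ tabulated in Tables \ref{tab:1}--\ref{tab:3}: first, that each normal form is weighted homogeneous of some type $(w_{1},\dots,w_{n};d_{1},\dots,d_{p})$; second, that the modular direction $\sigma_{m}$ has \emph{the same} weighted degree as $f_\lambda$, so $\lambda$ enters $\bar f(x,\lambda)=f_\lambda(x)$ as an unfolding of weight zero. The goal is to produce, for each $\lambda_{0}$ outside the finite exceptional set of Theorem \ref{th:8.14}, a pair of vector fields trivializing the unfolding $F(x,\lambda)=(\bar f(x,\lambda),\lambda)$ under the $\mathcal{K}$-group whose flows are bi-Lipschitz rather than merely continuous.

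The first step is to invoke the Thom-Levine lemma for the group $\mathcal{K}$: the unfolding $F$ is (bi-Lipschitz) $\mathcal{K}$-trivial in a neighborhood of $\lambda_{0}$ provided one can find a vector field $V(x,\lambda)=v(x,\lambda)+\partial/\partial\lambda$ with $v(0,\lambda)=0$ and a matrix $\mathcal{M}(x,\lambda)\in\mathrm{Mat}_{p\times p}(\mathcal{E}_{n+1})$ solving the infinitesimal equation
$$
\sigma_{m}(x)\;=\;\frac{\partial \bar f}{\partial\lambda}(x,\lambda)\;=\;df_{\lambda}(x)\cdot v(x,\lambda)\;+\;\mathcal{M}(x,\lambda)\cdot \bar f(x,\lambda),
$$
in the appropriate regularity class, since this is precisely the condition that $\sigma_{m}\in T\mathcal{K}_{un}(F)$ modulo the direction $\partial/\partial\lambda$. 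Because $\sigma_{m}$ represents the modulus in $Nf_{\lambda}$, this equation cannot be solved with $C^{\infty}$ germs vanishing at the origin — otherwise $f_{\lambda}$ would fail to be modal. The key idea is to relax the solution space: allow $v$ and $\mathcal{M}$ to be Lipschitz, with $v$ vanishing linearly at $0$, and use the weighted-homogeneous structure to guarantee this weaker regularity suffices.

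The second step is the explicit construction of $v$ and $\mathcal{M}$ using the weighted Euler derivation. Write $f_{\lambda}=(f_{1\lambda},\ldots,f_{p\lambda})$, so each $f_{i\lambda}$ is weighted homogeneous of degree $d_{i}$, and likewise for the components of $\sigma_{m}$. One searches for $v=(v_{1},\ldots,v_{n})$ with $v_{j}$ weighted homogeneous of degree $w_{j}$, and for $\mathcal{M}=(m_{ij})$ with $m_{ij}$ weighted homogeneous of degree $d_{i}-d_{j}\geq 0$ — i.e.\ all ingredients of weight zero relative to $\partial/\partial\lambda$. On each Zariski-open neighborhood of $\lambda_{0}$ in the parameter space, these unknowns live in finite-dimensional spaces of polynomials, and the linear equation reduces to a system of algebraic equations whose solvability can be verified, case by case on the normal forms (1)--(8) from Sections \ref{sec:case1:-nleq-p}--\ref{sec:case-2:-np}, using the ideal-membership computations summarized in Lemma \ref{lema8.8} together with the generator lists in Tables \ref{tab:1}--\ref{tab:3}. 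The openness of the $\mathcal{A}$-orbit in the $\mathcal{K}$-orbit (Theorem \ref{th:8.14}(c)) guarantees, with slight modification, that enough degrees of freedom are available in the $\mathcal{M}$-component to absorb the obstruction.

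The third step, which is the principal obstacle, is to verify that the resulting vector fields integrate to bi-Lipschitz (and not merely continuous) families of germs of homeomorphisms $h_{\lambda}:(\mathbb{K}^{n},0)\to(\mathbb{K}^{n},0)$ and contact homeomorphisms on $\mathbb{K}^{n}\times\mathbb{K}^{p}$. The mechanism is standard once the vector field has components of weighted degree $\geq w_{j}$ in the $j$-th coordinate: one estimates $\|v(x,\lambda)\|\leq C\|x\|$ and $\|\nabla_{x}v(x,\lambda)\|\leq C$ on a punctured neighborhood of $0$ — the second estimate being the delicate point, since weighted homogeneity of degree $w_{j}$ only gives a linear bound on the $j$-th component in the Euclidean metric when $w_{j}=1$, and in the mixed-weight cases one must work in a weighted metric and transfer the bi-Lipschitz property back through the change of coordinates; alternatively, one invokes a Fukui--Paunescu-type integration lemma for weighted-homogeneous vector fields. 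The bimodular family for the exceptional pair $(10,7)$ from Table \ref{tab:6} is handled analogously, using a two-parameter version of the same construction, since both moduli $\sigma_{m}^{1},\sigma_{m}^{2}$ are weighted homogeneous of the same degree as $f_{\lambda_{1}\lambda_{2}}$ and the ideal computation in Lemma \ref{lema8.8}(a) already handles the relevant quadric in $\mathbb{K}^{5}$. This yields the required bi-Lipschitz $\mathcal{K}$-triviality on each connected component of the parameter space away from the exceptional values, completing the proof.
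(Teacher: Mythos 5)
There is a genuine gap, and it sits at the heart of your Step 2, which contradicts your own Step 1. You correctly observe that the infinitesimal equation
$$
\sigma_{m}(x)\;=\;df_{\lambda}(x)\cdot v(x,\lambda)\;+\;\mathcal{M}(x,\lambda)\cdot \bar f(x,\lambda)
$$
cannot be solved with $C^{\infty}$ data vanishing at the origin, precisely because $\sigma_{m}$ represents a nonzero class in $N(f_{\lambda})$. But you then propose to solve it with $v_{j}$ and $m_{ij}$ weighted homogeneous \emph{polynomials} of degrees $w_{j}$ and $d_{i}-d_{j}$ respectively; such data are smooth, so a solution of this form would place $\sigma_{m}$ in $tf_{\lambda}(\Theta_{n})+f_{\lambda}^{*}(\mathcal{M}_{p})\Theta_{f_{\lambda}}$ and would make the family smoothly $\mathcal{K}$-trivial, contradicting modality. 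The finite-dimensional linear system you describe is therefore \emph{not} solvable, no amount of case-by-case checking against Lemma \ref{lema8.8} will make it so, and the appeal to Theorem \ref{th:8.14}(c) does not help: openness of the $\mathcal{A}$-orbit in the $\mathcal{K}$-orbit concerns $T\mathcal{A}=T\mathcal{K}$ and says nothing about absorbing the modulus direction, which lies outside $T\mathcal{K}$ by definition.

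The actual mechanism (Lemma \ref{ruatri} for $(9,9)$, and its analogues) is forced to leave the smooth category at exactly this point. One introduces the control function $\rho=\bigl(\sum_{i}F_{i}^{2}\bigr)^{1/2}$, which by homogeneity and $\mathcal{C}$-finite determinacy satisfies $c'\|x\|^{d}\leq\rho\leq c\|x\|^{d}$; since $\mathcal{I}\cdot\mathcal{M}^{2}=\mathcal{M}^{4}$ (Lemma \ref{lema8.8}), the \emph{multiplied} equation $\rho^{2}\,\partial F/\partial\lambda=(a_{ij})\cdot F$ is solvable with smooth $a_{ij}\in\mathcal{M}^{4}$, and dividing by $\rho^{2}$ yields coefficients that are homogeneous of degree zero — bounded and continuous, but \emph{not} Lipschitz near the origin. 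The crux, which your proposal omits entirely, is the cone-supported cutoff $p(x,\lambda,Y)$ equal to $1$ on $\{\|Y\|\leq c_{1}\|(x,\lambda)\|\}$ and $0$ on $\{\|Y\|\geq c_{2}\|(x,\lambda)\|\}$: multiplying the target vector field by $p$ produces a genuinely Lipschitz field agreeing with the original near the graph of $F$, and its integration gives the bi-Lipschitz trivialization. Note also that for $n\leq p$ the source vector field is taken to be zero (the triviality obtained is $\mathcal{C}$-triviality, using that $\mathcal{K}$- and $\mathcal{C}$-determinacy coincide for finite maps), which dissolves your Step 3 difficulties about integrating mixed-weight source fields; your worry there is legitimate in general but is simply bypassed by the structure of the actual argument.
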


\begin{remark}
  The exceptional unimodular strata when $(n,p)=(10,7)$ also safisfies
  bi-Lipschitz $\mathcal{K}$-triviality condition.
\end{remark}

We first review the notions of $\mathcal{K}$-equivalence and
$\mathcal{K}$-triviality of $r$-parameter deformations.

\index{Bi-Lipschitz!$\mathcal{K}$-equivalence of $r$-parameter
  deformation}
\begin{definition}
  A bi-Lipschitz  $\mathcal{K}$-equivalence of $r$-parameter
  deformations is a pair $(H,K)$ of bi-Lipschitz germs
  $H:(\mathbb R^{r}\times \mathbb R^{n}, 0) \to (\mathbb R^{r}\times
  \mathbb R^{n}, 0)$ and $K:(\mathbb R^{r}\times \mathbb R^{n}\times
  \mathbb R^{p}, 0)\to (\mathbb R^{r}\times \mathbb R^{n}\times \mathbb
  R^{p}, 0)$ with $H$ an $r$-parameter unfolding at $0$ of the germ of the
  identity map of $\mathbb R^{n},$ and $K$ an $r$-parameter unfolding
  at $0$ of the germ of the identity in $\mathbb R^{n}\times \mathbb
  R^{p}$ such that the following diagram commutes

  $$
  \xymatrix{ 
(\mathbb R^{r}\times \mathbb R^{n}, 0) \ar[r]^{i} &(\mathbb
R^{r}\times \mathbb R^{n}\times \mathbb R^{p}, 0) \ar[r]^{\pi} &(\mathbb R^{r}\times \mathbb R^{n}, 0) \\
(\mathbb R^{r}\times \mathbb R^{n}, 0) \ar[u]_{H}\ar[r]^{j}   &  (\mathbb
R^{r}\times \mathbb R^{n}\times\mathbb R^{p}, 0)
\ar[r]^{\pi}\ar[u]_{K} & (\mathbb R^{r}\times \mathbb R^{n},
0)\ar[u]_{H}  }
$$

Here $i$ is the canonical inclusion and $\pi$ is the canonical
projection. Two $r$-parameter deformations $\Phi$ and $\Psi$ of $f$ are
bi-Lipschitz $\mathcal{K}$-equivalent if there exist a bi-Lipschitz
$\mathcal{K}$-equivalence $(H,K)$ as above such that
$$K\circ (id,\phi)=(id,\Psi)\circ H.$$

If $(H,K)$ has the special property that $H$ is the germ of the
identity on $\mathbb R^{n},$ then $(H,K)$ is said to be a
$\mathcal{C}$-equivalence and $\phi$ and $\Psi$ are said to be
$\mathcal{C}$-equivalent deformations.
\end{definition}
\index{Bi-Lipschitz!$\mathcal{K}$-trivial}
\index{Bi-Lipschitz!$\mathcal{C}$-trivial}
\begin{definition}
  An $r$-parameter deformation $\Phi$ of a germ $f:(\mathbb R^{n},0)\to
(\mathbb R^{p},0)$ is \emph{bi-Lipschitz $\mathcal{K}$-trivial}
(resp. \emph{bi-Lipschitz $\mathcal{C}$-trivial}) if it is
bi-Lipschitz $\mathcal{K}$-equivalent (resp. bi-Lipschitz
$\mathcal{C}$-equivalent) to the deformation $\Psi: (\mathbb
R^{r}\times \mathbb R^{n}, 0) \to (\mathbb R^{p}, 0),$ given by $\Psi (u,x)=f(x).$
\end{definition}

A sufficient condition for bi-Lipschitz $\mathcal{K}$-triviality is
the following Thom-Levine type lemma.
% which follows from the fact that
% the integration of a Lipschitz vector field gives a bi-Lipschitz flow.

\begin{lemma}
Let $F: (\mathbb R^{n}\times \mathbb R, 0) \to (\mathbb R^{p}, 0)  $
be a one parameter deformation of $f: (\mathbb R^{r}, 0) \to (\mathbb
R^{p}, 0).$ If there exist a $p\times p$ matrix $(a_{ij})$ (not
necessarily invertible) with entries germs of Lipschitz functions $ (\mathbb
R^{n}\times \mathbb R, 0)$ and a germ of a Lipschitz vector field $X$
of the form
$$
X=\frac{\partial}{\partial t} +\sum_{i=1}^{n}X_{i}(x,t)\frac{\partial}{\partial x_{i}}
$$
with $X_{i}(0,t)=0$ such that
\begin{equation}
  \label{eq:28}
  X \cdot
\begin{bmatrix}
  F_{1}\\ \vdots \\ F_{p}
\end{bmatrix}
=
\begin{bmatrix}
  a_{11} &\dots & a_{1p}\\
  \vdots & \dots &\vdots\\
  a_{p1} &\dots &a_{pp}
\end{bmatrix}
\begin{bmatrix}
  F_{1}\\ \vdots \\ F_{p}
\end{bmatrix}
\end{equation}

Then, $F$ is a bi-Lipschitz $\mathcal{K}$-trivial deformation.
\end{lemma}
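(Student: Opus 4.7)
\bigskip

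The plan is to mimic the classical Thom-Levine argument for $\mathcal{K}$-equivalence, adapted to the Lipschitz category, by integrating $X$ in the source and simultaneously solving a linear ODE in the target to produce a matrix-valued trivialization. The matrix $(a_{ij})$ need not be invertible, but its role is only to generate, via a Carathéodory-type linear ODE, an invertible fundamental matrix. Since $X$ and $(a_{ij})$ are Lipschitz, Picard--Lindelöf applies and yields Lipschitz flows whose inverses (being flows of the time-reversed equations) are again Lipschitz, so the resulting germs are automatically bi-Lipschitz.

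First I would integrate the vector field $X$ on a small neighborhood of $0 \in \mathbb R^{n}\times\mathbb R$. Because $X = \partial/\partial t + \sum X_{i}(x,t)\,\partial/\partial x_{i}$ with $X_{i}$ Lipschitz and $X_{i}(0,t) = 0$, the origin slice $\{(0,t)\}$ is an invariant curve, so the flow $\Phi_{t}$ of $X$ exists on a uniform neighborhood of $0$ for small $t$ and has the form $\Phi_{t}(x,0) = (h_{t}(x),t)$ with $h_{0} = \mathrm{id}_{\mathbb R^{n}}$ and $h_{t}(0) = 0$. Standard Lipschitz ODE theory (e.g.\ the Gronwall estimate applied to differences of solutions) shows that each $h_{t}$, and its inverse, is Lipschitz in $x$ uniformly in $t$, giving the desired bi-Lipschitz germ $H(x,t) = (h_{t}(x),t)$.

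Next I would construct the target trivialization. For each fixed $x$, consider the linear ODE in $t$
\begin{equation*}
\frac{dM}{dt}(t,x) = A(h_{t}(x),t)\,M(t,x), \qquad M(0,x) = I_{p},
\end{equation*}
where $A = (a_{ij})$. Since the coefficients are Lipschitz, there is a unique solution $M(t,x)$, it depends in a Lipschitz way on $(t,x)$, and (by Liouville's formula, or simply by solving the reverse equation) $M(t,x)$ is invertible with $M(t,x)^{-1}$ also Lipschitz. I would then define
\begin{equation*}
K(x,y,t) = \bigl(h_{t}(x),\, M(t,x)\,y,\, t\bigr),
\end{equation*}
which is a bi-Lipschitz unfolding of the identity on $\mathbb R^{n}\times\mathbb R^{p}$ and preserves the zero-section $\{y=0\}$, as required for a contact equivalence. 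The key identity to verify is that along the flow the hypothesis $X\cdot F = A\cdot F$ forces $t \mapsto F(h_{t}(x),t)$ to satisfy the \emph{same} linear ODE as each column of $M(t,x)$ with initial value $f(x) = F(x,0)$; uniqueness then yields $F(h_{t}(x),t) = M(t,x)\,f(x)$, which is precisely the trivialization equation $K\circ(\mathrm{id},\Phi) = (\mathrm{id},\Psi)\circ H$ with $\Psi(u,x) = f(x)$.

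The only genuine obstacle is the bi-Lipschitz regularity (as opposed to mere continuity) of $h_{t}$, $h_{t}^{-1}$, $M(t,x)$ and $M(t,x)^{-1}$; once these four pieces are known to be Lipschitz uniformly on a neighborhood of the origin, everything else is the computation in the preceding paragraph. This regularity, however, is exactly what Gronwall's lemma provides for Lipschitz ODEs, so no deep new ingredient is needed beyond the observation that the Thom--Levine construction is \emph{linear} in $y$ and therefore transfers verbatim from the smooth to the Lipschitz setting.
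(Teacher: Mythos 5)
Your proof is correct and follows essentially the same route as the paper: integrate the Lipschitz field $X$ to get the source trivialization, and integrate the $y$-linear field $W=\partial/\partial t+\sum_{i}\bigl(\sum_{j}a_{ij}y_{j}\bigr)\partial/\partial y_{i}$ (whose flow is exactly your fundamental matrix $M(t,x)$) to get the bi-Lipschitz equivalence in the product, the key point in both being that Lipschitz vector fields integrate to bi-Lipschitz flows. Your write-up merely makes explicit, via the matrix ODE and the uniqueness argument giving $F(h_{t}(x),t)=M(t,x)f(x)$, what the paper leaves implicit.
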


The proof follows from the fact the integration of a Lipschitz vector
field gives a bi-Lipschitz flow. In fact, the bi-Lipschitz
trivialization in source is given by integrating the vector field
$X$ and that in the product is given by integration of the vector
field $W,$ where 

$$
W(x,y,t)=\frac{\partial}{\partial t} +\sum_{i=1}^{p}W_{i}(x,y,t)\frac{\partial}{\partial y_{i}}
$$
where $W_{i}(x,y,t)=\sum_{j=1}^{p}a_{ij}y_{j}$

The converse of the above lemma is not known and so we say that  a one
parameter deformation is \emph{strongly bi-Lipschitz
  $\mathcal{K}$-trivial} if the conditions of the above lemma hold.

If $X_{i}(x,t)\equiv 0,$ $i=1,\dots, n,$  condition \eqref{eq:28}
implies that $F$ is $\mathcal{C}$-trivial.

 A case by case proof of the
bi-Lipschitz $\mathcal{K}$-triviality of the unimodular strata
\ref{sec:case1:-nleq-p} and \ref{sec:case-2:-np} is given in Ruas and
Trivedi \cite{RuaTri}. 
The cases $n\leq p$ and $n>p$ are treated
separately.

When $n\leq p,$ the modal families are families of finite maps. For
them, $\mathcal{K}$-determinacy holds if and only if
$\mathcal{C}$-determinacy holds (see Wall \cite{Wal81}, Prop. 2.4). In this case,
we can apply the Lipschitz Thom-Levine lemma to prove the bi-Lipschitz
$\mathcal{C}$-triviality of these families.

We discuss here the case $n=p=9.$

\begin{lemma}[Ruas and Trivedi, \cite{RuaTri}, Lemma 6.1]\label{ruatri}
  The unimodular family \ref{sec:case1:-nleq-p} (1)
  $$
  F(x,y,z,\lambda)= (x^{2}+\lambda yz,y^{2}+\lambda xz, z^{2}+\lambda xy),
  $$
$\lambda\neq -2,0,1,$ is strongly bi-Lipschitz $\mathcal{C}$-trivial.
\end{lemma}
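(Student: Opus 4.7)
The plan is to construct, by direct algebraic computation, a $3\times 3$ matrix $A(x,y,z,\lambda)$ of germs of Lipschitz functions such that
\[
\frac{\partial F}{\partial \lambda} = (yz,\, xz,\, xy)^{T} = A(x,y,z,\lambda)\cdot F(x,y,z,\lambda),
\]
and then invoke the Lipschitz Thom--Levine-type lemma (with vanishing source component $X_{i}\equiv 0$) to conclude the strong bi-Lipschitz $\mathcal{C}$-triviality. First, I would exploit the defining identities $\lambda yz = F_{1}-x^{2}$, $\lambda xz = F_{2}-y^{2}$, $\lambda xy = F_{3}-z^{2}$, which rewrite the deformation direction as
\[
\frac{\partial F}{\partial \lambda} = \tfrac{1}{\lambda}\bigl( F - (x^{2},y^{2},z^{2})^{T}\bigr).
\]
Thus the task reduces to exhibiting a Lipschitz matrix $B$ with $(x^{2},y^{2},z^{2})^{T}=B\cdot F$; then $A=(I-B)/\lambda$ is Lipschitz since $\lambda$ is bounded away from $0$.

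The core difficulty is that $(x^{2},y^{2},z^{2})^{T}$ is \emph{not} in the smooth $\mathcal{E}_{3}$-submodule generated by $F$: one has $\mathcal{K}_{e}$-$\mathrm{cod}(f_{\lambda})=1$, with the modulus direction providing exactly this obstruction. One must therefore leave the smooth category and work with genuine Lipschitz coefficients. My proposal is to seek entries of $B$ of the form $\varphi_{ij}(x,y,z,\lambda)/r^{2}$, where $r^{2}=x^{2}+y^{2}+z^{2}$ and $\varphi_{ij}$ is a polynomial of degree $4$ in $(x,y,z)$. A direct estimate shows that any rational expression $P/r^{2k}$ with $P$ a polynomial vanishing to order $\ge 2k$ at the origin is Lipschitz---its partial derivatives are quotients of polynomials of matching degrees and hence bounded near $0$. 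The prototype is the Euler-type identity $xF_{1}+yF_{2}+zF_{3}=x^{3}+y^{3}+z^{3}+3\lambda xyz$, whose quotient by $r$ is Lipschitz.

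The remaining task is to select the $\varphi_{ij}$ so that $B\cdot F=(x^{2},y^{2},z^{2})^{T}$ holds identically; this amounts to solving a $3\times 3$ linear system over the field of rational functions on $\mathbb{K}^{3}\times\mathbb{K}$. The restriction $\lambda\neq -2, 0, 1$ is precisely what guarantees solvability: at the excluded parameter values, $f_{\lambda}$ ceases to be $\mathcal{K}$-finite and the isomorphism type of $Q(f_{\lambda})$ changes, so no such $B$ can exist. After producing an explicit $B$, the verification of the Lipschitz property of each entry reduces to finitely many elementary bounds, uniform on compact subsets of $\mathbb{K}\setminus\{-2,0,1\}$. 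The hard part will be the interplay between the algebraic obstruction in the smooth category and its resolution in the Lipschitz category: one must display a formula for $B$ whose denominator $r^{2}$ cancels correctly against the vanishing orders of the numerators $\varphi_{ij}$, and whose existence hinges on the nondegeneracy of $F_{\lambda}$ encoded by the condition $\lambda\notin\{-2,0,1\}$. Once such $B$ is exhibited, the matrix equation $\partial F/\partial\lambda = AF$ is immediate and the Thom--Levine-type lemma delivers the conclusion.
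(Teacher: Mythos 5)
There is a fatal gap: the equation you set out to solve, $\partial F/\partial\lambda = A\cdot F$ with $A$ a matrix of \emph{Lipschitz} germs (and no source vector field), has no solution. A Lipschitz --- indeed merely continuous --- matrix satisfies $A = A(0,\lambda)+o(1)$ as $(x,y,z)\to 0$, so comparing the homogeneous degree-$2$ parts of both sides forces the polynomial identity $yz=\sum_j A_{1j}(0,\lambda)F_j$; matching the coefficients of $x^{2},y^{2},z^{2}$ gives $A_{11}(0)=A_{12}(0)=A_{13}(0)=0$, while the coefficient of $yz$ gives $\lambda A_{11}(0)=1$, a contradiction. This is precisely the statement that $\sigma_{m}=yz\,e_{1}+xz\,e_{2}+xy\,e_{3}$ generates the normal space $N(f_{\lambda})$, and the obstruction survives in the Lipschitz category because Lipschitz coefficients still have well-defined values at the origin. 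Your proposed entries $\varphi_{ij}/r^{2}$ with $\deg\varphi_{ij}=4$ are moreover dimensionally inconsistent: they are homogeneous of degree $2$, so $B\cdot F$ would vanish to order $4$, not $2$; to match degrees you would need $\deg\varphi_{ij}=2$, and such degree-zero quotients are in general not even continuous at the origin.

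The paper's proof circumvents exactly this obstruction and is structurally different from what you propose. One multiplies $\partial F/\partial\lambda$ by the control function $\rho^{2}=F_{1}^{2}+F_{2}^{2}+F_{3}^{2}$, which is comparable to $\|(x,y,z)\|^{4}$ by $\mathcal{C}$-finiteness and homogeneity; since $\mathcal{I}\cdot\mathcal{M}_{3}^{2}\mathcal{E}_{4}=\mathcal{M}_{3}^{4}\mathcal{E}_{4}$ one obtains $\rho^{2}\,\partial F_{i}/\partial\lambda=\sum_{j}a_{ij}F_{j}$ with \emph{smooth} $a_{ij}\in\mathcal{M}_{3}^{4}\mathcal{E}_{4}$. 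The resulting target vector field with components $\sum_{j}(a_{ij}Y_{j}/\rho^{2})$ is continuous but still not Lipschitz, and the essential step --- entirely absent from your proposal --- is to multiply it by a Lipschitz cutoff $p$ equal to $1$ on a cone $\|Y\|\leq c_{1}\|(x,y,z,\lambda)\|$ containing the graph of $F$ and vanishing outside a larger cone; on the support of $p$ the bound $\|Y\|\leq c_{2}\|(x,y,z,\lambda)\|$ compensates the loss of one order of vanishing incurred when differentiating $a_{ij}/\rho^{2}$, so that $pV$ is Lipschitz and can be integrated to give the trivialization. Finally, a smaller point: your claim that $f_{\lambda}$ ceases to be $\mathcal{K}$-finite at every excluded parameter is false at $\lambda=0$, where $(x^{2},y^{2},z^{2})$ is perfectly $\mathcal{K}$-finite; that value is excluded because the $\mathcal{K}$-orbit (and hence the stratum) changes, not because finiteness fails.
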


\begin{proof}
  Let $\mathcal{I}$ be the $\mathcal{E}_{4}$-ideal generated by the components
  of $F,$ i.e.,
  $$
  \mathcal{I}=\langle x^{2}+\lambda yz,y^{2}+\lambda xz, z^{2}+\lambda xy \rangle.
  $$

  We can prove that $\mathcal{I}\supset
  \mathcal{M}^{4}_{3}\mathcal{E}_{4},$ where $\mathcal{M}_{3}$ is the
  ideal generated by $x,y,z.$ More precisely
  \begin{equation}
    \label{eq:22}
    \mathcal{I}\cdot\mathcal{M}^{2}_{3}\mathcal{E}_{4}=\mathcal{M}^{4}_{3}\mathcal{E}_{4}
  \end{equation}

  Consider the following control function
  $\rho(x,y,z,\lambda)=\sqrt{F_{1}^{2}+F_{2}^{2}+F_{3}^{2}}.$ Since
  $F_{\lambda}$ is $\mathcal{C}$-finitely determined and homogeneous
  of degree $2$ for all $\lambda\neq -2,0,1,$ there exist constants
  $c$ and $c',$ (see Ruas \cite{Rua86}), such that
  $$
c' ||(x,y,z)||^{2}\leq \rho(x,y,z,\lambda) \leq c ||(x,y,z)||^{2}
$$

From \eqref{eq:22} it follows that there exists a $3\times 3$ matrix
$(a_{ij})$ with entries in $\mathcal{M}^{4}_{3}\mathcal{E}_{4}$ such
that
$$ \rho^{2}(x,y,z,\lambda)
\begin{bmatrix}
  \frac{\partial F_{1}}{\partial\lambda}\\
  \frac{\partial F_{2}}{\partial\lambda}\\
  \frac{\partial F_{3}}{\partial\lambda}
\end{bmatrix}
=
\begin{bmatrix}
  a_{11} &a_{12} &a_{13}\\
  a_{21} &a_{22} &a_{23}\\
  a_{31} &a_{32} &a_{33}\\
\end{bmatrix}
\begin{bmatrix}
  F_{1}\\F_{2}\\ F_{3}
\end{bmatrix}
$$
\end{proof}

Now consider the germ  of the vector field $V$ on $(\mathbb R^{3}\times
\mathbb R^{3}\times \mathbb R, 0)$ defined by
$$
V= \frac{\partial}{\partial \lambda}+
\frac{1}{\rho^{2}} \left\{ \sum_{j=1}^{3}a_{1j}Y_{j}\frac{\partial}{\partial
      Y_{1}}+
  \sum_{j=1}^{3}a_{2j}Y_{j}\frac{\partial}{\partial Y_{2}}+
  \sum_{j=1}^{3}a_{3j}Y_{j}\frac{\partial}{\partial Y_{3}}
\right\}
$$
where $(Y_{1}, Y_{2}, Y_{3})=Y$ are the target coordinates. Notice
that $\frac{a_{ij}Y_{j}}{\rho^{2}}$ are continuous in a 
neighborhood of the origin in $(\mathbb R^{3}\times
\mathbb R^{3}\times \mathbb R, 0),$ but the derivative with respect to
$x,y,z$ are not bounded, so that $V$ is not Lipschitz. However
we can modify $V$ to get a Lipschitz vector field $V'=pV$ where
$p:(\mathbb R^{3}\times \mathbb R^{3} \times \mathbb R,0) \to (\mathbb
R,0)$ is defined  as follows.

Let $D_{1}=\{||Y||\leq c_{1}||(x,y,z,\lambda)|| \}$ and  $D_{2}=\{||Y||\geq
c_{2}||(x,y,z,\lambda)|| \}$ be cones in $(\mathbb R^{3}\times
\mathbb R^{3}\times \mathbb R)$ with $c_{1}< c_{2}$ and let $p$ be
defined by
$$p(x,y,z,\lambda,Y)=
\begin{cases}
  1 & \mathrm{if}\  (x,y,z,\lambda,Y) \in D_{1}\\ 
  0 & \mathrm{if}\  (x,y,z,\lambda,Y) \in D_{2}
\end{cases}
$$
and $0<p(x,y,z,\lambda,Y)<1$ if $c_{1}||(x,y,z,\lambda)||<
|Y|<c_{2}||(x,y,z,\lambda)||,$ such that the derivative of
$p(x,y,z,\lambda,Y)$ with respect to any coordinate is bounded by a
real number $K$ (see Ruas \cite{Rua86} for details).

The integration of $V'$ will give a bi-Lipschitz
$\mathcal{C}$-trivialization of $F$ by the Thom-Levine criterion. This
completes the proof.

\begin{remark}
  For any fixed $\lambda=\lambda_{0}\neq -2,0,1,$ the deformation
  $F(x,y,z,\lambda)$ in Lemma \ref{ruatri} is semialgebraic and
  satisfies the condition
  $\frac{|F_{\lambda}(x,y,z)|}{|F_{\lambda_{0}}(x,y,z)|}$ is bounded
  for any $(x,y,z,\lambda)$ in $(\mathbb R^{3}\times \mathbb R,0).$
  Then we can also apply  Theorem 3.1 of Ruas and Valette \cite
  {RuaVal} to prove that $F_{\lambda}$ is semialgebraically
  bi-Lipschitz $\mathcal{K}$-trivial. Notice however that the
  conclusion in Lemma \ref{ruatri} is stronger, as we prove that the family
  $F_{\lambda}$ is strongly bi-Lipschitz $\mathcal{K}$-trivial.
\end{remark}

 {The bi-Lipschitz $\mathcal{K}$-triviality of the Thom-Mather
stratification along the unimodular strata  in the boundary of the
nice dimensions suggest that mappings transverse to this
stratification are bi-Lipschitz stable. }

{A natural approach to prove Conjecture \ref{lnd} is to follow the proof of Theorem \ref{th:dam80}, taking into account that the
pair $(n,p)$ is in the boundary of the nice dimensions.}

 {    We saw in Corollary \ref{azul}
  that a Thom-Mather map $f:N^{n}\to P^{p},$  $(n,p)$ in the boundary
  of the nice dimensions  has at most a finite set
  of points $S=\{x_{1}, \dots, x_{\ell}\}$ such that for all
  $x_{i}\in S,$ $j^kf(x_{i}) \in \mathcal{A}_{M},$ $j^{k}f \pitchfork
  \mathcal{A}_{M},$ where $\mathcal{A}_{M}$ is the modal
  stratum. Moreover by multi-transversality, if 
  $f(x_{i})=y_{i},\,i=1,\dots \ell$ then
  $$f^{-1}(y_{i}) \cap \Sigma(f)=\{x_{i}\}, \ i=1, \dots, \ell.
  $$
}

  Clearly, $f$ is  an infinitesimally stable mapping in the
  complement of $S.$

  To prove   that $f$ is Lipschitz stable it would be  sufficient to prove
  that each unimodular family $F_{\lambda}$ (see Section \ref{sec:topol-triv-unim}), and also the bimodular family
  when $(n,p)=(10,7),$  is bi-Lipschitz
  $\mathcal{A}$-trivial.

  Let $$F(x,u,\lambda)=
  (\bar f(x,u,\lambda),u,\lambda)$$ be the (weighted homogeneous) normal form of a unimodular
  family in BND  as in  \eqref{eq:25}, where $x=(x_{1}, \dots, x_{s}),$
  $u=(u_{1}, \dots, u_{r}),$ $s+r =n$ and $\bar f = (\bar f_{1},
  \dots, \bar f_{t}).$

Following the proof of Theorem \ref{th:dam80}, we can find weighted homogeneous vector fields
$V$ and $W$ in  source and  target respectively, given by:

 $$V(x,u,\lambda) =
 \sum_{j=1}^{s}v_{j}(x,u,\lambda)\frac{\partial }{\partial x_{j}}+\sum_{i=1}^{r}\overline
   v_{i}(\tilde f, u,\lambda)\frac{\partial}{\partial u_{i}} + \frac{\partial}{\partial\lambda}
$$
where $x=(x_{1}, \dots , x_{s}),$ $u=(u_{1},u_{2}, \dots, u_{r})$ and
$\tilde v_{i}(0,0,\lambda)=v_{j}(0,0,\lambda)=0,$

{ $$W(X,U,\lambda) =
 \sum_{j=1}^{t}w_{j}(X,U,\lambda)\frac{\partial }{\partial X_{j}}+\sum_{i=1}^{r}\overline
   w_{i}(X, U,\lambda)\frac{\partial}{\partial U_{i}}+ \frac{\partial}{\partial\lambda}
$$
where $X=(X_{1},\dots , X_{t}),$ $U=(U_{1}, \dots, U_{r}),$ and
$\overline w_i(0,0,\lambda)= w_{j}(0,0,\lambda)=0,$ (capital letters
denote the coordinates in the target),and a weighted homogeneous control function
$\rho(X,U,\lambda)$ such that}

\begin{equation}
  \label{eq:23}
(\rho\circ F)(x,u,\lambda)\frac{\partial \tilde f}{\partial \lambda} =
-\sum_{j=1}^{3}\frac{\partial \tilde f}{\partial x_{j}}v_{j}(x,u,\lambda)
-\sum_{i=1}^{6}\frac{\partial \tilde f}{\partial u_{i}}\overline v_{i}(\tilde f, u,\lambda) +\tilde W(\tilde f, u,\lambda)
\end{equation}
where $\tilde W=(w_{1}, \dots, w_{t}).$

{It follows from \eqref{eq:23} that the vector fields  $\mathcal{X}(x,u,\lambda)=\frac 1 {(\rho\circ F)(x,u,\lambda)} {V}(x,u,\lambda)$ and
$\mathcal{Y}(X,U,\lambda) =\frac 1{\rho(X,U,\lambda)}{W}(X,U,\lambda)$
 satisfy the equation
$DF(\mathcal{X})=\mathcal{Y}\circ F.$ Moreover, they are continuous and can be integrated to give the topological $\mathcal A$-triviality
of $F$ along the moduli space.}

{If we can prove
that $\mathcal{X}$ and $\mathcal{Y}$ are Lipschitz vector fields in the 
source and target, respectively, the  bi-Lipschitz
$\mathcal{A}$-triviality of $F$ would follow from the Lipschitz version
of the Thom-Levine lemma.}

\section{Sections of discriminant of stable germs. Open Problems}
\label{sec:open-problems}

An important consequence of Theorems \textbf{A} and \textbf{B} is that
we can approximate any map $f:U\subset\mathbb K^{n}\to \mathbb K^{p},$
$\mathbb K=\mathbb R$ or  $\mathbb C,$ by a stable mapping if $(n,p)$ is
in the nice dimensions or else by a topologically stable map if
$(n,p)$  is not in the nice dimensions.

For a map-germ of finite singularity type $f:(\mathbb K^{n},0)\to
(\mathbb K^{p},0),$ a stable perturbation can be realized as the
generic member of a $1$-parameter unfolding $\bar
f(x,t)=(f_{t}(x),t)$ of $f.$ More precisely, $\bar f$ is a \index{Stabilization}
\emph{stabilization} of $f$ if there exists a representative $\bar f:U
\to V\times T$ such that $f_{t}:U\cap (\mathbb K^{n}\times \{t\}) \to
V$ is stable for all $t\neq 0.$

When $\mathbb K=\mathbb C,$ the stable perturbation of $f$ is uniquely
determined up to $\mathcal{A}$-equivalence  when $(n,p)$ is in the nice
dimensions and up to $C^{0}$-$\mathcal{A}$-equivalence  otherwise.
When $\mathbb K=\mathbb R,$ there may exists a finite number of
nonequivalent stabilizations  of $f.$  On the reals, in general $t>0$ and $t<0$  
give non-equivalent perturbations of $f$ (see Mond and
Nu\~no-Ballesteros in this Handbook or \cite{MonNun} for details).

The geometry of the stable perturbations $\bar f$ are associated to
invariants of the germ $f$.

We discuss here this important  tool in singularity theory.

Let $f:(\mathbb K^{n},0)\to (\mathbb K^{p},0),$ $\mathbb K=\mathbb R$
or $\mathbb C$ be a germ of a finite singularity type and $F$ its
stable unfolding:

\begin{equation}
  \label{eq:32}
  \xymatrix{
(\mathbb K^{n'},0) \ar[r]^{F}       & (\mathbb K^{p'},0)  \\
(\mathbb K^{n},0) \ar[u]\ar[r]_{f}  & (\mathbb K^{p},0)\ar[u]^{g}}
\end{equation}
where $g$ is the germ of an immersion transverse to $F.$

Let $V=\Delta(F)$ be the discriminant of $F$ (recall that when $n<p$
the discriminant is the image $F(\mathbb K^{n})$.) Damon in \cite{Dam91}
described a relation between $\mathcal{A}$-equivalence and properties
of the discriminant $V.$ This relation is valid for all pairs $(n,p)$
and directly relates $\mathcal{A}_{e}$-codimension of $f$ with
a codimension of the germ at $0$ of $g(\mathbb K^{p})$ as a section of 
the discriminant. The idea of using sections of the discriminant to
determine $\mathcal{A}$-determinacy properties of $f$ appears in
\cite{Mar76-1, Mar76} (see also du Plessis \cite{Ple}). However, the precise
relation between an equivalence relation for germs of immersions $g$
and the $\mathcal{A}$-equivalence of $f$ was derived in \cite{Dam91}.

Given the germ of a variety $(V,0) \subset (\mathbb K^{p'},0)$ Damon
defined the group $\mathcal{K}_{V}$ of \emph{contact equivalences
  preserving $V$} which acts on the set of germs $g:(\mathbb
K^{p},0)\to (\mathbb K^{p'},0)$  (the map-germs $g$ are in
$\mathcal{E}^{p'}_{p}$ when $\mathbb K =\mathbb R$ or in
$\mathcal{O}^{p'}_{p}$ when $\mathbb K =\mathbb  C.$)

\index{Contact!group $\mathcal{K}_{V}$}
The \emph{contact group $\mathcal{K}_{V}$} is defined as follows:
$$
\mathcal{K}_{V}=\{(h,H)\in \mathcal{K}|\, H(\mathbb K^{p}\times V)\subseteq\mathbb K^{p}\times V\}
$$
(see definition \ref{def:contact}).

The action of $\mathcal{K}_{V}$ on $\mathcal{E}^{p'}_{p}$ or
$\mathcal{O}^{p'}_{p}$ is defined as in definition
\ref{def:contact}. We can also define the similar notions for
unfoldings.The group $\mathcal{K}_{V}$ is a geometric subgroup of the
contact group, so that the machinery of singularity theory applies to
$\mathcal{K}_{V}$-equivalence. In particular the infinitesimal and the
geometric criteria for $\mathcal{K}_{V}$-determinacy.

We can define
\begin{align*}
  T\mathcal{K}_{V}\cdot g &= tg(\mathcal{M}_{p}\Theta_{p}) +
                           \epsilon_{p}\{\eta_{i}\circ g,\, i=1,\dots,m\}\\
  T\mathcal{K}_{V_{e}}\cdot g &=tg(\Theta_{p}) +  \epsilon_{p}\{\eta_{i}\circ g,\, i=1,\dots,m\}\\
\end{align*}
{where $\eta_{i},\, i=1,\dots,m$ are the generators of $\Theta_{V},$
the $\epsilon_{p'}$-module of vector fields in $\mathbb K^{p'}$ tangent to the variety $V$
at its smooth points. Equivalently, $\Theta_{V} $ is the
$\epsilon_{p'}$ module of derivations of $\Theta_{p'}$ which preserve
the ideal defining $V.$ The notation $\mathrm{Der(-log V)}$ proposed by
Saito for the module of these vector fields as well the notation $g^{*}(\mathrm{Der(-log V)})$ for
the $\epsilon_{p}$-module $\epsilon_{p}\{\eta_{i}\circ g,\,
i=1,\dots,m\}$ are also widely used. See section 2.9 of the article of
Nu\~no-Ballesteros and David Mond in this Handbook \cite{Han}.}

$T\mathcal{K}_{V}g$ and $  T\mathcal{K}_{V_{e}}g$ are
$\mathcal{O}_{p}$-modules when $\mathbb K=\mathbb C$ and $\mathcal{E}_{p}$-modules when $\mathbb K=\mathbb R.$ 

With the notations as in \eqref{eq:32} we can state the main results
in \cite{Dam91} as follows.

\begin{enumerate}
\item $g$ has finite $\mathcal{K}_{V}$-codimension if and only if $f$
  has finite $\mathcal{A}$-codimension.
\item If $N\mathcal{A}_{e}f$ and $N\mathcal{K}_{V_{e}}g$ denote the
  normal spaces to  $\mathcal{A}_{e}f$ and $\mathcal{K}_{V_{e}}g,$
  respectively, then $$N\mathcal{A}_{e}f \cong N\mathcal{K}_{V_{e}}g.$$
\item $\mathcal{A}_{e}$-codimension$(f)=\mathcal{K}_{V_{e}}$-codimension$(g).$ 
\item Conditions 1. to 3. hold for multigerms $f:(\mathbb
K^{n},S)\to (\mathbb K^{p},0).$
\end{enumerate}

The geometric characterization of $\mathcal{K}_{V}$-equivalence holds
only for holomorphic map-germs $f\in \mathcal{O}_{n}^{p},$ namely: $f:(\mathbb
C^{n},0)\to (\mathbb C^{p},0)$ is $\mathcal{A}$- finitely determined
if and only if $g$ is transverse to the strata of $V$ away from the
origin. For real germs, the geometric condition is a necessary
condition for $\mathcal{K}_{V}$finite determinacy, but the converse
does not hold.

Damon's theory builds a solid bridge between  singularity theory of
mappings and topology of singular varieties. This connection has
been used successfully  for the past three decades. We follow this 
approach to formulate some open problems in singularity theory,
related to the subject discussed in this paper.

\subsection{Geometry of sections of discriminant of stable mappings in
  the nice dimensions}
\label{sec:sect-discr-stable}

Let $(n+1,p+1)$ be a nice  pair of dimensions and $F:(\mathbb
K^{n+1},0)\to (\mathbb K^{p+1},0)$ a minimal stable map-germ. Minimal
here means that  $\{0\}\in \mathbb K^{n+1}$ is a stratum on the
stratification of $F$ by stable types.  A hyperplane section
$H=g(\mathbb K^{p})$ transversal to the discriminant $V=\Delta(F)
\subset \mathbb K^{p+1}$ away from the origin pulls back by $F$ to an
$\mathcal{A}$-finite map-germ  $f:(\mathbb K^{n},0) \to (\mathbb
K^{p},0).$

From Damon's result 3. above, it follows that if $(n,p)$ is in the
semi-nice dimensions (see section \ref{sec:how-strat-mapp}) there exists an
open and dense set $\mathcal{I}$ of immersions $g:(\mathbb K^{p},0)\to
(\mathbb K^{p+1},0)$ such that the pull back of $g$ by $F$ is an
$\mathcal{A}$-finite map germ $f:(\mathbb K^{n},0)\to (\mathbb
K^{p},0)$ whose $\mathcal{A}_{e}$-codimension is minimal, that is,
$$
\mathcal{A}_{e}\mathrm{-cod}\, f \leq \mathcal{A}_{e}\mathrm{-cod}\, f',
\ \ \mathrm{for\  all }\ f'\widesim{\mathcal{K}} f.
$$

As $F$ is a minimal stable unfolding of $f$ we may ask: is there a
map-germ $f:(\mathbb K^{n},0)\to (\mathbb K^{p},0),$ $Q(f)\cong Q(F)$
such that $\mathcal{A}_{e}\mathrm{-cod}\, f=1,$ which in this case implies
that $\mathcal{A}$-orbit of $f$ is open in  its
$\mathcal{K}$-orbit?

It follows form Proposition \ref{cidinha} that this condition holds if and
only if it holds for a general linear hyperplane section (see
\cite{Mir} for the case $(n,n+1)).$
 Notice however that sections of $\Delta(F)$ minimizing
 $\mathcal{A}_{e}$-codimension are not necessarily linear (see section
 3.1 in \cite{AtiRuaSin}). The complete answer to the question above
 appears in \cite{AtiRuaSin}.

 \begin{theorem}[\cite{AtiRuaSin}, Theorem 4.6]
   If the pair $(n,p)$ is in the extra-nice dimensions, then every
   stable germ  $F:(\mathbb K^{n+1},0)\to (\mathbb K^{p+1},0)$ admits
   a section of $\mathcal{A}_{e}$-codimension $1$  $f:(\mathbb
K^{n},0)\to (\mathbb K^{p},0).$ The converse is true if $(n+1,p+1)$ is
in the nice dimensions.
 \end{theorem}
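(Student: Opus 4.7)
The bridge between the two conditions is Damon's theorem reviewed in the section above: for any stable germ $F:(\mathbb K^{n+1},0)\to(\mathbb K^{p+1},0)$ with discriminant $V=\Delta(F)$, there is a natural bijection
\[
  \{\mathcal A\text{-classes of sections }f\}\longleftrightarrow\{\mathcal K_{V}\text{-classes of transverse immersions }g:(\mathbb K^{p},0)\to(\mathbb K^{p+1},0)\},
\]
under which $\mathcal A_{e}\text{-cod}(f)=\mathcal K_{V,e}\text{-cod}(g)$. The existence of an $\mathcal A_{e}$-codimension-$1$ section of $F$ is therefore equivalent to the existence of a $\mathcal K_{V,e}$-codimension-$1$ immersion transverse to $V$ through $0$. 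The strategy is to reformulate the extra-nice hypothesis as a codimension estimate for the Thom-Mather stratification of $J^{k}(n,p)$ and transport this estimate to $V$ via Damon's correspondence.

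For the forward direction, I would use the characterisation of the extra-nice dimensions from \cite{AtiRuaSin} paralleling Mather's codimension condition $\sigma(n,p)>n$ for the nice dimensions: $(n,p)$ is extra-nice if and only if the union $\Pi_{2}^{k}(n,p)\subset J^{k}(n,p)$ of all $\mathcal A$-orbits of $\mathcal A_{e}$-codimension $\geq 2$ has codimension at least $n+1$ for large enough $k$. Under Damon's bijection, this translates into the statement that, in the space of immersions $g$ through $0$, the locus of sections with $\mathcal K_{V,e}$-cod $\geq 2$ has codimension $\geq 2$, whereas the $\mathcal K_{V,e}$-cod $1$ stratum sits as a non-empty real codimension-$1$ subset (non-emptiness being guaranteed because extra-nice implies nice, so the classification of stable germs in dimensions $(n,p)$ actually produces $\mathcal A_{e}$-cod $1$ adjacencies). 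Given $F$ arbitrary, I would pick a generic $1$-parameter family of immersions $g_{t}$ through $0$ starting at a transverse stable section $g_{0}$; by the codimension estimate the family must meet the $\mathcal K_{V,e}$-cod $1$ stratum at an isolated parameter $t_{0}$ while avoiding the higher codimension bad part, and Damon's equality produces the required section $f_{g_{t_{0}}}$ of $\mathcal A_{e}$-cod $1$.

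For the converse, I assume $(n+1,p+1)$ is nice while $(n,p)$ is not extra-nice, so that $\Pi_{2}^{k}(n,p)$ contains a stratum $\mathcal S$ of codimension exactly $n$ in $J^{k}(n,p)$. I would choose a jet $z\in\mathcal S$ whose $\mathcal K^{k}$-orbit is of smallest possible $\mathcal K$-codimension among those meeting $\mathcal S$, and use the hypothesis $(n+1,p+1)\in$ nice dimensions to realise the algebra $Q(z)$ as $Q(F)$ for a genuinely stable $F:(\mathbb K^{n+1},0)\to(\mathbb K^{p+1},0)$ (this is where density of stable maps in $(n+1,p+1)$ is indispensable, since we need an \emph{honestly} stable $F$, not merely a topologically stable one). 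Under Damon's correspondence, the $\mathcal K_{V}$-classes of transverse sections of $V$ through $0$ are precisely the $\mathcal A$-orbits inside the $\mathcal K$-orbit of $z$, and the codimension-$n$ stratum $\mathcal S$ forces every such section to have $\mathcal K_{V,e}$-cod $\geq 2$. Consequently no $\mathcal A_{e}$-codimension-$1$ section of $F$ exists.

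The main obstacle will be the converse: one must verify that the bad codimension-$n$ stratum of $J^{k}(n,p)$ can actually be implanted at $0$ in the discriminant of a stable $F$ in dimensions $(n+1,p+1)$, and that this rigidity obstructs codimension-$1$ sections globally through $0$ rather than merely at some generic representative jet. This amounts to an obstruction-theoretic analysis of the logarithmic stratification of $V$ near $0$, in the spirit of Mather's original argument that $\Pi^{k}(n,p)$ cannot be avoided outside the nice dimensions, and should be carried out by combining the $\mathcal K$-versality of the unfolding $F\to f$ with a dimension count for the $\mathcal K_{V}$-orbit decomposition of the immersion space.
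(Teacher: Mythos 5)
The paper does not actually prove this statement: it is quoted verbatim from \cite{AtiRuaSin} (Theorem 4.6), and the surrounding text only supplies the ingredients (Damon's $\mathcal{K}_{V}$-correspondence, Proposition \ref{cidinha}, the reduction to general \emph{linear} hyperplane sections). Your overall skeleton --- translate ``$\mathcal{A}_{e}$-codimension $1$ section'' into ``$\mathcal{K}_{V,e}$-codimension $1$ immersion'' via Damon's equality and then run a jet-space codimension count against the definition of the extra-nice dimensions --- is indeed the approach of \cite{AtiRuaSin}. But the forward direction as you set it up contains a genuine error. You start a generic path of immersions $g_{t}$ at ``a transverse stable section $g_{0}$'' and argue the path must hit the codimension-$1$ stratum. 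No such $g_{0}$ exists when $F$ is minimal: minimality means $\{0\}$ is a zero-dimensional stratum of the stratification of $\Delta(F)$ by stable types, and no immersed $p$-plane through $0$ in $\mathbb K^{p+1}$ can be transverse to a point-stratum, so \emph{every} section germ through $0$ already has $\mathcal{K}_{V,e}$-codimension $\geq 1$. Moreover a generic path is under no obligation to meet a codimension-$1$ stratum at all. The mechanism you want is static, not dynamic: the \emph{generic} section has $\mathcal{K}_{V,e}$-cod exactly $1$, the lower bound coming from non-transversality to $\{0\}$ as above, and the upper bound coming from the extra-nice codimension estimate (the locus of jets admitting no representative of $\mathcal{A}_{e}$-cod $\leq 1$ has codimension $>n$), transported to the section space and reduced to linear sections via Proposition \ref{cidinha} and the homogeneity/versality of $F$. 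You also never address the non-minimal (prism) case, which is needed since the theorem asserts the conclusion for \emph{every} stable germ.

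In the converse you have the right idea --- implant a codimension-$\leq n$ jet class all of whose representatives have $\mathcal{A}_{e}$-cod $\geq 2$ into the discriminant of a stable $F$ in dimensions $(n+1,p+1)$ --- but the step you flag as ``the main obstacle'' is exactly where the argument is incomplete. Realising $Q(z)$ as $Q(F)$ for \emph{some} stable $F$ in sufficiently high dimensions is automatic; what you need is a stable $F$ specifically in dimensions $(n+1,p+1)$ whose sections land in the $\mathcal{K}$-class of $z$, i.e.\ a \emph{one-parameter} stable unfolding of a representative of $z$, and it is here (via the bound $\mathcal{K}$-$\cod z\leq n<\sigma(n+1,p+1)$ forced by niceness of $(n+1,p+1)$) that the hypothesis is actually consumed. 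Once that $F$ is produced, the conclusion is immediate from Damon's equality $\mathcal{A}_{e}\textrm{-}\cod f=\mathcal{K}_{V_{e}}\textrm{-}\cod g$ together with the fact that all sections of $F$ through $0$ share the local algebra $Q(F)$, hence the $\mathcal{K}$-class of $z$; no further ``obstruction-theoretic analysis of the logarithmic stratification'' is needed. I would recommend restructuring the forward direction around genericity of linear sections rather than paths, and making the dimension count in the converse explicit.
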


 \begin{corollary}
   If $\mathbb K=\mathbb C$ and $(n,p)$ is in the extra-nice
   dimensions any two generic hyperplane sections $g$ and $g'$ of the
   discriminant $\Delta(F)$ of a stable germ $F:(\mathbb K^{n+1},0)\to
   (\mathbb K^{p+1},0)$  pull back by $F$ to $\mathcal{A}$-equivalent
   germs $f,\,f':(\mathbb K^{n},0)\to (\mathbb K^{p},0).$ Moreover
   $\mathcal{A}_{e}\mathrm{-cod}\, f = \mathcal{A}_{e}\mathrm{-cod}\, f'.$ 
 \end{corollary}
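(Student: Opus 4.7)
The plan is to combine the preceding theorem with Damon's $\mathcal{K}_V$-correspondence and a connectedness argument that is available only over $\mathbb{C}$.

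First, I would apply the preceding theorem to obtain, for the given stable germ $F$, at least one transverse hyperplane section $g_0$ whose pull back $f_0$ satisfies $\mathcal{A}_e\text{-}\cod(f_0)=1$. Damon's identity $\mathcal{A}_e\text{-}\cod(f)=\mathcal{K}_{V_e}\text{-}\cod(g)$, with $V=\Delta(F)$, translates this into $\mathcal{K}_{V_e}\text{-}\cod(g_0)=1$. Upper semicontinuity of $\mathcal{K}_{V_e}\text{-}\cod$ in the parameter space of hyperplane sections then forces the locus
\[
\mathcal{U}=\{g:\ g\pitchfork V\setminus\{0\},\ \mathcal{K}_{V_e}\text{-}\cod(g)\le 1\}
\]
to be Zariski open in the Grassmannian of hyperplane section germs through the origin. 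Since $\mathcal{K}_{V_e}\text{-}\cod(g)=0$ is impossible for a proper section of the singular variety $V$, equality $\mathcal{K}_{V_e}\text{-}\cod(g)=1$ holds throughout $\mathcal{U}$, and over $\mathbb{C}$ this $\mathcal{U}$ is irreducible, hence connected.

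The crucial step is to establish that the $\mathcal{A}$-class of the pulled-back germ $f$ is constant as $g$ varies in $\mathcal{U}$. This reduces to a \emph{simplicity} statement proper to the extra-nice dimensions: the $\mathcal{A}_e\text{-}\cod 1$ germs whose minimal stable unfolding is $\mathcal{A}$-equivalent to $F$ form a single $\mathcal{A}$-orbit. Granting simplicity, such germs are $\mathcal{A}$-determined by the combined data of their local algebra $Q(f)\cong Q(F)$ (which does not depend on the choice of transverse section) and the constraint $\mathcal{A}_e\text{-}\cod(f)=1$, via Mather's classification Theorem \ref{th:10}; hence any two $g,g'\in\mathcal{U}$ pull back to $\mathcal{A}$-equivalent $f$ and $f'$. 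The codimension equality $\mathcal{A}_e\text{-}\cod f=\mathcal{A}_e\text{-}\cod f'$ follows at once because $\mathcal{A}_e\text{-}\cod$ is an $\mathcal{A}$-invariant.

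The main obstacle is the simplicity step, i.e.\ excluding a continuous modulus inside the $\mathcal{A}_e\text{-}\cod 1$ stratum attached to $\mathcal{K}\cdot F$. This is the codimension-one analogue of Mather's classification of stable germs in the nice dimensions, and the combinatorial conditions defining the extra-nice dimensions are engineered precisely to rule out such a modulus. I would expect to handle it, in the spirit of Section \ref{sec:bound-nice-dimens}, by a case-by-case inspection of the low-codimension $\mathcal{K}$-orbits adjacent to $\mathcal{K}\cdot F$ in jet space, or equivalently, on the $\mathcal{K}_V$ side of Damon's correspondence, by analyzing the minimal-codimension $\mathcal{K}_V$-orbits of sections of $V$. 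The complex hypothesis is used both to guarantee irreducibility (hence connectedness) of the Grassmannian of transverse hyperplanes and to allow Mather's classification-by-local-algebra to be applied without real-parameter complications.
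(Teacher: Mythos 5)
Your outline assembles the right ingredients (the existence theorem, Damon's identity $\mathcal{A}_e\text{-}\cod f=\mathcal{K}_{V_e}\text{-}\cod g$, openness of the good locus of sections, and a connectedness argument over $\mathbb{C}$), but it stalls exactly at the step that carries the whole corollary: you reduce the constancy of the $\mathcal{A}$-class to a ``simplicity'' statement for the $\mathcal{A}_e\text{-}\cod 1$ germs attached to $\mathcal{K}\cdot F$, and then propose to verify that statement by an unexecuted case-by-case inspection of adjacent $\mathcal{K}$-orbits. That is not a proof, and the one concrete tool you invoke to discharge it --- Mather's classification of germs by their local algebra, Theorem \ref{th:10} --- applies only to \emph{stable} germs, whereas your $f$ and $f'$ have $\mathcal{A}_e\text{-}\cod =1$ and are precisely not stable. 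The local algebra $Q(f)\cong Q(F)$ pins down only the $\mathcal{K}$-class of $f$, never its $\mathcal{A}$-class, so ``local algebra plus the constraint $\mathcal{A}_e\text{-}\cod =1$'' does not by itself determine $f$ up to $\mathcal{A}$-equivalence; that is the very assertion to be proved.

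The missing idea, which the paper's surrounding discussion supplies (see the remark preceding Theorem 9.2 and Proposition \ref{cidinha}, together with the equivalence $(b)\Leftrightarrow(c)$ in Theorem \ref{th:8.14}), is that for a section of a \emph{minimal} stable germ the condition $\mathcal{A}_e\text{-}\cod f=1$ is equivalent, via Wilson's formula, to $T\mathcal{A}f=T\mathcal{K}f$, i.e.\ to the $\mathcal{A}$-orbit of $j^kf(0)$ being \emph{open} in its $\mathcal{K}^k$-orbit. Over $\mathbb{C}$ a $\mathcal{K}^k$-orbit is the orbit of a connected group, hence irreducible, so it contains at most one open $\mathcal{A}^k$-orbit (two nonempty Zariski-open subsets of an irreducible variety meet, and distinct orbits are disjoint). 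Since $f$ and $f'$ lie in the same $\mathcal{K}$-class (both have local algebra $Q(F)$) and both have open $\mathcal{A}$-orbits there, they are $\mathcal{A}$-equivalent, and the codimension equality follows by $\mathcal{A}$-invariance. This replaces your simplicity step by a two-line orbit argument and requires no classification; the complex hypothesis enters through the irreducibility of the $\mathcal{K}$-orbit, not merely through the connectedness of the space of hyperplanes.
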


 \begin{remark}
   When $\mathbb K=\mathbb C,$ $p\leq n+1$ and $(n,p)$ is in the nice
   dimensions, the topology of the stabilization of holomorphic
   $\mathcal{A}_{e}$-codimension 1, $\mathrm{corank}\, 1$ germs and
   multigerms is well understood. See \cite{AtiCooMon} where
   T. Cooper, D. Mond and Wik-Atique classify these singularities and
   study the topology of their stabilizations.
 \end{remark}

 \noindent\underline{Problem 1}. To study the geometry of generic
 hyperplane sections of the discriminant of stable mappings in 
 $(n+1,p+1)$ when $(n,p)$ is in extra-nice dimensions and its boundary.

 \noindent\underline{Problem 2}. To study equisingularity of families of
 generic hyperplane sections $g_{t}(\mathbb C^{p})$ of the
   discriminant $\Delta(F)$ of stable map-germs $F:(\mathbb
     C^{n+1},0)\to (\mathbb C^{p+1},0)$ where $g_{t}:(\mathbb
     C^{p},0)\to (\mathbb C^{p+1},0)$ are germs of immersions, when
     $(n,p)$ is in the boundary of extra-nice dimensions. These pair of
     extra-nice dimensions have  been calculated in \cite{AtiRuaSin}.
     \begin{enumerate}
     \item [(i)]\ $n\leq p,\ \ 4p=5n-5,\ p\geq 5.$
     \item [(ii)] $n>p,\ \ (n,p)=\{(5,4), (7,5), (9+k,6),\, k\geq 0\}. $ 
     \end{enumerate}

  Observe that these families are always topologically
  trivial. However the Whitney equisingularity and the bi-Lipschitz
  triviality of these families are open questions. 

  \begin{conjecture}
  At the boundary of the extra-nice dimensions any two generic
  immersions $g, g':(\mathbb C^{p},0)\to (\mathbb C^{p+1},0)$ are
  bi-Lipschitz $\mathcal{K}_{V}$-equivalent and they define bi-Lipschitz
  $\mathcal{A}$-equivalent germs $f,\,f':(\mathbb C^{n},0)\to (\mathbb
  C^{p},0).$
 \end{conjecture}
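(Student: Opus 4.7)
The plan is to imitate the bi-Lipschitz $\mathcal{K}$-triviality argument of Section \ref{sec:non-trivial-example}, but now applied to the group $\mathcal{K}_V$ and then transported back via Damon's correspondence. First I would use that the generic immersions form a connected open subset of the space of immersions $(\mathbb{C}^p,0)\to(\mathbb{C}^{p+1},0)$, so that $g$ and $g'$ can be joined by a smooth one-parameter family $g_t$, $t\in[0,1]$, each member of which is transverse to the strata of $V=\Delta(F)$ away from the origin. Showing bi-Lipschitz $\mathcal{K}_V$-triviality of $g_t$ will, by the items 1--4 after (\ref{eq:32}) in Section \ref{sec:open-problems} combined with a Lipschitz strengthening of Damon's preservation-of-geometric-subgroup principle, produce a bi-Lipschitz $\mathcal{A}$-trivialization of the pull-back family $f_t$, giving both conclusions in one stroke.

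The core technical step is a $\mathcal{K}_V$-version of the Lipschitz Thom--Levine lemma of Section \ref{sec:non-trivial-example}. I would look for Lipschitz vector fields
\[
X=\frac{\partial}{\partial t}+\sum_{j=1}^{p}v_j(y,t)\frac{\partial}{\partial y_j},\qquad Y=\frac{\partial}{\partial t}+\sum_{i=1}^{p+1}w_i(Y,t)\frac{\partial}{\partial Y_i},
\]
with $v_j(0,t)=0$, $w_i(0,t)=0$, $Y$ tangent to $V$ at its smooth points (i.e.\ $Y-\partial_t\in\mathrm{Der}(-\log V)$), and with $dg_t(X)=Y\circ g_t$. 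Integrating $X$ and $Y$ produces a bi-Lipschitz isotopy of pairs $((\mathbb{C}^{p+1},0),V)\to((\mathbb{C}^{p+1},0),V)$ carrying $g_0$ to $g_1$, which is exactly a bi-Lipschitz $\mathcal{K}_V$-equivalence. To find such a $Y$ I would exploit the weighted homogeneity of stable germs in the nice dimensions (Theorem 7.6 of Mond--Nu\~no-Ballesteros \cite{MonNun}), which passes to weighted homogeneity of $V$ and of a generating set of $\mathrm{Der}(-\log V)$, and introduce a semi-algebraic control function $\rho$ on $(\mathbb{C}^{p+1},0)$, weighted homogeneous and vanishing on $V$, in the spirit of Lemma \ref{ruatri} and the cone cut-off construction of Ruas \cite{Rua86}. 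Generically dividing the relevant infinitesimal generators by a suitable power of $\rho\circ g_t$ produces continuous vector fields whose derivatives one expects to be bounded by a careful \L{}ojasiewicz-type inequality.

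The hard part will be handling the modular strata of $V$ that appear precisely because $(n,p)$ sits at the boundary of the extra-nice dimensions. At that boundary, the minimal codimension of a $1$-parameter pseudo-isotopy singularity equals $n$, which means the Thom--Mather stratification of $V$ contains unimodular (or, in the exceptional pair $(10,7)$-case, bimodular) strata transverse to which generic $g_t$ must cross at isolated values of $t$. Along these strata the $\mathcal{A}_e$-cotangent space is not exhausted by vector fields of purely negative weight, so the weighted-homogeneous estimate that sufficed in the unimodular $\mathcal{K}$-case (equation \eqref{eq:23} and Lemma \ref{ruatri}) degenerates; one has to argue that the contribution of the modulus direction to $Y$ is still Lipschitz after pulling back through $F$. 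I would attempt this by combining the semi-algebraic bi-Lipschitz triviality theorem of Ruas--Valette \cite{RuaVal} with the $\mathcal{A}_e$-codimension$=1$ property established in Theorem \ref{th:8.14}, using the latter to write the moduli direction in $\mathrm{Der}(-\log V)$ as a Lipschitz combination of generators of purely negative weight modulo a controlled remainder.

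The final obstacle is that no bi-Lipschitz analogue of Damon's theorem identifying $N\mathcal{A}_e f\cong N\mathcal{K}_{V_e}g$ is currently available: one needs that a bi-Lipschitz $\mathcal{K}_V$-trivialization of $(g_t,V)$ lifts to a bi-Lipschitz $\mathcal{A}$-trivialization of the pull-back family $f_t$, not merely a topological one. I expect that such a lifting holds because $F$ is a submersion off $\Sigma(F)$ and the stratified transversality of each $g_t$ to $V$ makes $F$ a Lipschitz trivial fibration over each stratum; making this precise, so that the product structure on source and target behaves Lipschitz-ly, is what I regard as the main open technical step in the proposal.
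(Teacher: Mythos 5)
You should first be aware that the paper offers no proof of this statement: it is stated explicitly as a conjecture in Section \ref{sec:open-problems}, immediately after the remark that ``the Whitney equisingularity and the bi-Lipschitz triviality of these families are open questions.'' So there is no argument in the text to compare yours against; what can be assessed is whether your strategy is coherent and where it genuinely breaks down. Your overall plan --- join $g$ to $g'$ by a path of generic immersions, prove bi-Lipschitz $\mathcal{K}_V$-triviality of the path by a Lipschitz Thom--Levine argument with a weighted-homogeneous control function in the style of Lemma \ref{ruatri} and Ruas \cite{Rua86}, then transport the result to the pull-backs $f_t$ via Damon's correspondence \cite{Dam91} --- is consistent with the program the author herself sketches at the end of Section \ref{sec:non-trivial-example} for the related Conjecture \ref{lnd}, and you are right to flag that the Lipschitz analogue of Damon's isomorphism $N\mathcal{A}_e f\cong N\mathcal{K}_{V_e}g$ is the missing keystone. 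That last step is not a technicality: Damon's equivalence is an isomorphism of infinitesimal normal spaces, and nothing in \cite{Dam91} guarantees that a bi-Lipschitz trivialization of the pair $(\mathbb{C}^{p+1},V)$ lifts through $F$ to a bi-Lipschitz trivialization of the family $f_t$ on the source $\mathbb{C}^n$; the lift is only known to be topological. Asserting it would already prove a substantial new theorem.

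There is also a conceptual confusion in your third paragraph. At the boundary of the \emph{extra-nice} dimensions the pair $(n+1,p+1)$ still lies in Mather's nice dimensions, so $F$ is a genuinely $C^\infty$-stable germ and its discriminant $V=\Delta(F)$ carries no unimodular or bimodular Thom--Mather strata of the kind discussed in Section \ref{sec:bound-nice-dimens}; in particular Theorem \ref{th:8.14}, which you invoke, concerns the unimodular families in the boundary of the \emph{nice} dimensions and says nothing about this situation. The moduli that make the conjecture nontrivial do not live in $V$ and are not ``crossed at isolated values of $t$'': by Theorem 4.6 of \cite{AtiRuaSin}, at the boundary of the extra-nice dimensions $F$ admits no section of $\mathcal{A}_e$-codimension $1$, and the expected phenomenon is that the \emph{generic} $\mathcal{K}_V$-orbits of immersions form a positive-dimensional family, so that your path $g_t$ moves through a continuum of analytically inequivalent sections for \emph{all} $t$. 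Your proposed \L{}ojasiewicz/cone-cutoff estimate must therefore control the modulus direction everywhere along the path, not just near isolated bad parameters, and the mechanism you propose for doing so (writing the modulus direction as a Lipschitz combination of negative-weight generators of $\mathrm{Der}(-\log V)$ ``modulo a controlled remainder'') is precisely the point at which no argument currently exists. Until that estimate and the Lipschitz lifting through $F$ are supplied, the proposal remains a plausible research program rather than a proof, which is consistent with the paper's decision to state the result as a conjecture.
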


\noindent\underline{Problem 3}. Apply the geometric approach discussed
in this section to study the bi-Lipschitz $\mathcal{G}$-classification
of analytic map-germs $f\in \mathcal{O}_{n}^{p}$ where
$\mathcal{G}=\mathcal{R}, \mathcal{C}, \mathcal{K}, \mathcal{L},
\mathcal{A}$ or more generally, any geometric subgroup of
$\mathcal{K}.$ The Lipschitz theory of singularity is an almost
completely open problem. See \cite{Rua20} for an account on
bi-Lipschitz $\mathcal{G}$-classification of function germs
$\mathcal{G}=\mathcal{R}, \mathcal{C}, \mathcal{K}$ and references
therein \cite{BirCosFerRua,RuaVal,HenPar,NguRuaTri,
  BirFerGraGaf,BirFerGraGab, FerRua04, KoiPar10, KoiPar13,FukSatKuo}.

\section*{Acknowledgments}
\label{sec:acknowledgment}

This research was partially supported by CNPq grant \# 433257/2016/4
and FAPESP, grant \# 2019/18226-1.

I am grateful to Jose  Seade for the invitation to publish this work as a chapter of
the “Handbook of Geometry and Topology of Singularities.”

Special thanks are due to David Trotman, David Mond and Raul Oset-Sinha for
their valuable comments on preliminary versions of this paper, and to
D\' ebora Lopes for the help with the figures.

It is a pleasure to thank Nhan Nguyen ans Saurabh Trivedi for helpful discussions
on Lipschitz stability of smooth mappings. I amn also grateful to the referee for his
careful reading and detailed remarks that improved very much the presentation of
these notes.

\bibliography{sing}
\bibliographystyle{spmpsci}
\printindex
\end{document}